\newtheorem{theorem}{Theorem}
\theoremstyle{plain}
\newtheorem{corollary}{Corollary}
\newtheorem{definition}{Definition}
\newtheorem{lemma}{Lemma}
\newtheorem{proposition}{Proposition}
\newtheorem{remark}{Remark}
\numberwithin{equation}{section}
\begin{document}
\title[Classical Tilting]{Tilting theory and functor categories I.\\
Classical Tilting}
\author{R. Mart\'{\i}nez-Villa}
\address{Instituto de Matem\'{a}ticas UNAM, Unidad Morelia, Mexico}
\email{mvilla@matmor.unam.mx}
\thanks{}
\author{M. Ortiz-Morales}
\address{Instituto de Matem\'{a}ticas UNAM, Unidad Morelia, Mexico}
\email{mortiz@matmor.unam.mx}
\urladdr{http://www.matmor.unam.mx}
\thanks{The second author thanks CONACYT for giving him financial support
during his graduate studies. }
\date{December 15, 2010}
\subjclass{2000]{Primary 05C38, 15A15; Secondary 05A15, 15A18}}
\keywords{ Classical Tilting, Functor Categories}
\dedicatory{}
\thanks{This paper is in final form and no version of it will be submitted
for publication elsewhere.}

\begin{abstract}
Tilting theory has been a very important tool in the classification of
finite dimensional algebras of finite and tame representation type, as well
as, in many other branches of mathematics. Happel [Ha] proved that
generalized tilting induces derived equivalences between module categories,
and tilting complexes were used by Rickard [Ri] to develop a general
Morita theory of derived categories.

In the other hand, functor categories were introduced in representation
theory by M. Auslander [A], [AQM] and used in his proof of the first
Brauer-Thrall conjecture [A2] and later on, used systematically in his
joint work with I. Reiten on stable equivalence [AR], [AR2] and many other
applications.

Recently, functor categories were used in [MVS3] to study the
Auslander-Reiten components of finite dimensional algebras.

The aim of the paper is to extend tilting theory to arbitrary functor
categories, having in mind applications to the functor category $\mathrm{Mod}%
(\mathrm{mod}_{\Lambda })$, \ with \ $\Lambda $ a finite dimensional algebra.
\end{abstract}

\maketitle

\section{\protect\bigskip Introduction and basic results}

Tilting theory traces back his history to the article by Bernstein, Gelfand
and Ponomarev ([BGP] 1973), where they defined partial Coxeter functors.
These functors were generalized by Auslander Platzeck and Reiten ([APR]
1979), but a major brake through was the article by Brenner and Butler (%
[BB]  1979). The notion of tilting was further generalized by Miyashita (%
[Mi] 1986) and Happel ([Ha] 1987). Happel showed that generalized tilting
induces derived equivalence between the corresponding module categories.
These results inspired Rickard [Ri] to develop his Morita theory of
derived categories.

This paper is the first one in a series of articles in which, having in mind
applications to functor categories from subcategories of modules over a
finite dimensional algebra to the category of abelian groups, we generalize
tilting theory from modules to functor categories.

The first paper is dedicated to classical tilting and it consists of four
sections:

In the first section we fix the notation and recall some notions from
functor categories that will be used through the paper. In the second
section, we generalize Bongartz\'{}s proof [B] of Brenner-Butler%
\'{}%
s theorem [BB] to arbitrary functor categories. In the third section we
deal with two special cases: first we extend tilting from subcategories to
full categories, and second we apply the theory so far developed, to study
tilting for infinite quivers with no relations, and apply it to compute the
Auslander-Reiten components of infinite Dynkin quivers. In the last section
we recall results from [MVS1], [MVS2], [MVS3] on graded and Koszul
categories and we apply our results from the third section, to compute the
Auslander-Reiten components of the categories of Koszul functors over a
regular Auslander-Reiten component of a finite dimensional algebra. These
results generalize previous work by [MV] on the preprojective algebra.

\subsection{The Category $\mathrm{Mod}(\mathcal{C})$}

In this section $\mathcal{C}$ will be an arbitrary skeletally small pre
additive category, and $\mathrm{Mod}(\mathcal{C})$ will denote the category
of contravariant functors from $\mathcal{C}$ to the category of abelian
groups. Following the approach by Mitchel [M], we can think of $\mathcal{C}$
as a ring "with several objects" and $\mathrm{Mod}(\mathcal{C})$ as a
category of $\mathcal{C}$-modules. The aim of the paper is to show that the
notions of tilting theory can be extended to$\mathcal{\ }\mathrm{Mod}(%
\mathcal{C)},$ to obtain generalizations of the main theorems on tilting for
rings. To obtain generalizations of some tilting theorems for finite
dimensional algebras, we need to add restrictions on our category $\mathcal{C%
}$, like: existence of pseudo kernels, Krull-Schmidt, $\mathrm{Hom}$-finite,
dualizing, etc.. To fix the notation, we recall known results on functors
and categories that we use through the paper, referring for the proofs to
the papers by Auslander and Reiten [A], [AQM], [AR].

\subsection{Functor Categories}

Let $\mathcal{C}$ be a pre additive skeletally small category. By $\mathrm{%
Mod}(\mathcal{C})$ we denote the category of additive contravariant functors
from $\mathcal{C}$ to the category of abelian groups. Then, $\mathrm{Mod}(%
\mathcal{C})$ is an abelian category with arbitrary sums and products, in
fact it has arbitrary limits and colimits, and filtered limits are exact
(Ab5 in Grothendiek terminology). It has enough projective and injective
objects. For any object $C\in \mathcal{C}$, the representable functor $%
\mathcal{C}(\;,C)$ is projective, arbitrary sums of representable functors
are projective, and any object $M\in \mathrm{Mod}(\mathcal{C})$ is covered
by an epimorpism $\coprod_{i}\mathcal{C}(\;,C_{i})\rightarrow M\rightarrow 0$%
. We say that an object $M$ in $\mathrm{Mod}(C)$ is finitely generated if
there exists an epimorphism $\coprod_{i\in I}\mathcal{C}(\;,C_{i})%
\rightarrow M\rightarrow 0$, with $I$ a finite set.

Given a finitely generated functor $M$ and and an arbitrary sum of functors $%
\coprod_{j}N_{j},$ there is a natural isomorphism $\mathrm{Hom}_{\mathcal{C}%
}(M,\coprod_{j}N_{j})\cong \coprod_{j}\mathrm{Hom}_{\mathcal{C}}(M,N_{j})$
(finitely generated are compact).

An object $P$ in $\mathrm{Mod}(\mathcal{C})$ is projective (finitely
generated projective) if and only if $P$ is summand of $\coprod_{i\in I}%
\mathcal{C}(\;,C_{i})$ for a family (finite) $\{C_{i}\}_{i\in I}$ of objects
in $\mathcal{C}$. The subcategory $\mathfrak{p}(\mathcal{C})$ of $\mathrm{Mod%
}(\mathcal{C})$, of all finitely generated projective objects, is a
skeletally small additive category in which idempotents split, the functor $%
P:\mathcal{C}\rightarrow \mathfrak{p}(\mathcal{C})$, $P(C)=\mathcal{C}(\;,C)$%
, is fully faithful and induces by restriction $\mathrm{res}:\mathrm{Mod}(%
\mathfrak{p}(\mathcal{C}))\rightarrow \mathrm{Mod}(\mathcal{C})$, an
equivalence of categories [A]. For this reason we may assume that our
categories are skeletally small additive categories such that idempotents
split, they were called annuli varieties in [A].

\begin{definition}
\emph{[AQM] } Given a preadditive skeletally small category $\mathcal{C}$, we
say $\mathcal{C}$ has pseudokernels, if given a map $f:C_{1}\rightarrow
C_{0} $ there exists a map $g:C_{2}\rightarrow C_{1}$ such that the
sequence: $\mathcal{C}(\;C_{2})\overset{(\;,g)}{\rightarrow }\mathcal{C}%
(\;C_{1})\overset{(\;,f)}{\rightarrow }\mathcal{C}(\;,C_{0})$ is exact.
\end{definition}

A functor $M$ is finitely presented if there exists an exact sequence $%
\mathcal{C}(\;C_{1})\rightarrow \mathcal{C}(\;,C_{0})\rightarrow
M\rightarrow 0$. We denote by $\mathrm{mod}(\mathcal{C})$ the full
subcategory of $\mathrm{Mod}(\mathcal{C})$ consisting of finitely presented
functors. It was proved in [AQM] $\mathrm{mod}(\mathcal{C})$ is abelian if
and only if $\mathcal{C}$ has pseudokernels.

We will say indistinctly that $M$ is an object of $\mathrm{Mod}(\mathcal{C})$
or that $M$ is a $\mathcal{C}$-module. A representable functor $\mathcal{C}%
(\;,C)$ will be sometimes denoted by $(\;,C).$

\subsection{Change of Categories}

According to [A], there exists a unique up to isomorphism functor $-\otimes
_{\mathcal{C}}-:\mathrm{Mod}(\mathcal{C}^{op})\times \mathrm{Mod}(\mathcal{C}%
)\rightarrow \mathbf{Ab,}$ called the \textbf{tensor product}, with the
following properties:

\begin{itemize}
\item[(a)]

\begin{itemize}
\item[(i)] For each $\mathcal{C}$-module $N$, the functor $\otimes _{%
\mathcal{C}}N:\mathrm{Mod}(\mathcal{C}^{op})\rightarrow \mathbf{Ab}$, given
by $(\otimes _{\mathcal{C}}N)(M)=M\otimes _{\mathcal{C}}N$ is right exact.

\item[(ii)] For each $\mathcal{C}^{op}$-module $M$, the functor $M\otimes _{%
\mathcal{C}}:\mathrm{Mod}(\mathcal{C})\rightarrow \mathbf{Ab}$, given by $%
(M\otimes _{\mathcal{C}})(N)=M\otimes _{\mathcal{C}}N$ is right exact.
\end{itemize}

\item[(b)] The functors $M\otimes _{\mathcal{C}}-$ and $-\otimes _{\mathcal{C%
}}N$ preserve arbitrary sums.

\item[(c)] For each object $C$ in $\mathcal{C}$ $M\otimes _{\mathcal{C}%
}(\;,C)=M(C)$ and $(C,\;\;)\otimes _{\mathcal{C}}N=N(C)$.
\end{itemize}

Given a full subcategory $\mathcal{C}^{\prime }$ of $\mathcal{C}$. The
restriction $\mathrm{res}:\mathrm{Mod}(\mathcal{C})\rightarrow \mathrm{Mod}(%
\mathcal{C}^{\prime })$ has a right adjoint, called also the tensor product,
and denoted by $\mathcal{C}\otimes _{\mathcal{C}^{\prime }}:\mathrm{Mod}(%
\mathcal{C}^{\prime })\rightarrow \mathrm{Mod}(\mathcal{C}).$ This functor
is defined by: $(\mathcal{C}\otimes _{\mathcal{C}^{\prime }}M)(C)=(C,\;\;)|_{%
\mathcal{C}^{\prime }}\otimes _{\mathcal{C}^{\prime }}M$ , for any $M$ in $%
\mathrm{Mod}(\mathcal{C}^{\prime })$ and $C$ in $\mathcal{C}$. The following
proposition is proved in [A Prop. 3.1].

\begin{proposition}
\label{cap1.9} Let $\mathcal{C}^{\prime }$ be a full subcategory of $%
\mathcal{C}$. The functor $\mathcal{C}\otimes _{\mathcal{C}^{\prime }}:%
\mathrm{Mod}(\mathcal{C}^{\prime })\rightarrow \mathrm{Mod}(\mathcal{C})$
satisfies the following conditions:

\begin{itemize}
\item[(a)] $\mathcal{C}\otimes _{\mathcal{C}^{\prime }}$ is right exact and
preserves arbitrary sums.

\item[(b)] The composition $\mathrm{Mod}(\mathcal{C}^{\prime })%
\xrightarrow{\mathcal
C\otimes_{\mathcal C'}}\mathrm{Mod}(\mathcal{C})\xrightarrow{\mathrm{res}}%
\mathrm{Mod}(C^{\prime })$ is the identity in $\mathrm{Mod}(\mathcal{C}%
^{\prime }).$

\item[(c)] For each object $C^{\prime }$ in $\mathcal{C}^{\prime }$, $%
\mathcal{C}\otimes _{\mathcal{C}^{\prime }}\mathcal{C}^{\prime
}(\;\;,C^{\prime })=\mathcal{C}(\;\;,C^{\prime })$.

\item[(d)] $\mathcal{C}\otimes _{\mathcal{C}^{\prime }}$ is fully faithful.

\item[(e)] $\mathcal{C}\otimes _{\mathcal{C}^{\prime }}$ preserves
projective objects.
\end{itemize}
\end{proposition}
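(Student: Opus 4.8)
The plan is to reduce each of (a)--(e) to the corresponding property of the bifunctor $-\otimes_{\mathcal{C}}-$ recalled above, exploiting that in a functor category with values in $\mathbf{Ab}$ kernels, cokernels and arbitrary coproducts are computed objectwise, i.e. evaluation $\mathrm{ev}_{C}:\mathrm{Mod}(\mathcal{C})\to\mathbf{Ab}$ at an object $C$ is exact and preserves sums. Combined with the defining formula $(\mathcal{C}\otimes_{\mathcal{C}'}M)(C)=\mathcal{C}(C,-)|_{\mathcal{C}'}\otimes_{\mathcal{C}'}M$, this yields (a) at once: for fixed $C$ the functor $M\mapsto\mathcal{C}(C,-)|_{\mathcal{C}'}\otimes_{\mathcal{C}'}M$ is right exact and preserves arbitrary sums by items (a)(ii) and (b) of the tensor product, and since exactness of sequences and the property of being a coproduct are both detected objectwise in $\mathrm{Mod}(\mathcal{C})$, the functor $\mathcal{C}\otimes_{\mathcal{C}'}-$ is right exact and preserves arbitrary sums.

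I would next do (c) and (b), which are computations with representable functors; it is convenient to establish (c) first. For $C'\in\mathcal{C}'$ we have $(\mathcal{C}\otimes_{\mathcal{C}'}\mathcal{C}'(-,C'))(C)=\mathcal{C}(C,-)|_{\mathcal{C}'}\otimes_{\mathcal{C}'}\mathcal{C}'(-,C')=\mathcal{C}(C,C')$ by property (c) of $-\otimes_{\mathcal{C}'}-$; checking that this identification is natural in $C$ shows the functor in question is $\mathcal{C}(-,C')$, proving (c). For (b) the essential point is that $\mathcal{C}'$ is a \emph{full} subcategory of $\mathcal{C}$: for $C'\in\mathcal{C}'$ the restricted functor $\mathcal{C}(C',-)|_{\mathcal{C}'}$ is exactly the representable $\mathcal{C}'(C',-)$, so $(\mathcal{C}\otimes_{\mathcal{C}'}M)(C')=\mathcal{C}'(C',-)\otimes_{\mathcal{C}'}M=M(C')$, again by property (c) of the tensor product; naturality in $C'$ then identifies $\mathrm{res}\circ(\mathcal{C}\otimes_{\mathcal{C}'}-)$ with the identity of $\mathrm{Mod}(\mathcal{C}')$.

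Part (e) I would get directly from (a) and (c): being additive, $\mathcal{C}\otimes_{\mathcal{C}'}-$ preserves direct summands, and by (a) it preserves arbitrary coproducts, while by (c) it sends a representable $\mathcal{C}'(-,C')$ to the projective functor $\mathcal{C}(-,C')$; since every projective of $\mathrm{Mod}(\mathcal{C}')$ is a summand of a coproduct of representables, its image is a summand of a coproduct of representable $\mathcal{C}$-modules, hence projective. Finally (d) is formal once (b) is known: using the adjunction between $\mathrm{res}$ and $\mathcal{C}\otimes_{\mathcal{C}'}-$ recorded above, (b) says the corresponding unit (resp.\ counit) is an isomorphism, and an adjoint functor whose unit, resp.\ counit, is invertible is fully faithful. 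Concretely, one checks that the restriction map $\mathrm{Hom}_{\mathcal{C}}(\mathcal{C}\otimes_{\mathcal{C}'}M,\mathcal{C}\otimes_{\mathcal{C}'}N)\to\mathrm{Hom}_{\mathcal{C}'}(M,N)$ is bijective, using that by (b) each group $(\mathcal{C}\otimes_{\mathcal{C}'}M)(C)$ is generated by the images of the groups $(\mathcal{C}\otimes_{\mathcal{C}'}M)(C')=M(C')$, $C'\in\mathcal{C}'$, under the transition maps of the functor.

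None of this is hard: the only genuinely non-formal ingredient is the use of fullness of $\mathcal{C}'$ in (b) (and hence in (d)), and the step demanding the most attention is verifying that the objectwise isomorphisms obtained in (b) and (c) are natural, so that they assemble into isomorphisms of functors rather than mere objectwise bijections.
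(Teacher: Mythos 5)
The paper itself does not prove this proposition; it is quoted from Auslander [A, Prop.\ 3.1], so there is nothing in the present source to compare your argument against, and I evaluate it on its own terms. Your proof is the standard one and is correct: everything is reduced to the defining objectwise formula $(\mathcal{C}\otimes_{\mathcal{C}'}M)(C)=\mathcal{C}(C,-)|_{\mathcal{C}'}\otimes_{\mathcal{C}'}M$, with (a) following because kernels, cokernels and coproducts in $\mathrm{Mod}(\mathcal{C})$ are computed pointwise, (b) and (c) from the evaluation identity $(C,\;)\otimes N=N(C)$ for the tensor bifunctor (and you correctly single out fullness of $\mathcal{C}'$ as what makes (b) work), and (e) from additivity plus (a) and (c), since projectives are exactly the summands of coproducts of representables. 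The one place you should tighten is (d): knowing that $\mathrm{res}\circ(\mathcal{C}\otimes_{\mathcal{C}'}-)\cong\mathrm{id}$ via \emph{some} natural isomorphism does not in full generality force the unit (or counit) of the adjunction itself to be invertible, so you should either verify that the natural isomorphism produced in (b) actually is the (co)unit of the adjunction, or rely on your concrete alternative argument; for the latter, the missing sentence is that $(\mathcal{C}\otimes_{\mathcal{C}'}M)(C)$ is by construction a quotient of $\coprod_{C'\in\mathcal{C}'}\mathcal{C}(C,C')\otimes_{\mathbb{Z}}M(C')$, hence is generated under the transition maps $(\mathcal{C}\otimes_{\mathcal{C}'}M)(\varphi)$, $\varphi\in\mathcal{C}(C,C')$, by the subgroups $(\mathcal{C}\otimes_{\mathcal{C}'}M)(C')=M(C')$. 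With that point made explicit, your proof is complete.
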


The functor $M$ in $\mathrm{Mod}(\mathcal{C})$ is called projectively
presented over $\mathcal{C}^{\prime },$ if there exists an exact sequence $%
\coprod_{i\in I}\mathcal{C}(\;\;,C_{i}^{\prime })\rightarrow \coprod_{j\in J}%
\mathcal{C}(\;\;,C_{j}^{\prime })\rightarrow M\rightarrow 0$, with $%
C_{i}^{\prime },C_{j}^{\prime }\in \mathcal{C}^{\prime }$. The category $%
\mathcal{C}\otimes _{\mathcal{C}^{\prime }}\mathrm{Mod}(\mathcal{C}^{\prime
})$ is the subcategory of $\mathrm{Mod}(\mathcal{C})$ whose objects are the
functors projectively presented over $\mathcal{C}^{\prime }$. The functor $%
\mathrm{res}$ and $\mathcal{C}\otimes _{\mathcal{C}^{\prime }}$ induces an
equivalence between $\mathrm{Mod}(\mathcal{C}^{\prime })$ and $\mathcal{C}%
\otimes _{\mathcal{C}^{\prime }}\mathrm{Mod}(\mathcal{C}^{\prime })$.

We say that an exact sequence $P_{1}\xrightarrow{\alpha}P_{0}%
\xrightarrow{\beta}M\rightarrow 0$ is a \textbf{minimal} \textbf{projective
presentation} of $M$ if and only if the epimorphisms $P_{0}%
\xrightarrow{\beta} M$ and $P_{1}\rightarrow \mathrm{Im}\alpha$, are minimal
projective covers (in the sense of [AF]).

It is of interest to know under what conditions minimal projective
presentations exist.

\begin{theorem}[A Theor. 4.12]
\label{cap1.13} The following conditions are equivalent:

\begin{itemize}
\item[(a)] Every object in $\mathrm{mod}(\mathcal{C})$ has a minimal
projective presentation.

\item[(b)] For each object $C$ in $\mathcal{C}$, every finitely presented $%
\mathrm{End}(C)^{op}$-module has a minimal projective presentation.
\end{itemize}
\end{theorem}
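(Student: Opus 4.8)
The plan is to reduce the statement to the endomorphism ring of a single object and then transport minimal projective presentations back and forth through a Morita-type equivalence; what makes the transport legitimate is that minimality of a projective presentation is essentially a categorical condition.

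First I would reduce to functors presented by one representable. Given $M\in\mathrm{mod}(\mathcal C)$, pick a finite presentation $\mathcal C(\,,C_1)\to\mathcal C(\,,C_0)\to M\to 0$ and put $C=C_0\oplus C_1$, which exists since $\mathcal C$ is additive. Let $\mathcal C'=\mathrm{add}\,C$ be the full subcategory of summands of finite sums of copies of $C$; by Mitchell's reduction $\mathrm{Mod}(\mathcal C')$ is equivalent to $\mathrm{Mod}(\mathrm{End}(C)^{op})$, and under this equivalence the finitely presented $\mathrm{End}(C)^{op}$-modules correspond to $\mathrm{mod}(\mathcal C')$. Now apply Proposition \ref{cap1.9} with this $\mathcal C'$: the functor $T:=\mathcal C\otimes_{\mathcal C'}-:\mathrm{Mod}(\mathcal C')\to\mathrm{Mod}(\mathcal C)$ is fully faithful, right exact, preserves projectives, satisfies $\mathrm{res}\circ T=\mathrm{id}$ (with $\mathrm{res}$ exact, since exactness of functors is pointwise), and restricts to an equivalence of $\mathrm{Mod}(\mathcal C')$ onto the full subcategory $\mathcal X\subseteq\mathrm{Mod}(\mathcal C)$ of functors projectively presented over $\mathcal C'$, which contains $M$ and every $\mathcal C(\,,C')$ with $C'\in\mathcal C'$.

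Next I would record a transport principle. An exact sequence $P_1\xrightarrow{\alpha}P_0\xrightarrow{\beta}M\to0$ with $P_0,P_1$ projective is a minimal projective presentation precisely when $\beta$, and $\alpha$ regarded as a map into $P_0$, are right minimal, i.e.\ $fg=f\Rightarrow g$ an automorphism — a condition preserved and reflected by any fully faithful functor. Together with the exactness of $\mathrm{res}$ and the fact that $T$ lands in genuine projectives (Proposition \ref{cap1.9}(e)), this shows that minimal projective presentations in $\mathrm{Mod}(\mathcal C')\simeq\mathrm{Mod}(\mathrm{End}(C)^{op})$ correspond, via $T$ and $\mathrm{res}$, to the minimal projective presentations of objects of $\mathcal X$ in $\mathrm{Mod}(\mathcal C)$. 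From here (b)$\Rightarrow$(a) is immediate: for $M$ as above, $\mathrm{res}(M)$ is a finitely presented $\mathrm{End}(C)^{op}$-module, which by (b) has a minimal projective presentation, and applying $T$ gives one for $M=T(\mathrm{res}\,M)$. For (a)$\Rightarrow$(b), start with a finitely presented $\mathrm{End}(C)^{op}$-module $N$, set $M=T(N)\in\mathrm{mod}(\mathcal C)$, and take a minimal projective presentation $P_1\xrightarrow{\alpha}P_0\xrightarrow{\beta}M\to0$ by (a). One checks $P_0,P_1\in\mathcal X$: $M$ is a quotient of a finite sum of copies of $\mathcal C(\,,C)$, so its projective cover $P_0$ is a summand of such a sum, hence in $\mathrm{add}\,\mathcal C(\,,C)\subseteq\mathcal X$; chasing the finite presentation of $M$ through $\beta$ shows $\mathrm{Im}\,\alpha$ is also a quotient of a finite sum of copies of $\mathcal C(\,,C)$, so its cover $P_1$ lies in $\mathcal X$ too. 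Then $\mathrm{res}$ yields a minimal projective presentation of $N\cong\mathrm{res}(M)$.

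The point that needs care — and that I expect to be the main obstacle — is the bookkeeping around $\mathcal X$: it need not be closed under subobjects, so kernels and images computed in $\mathcal X$ and in $\mathrm{Mod}(\mathcal C)$ can differ, and one must verify that "$\beta$ is a projective cover" genuinely transports in both directions. This is exactly what the categorical reformulation of minimality in the second step is designed to handle, together with exactness of $\mathrm{res}$ and the genuine-projectivity clause of Proposition \ref{cap1.9}. No semiperfectness of $\mathcal C$ or of any $\mathrm{End}(C)$ is available or used; condition (b) is precisely the hypothesis that lets the relevant covers exist locally, and the argument shows that this local existence already forces the global statement.
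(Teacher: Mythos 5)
The paper states this result without proof, citing it from Auslander's ``Representation Theory of Artin Algebras I'' (Theorem 4.12), so there is no internal proof to compare against. Your argument is correct, and it is essentially the standard reduction Auslander uses: pass to $\mathcal C' = \mathrm{add}(C_0\oplus C_1)$, invoke the equivalence $\mathrm{Mod}(\mathcal C')\simeq\mathrm{Mod}(\mathrm{End}(C)^{op})$ together with Proposition~\ref{cap1.9}, and transport minimality. The two points that carry the weight are correctly identified and correctly handled: (i) the reformulation of ``$P_1\xrightarrow{\alpha}P_0\xrightarrow{\beta}M\to 0$ is minimal'' as ``$\alpha$ and $\beta$ are both right minimal,'' which is the right categorical condition because it is preserved and reflected by any fully faithful functor; and (ii) for $(a)\Rightarrow(b)$, the verification that both $P_0$ and $P_1$ actually lie in $\mathrm{add}\,\mathcal C(\,,C)\subseteq\mathcal X$, so that $\mathrm{res}$ (which is exact and restricts to an equivalence on $\mathcal X$) can be applied without worrying about $\mathcal X$ failing to be closed under kernels. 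Your lifting argument showing $P_1$ is a summand of $\mathcal C(\,,C)^m$ — by lifting $\gamma$ through $\alpha$ and using that $\ell_0\gamma$ maps onto $\ker\beta$, plus smallness of $\ker(P_1\to\mathrm{Im}\,\alpha)$ — is sound.
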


A ring $R$ is \textbf{semiperfect}, if every finitely generated $R$-module
has a projective cover.

\begin{corollary}
\label{cap1.14} If $\mathcal{C}$ is a category, such that for every $C$ in $%
\mathcal{C}$, $\mathrm{End}(C)^{op}$ is semiperfect, then every object in $%
\mathrm{mod}(\mathcal{C})$ has a minimal projective presentation.
\end{corollary}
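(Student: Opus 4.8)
The plan is to reduce Corollary \ref{cap1.14} directly to Theorem \ref{cap1.13} by verifying condition (b) of that theorem. So suppose that for every object $C$ in $\mathcal{C}$ the endomorphism ring $\mathrm{End}(C)^{op}$ is semiperfect; I must show that every finitely presented $\mathrm{End}(C)^{op}$-module has a minimal projective presentation, and then invoke Theorem \ref{cap1.13}(b)$\Rightarrow$(a).

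First I would recall the definition of semiperfect: a ring $R$ is semiperfect if every finitely generated $R$-module has a projective cover. Let $R=\mathrm{End}(C)^{op}$ and let $M$ be a finitely presented $R$-module, so there is an exact sequence $R^{n}\xrightarrow{\;f\;}R^{m}\to M\to 0$ for some $n,m$. Since $M$ is finitely generated, by hypothesis it has a projective cover $P_{0}\to M\to 0$; and since $R$ is semiperfect, finitely generated projective modules are finite direct sums of projective covers of simple modules, hence themselves finitely generated, and in particular the kernel $K=\ker(P_{0}\to M)$ is finitely generated because $M$ is finitely presented and $P_{0}$ is a summand of $R^{m}$ (so $K$ is a summand of $R^n \oplus (\text{something finitely generated})$; more simply, $K$ is finitely generated since $M$ is finitely presented). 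Then, again using that $R$ is semiperfect, $K$ admits a projective cover $P_{1}\to K\to 0$, and composing with the inclusion $K\hookrightarrow P_{0}$ gives $P_{1}\to P_{0}\to M\to 0$. By construction $P_{0}\to M$ is a projective cover and $P_{1}\to \mathrm{Im}(P_{1}\to P_{0})=K$ is a projective cover, which is exactly the requirement in the definition of minimal projective presentation given in the excerpt.

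The only slightly delicate point is checking that $K=\ker(P_{0}\to M)$ is finitely generated, so that it has a projective cover at all. This follows from the standard fact that over any ring, if $M$ is finitely presented and $P_{0}\to M\to 0$ is any epimorphism with $P_{0}$ finitely generated projective, then $\ker(P_{0}\to M)$ is finitely generated (Schanuel's lemma, or a direct comparison of the two presentations). Here $P_{0}$ is finitely generated projective because it is the projective cover of the finitely generated module $M$ over the semiperfect ring $R$. I would state this as the one lemma that needs invoking and otherwise regard the argument as a routine unwinding of definitions.

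I do not expect a genuine obstacle here; the corollary is essentially a dictionary translation. If anything, the work is purely bookkeeping: confirming that "semiperfect" supplies projective covers for both $M$ and the first syzygy $K$, and that the minimality conditions in the excerpt's definition of minimal projective presentation are precisely "projective cover of $M$" together with "projective cover of $\mathrm{Im}\,\alpha$". Having verified Theorem \ref{cap1.13}(b), the conclusion (a) --- every object in $\mathrm{mod}(\mathcal{C})$ has a minimal projective presentation --- is immediate.
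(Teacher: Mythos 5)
Your proof is correct and is exactly the reduction the paper leaves implicit: verify condition (b) of Theorem~\ref{cap1.13} using the fact that over a semiperfect ring both a finitely generated module and its first syzygy (which is finitely generated since the module is finitely presented and the projective cover is finitely generated) admit projective covers, then invoke (b)$\Rightarrow$(a). The paper states the corollary without proof, and your argument supplies the standard missing details.
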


\subsection{ Krull-Schmidt Categories}

We start giving some definitions from [AR].

\begin{definition}
Let $R$ be a commutative artin ring, an $R$-category $\mathcal{C}$, is a pre
additive category such that $\mathcal{C}(C_{1},C_{2})$ is an $R$-module and
composition is $R$-bilinear. Under these conditions $\mathrm{\mathrm{Mod}}(%
\mathcal{C})$ is a $R$-category which we identify with the category of
functors $(\mathcal{C}^{op},\mathrm{Mod}(R))$.

An $R$-category $\mathcal{C}$ is $\mathrm{Hom}$-\textbf{finite}, if for each
pair of objects $C_{1},C_{2}$ in $\mathcal{C}$ the $R$-module $\mathcal{C}%
(C_{1},C_{2})$ is finitely generated. We denote by $(\mathcal{C}^{op},%
\mathrm{mod}(R))$, the full subcategory of $(\mathcal{C}^{op},\mathrm{%
\mathrm{Mod}}(R))$ consisting of the $\mathcal{C}$-modules such that; for
every $C$ in $\mathcal{C}$ the $R$-module $M(C)$ is finitely generated. The
category $(\mathcal{C}^{op},\mathrm{mod}(R))$ is abelian and the inclusion $(%
\mathcal{C}^{op},\mathrm{mod}(R))\rightarrow (\mathcal{C}^{op},\mathrm{%
\mathrm{Mod}}(R)) $ is exact.
\end{definition}

The category $\mathrm{mod}(C)$ is a full subcategory of $(\mathcal{C}^{op},%
\mathrm{mod}(R))$. The functors $D:(\mathcal{C}^{op},\mathrm{mod}%
(R))\rightarrow (\mathcal{C},\mathrm{mod}(R))$, and $D:(\mathcal{C},\mathrm{%
mod}(R))\rightarrow (\mathcal{C}^{op},\mathrm{mod}(R))$, are defined as
follows: for any $C$ in $\mathcal{C}$, $D(M)(C)=\mathrm{Hom}%
_{R}(M(C),I(R/r)) $, with $r$ the Jacobson radical of $R,$ and $I(R/r)$ is
the injective envelope of $R/r.$ The functor $D$ defines a duality between $(%
\mathcal{C},\mathrm{mod}(R))$ and $(\mathcal{C}^{op},\mathrm{mod}(R))$. If $%
\mathcal{C}$ is an $\mathrm{Hom}$-finite $R$-category and $M$ is in $\mathrm{%
mod}(\mathcal{C})$, then $M(C)$ is a finitely generated $R$-module and is
therefore in $\mathrm{mod}(R)$.

\begin{definition}
An $\mathrm{Hom}$-finite $R$-category\ $\mathcal{C}$ is\ \textbf{dualizing,}
\ if\ the functor\ $D:(\mathcal{C}^{op},\mathrm{mod}(R))\rightarrow (%
\mathcal{C},\mathrm{mod}(R))$ induces a duality between the categories $%
\mathrm{mod}(\mathcal{C})$ and $\mathrm{mod}(\mathcal{C}^{op}).$
\end{definition}

It is clear from the definition that for dualizing categories $\mathrm{mod}(%
\mathcal{C})$ has enough injectives.

To finish, we recall the following definition:

\begin{definition}
An additive category $\mathcal{C}$ is \textbf{Krull-Schmidt}, if every
object in $\mathcal{C}$ decomposes in a finite sum of objects whose
endomorphism ring is local.
\end{definition}

\subsection{The radical of a category}

The notion of the Jacobson radical of a category was introduced in [M] and
[A], it is defined in the following way:

\begin{definition}
The Jacobson radical of $\mathcal{C}$, $\mathrm{rad}_{\mathcal{C}}(\;,\;)$,
is a subbifunctor of $\mathrm{Hom}_{\mathcal{C}}(\;,\;)$ defined in objects
as: $\mathrm{rad}_{\mathcal{C}}(X,Y)=\{f\in \mathrm{Hom}_{\mathcal{C}%
}(X,Y)\mid $for any map $g:Y\rightarrow X$, $1-gf$ is invertible \}.

If $M$ is a $\mathcal{C}$-module , then we denote by $\mathrm{rad}M$ the
intersection of all maximal subfunctors of $M$.
\end{definition}

\begin{proposition}
\label{cap1.21}\emph{[A], [BR], [M]} Let $\mathcal{C}$ be an additive
category and $\mathrm{rad}_{\mathcal{C}}(\;,\;)$ the Jacobson radical of $%
\mathcal{C}$. Then:

\begin{itemize}
\item[(a)] For every object $C$ in $\mathcal{C}$ $\mathrm{rad}_{\mathcal{C}%
}(C,C)$ is just the Jacobson radical of $\mathrm{End}_{\mathcal{C}}(C).$

\item[(b)] If $C$ and $C^{\prime }$ are indecomposable objects in $\mathcal{C%
}$, then the radical $\mathrm{rad}_{\mathcal{C}}(C,C^{\prime })$ consists of
all non isomorphisms from $C$ to $C^{\prime }$.

\item[(c)] For every object $C$ in $\mathcal{C}$, $\mathrm{rad}_{\mathcal{C}%
}(C,\;\;)=\mathrm{rad}\mathcal{C}(C,\;\;)$ and $\mathrm{rad}_{\mathcal{C}%
}(\;\;,C)=\mathrm{rad}\mathcal{C}(\;\;,C)$.

\item[(d)] For every pair of objects $C$ and $C^{\prime }$ in $\mathcal{C}$,
$\mathrm{rad}_{\mathcal{C}}(C^{\prime },\;\;)(C)=\mathrm{rad}_{\mathcal{C}%
}(\;\;,C)(C^{\prime })$.
\end{itemize}
\end{proposition}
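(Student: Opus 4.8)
The plan is to prove the four statements essentially independently, in the order (a), (b), (c), and to obtain (d) at the end. Statement (a) is the ``ring with several objects'' form of the classical description of the Jacobson radical of a ring $R$: one has $x\in J(R)$ if and only if $1-yx$ is left invertible --- equivalently, a two-sided unit --- for every $y\in R$. (Indeed, if $x\in J(R)$ and $1-yx$ were not left invertible, then $R(1-yx)$ would lie in a maximal left ideal $\mathfrak m$, which also contains $J(R)\ni yx$, whence $1\in\mathfrak m$; conversely, if $x\notin J(R)$, pick a maximal left ideal $\mathfrak m$ with $x\notin\mathfrak m$, write $1=yx+m$, and note that $m=1-yx\in\mathfrak m$ is not a unit.) Taking $R=\mathrm{End}_{\mathcal{C}}(C)$, the defining condition of $\mathrm{rad}_{\mathcal{C}}(C,C)$ is literally ``$1-gf$ invertible for all $g$'', so $\mathrm{rad}_{\mathcal{C}}(C,C)=J(\mathrm{End}_{\mathcal{C}}(C))$. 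I would also record here the standard equivalence ``$1-gf$ invertible $\iff$ $1-fg$ invertible'', which is used silently below.

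For (b), recall that an indecomposable object $C$ has local endomorphism ring (this is the Krull--Schmidt setting of the cited sources), so the non-invertible endomorphisms of $C$ form the ideal $J(\mathrm{End}_{\mathcal{C}}(C))$. If $f\colon C\to C'$ is an isomorphism, then for $g=f^{-1}$ we get $1_{C}-gf=0$, which is not invertible (as $C\neq 0$), so $f\notin\mathrm{rad}_{\mathcal{C}}(C,C')$. If $f$ is not an isomorphism, then for every $g\colon C'\to C$ the composite $gf\in\mathrm{End}_{\mathcal{C}}(C)$ is also not an isomorphism --- otherwise $f$ would be a split monomorphism, exhibiting the nonzero $C$ as a direct summand of the indecomposable $C'$ and forcing $f$ to be invertible. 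Hence $gf\in J(\mathrm{End}_{\mathcal{C}}(C))$, so $1_{C}-gf$ is invertible for every $g$, i.e.\ $f\in\mathrm{rad}_{\mathcal{C}}(C,C')$; thus $\mathrm{rad}_{\mathcal{C}}(C,C')$ is exactly the set of non-isomorphisms.

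For (c), the two equalities are exchanged by passing from $\mathcal{C}$ to $\mathcal{C}^{op}$ (using $\mathrm{rad}_{\mathcal{C}}(X,Y)=\mathrm{rad}_{\mathcal{C}^{op}}(Y,X)$), so it suffices to prove $\mathrm{rad}_{\mathcal{C}}(-,C)=\mathrm{rad}\,\mathcal{C}(-,C)$ as subfunctors of the projective object $\mathcal{C}(-,C)\in\mathrm{Mod}(\mathcal{C})$ (the case $C=0$ being trivial, assume $C\neq 0$). For the inclusion ``$\subseteq$'', suppose $f\in\mathrm{rad}_{\mathcal{C}}(X,C)$ but $f\notin N(X)$ for some maximal subfunctor $N$, with simple quotient $\pi\colon\mathcal{C}(-,C)\to S$. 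The composite of $\pi$ with the Yoneda transformation $\mathcal{C}(-,X)\to\mathcal{C}(-,C)$ attached to $f$ carries $1_{X}$ to $\pi(f)\neq 0$, hence is an epimorphism onto the simple functor $S$; projectivity of $\mathcal{C}(-,C)$ then lifts $\pi$ through it, and by Yoneda this lift is the transformation attached to some $g\colon C\to X$. One reads off that $\pi$ is unchanged by the endomorphism of $\mathcal{C}(-,C)$ induced by $fg\in\mathrm{End}_{\mathcal{C}}(C)$, so the endomorphism induced by $1_{C}-fg$ has image in $N\subsetneq\mathcal{C}(-,C)$, hence is not an automorphism, so $1_{C}-fg$ is not invertible --- contradicting $f\in\mathrm{rad}_{\mathcal{C}}(X,C)$. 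For ``$\supseteq$'', to each maximal right ideal $\mathfrak m$ of $E:=\mathrm{End}_{\mathcal{C}}(C)$ I attach the subfunctor $N_{\mathfrak m}$ of $\mathcal{C}(-,C)$ with $N_{\mathfrak m}(X)=\{h\colon X\to C\mid hk\in\mathfrak m\text{ for all }k\colon C\to X\}$, and check that $N_{\mathfrak m}$ is a subfunctor, that $N_{\mathfrak m}(C)=\mathfrak m$, and that it is maximal (a strictly larger subfunctor has value at $C$ a right ideal strictly containing $\mathfrak m$, hence all of $E$, hence contains $1_{C}$, hence equals $\mathcal{C}(-,C)$). Since $\mathrm{rad}\,\mathcal{C}(-,C)$ lies in every maximal subfunctor, and in particular in every $N_{\mathfrak m}$, we get $\mathrm{rad}\,\mathcal{C}(-,C)(C)\subseteq\bigcap_{\mathfrak m}N_{\mathfrak m}(C)=\bigcap_{\mathfrak m}\mathfrak m=J(E)$. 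Hence, for $f\in\mathrm{rad}\,\mathcal{C}(-,C)(X)$ and any $g\colon C\to X$, the element $fg=\mathcal{C}(g,C)(f)$ lies in $\mathrm{rad}\,\mathcal{C}(-,C)(C)\subseteq J(E)$, so $1_{C}-fg$ is invertible; as this holds for all $g$, $f\in\mathrm{rad}_{\mathcal{C}}(X,C)$.

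Part (d) is then immediate: by definition of the subbifunctor $\mathrm{rad}_{\mathcal{C}}(\;,\;)$ of $\mathrm{Hom}_{\mathcal{C}}(\;,\;)$, both $\mathrm{rad}_{\mathcal{C}}(C',\;\;)(C)$ and $\mathrm{rad}_{\mathcal{C}}(\;\;,C)(C')$ are the same subgroup $\mathrm{rad}_{\mathcal{C}}(C',C)$ of $\mathcal{C}(C',C)$; combined with (c) this also yields $\mathrm{rad}\,\mathcal{C}(C',\;\;)(C)=\mathrm{rad}\,\mathcal{C}(\;\;,C)(C')$. The step I expect to be the real obstacle is the ``$\supseteq$'' half of (c): producing, from an abstract maximal right ideal of $\mathrm{End}_{\mathcal{C}}(C)$, a genuine maximal \emph{subfunctor} of $\mathcal{C}(-,C)$ realizing that ideal at $C$, and checking its maximality. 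The remaining steps are classical ring theory (a), a split-summand argument (b), and Yoneda together with projectivity of representable functors (the ``$\subseteq$'' half of (c)).
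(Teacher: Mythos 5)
The paper states this proposition without proof, citing [A], [BR], [M], so there is no in-house argument to compare against; your proof is a correct one and follows the standard lines. Two remarks. First, for part (b), the hypothesis you import --- that indecomposable objects have local endomorphism rings --- is genuinely needed and is not implied by ``$\mathcal{C}$ additive'' alone: in abelian groups, $\mathbb{Z}$ is indecomposable, $\mathrm{rad}_{\mathcal{C}}(\mathbb{Z},\mathbb{Z})=J(\mathbb{Z})=0$, yet $2\cdot\mathrm{id}$ is a non-isomorphism, so (b) fails. With $\mathrm{End}(C')$ local your split-mono argument does go through cleanly even without assuming idempotents split, since the idempotent $f\,(gf)^{-1}g\in\mathrm{End}(C')$ must be $0$ or $1$ and either case yields a contradiction or that $f$ is invertible. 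Second, you correctly locate the real content in the ``$\supseteq$'' half of (c); your verification that $N_{\mathfrak m}$ is a subfunctor with $N_{\mathfrak m}(C)=\mathfrak m$, and that it is maximal (any $N'\supsetneq N_{\mathfrak m}$ has $N'(C)$ a right ideal strictly containing $\mathfrak m$, hence all of $E$, hence $1_C\in N'(C)$ and $N'=\mathcal{C}(-,C)$), is exactly what is needed, and the ``$\subseteq$'' half by projectivity of $\mathcal{C}(-,C)$ plus Yoneda plus the Jacobson-lemma equivalence ``$1-gf$ invertible $\iff 1-fg$ invertible'' is correct. Part (d) as printed is, as you say, a tautology about the subbifunctor $\mathrm{rad}_{\mathcal{C}}$; the nontrivial consequence for module radicals then follows from (c).
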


\begin{definition}
By an ideal of the additive category $\mathcal{C}$ we understand a sub
bifunctor of $\mathrm{Hom}_{\mathcal{C}}(\;,\;).$
\end{definition}

Given two ideals of $I_{1}$ and $I_{2}$ of $\mathcal{C}$ we define $%
I_{1}I_{2}$ as follows: $f\in I_{1}I_{2}(C_{1},C_{3}),$ if and only if, $f$
is a finite sum of morphisms $C_{1}\xrightarrow{h}C_{2}\xrightarrow{g}C_{3}$%
, with $h\in I_{1}(C_{1},C_{2})$ and $g\in I_{2}(C_{2},C_{3})$.

Given an ideal $I$ of $\mathcal{C}$ and a $\mathcal{C}$-module $M,$ we
define a $\mathcal{C}$-sub module $IM$ of $M$ by
\begin{equation*}
IM(C)=\Sigma _{f\in I(C,C^{\prime })}\mathrm{Im}M(f)\text{,}
\end{equation*}%
with $C$ in $\mathcal{C}$. We say a $\mathcal{C}$-module $S$ is \textbf{%
simple}, if it does not have proper $\mathcal{C}$-sub modules.

\begin{lemma}
\label{cap1.22} \emph{[MVS1 Lemma 2.5], [BR]} Let $\mathcal{C}$ be a
Krull-Schmidt $K$-category. Then the following statements are true:

\begin{itemize}
\item[(a)] Any simple functor in $\mathrm{Mod}(\mathcal{C})$ is of the form $%
\mathcal{C}(\;\;,C)/\mathrm{rad}_{\mathcal{C}}(\;\;,C)$, for some
indecomposable object $C$ in $\mathcal{C}$.

\item[(b)] For all finitely generated functors $F$ in $\mathrm{Mod}(\mathcal{%
C})$, the radical of $F$ is isomorphic to $\mathrm{rad}_{\mathcal{C}}F.$

\item[(c)] All finitely generated functors $F$ in $\mathrm{Mod}(\mathcal{C})$
have a projective cover.
\end{itemize}
\end{lemma}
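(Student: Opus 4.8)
The plan is to reduce everything to the representable case via projective covers, exploiting that in a Krull--Schmidt category $\mathcal{C}$ every object is a finite direct sum of objects with local endomorphism rings. First I would settle part (a). Let $S$ be a simple functor; pick any nonzero $S(C)$, so by Yoneda there is a nonzero map $\mathcal{C}(\;,C)\to S$, which must be an epimorphism since $S$ is simple. Decomposing $C=\bigoplus C_i$ into indecomposables, one of the composites $\mathcal{C}(\;,C_i)\to S$ is still epi; so without loss $C$ is indecomposable with local endomorphism ring. The kernel of $\mathcal{C}(\;,C)\to S$ is a maximal subfunctor; I would check it contains $\mathrm{rad}_{\mathcal{C}}(\;,C)$ by using Proposition \ref{cap1.21}(b)--(c): any non-isomorphism $C'\to C$ induces a map that lands in a proper subfunctor of $S$ after evaluation, hence is killed; and conversely any element of the kernel evaluated at an indecomposable $C'$ is a non-isomorphism (else $S(C')\to S(C)$ would be surjective with $S(C')\neq 0$, forcing $\ker\to S$ not proper). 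Since $\mathcal{C}(\;,C)/\mathrm{rad}_{\mathcal{C}}(\;,C)$ has no proper subfunctors when $C$ is indecomposable — its value at $C$ is $\mathrm{End}_{\mathcal{C}}(C)/\mathrm{rad}\,\mathrm{End}_{\mathcal{C}}(C)$, a division ring, and its value elsewhere is generated over that by the identity — the two maximal subfunctors coincide, giving $S\cong\mathcal{C}(\;,C)/\mathrm{rad}_{\mathcal{C}}(\;,C)$.

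For (b), I would first treat $F=\mathcal{C}(\;,C)$ with $C$ indecomposable: by Proposition \ref{cap1.21}(c) we have $\mathrm{rad}_{\mathcal{C}}(\;,C)=\mathrm{rad}\,\mathcal{C}(\;,C)$ on the nose, and the quotient is simple by part (a), so $\mathrm{rad}_{\mathcal{C}}(\;,C)$ is the unique maximal subfunctor and equals $\mathrm{rad}\,\mathcal{C}(\;,C)$. Additivity in $C$ then handles arbitrary representables. For general finitely generated $F$, choose an epimorphism $p:\coprod_{i=1}^n\mathcal{C}(\;,C_i)\twoheadrightarrow F$ (finite because $F$ is finitely generated) with each $C_i$ indecomposable. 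Since the ideal $\mathrm{rad}_{\mathcal{C}}$ is functorial, $p$ maps $\mathrm{rad}_{\mathcal{C}}(\coprod\mathcal{C}(\;,C_i))$ into $\mathrm{rad}_{\mathcal{C}}F$; combined with the representable case this gives $\mathrm{rad}_{\mathcal{C}}F=p\bigl(\bigoplus\mathrm{rad}\,\mathcal{C}(\;,C_i)\bigr)$. On the other hand $F/\mathrm{rad}_{\mathcal{C}}F$ is a quotient of $\coprod\mathcal{C}(\;,C_i)/\mathrm{rad}_{\mathcal{C}}(\;,C_i)$, a finite semisimple functor, hence semisimple, so $\mathrm{rad}_{\mathcal{C}}F$ is an intersection of maximal subfunctors, i.e. $\mathrm{rad}\,F\subseteq\mathrm{rad}_{\mathcal{C}}F$; and the reverse inclusion holds because $\mathrm{rad}_{\mathcal{C}}F$ is contained in every maximal subfunctor (the composite of a generator $\mathcal{C}(\;,C_i)\to F\to F/M$ into a simple quotient kills the radical of $\mathcal{C}(\;,C_i)$). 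Hence $\mathrm{rad}\,F\cong\mathrm{rad}_{\mathcal{C}}F$.

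Part (c) then falls out: given finitely generated $F$ with epi $p:P=\coprod_{i=1}^n\mathcal{C}(\;,C_i)\twoheadrightarrow F$, consider the induced $\bar p:P/\mathrm{rad}\,P\to F/\mathrm{rad}\,F$. By (b) both sides are finite semisimple and $\bar p$ is epi; choose a semisimple summand of $P/\mathrm{rad}\,P$ mapping isomorphically, pull back to a summand $Q$ of $P$ (here idempotents split, as the $C_i$ have local, hence semiperfect, endomorphism rings, so such a summand exists), and restrict to $q:Q\to F$. Then $q$ is still epi (its image is not contained in $\mathrm{rad}\,F$ by construction, and $\mathrm{rad}\,F$ is the unique... more precisely, $\mathrm{Im}\,q+\mathrm{rad}\,F=F$ forces $\mathrm{Im}\,q=F$ since $\mathrm{rad}\,F$ is superfluous, being contained in every maximal subfunctor of the finitely generated $F$), and $\ker q\subseteq\mathrm{rad}\,Q$ because modulo $\mathrm{rad}\,Q$ the map $q$ is an isomorphism; so $q:Q\to F$ is a projective cover. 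The main obstacle I anticipate is in (b): making precise, without circularity, that the functorial radical $\mathrm{rad}_{\mathcal{C}}$ computed on a finite sum of representables agrees summand-by-summand with the intrinsic radical and that this survives passage to the quotient $F$ — the cleanest route is the two-sided inclusion argument above, using simplicity of the representable quotients (part (a)) to pin down maximal subfunctors on one side and functoriality of the ideal $\mathrm{rad}_{\mathcal{C}}$ on the other.
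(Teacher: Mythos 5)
The paper does not reprove this lemma; it cites [MVS1, Lemma 2.5] and [BR] and uses Proposition \ref{cap1.21} as the supporting input. Your proof is correct and is the standard Krull--Schmidt argument one finds in those sources, so there is nothing to compare to inside the paper itself. A few remarks on streamlining and on the places where you should make the dependencies explicit. In (a), the middle passage about non-isomorphisms landing in proper subfunctors is unnecessary: once you invoke Proposition \ref{cap1.21}(c), you have $\mathrm{rad}_{\mathcal{C}}(\;,C)=\mathrm{rad}\,\mathcal{C}(\;,C)$, which by definition of $\mathrm{rad}$ is contained in every maximal subfunctor, so in particular in the kernel $K$ of $\mathcal{C}(\;,C)\twoheadrightarrow S$; then it suffices to show $\mathcal{C}(\;,C)/\mathrm{rad}_{\mathcal{C}}(\;,C)$ is simple (your cyclic-generator argument, which works because $\mathrm{End}(C)/\mathrm{rad}\,\mathrm{End}(C)$ is a division ring and the functor is generated by $[1_C]$) to conclude $K=\mathrm{rad}_{\mathcal{C}}(\;,C)$. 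In (b), the key functoriality facts you should record explicitly are that a natural transformation carries $\mathrm{rad}_{\mathcal{C}}M$ into $\mathrm{rad}_{\mathcal{C}}N$, that an epimorphism carries it \emph{onto} it, and that $\mathrm{rad}_{\mathcal{C}}$ commutes with finite coproducts; these are what make $p\bigl(\bigoplus \mathrm{rad}_{\mathcal{C}}(\;,C_i)\bigr)=\mathrm{rad}_{\mathcal{C}}F$ and the semisimplicity of $F/\mathrm{rad}_{\mathcal{C}}F$ legitimate. For the reverse inclusion you do in fact use (a): the composite $\mathcal{C}(\;,C_i)\to F\to F/M$ either is zero or is epi onto a simple, and in the latter case its kernel equals $\mathrm{rad}_{\mathcal{C}}(\;,C_i)$ precisely by the identification in (a). Finally, in (c) your lift of a complement in $P/\mathrm{rad}\,P$ to a direct summand of $P$ should be done coordinate-wise: since $P=\coprod_{i=1}^n\mathcal{C}(\;,C_i)$ with each $C_i$ indecomposable, $P/\mathrm{rad}\,P=\coprod S_{C_i}$ and one can choose a sub-index set $J$ so that $\coprod_{i\in J}S_{C_i}\to F/\mathrm{rad}\,F$ is an isomorphism; then $Q=\coprod_{i\in J}\mathcal{C}(\;,C_i)$ is the desired summand, avoiding any appeal to idempotent lifting. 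The superfluousness of $\mathrm{rad}\,F$ for finitely generated $F$ (every proper subfunctor of a finitely generated functor is contained in a maximal one, by the usual Zorn argument) is exactly the right thing and closes the proof.
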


\subsection{A Pair of Adjoint Functors}

Let $\mathcal{C}$ be a skeletally small additive category, and $\mathcal{T}$
a skeletally small full subcategory of $\mathrm{Mod}(\mathcal{C})$. Let's
define the following functor
\begin{equation*}
\phi :\mathrm{Mod}(\mathcal{C})\rightarrow \mathrm{Mod}(\mathcal{T}%
),\;\;\phi (M)=\mathrm{Hom}(\;\;,M)_{\mathcal{T}}.
\end{equation*}

Our aim in this subsection is to prove $\phi $ has a left adjoint. Since $%
\mathrm{Mod}(\mathcal{C})$ is abelian, it has equalizers, and it was
remarked above it is Ab5. In this way $\mathrm{\mathrm{Mod}}(\mathcal{C})$
is complete [SM V.2 Theo.1 ]. Since the functor $\mathrm{Hom}_\mathcal{C}%
(T,-)$ preserve limits, for every $T $ in $\mathcal{T}$ it follows that the
functor $\phi $ preserve limits.

\begin{lemma}
The category $\mathrm{Mod}(\mathcal{C})$ has an injective cogenerator.
\end{lemma}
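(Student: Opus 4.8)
The plan is to construct an injective cogenerator of $\mathrm{Mod}(\mathcal{C})$ explicitly out of the injective hulls of the simple (or, more safely, out of a sufficiently large family of) quotients of representable functors, using the fact — recalled in the excerpt — that $\mathrm{Mod}(\mathcal{C})$ is an abelian category with enough injectives, arbitrary products, and a projective generator given by the representable functors $\mathcal{C}(\;,C)$, $C\in\mathcal{C}$. Since $\mathcal{C}$ is skeletally small, fix a set $\{C_i\}_{i\in I}$ of representatives of the isomorphism classes of objects of $\mathcal{C}$; then $\coprod_{i\in I}\mathcal{C}(\;,C_i)$ is a projective generator of $\mathrm{Mod}(\mathcal{C})$.

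First I would form, for each $i\in I$, an injective hull $E_i$ of a cogenerating quotient of $\mathcal{C}(\;,C_i)$ — concretely, one can take $E_i$ to be the injective envelope of $\mathcal{C}(\;,C_i)/N_i$ ranging over all subfunctors $N_i$, but since there may be a proper class of subfunctors one instead argues as follows. For every object $C$ of $\mathcal{C}$ and every nonzero element $x\in M(C)$ of a module $M$, there is a map $\mathcal{C}(\;,C)\to M$ hitting $x$ (Yoneda), so the representables detect nonzero elements; hence it suffices to produce, for each $C_i$, an injective $E_i$ together with a monomorphism from some nonzero module mapping onto representable-generated pieces. The clean route: let $Q$ be the set of isomorphism classes of cyclic modules $\mathcal{C}(\;,C_i)/N$ (this is a set because subfunctors of $\mathcal{C}(\;,C_i)$ form a set, each being determined by the subgroups $N(C)\subseteq\mathcal{C}(C,C_i)$), choose an injective hull $E(S)$ for each $S\in Q$ using that $\mathrm{Mod}(\mathcal{C})$ has enough injectives and is well-powered, and set $W=\prod_{S\in Q}E(S)$.

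The second step is to verify $W$ is an injective cogenerator. Injectivity is immediate: a product of injectives is injective in any abelian category with products (dualize the fact that coproducts of projectives are projective, or observe $\mathrm{Hom}(-,\prod E(S))=\prod\mathrm{Hom}(-,E(S))$ is exact). For the cogenerating property, take $0\neq f:M\to N$; pick $C\in\mathcal{C}$ and $0\neq x\in\mathrm{Im}(f)(C)\subseteq N(C)$; by Yoneda the map $\mathcal{C}(\;,C)\to N$ sending $\mathrm{id}_C\mapsto x$ factors through a cyclic quotient $S=\mathcal{C}(\;,C)/N'\hookrightarrow N$ with $S\in Q$, and composing the inclusion $S\hookrightarrow N$ with the structural projection $E(S)\to$... — more precisely, $S\hookrightarrow E(S)$, and since $E(S)$ is injective the inclusion $S\hookrightarrow N$ and $S\hookrightarrow E(S)$ combine: one gets a map $N\to E(S)$ extending $S\xrightarrow{\sim}S$, wait — rather, extend $S\hookrightarrow E(S)$ along $S\hookrightarrow N$ to get $g:N\to E(S)$ with $g|_S$ the hull embedding, hence $g\circ f\neq 0$ on $x$. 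Composing with the inclusion $E(S)\hookrightarrow W$ gives a nonzero map $M\to W$ not killed appropriately; standard diagram-chasing then shows $\mathrm{Hom}(-,W)$ is faithful, i.e. $W$ cogenerates.

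The main obstacle — and the only delicate point — is the set-theoretic bookkeeping: one must be sure that the relevant injective hulls exist and are indexed by a set rather than a proper class. This rests on $\mathrm{Mod}(\mathcal{C})$ being a Grothendieck category (it is Ab5 with a generator, as recalled), which guarantees both enough injectives and that it is well-powered (every object has only a set of subobjects); once that is in hand the construction above goes through verbatim, mirroring the classical proof that a module category $\mathrm{Mod}(R)$ has $E(\bigoplus_{I}R/I)$, with $I$ ranging over left ideals, as an injective cogenerator. I would therefore open the proof by invoking that $\mathrm{Mod}(\mathcal{C})$ is Grothendieck with projective generator $\coprod_i\mathcal{C}(\;,C_i)$, then carry out the two steps above.
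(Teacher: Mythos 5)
Your proof is correct, but it takes a genuinely different route from the paper's. The paper constructs an explicit small set of injective cogenerators via the character-module duality $D(F)(C)=\mathrm{Hom}_{\mathbb{Z}}(F(C),\mathbb{Q}/\mathbb{Z})$: because $\mathbb{Q}/\mathbb{Z}$ is an injective cogenerator of $\mathbf{Ab}$, the functor $D$ is exact and faithful, sends projectives to injectives, and the unit $F\to D^{2}F$ is a monomorphism; covering $DF\in\mathrm{Mod}(\mathcal{C}^{op})$ by a coproduct of covariant representables and applying $D$ again embeds $F$ into $\prod_{i}D\mathcal{C}(C_{i},\;)$, exhibiting $\{D\mathcal{C}(C,\;)\}_{C\in\mathcal{C}}$ as a set of injectives that cogenerate. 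Your argument is instead the abstract Grothendieck-category one: form the set of cyclic quotients of the representables, take injective hulls, and take the product; this leans on $\mathrm{Mod}(\mathcal{C})$ being Ab5 with a generator, hence Grothendieck, hence well-powered with enough injectives and injective hulls — all of which the paper has indeed recorded just before this lemma, so your use of them is legitimate. What the paper's proof buys is self-containment and explicitness: it produces the injective cogenerator directly from representables without presupposing existence of injective hulls, and in fact re-derives that $\mathrm{Mod}(\mathcal{C})$ has enough injectives along the way. What yours buys is familiarity: it is the classical $\mathrm{Mod}(R)$ construction, with $E\bigl(\bigoplus_{I}R/I\bigr)$ replaced by $\prod_{S}E(S)$ over cyclic quotients of representables, transported verbatim to the functor-category setting. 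Both are sound; your set-theoretic bookkeeping (skeletal smallness gives a set of representables, well-poweredness gives a set of subfunctors of each) is exactly the right point to flag and is handled correctly.
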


\begin{proof}
Denote by $D$ the functor $D:\mathrm{Mod}(\mathcal{C})\rightarrow \mathrm{Mod%
}(\mathcal{C}^{op})$, $D(F)(C)=(F(C),\mathbb{Q/Z})$, and let $F$ be a
functor in $\mathrm{Mod}(\mathcal{C}^{op})$. Then we have an epimorphism $%
\coprod_{i}\mathcal{C}(C_{i},\;)\rightarrow DF$ and hence, a monomorphism
\begin{equation*}
F\xrightarrow{\eta}D^{2}F\xrightarrow{Df}\prod_{i\in I}D\mathcal{C}%
(C_{i},\;\;)\text{,}
\end{equation*}%
where $\eta _{C}(x)(f)=f(x)$. Moreover, $D(C_{i},\;)$ is injective. Hence $%
\{D\mathcal{C}(C,\;)\}_{C\in \mathcal{C}}$ is a small cogenerator set
consisting of injective objects in $\mathrm{Mod}(\mathcal{C})$.
\end{proof}

We need the following

\begin{definition}
Let $\mathcal{A}$ be an arbitray category and $\mathbf{X}\rightarrow
Y=\{X_{i}\xrightarrow{f_i}Y\}_{i\in I}$ a family of morphisms. A \textbf{%
fibered product} of $\mathbf{X}\rightarrow Y$ is a pair $(\{P%
\xrightarrow{p_i}X_{i}\}_{i\in I},L)$, where $L:P\rightarrow Y$ is a
morphism such that $f_{i}p_{i}=L$, and satisfies the following universal
property:

If $(\{Q\xrightarrow{q_i}X_{i}\}_{i\in I},L^{\prime })$ is a pair such that
for each $i\in I$, $f_{i}q_{i}=L^{\prime }$, then there exists a unique
morphism $\eta :Q\rightarrow P$, such that $L\eta =L^{\prime}$, and for each
$i\in I$, $p_{i}\eta =q_{i}$.
\end{definition}

Using the fact $\mathrm{\mathrm{Mod}}(\mathcal{C})$ is Ab5 and $\phi $ is
left exact, the reader can verify the following

\begin{proposition}
In $\mathrm{Mod}(\mathcal{C})$ any family of monomorphisms $\mathbf{X}%
\rightarrow Y=\{X_{i}\xrightarrow{f_i}Y\}_{i\in I}$ has a fibered product.
Moreover, $\phi $ preserves fibered products of monomorphisms.
\end{proposition}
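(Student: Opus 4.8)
The plan is to verify the two assertions — existence of fibered products of arbitrary families of monomorphisms in $\mathrm{Mod}(\mathcal{C})$, and their preservation by $\phi$ — by a direct pointwise (i.e., object-by-object in $\mathcal{C}$) construction, exploiting that $\mathrm{Mod}(\mathcal{C})$ is an $\mathrm{Ab}5$ abelian category with all limits and colimits, computed objectwise in $\mathbf{Ab}$.

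First I would construct the fibered product. Given $\mathbf{X}\rightarrow Y=\{X_{i}\xrightarrow{f_{i}}Y\}_{i\in I}$ with each $f_{i}$ a monomorphism, set $P=\bigcap_{i\in I}X_{i}$, the intersection of the subobjects $\mathrm{Im}(f_{i})\hookrightarrow Y$, which exists because $\mathrm{Mod}(\mathcal{C})$ is complete (it is an arbitrary limit, or equivalently the equalizer/pullback of the $f_{i}$); concretely $P(C)=\bigcap_{i}\mathrm{Im}(f_{i,C})\subseteq Y(C)$ for each $C\in\mathcal{C}$. Let $L:P\to Y$ be the inclusion and $p_{i}:P\to X_{i}$ the factorization of $L$ through the mono $f_{i}$ (unique since $f_{i}$ is mono), so $f_{i}p_{i}=L$. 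For the universal property, suppose $(\{Q\xrightarrow{q_{i}}X_{i}\}_{i\in I},L')$ satisfies $f_{i}q_{i}=L'$ for all $i$; then $\mathrm{Im}(L')\subseteq\mathrm{Im}(f_{i})$ for every $i$, hence $\mathrm{Im}(L')\subseteq P$, which produces the required factorization $\eta:Q\to P$ with $L\eta=L'$; and then $f_{i}p_{i}\eta=L\eta=L'=f_{i}q_{i}$ forces $p_{i}\eta=q_{i}$ since $f_{i}$ is mono, while uniqueness of $\eta$ is immediate because $L$ is mono. This is the "verification left to the reader" part and is routine.

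The substantive point is that $\phi$ preserves these fibered products, i.e. $\phi(P)\to Y'=\{\phi(X_{i})\xrightarrow{\phi(f_{i})}\phi(Y)\}_{i\in I}$ is again a fibered product in $\mathrm{Mod}(\mathcal{T})$. Since $\phi$ is left exact (established above via left exactness of each $\mathrm{Hom}_{\mathcal{C}}(T,-)$ and $\mathrm{Ab}5$-ness, so $\phi$ preserves limits), $\phi$ sends each mono $f_{i}$ to a mono $\phi(f_{i})$, and the fibered product we built is just a particular limit (the intersection of the subobjects $\mathrm{Im}(f_i)$ inside $Y$), so $\phi$ preserves it on the nose. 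The only genuine subtlety — and the step I expect to be the main obstacle — is checking that the intersection $P=\bigcap_{i}X_{i}$, which is an arbitrary (possibly infinite) limit, is still preserved by $\phi$: a priori $\phi$ preserving limits covers this, but one must be careful that $P$ really is computed as that limit and that "intersection of subobjects" in $\mathrm{Mod}(\mathcal{C})$ behaves well, which is exactly where $\mathrm{Ab}5$ enters (filtered colimits are exact, so the subobject lattice of each $Y(C)$ is well-behaved and intersections commute with the relevant constructions). Once this is pinned down, applying $\phi$ termwise and invoking the already-proven universal property in $\mathrm{Mod}(\mathcal{T})$ finishes the argument; I would phrase the whole thing so that the reader's verification reduces to these lattice-theoretic facts about subobjects together with left exactness of $\phi$.
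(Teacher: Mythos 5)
Your construction is the obvious correct one: $P(C)=\bigcap_{i}\mathrm{Im}(f_{i,C})$ is the objectwise wide pullback, each $p_{i}$ is the unique factorization through the mono $f_{i}$, and the universal property is routine exactly as you describe; the paper itself leaves this verification to the reader, so there is no alternative proof in the paper to compare against. The one point that needs correcting is your justification for the preservation claim, which misattributes the decisive fact. The intersection of a set of subobjects is a small limit, and $\phi$ preserves it simply because, as the paper records in the paragraph immediately preceding the statement, each $\mathrm{Hom}_{\mathcal{C}}(T,-)$ preserves \emph{all} small limits and limits in $\mathrm{Mod}(\mathcal{T})$ are computed objectwise --- this is strictly stronger than left exactness, which would only give finite limits. $\mathrm{Ab}5$ plays no role in this step: it asserts exactness of filtered \emph{colimits}, whereas the intersection $\bigcap_{i}X_{i}$ is a cofiltered \emph{limit}, and nothing in your verification actually uses exactness of filtered colimits or any ``lattice-theoretic'' consequence of it (the subtlety you anticipate at the end is not there). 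Replace the appeal to $\mathrm{Ab}5$ and to ``left exactness'' with the direct appeal to $\phi$ preserving small limits, and the proof is complete.
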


Freyd's special adjoint functor theorem [MS V.7 Theo. 2] justifies the
following assertion.

\begin{theorem}
\label{AD13} Let $\mathcal{C}$ be a skeletally small pre additive category.
Then the following is true:

\begin{itemize}
\item[(a)] The category $\mathrm{Mod}(\mathcal{C})$ is complete-small, has a
small cogenerator, and any set of subobjects of a functor $M$ has a fibered
product.

\item[(b)] Let $\mathcal{T}$ be a small full subcategory of $\mathrm{Mod}(%
\mathcal{C}) $. Then the functor $\phi :\mathrm{Mod}(\mathcal{C})\rightarrow
\mathrm{Mod}(\mathcal{T})$ preserves small limits and fibered products of
families of monomorphisms.

\item[(c)] The functor $\phi :\mathrm{Mod}(\mathcal{C})\rightarrow \mathrm{%
Mod}(\mathcal{T})$ has a left adjoint.
\end{itemize}
\end{theorem}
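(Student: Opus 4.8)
The plan is to deduce (a), (b), (c) of Theorem \ref{AD13} by combining the lemmas and propositions just established and then invoking Freyd's special adjoint functor theorem. First I would verify (a). Completeness of $\mathrm{Mod}(\mathcal{C})$ is already recorded above: the category is abelian with equalizers and satisfies Ab5, hence is complete by the cited reference [SM V.2]. The existence of a small cogenerator is precisely the content of the preceding Lemma, which produces the set $\{D\mathcal{C}(C,\;)\}_{C\in\mathcal{C}}$ of injective cogenerators; since $\mathcal{C}$ is skeletally small, this is a genuine \emph{set}, so $\mathrm{Mod}(\mathcal{C})$ is well-powered as well (subobjects of a functor $M$ embed into subobjects of a product of copies of the cogenerator, hence form a set). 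The statement that any set of subobjects of $M$ has a fibered product is the specialization of the Proposition on fibered products of monomorphisms to the family of inclusions $X_i\hookrightarrow M$; one checks the fibered product is again a subobject of $M$ (the structure map $L:P\to M$ is a monomorphism, being a pullback of monomorphisms).

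Next I would handle (b). That $\phi$ preserves small limits was already observed in the text: $\mathrm{Mod}(\mathcal{C})$ is complete, and $\phi(M)=\mathrm{Hom}(\;,M)_{\mathcal{T}}$ is computed objectwise by $\mathrm{Hom}_{\mathcal{C}}(T,-)$, each of which preserves limits, so $\phi$ preserves them. Preservation of fibered products of families of monomorphisms is exactly the second assertion of the Proposition quoted just before the theorem, applied verbatim; I would simply point to it. The only thing to add is the remark that these two facts together say $\phi$ carries the limits that the special adjoint functor theorem requires to be preserved into the corresponding limits in $\mathrm{Mod}(\mathcal{T})$.

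Finally, for (c) I would apply Freyd's special adjoint functor theorem in the form cited as [MS V.7 Theo. 2]: a limit-preserving functor out of a complete, well-powered category with a small cogenerating set admits a left adjoint. By part (a), $\mathrm{Mod}(\mathcal{C})$ meets all the hypotheses on the source category; by part (b), $\phi$ preserves the relevant limits. Hence $\phi$ has a left adjoint, which I would denote (say) by $\psi:\mathrm{Mod}(\mathcal{T})\to\mathrm{Mod}(\mathcal{C})$, completing the proof.

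The routine points are the objectwise computation of limits of functors and the pullback-of-monos bookkeeping, neither of which needs to be spelled out. The one place that deserves a careful sentence — and the main obstacle, such as it is — is confirming that $\mathrm{Mod}(\mathcal{C})$ is well-powered, since the special adjoint functor theorem needs it and the excerpt only states the existence of a small cogenerator explicitly; the argument is that an injective cogenerator set forces every subobject lattice to be small, because a subobject is determined by the set of morphisms from the ambient object to products of cogenerators that factor through it. Once that is noted, everything else is an assembly of results already in hand.
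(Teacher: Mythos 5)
Your proof coincides with the paper's argument: the paper simply cites Freyd's special adjoint functor theorem [SM V.7 Theo. 2] and relies on the preceding Lemma (the small injective cogenerator set $\{D\mathcal{C}(C,\;)\}$) and Proposition (fibered products of families of monomorphisms exist and are preserved by $\phi$), which is exactly the assembly you carry out. Your aside on well-poweredness is a harmless variant formulation of Mac Lane's hypothesis on pullbacks of families of monics; in the presence of a small cogenerating set the two are interchangeable, and the argument you sketch for it is the standard one.
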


\begin{remark}
Denote by $-\otimes \mathcal{T}$ the left adjoint of $\phi $, it follows
from Yoneda's Lemma, that for any pair of objects $T$ in $\mathcal{T}$ and $%
M $ in $\mathrm{Mod}(\mathcal{C})$, there are natural isomorphisms:
\begin{equation*}
\mathrm{Hom}_{\mathcal{C}}((\;\;,T)_{\mathcal{T}}\otimes \mathcal{T},M)\cong
\mathrm{Hom}_{\mathcal{T}}((\;\;,T)_{\mathcal{T}},\phi (M))\cong \mathrm{Hom}%
_{\mathcal{C}}(T,M)\text{.}
\end{equation*}%
By Yoneda's Lemma again, $(\;,T)_{\mathcal{T}}\otimes \mathcal{T}=T.$
\end{remark}

Since there are enough projective and enough injective objects in $\mathrm{%
Mod}(\mathcal{C})$, and $\mathrm{Mod}(\mathcal{T})$, we can define, for any
integer $n$, the nth right derived functors of the functors $\mathrm{Hom}_{%
\mathcal{C}}(M,\;\;)$, $\mathrm{Hom}_{\mathcal{C}}(\;\;,N)$, which will be
denoted by $\mathrm{Ext}_{\mathcal{C}}^{n}(M,\;\;)$ and $\mathrm{Ext}_{%
\mathcal{C}}^{n}(\;\;,N)$ respectively. Analogously, the nth left derived
functors of $M\otimes _{\mathcal{C}}-$ and $-\otimes N$, can be defined.
They will be denoted by $\mathrm{Tor}_{n}^{\mathcal{C}}(M,\;\;)$ and $%
\mathrm{Tor}_{n}^{\mathcal{C}}(\;\;,N),$ respectively.

In the same way, the right derived functors of the functor $\phi $ can be
defined, they will be denoted by $\mathrm{Ext}_{\mathcal{C}}^{n}(\;\;,-)_%
\mathcal{T}:\mathrm{Mod}(\mathcal{C})\rightarrow \mathrm{Mod}(\mathcal{T})$,
and they are defined as $\mathrm{Ext}_{\mathcal{C}}^{n}(\;\;,-)_\mathcal{T}%
(M)=\mathrm{Ext}_{\mathcal{C}}^{n}(\;\;,M)_\mathcal{T}$.

Of course, we can also define the left derived functors of the functor $%
-\otimes \mathcal{T}$, they will be denoted by $\mathrm{Tor}_{n}^{\mathcal{T}%
}(\;\;,\mathcal{T}):\mathrm{Mod}(\mathcal{T})\rightarrow \mathrm{Mod}(%
\mathcal{C})$.

We will see below relations among these functors.

\begin{proposition}
\label{TC} If $M$ is finitely presented, then the functors $\mathrm{Ext}_{%
\mathcal{C}}^{1}(M,\;\;)$ commute with arbitrary sums.
\end{proposition}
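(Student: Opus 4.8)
The plan is to use the finite presentation of $M$ to reduce the claim for $\mathrm{Ext}^1_{\mathcal{C}}(M,-)$ to the already-noted fact that finitely generated (in particular finitely presented) functors are compact, i.e. that $\mathrm{Hom}_{\mathcal{C}}(M,-)$ commutes with arbitrary coproducts. First I would fix a finite presentation $\mathcal{C}(\;,C_1)\xrightarrow{\ } \mathcal{C}(\;,C_0)\to M\to 0$ and let $K=\operatorname{Im}(\mathcal{C}(\;,C_1)\to\mathcal{C}(\;,C_0))$, so that we have a short exact sequence $0\to K\to \mathcal{C}(\;,C_0)\to M\to 0$ with $K$ finitely generated. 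Applying $\mathrm{Hom}_{\mathcal{C}}(-,\coprod_j N_j)$ and using that $\mathcal{C}(\;,C_0)$ is projective, the long exact sequence gives
\begin{equation*}
\mathrm{Hom}_{\mathcal{C}}\Bigl(\mathcal{C}(\;,C_0),\coprod_j N_j\Bigr)\to \mathrm{Hom}_{\mathcal{C}}\Bigl(K,\coprod_j N_j\Bigr)\to \mathrm{Ext}^1_{\mathcal{C}}\Bigl(M,\coprod_j N_j\Bigr)\to 0.
\end{equation*}

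Next I would run the same long exact sequence for each $N_j$ separately and take the coproduct over $j$; since coproducts are exact in the $\mathrm{Ab5}$ category $\mathrm{Mod}(\mathcal{C})$, this yields an exact sequence
\begin{equation*}
\coprod_j\mathrm{Hom}_{\mathcal{C}}\bigl(\mathcal{C}(\;,C_0),N_j\bigr)\to \coprod_j\mathrm{Hom}_{\mathcal{C}}\bigl(K,N_j\bigr)\to \coprod_j\mathrm{Ext}^1_{\mathcal{C}}(M,N_j)\to 0.
\end{equation*}
Then I would compare the two exact sequences via the canonical maps $\coprod_j\mathrm{Hom}_{\mathcal{C}}(-,N_j)\to\mathrm{Hom}_{\mathcal{C}}(-,\coprod_j N_j)$. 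By the compactness statement recalled in the excerpt, this canonical map is an isomorphism when evaluated on the finitely generated functors $\mathcal{C}(\;,C_0)$ (which is even representable) and $K$. The five lemma (or a direct diagram chase, using that both right-hand terms are cokernels of the same map up to these isomorphisms) then forces the induced map $\coprod_j\mathrm{Ext}^1_{\mathcal{C}}(M,N_j)\to\mathrm{Ext}^1_{\mathcal{C}}(M,\coprod_j N_j)$ to be an isomorphism.

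The one point that needs a little care — and which I expect to be the main obstacle — is justifying that the canonical map $\coprod_j\mathrm{Hom}_{\mathcal{C}}(K,N_j)\to\mathrm{Hom}_{\mathcal{C}}(K,\coprod_j N_j)$ is an isomorphism for the non-representable but finitely generated functor $K$. The excerpt states compactness for finitely generated functors, so $K$ qualifies directly; alternatively, if one prefers to use only compactness of representables, one chooses $K$ to be covered by $\mathcal{C}(\;,C_1)$ and chases the presentation $\mathcal{C}(\;,C_1)\twoheadrightarrow K$ through the left-exact functor $\mathrm{Hom}_{\mathcal{C}}(-,\coprod_j N_j)$ together with the right-exactness of $\coprod_j$, again invoking the five lemma. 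Either way the argument is formal once the two long exact sequences are in place; no hypotheses on $\mathcal{C}$ beyond $\mathrm{Ab5}$, enough projectives, and compactness of finitely generated functors are needed, all of which are available in $\mathrm{Mod}(\mathcal{C})$.
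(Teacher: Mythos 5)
Your proof is correct and is essentially the same argument the paper gives: fix a finite presentation giving a short exact sequence $0\to K\to P_0\to M\to 0$ with $P_0$ a finitely generated projective and $K$ finitely generated, apply $\mathrm{Hom}_{\mathcal{C}}(-,\coprod_j N_j)$ and $\coprod_j\mathrm{Hom}_{\mathcal{C}}(-,N_j)$, identify the two left columns via compactness of finitely generated functors, and read off the induced isomorphism on the cokernel $\mathrm{Ext}^1$. The caution you raise about $K$ is already handled by the compactness statement exactly as the paper uses it (the canonical map is surjective because generators land in a finite subcoproduct, and injective because each inclusion $N_j\hookrightarrow\coprod N_j$ is split), so no further argument is needed.
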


\begin{proof}
There is an exact sequence
\begin{equation}
0\rightarrow \mathrm{Ker}(\alpha )\rightarrow \mathcal{C}(\;\;,C)%
\xrightarrow{\alpha}M\rightarrow 0  \label{TA}
\end{equation}%
with $\mathrm{Ker}(\alpha )$ finitely generated and $C$ an object in $%
\mathcal{C}$. Let $\{N_{i}\}_{i\in I}$ be a family of objects in $\mathrm{Mod%
}(\mathcal{C})$. After applying $\mathrm{Hom}_\mathcal{C}(\;\;,\coprod_{i\in
I}N_{i})$ to (\ref{TA}), it follows the existence of a isomorphism $\eta $,
such that the following diagram commutes
\begin{equation*}
\divide\dgARROWLENGTH by2 \begin{diagram} \node{(\mathcal
C(\;\;,C),\coprod_{i\in I}N_i)}\arrow{e,t}{}\arrow{s,l}{\cong}
\node{(\mathrm{Ker}(\alpha),\coprod_{i\in J
}N_i)}\arrow{e,t}{}\arrow{s,l}{\cong}
\node{\mathrm{Ext}^1_\mathcal{C}(M,\coprod_{i\in
I}N_i)}\arrow{e,t}{}\arrow{s,l}{\eta } \node{0}\\ \node{\coprod_{i\in
I}(\mathcal C(\;\;,C),N_i)}\arrow{e,t}{} \node{\coprod_{i\in
I}(\mathrm{Ker}(\alpha),N_i)}\arrow{e,t}{} \node{\coprod_{i\in
I}\mathrm{Ext}^1_\mathcal{C}(M,N_i)}\arrow{e,t}{} \node{0} \end{diagram}
\end{equation*}
\end{proof}

\begin{corollary}
If $\mathcal{T}$ consists of finitely presented functors, then the functors $%
\phi $ and $\mathrm{Ext}_{\mathcal{C}}^{1}(\;\;,-)_\mathcal{T}$ commute with
arbitrary sums.
\end{corollary}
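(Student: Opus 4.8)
The plan is to deduce the corollary directly from Proposition~\ref{TC} together with the compatibility of $\phi$ and its right-derived functors with coproducts, exploiting the hypothesis that every object of $\mathcal{T}$ is finitely presented in $\mathrm{Mod}(\mathcal{C})$. First I would unwind the definitions: for a family $\{N_i\}_{i\in I}$ in $\mathrm{Mod}(\mathcal{C})$ and an object $T$ in $\mathcal{T}$, evaluating the canonical comparison map yields
\begin{equation*}
\Bigl(\phi\bigl(\textstyle\coprod_i N_i\bigr)\Bigr)(T)=\mathrm{Hom}_{\mathcal{C}}\Bigl(T,\textstyle\coprod_i N_i\Bigr),\qquad \Bigl(\textstyle\coprod_i\phi(N_i)\Bigr)(T)=\textstyle\coprod_i\mathrm{Hom}_{\mathcal{C}}(T,N_i),
\end{equation*}
and since $T$ is finitely presented it is in particular finitely generated, hence compact (as recalled in the subsection on functor categories), so the natural map $\coprod_i\mathrm{Hom}_{\mathcal{C}}(T,N_i)\to\mathrm{Hom}_{\mathcal{C}}(T,\coprod_i N_i)$ is an isomorphism. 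As this holds for every $T\in\mathcal{T}$ and the coproduct in $\mathrm{Mod}(\mathcal{T})$ is computed objectwise, the comparison morphism $\coprod_i\phi(N_i)\to\phi(\coprod_i N_i)$ is an isomorphism of $\mathcal{T}$-modules; that settles the claim for $\phi$ itself.

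For the derived functors, I would run the same argument one homological degree at a time, using that both sides are effaceable $\delta$-functors that agree in degree $0$. Concretely, pick for each $N_i$ an injective resolution, and note that coproducts of injectives in $\mathrm{Mod}(\mathcal{C})$ need not be injective; so instead I would compute $\mathrm{Ext}^n_{\mathcal{C}}(\;,N_i)_{\mathcal{T}}$ and its coproduct by a dimension-shift/induction. In degree $1$ the point is exactly Proposition~\ref{TC}: for any $T$ in $\mathcal{T}$ the group $\mathrm{Ext}^1_{\mathcal{C}}(T,\coprod_i N_i)$ is naturally isomorphic to $\coprod_i\mathrm{Ext}^1_{\mathcal{C}}(T,N_i)$ because $T$ is finitely presented, and again this isomorphism is natural in $T$, hence assembles into an isomorphism of functors $\coprod_i\mathrm{Ext}^1_{\mathcal{C}}(\;,N_i)_{\mathcal{T}}\cong\mathrm{Ext}^1_{\mathcal{C}}(\;,\coprod_i N_i)_{\mathcal{T}}$ in $\mathrm{Mod}(\mathcal{T})$. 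Since the statement of the corollary only asserts commutation for $\phi$ and $\mathrm{Ext}^1$, this is all that is strictly needed; if one wanted the higher $\mathrm{Ext}^n$ one would take a short exact sequence $0\to K\to \mathcal{C}(\;,C)^{(J)}\to T\to 0$ presenting $T$, observe $K$ is finitely generated, and induct using the long exact sequence and the five lemma, but I would not belabor that here.

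The one genuinely delicate point is the naturality/coherence of all these isomorphisms in the variable $T$, so that the objectwise isomorphisms on $\mathrm{Mod}(\mathcal{T})$ actually come from a morphism of functors rather than merely a family of abelian-group isomorphisms. I would address this by building the comparison map functorially from the start: the coproduct maps $N_j\hookrightarrow\coprod_i N_i$ induce $\phi(N_j)\to\phi(\coprod_i N_i)$ and hence a canonical $\coprod_i\phi(N_i)\to\phi(\coprod_i N_i)$, and likewise on $\mathrm{Ext}^1$ via functoriality of the long exact sequence in the second variable; these are morphisms in $\mathrm{Mod}(\mathcal{T})$ by construction, and the computations above show they are isomorphisms after evaluating at each $T\in\mathcal{T}$, which suffices since a morphism in a functor category is an isomorphism iff it is so objectwise. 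Thus the main obstacle is bookkeeping rather than mathematical: once compactness of finitely presented functors (already recorded in the excerpt) and Proposition~\ref{TC} are in hand, the corollary follows formally.
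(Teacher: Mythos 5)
Your proof is correct and follows the route the paper evidently intends (no proof is given for the corollary because it follows directly from Proposition~\ref{TC} together with the compactness of finitely generated functors recalled in the preliminaries): evaluate the canonical comparison morphism at each $T\in\mathcal{T}$, use that $T$ finitely presented (hence finitely generated, hence compact) gives the isomorphism for $\phi$, use Proposition~\ref{TC} for $\mathrm{Ext}^1$, and conclude objectwise. The digression on $\delta$-functors and higher $\mathrm{Ext}^n$ is harmless but unnecessary, since the statement only concerns $\phi$ and $\mathrm{Ext}^1$.
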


\section{Brenner-Butler%
\'{}%
s Theorem}

\subsection{The main theorem.}

In this section we introduce the notions of tilting categories and we show,
that with slights modifications, Bongartz's proof $[B]$ of Brenner-Butler's
theorem $[BB]$, extends to tilting categories over arbitrary skeletally
small pre additive categories. We also have the corresponding theorems on
the invariance of Grothendieck groups under tilting, and the relations
between the global dimension of a category and the global dimension of the
tilted category. Then we prove, that under mild conditions, tilting functors
restrict to the categories of finitely presented functors. For dualizing
categories we have theorems analogous to classical tilting for finite
dimensional algebras.

The first definition is a natural generalization of the classical notion of
a tilting object.

\begin{definition}
Let $\mathcal{C}$ an annuli variety, a subcategory $\mathcal{T}$ of $\mathrm{%
\mathrm{Mod}}(\mathcal{C})$ is a tilting category, if every object in $%
\mathcal{T}$ is finitely presented and the following conditions are
satisfied:

\begin{itemize}
\item[(i)] $\mathrm{pdim}\mathcal{T}\leq 1.$

\item[(ii)] We have $\mathrm{Ext}_{\mathcal{C}}^{1}(T_{i},T_{j})=0$, for
every pair of objects $T_{i},T_{j}$ in $\mathcal{T}$.

\item[(iii)] For every object $C$ in $\mathcal{C}$, the representable
functor $\mathcal{C}(\;\;,C)$ has a resolution
\begin{equation*}
0\rightarrow \mathcal{C}(\;\;,C)\rightarrow T_{1}\rightarrow
T_{2}\rightarrow 0\text{,}
\end{equation*}%
with $T_{1},T_{2}$ in $\mathcal{T}$.
\end{itemize}
\end{definition}

\begin{definition}
Given a skeletally small pre additive category $\mathcal{C}$, we introduce
the following notion of tensor product: given an abelian group $G$ and an
object $M$ in $\mathrm{Mod}(\mathcal{C})$ the tensor product $G\otimes _{%
\mathbb{Z}}M$, is the functor defined in objects as $(G\otimes _{\mathbb{Z}%
}M)(C)=G\otimes _{\mathbb{Z}}M(C)$.
\end{definition}

Given a subcategory $\mathcal{T}$ of $\mathrm{Mod}(\mathcal{C} )$, we define
for every $M$ in $\mathrm{Mod}(\mathcal{C})$ and every object $T$ of $%
\mathcal{T}$ the evaluation map $e^{(T,M)}:\mathcal{C}(T,M)\otimes _{\mathbb{%
Z}}M\rightarrow M$, $e^{(T,M)}_{C}(f\otimes m)=f_C(m)$, where, $C\in
\mathcal{C}$, $m\in M(C)$ and $f=\{f_{C}\}_{C\in\mathcal{C}}\in \mathcal{C}%
(T,M)$. Define the trace of $\mathcal{T}$ in $M$, as the image of the sum $%
\coprod e^{(T_{i},M)}:\coprod_{i\in I}\mathcal{C}(T_{i},M)\otimes _{\mathbb{Z%
}}T_{i}\rightarrow M$, where $\{T_{i}\}_{i\in I}$ is a set of
representatives of the isomorphism classes of objects in $\mathcal{T}$.

Let $T_{i}$ be an object in $\{T_{i}\}_{i\in I}$, and $X_{i}=\{f_{j}^{i}%
\}_{j\in J_{i}}$ a set of generators of the abelian group $\mathcal{C}%
(T_{i},M)$, let $\mathbb{Z}^{(X_{i})}$ be the free abelian group with basis $%
X_{i}$ and $X=\cup X_{i}$. Then the epimorphism of abelian groups
\begin{equation*}
\varphi :\mathbb{Z}^{(X_{i})}\rightarrow \mathcal{C}(T_{i},M)\rightarrow 0
\end{equation*}

\bigskip Induces an epimorphism

\begin{equation*}
\psi :\coprod_{i\in I}T_{i}^{(X_{i})}\cong \coprod_{i\in I}\mathbb{Z}%
^{(X_{i})}\otimes _{\mathbb{Z}}T_{i}\rightarrow \coprod_{i\in I}\mathcal{C}%
(T_{i},M)\otimes _{\mathbb{Z}}T_{i}\;\;\rightarrow 0
\end{equation*}

composing with the sum of the evaluation maps we obtain a map

\begin{equation*}
\coprod e^{(T_{i},M)}\psi :\coprod_{i\in I}T_{i}^{(X_{i})}\rightarrow M
\end{equation*}

\bigskip Let's denote $\coprod_{i\in I}T_{i}^{(X_{i})}$ by $T^{(X)}$, then
the map: $\varTheta_{M}=$ $\coprod e^{(T_{i},M)}\psi :$ $T^{(X)}\rightarrow
M $ has the following property:

\begin{proposition}
\label{Catilt} Let $M$ be in $\mathrm{Mod}($ $\mathcal{C)}$, and $\varTheta
_{M}:T^{(X)}\rightarrow M$ the map given above. Then for every object $%
T^{\prime }\in \mathcal{T}$ and every map $\eta :T^{\prime }\rightarrow M$,
there exists a map $\gamma :T^{\prime }\rightarrow T^{(X)}$ such that $%
\varTheta_{M}$ $\gamma =\eta $.
\end{proposition}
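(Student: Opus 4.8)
The plan is to show that the map $\varTheta_M : T^{(X)} \to M$ is, by construction, a kind of approximation of $M$ by objects of $\mathcal{T}$, and that this approximation property is exactly what is needed to lift an arbitrary $\eta : T' \to M$ through it. The key observation is that $T^{(X)} = \coprod_{i\in I} T_i^{(X_i)}$ was built so that, for each $i$, the component of $\varTheta_M$ on $T_i^{(X_i)}$ is the composite of the sum of evaluation maps $e^{(T_i,M)}$ with the epimorphism $\psi$ coming from a \emph{generating} set $X_i$ of the abelian group $\mathcal{C}(T_i,M) = \Hom_{\mathcal{C}}(T_i,M)$.

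First I would reduce to the case where $T'$ is one of the chosen representatives $T_i$. Indeed, every object $T'$ of $\mathcal{T}$ is isomorphic to some $T_i$ in the set of representatives $\{T_i\}_{i\in I}$; fixing such an isomorphism $\iota : T' \to T_i$, it suffices to lift $\eta \iota^{-1} : T_i \to M$ through $\varTheta_M$ and then precompose with $\iota$. So assume $\eta : T_i \to M$, i.e. $\eta \in \mathcal{C}(T_i,M)$. Since $X_i = \{f^i_j\}_{j\in J_i}$ generates $\mathcal{C}(T_i,M)$ as an abelian group, we may write $\eta = \sum_{j\in J_i} n_j f^i_j$ for integers $n_j$, almost all zero. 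This expresses $\eta$ as a $\mathbb{Z}$-linear combination of the generators, and under the identification $T_i^{(X_i)} \cong \mathbb{Z}^{(X_i)} \otimes_{\mathbb{Z}} T_i$ this linear combination corresponds to an explicit map out of $T_i$ into the $i$-th summand of $T^{(X)}$.

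Concretely, I would define $\gamma : T_i \to T^{(X)}$ as the composite $T_i \to \mathbb{Z}^{(X_i)}\otimes_{\mathbb{Z}} T_i \to T_i^{(X_i)} \hookrightarrow T^{(X)}$, where the first map is $t \mapsto \big(\sum_j n_j e_j\big)\otimes t$ (with $e_j$ the basis element corresponding to $f^i_j$), or equivalently the map into the coproduct whose $j$-th coordinate is multiplication by $n_j$ on $T_i$. Then one checks that $\varTheta_M \gamma = \eta$ by tracing an element $t \in T_i(C)$ through the definitions: $\gamma$ sends $t$ to $\sum_j n_j e_j \otimes t$, $\psi$ sends this into $\sum_j n_j f^i_j \otimes t \in \mathcal{C}(T_i,M)\otimes_{\mathbb{Z}} T_i$, and the evaluation map $e^{(T_i,M)}_C$ sends that to $\sum_j n_j (f^i_j)_C(t) = \big(\sum_j n_j f^i_j\big)_C(t) = \eta_C(t)$, using that evaluation is $\mathbb{Z}$-bilinear and additive in the first variable. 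This is a routine diagram chase on components once the maps are set up correctly.

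The main obstacle — really the only subtlety — is bookkeeping: keeping straight the identification $T_i^{(X_i)} \cong \mathbb{Z}^{(X_i)}\otimes_{\mathbb{Z}} T_i$, the fact that $\coprod_{i}\mathbb{Z}^{(X_i)}\otimes_{\mathbb{Z}}T_i \to \coprod_i \mathcal{C}(T_i,M)\otimes_{\mathbb{Z}}T_i$ is the coproduct of the maps $\varphi\otimes T_i$, and which summand indexed by $i$ we land in. There is no homological input needed here at all — conditions (i)--(iii) in the definition of tilting category play no role in this proposition; it is purely formal, expressing that $\varTheta_M$ is a $\Hom_{\mathcal{C}}(\mathcal{T},-)$-surjective (i.e. right $\mathcal{T}$-approximation-like) map constructed from generating sets of the relevant $\Hom$ groups. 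Once the reduction to $T' = T_i$ is made and the lift $\gamma$ is written down explicitly as above, the verification $\varTheta_M\gamma = \eta$ is immediate.
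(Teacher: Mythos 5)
Your proof is correct and, since the paper leaves this proposition without proof (it is treated as immediate from the construction), your argument supplies exactly the intended verification: reduce to a representative $T_i$, expand $\eta\in\mathcal{C}(T_i,M)$ over the generating set $X_i$, and read off $\gamma$ as the corresponding map into the $i$-th summand of $T^{(X)}$. You are also right to observe that no homological hypothesis on $\mathcal{T}$ is used here; the statement is purely formal. One small note: the paper's display of the evaluation map reads $e^{(T,M)}:\mathcal{C}(T,M)\otimes_{\mathbb{Z}}M\rightarrow M$, which is a typo for $e^{(T,M)}:\mathcal{C}(T,M)\otimes_{\mathbb{Z}}T\rightarrow M$ (as the formula $e^{(T,M)}_C(f\otimes m)=f_C(m)$ requires $m\in T(C)$); your proof uses the corrected version, which is the right one.
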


Given a skeletally small pre additive category $\mathcal{C}$ and $\mathcal{T}
$ a tilting subcategory of $\mathrm{\mathrm{Mod}}(\mathcal{C})$, we denote
by $\mathscr{T}$ the full subcategory of $\mathrm{\mathrm{Mod}}(\mathcal{C})$
whose objects are epimorphic images of objects in $\mathcal{T}$. We want to
prove that $\mathscr T$ is a torsion class of $\mathrm{\mathrm{Mod}}(%
\mathcal{C})$.

The above proposition implies the following:

\begin{proposition}
\label{TORD} Let $M$ be an object in $\mathrm{\mathrm{Mod}}(\mathcal{C})$.
Then $M$ is in $\mathscr T$, if and only if, $\coprod e^{(T_i,M)}$ is an
epimorphism.
\end{proposition}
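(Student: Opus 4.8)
The plan is to prove both implications of Proposition~\ref{TORD} directly from the definitions, using Proposition~\ref{Catilt} as the key tool for one direction.

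First, suppose $M$ is in $\mathscr{T}$, i.e. there is an epimorphism $\pi:T\rightarrow M\rightarrow 0$ with $T$ in $\mathcal{T}$. I want to show $\coprod e^{(T_i,M)}:\coprod_{i\in I}\mathcal{C}(T_i,M)\otimes_{\mathbb{Z}}T_i\rightarrow M$ is an epimorphism. The idea is that the image of this sum is precisely the trace of $\mathcal{T}$ in $M$, and the epimorphism $\pi$ (being an element of $\mathcal{C}(T,M)$ for $T$ lying in some isomorphism class, say $T\cong T_{i_0}$) already contributes its entire image $M$ to the trace. Concretely, $\pi$ corresponds to a generator-type element of $\mathcal{C}(T_{i_0},M)$, and the evaluation map $e^{(T_{i_0},M)}$ applied to elements of the form $\pi\otimes m$ recovers all of $M(C)$ for each $C$ because $\pi_C$ is surjective. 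Hence the trace equals $M$, so $\coprod e^{(T_i,M)}$ is onto.

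Conversely, suppose $\coprod e^{(T_i,M)}$ is an epimorphism. Then by the construction preceding Proposition~\ref{Catilt}, the map $\varTheta_M:T^{(X)}\rightarrow M$ is the composite of $\coprod e^{(T_i,M)}$ with the epimorphism $\psi$; since $\psi$ is an epimorphism of abelian-group-tensored functors (induced objectwise by surjections of free abelian groups, which remain surjective after $-\otimes_{\mathbb{Z}}T_i(C)$ by right-exactness of the tensor product), the composite $\varTheta_M=\coprod e^{(T_i,M)}\circ\psi$ is an epimorphism. But $T^{(X)}=\coprod_{i\in I}T_i^{(X_i)}$ is a coproduct of copies of objects in $\mathcal{T}$, hence lies in $\mathcal{T}$ provided $\mathcal{T}$ is closed under arbitrary coproducts — and here I would note that $\mathscr{T}$, the class of epimorphic images of objects of $\mathcal{T}$, certainly contains all such coproducts and their quotients, so $M$ being a quotient of $T^{(X)}$ puts $M$ in $\mathscr{T}$. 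Thus $M\in\mathscr{T}$.

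The main obstacle I anticipate is a bookkeeping subtlety rather than a deep one: making sure that ``epimorphic image of an object in $\mathcal{T}$'' is read correctly as including coproducts of objects in $\mathcal{T}$ (otherwise $T^{(X)}$ need not itself be in $\mathscr{T}$ and the argument stalls). This should be handled by the convention implicit in the paper that $\mathscr{T}$ is defined so as to be closed under coproducts, or equivalently by observing that an arbitrary coproduct of objects of $\mathcal{T}$ is again an epimorphic image of an object of $\mathcal{T}$ in the relevant sense; one may also sidestep it entirely by working with the trace: in both directions the content is simply that $\mathrm{Im}(\coprod e^{(T_i,M)})=M$ if and only if $M$ is generated by $\mathcal{T}$, which is visibly equivalent to $M$ being a quotient of a coproduct of objects of $\mathcal{T}$. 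The remaining steps — surjectivity of $\psi$ and the factorization $\varTheta_M=\coprod e^{(T_i,M)}\circ\psi$ — are routine given the explicit constructions above and the right-exactness of $-\otimes_{\mathbb{Z}}T_i$.
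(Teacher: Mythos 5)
Your proof is correct and follows essentially the same route the paper intends, combining the factorization $\varTheta_M = \coprod e^{(T_i,M)}\circ\psi$ (with $\psi$ an epimorphism by right-exactness of $\otimes_{\mathbb{Z}}$) with the observation that the image of $\coprod e^{(T_i,M)}$ is the trace of $\mathcal{T}$ in $M$. You also correctly flag and resolve the convention issue that $\mathscr{T}$ must be read as quotients of arbitrary coproducts of objects of $\mathcal{T}$, which is indeed how the paper uses it (see the proof of Proposition \ref{TORC}).
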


We can prove now:

\begin{proposition}
\label{TORC} If $M$ is in $\mathscr T$, then, for every object $T\in
\mathcal{T}$, $\mathrm{Ext}_{\mathcal{C}}^{1}(T,M)=0$.
\end{proposition}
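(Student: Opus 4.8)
The plan is to reduce the vanishing of $\mathrm{Ext}^1_{\mathcal{C}}(T,M)$ for $M\in\mathscr{T}$ to the hypothesis (ii) in the definition of a tilting category, using Proposition \ref{TORD} together with the compactness of objects of $\mathcal{T}$ (recall every object of $\mathcal{T}$ is finitely presented, hence $\mathrm{Ext}^1_{\mathcal{C}}(T,-)$ commutes with arbitrary sums by Proposition \ref{TC}). First I would use Proposition \ref{TORD}: since $M\in\mathscr{T}$, the map $\coprod e^{(T_i,M)}:\coprod_{i\in I}\mathcal{C}(T_i,M)\otimes_{\mathbb{Z}}T_i\to M$ is an epimorphism. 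Equivalently, the map $\varTheta_M:T^{(X)}\to M$ constructed before Proposition \ref{Catilt} is an epimorphism, where $T^{(X)}=\coprod_{i\in I}T_i^{(X_i)}$ is a coproduct of objects of $\mathcal{T}$. Let $K=\mathrm{Ker}\,\varTheta_M$, so that we have a short exact sequence
\begin{equation*}
0\rightarrow K\rightarrow T^{(X)}\xrightarrow{\varTheta_M} M\rightarrow 0.
\end{equation*}

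Next I would apply $\mathrm{Hom}_{\mathcal{C}}(T,-)$ for a fixed $T\in\mathcal{T}$ and look at the resulting long exact sequence; the relevant piece is
\begin{equation*}
\mathrm{Hom}_{\mathcal{C}}(T,T^{(X)})\rightarrow \mathrm{Hom}_{\mathcal{C}}(T,M)\rightarrow \mathrm{Ext}^1_{\mathcal{C}}(T,K)\rightarrow \mathrm{Ext}^1_{\mathcal{C}}(T,T^{(X)})\rightarrow \mathrm{Ext}^1_{\mathcal{C}}(T,M)\rightarrow \mathrm{Ext}^2_{\mathcal{C}}(T,K).
\end{equation*}
Since $\mathrm{pdim}\,\mathcal{T}\leq 1$, we have $\mathrm{Ext}^2_{\mathcal{C}}(T,K)=0$, so $\mathrm{Ext}^1_{\mathcal{C}}(T,M)$ is a quotient of $\mathrm{Ext}^1_{\mathcal{C}}(T,T^{(X)})$. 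Now because $T$ is finitely presented, Proposition \ref{TC} gives $\mathrm{Ext}^1_{\mathcal{C}}(T,T^{(X)})\cong\coprod_{i\in I}\mathrm{Ext}^1_{\mathcal{C}}(T,T_i^{(X_i)})$, and applying Proposition \ref{TC} once more (or just additivity of $\mathrm{Ext}^1$ in coproducts of copies of a fixed module, again via compactness of $T$) each summand is a coproduct of copies of $\mathrm{Ext}^1_{\mathcal{C}}(T,T_i)$. By condition (ii) in the definition of tilting category, $\mathrm{Ext}^1_{\mathcal{C}}(T,T_i)=0$ for all $i\in I$. Hence $\mathrm{Ext}^1_{\mathcal{C}}(T,T^{(X)})=0$, and therefore $\mathrm{Ext}^1_{\mathcal{C}}(T,M)=0$.

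The one point that needs care — and which I expect is the only real obstacle — is the interchange of $\mathrm{Ext}^1_{\mathcal{C}}(T,-)$ with the (possibly infinite) coproduct $T^{(X)}$. This is exactly what Proposition \ref{TC} provides, but one must check that its hypothesis is met: $T\in\mathcal{T}$ is finitely presented by the very definition of a tilting category, so the proposition applies. One should also confirm that $\mathrm{Hom}_{\mathcal{C}}(T,-)$ itself commutes with these coproducts (true since finitely generated functors are compact, as recalled in the first section), although for the vanishing argument only the $\mathrm{Ext}^1$ statement is strictly needed. A minor alternative, avoiding the coproduct entirely, is to note that $\mathscr{T}$ is closed under the relevant constructions and argue directly that any $M\in\mathscr{T}$ fits in $0\to K\to T'\to M\to 0$ with $T'\in\mathcal{T}$; but in general $\mathcal{T}$ need not be closed under arbitrary coproducts inside $\mathrm{Mod}(\mathcal{C})$, so the compactness route via Proposition \ref{TC} is the robust one.
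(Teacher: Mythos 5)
Your proof is correct and follows essentially the same path as the paper's: realize $M$ as a quotient of a coproduct of objects from $\mathcal{T}$, take the kernel, apply $\mathrm{Hom}_{\mathcal{C}}(T,-)$, and then use Proposition \ref{TC} (compactness of the finitely presented $T$ for $\mathrm{Ext}^1$), condition (ii) of the tilting definition, and $\mathrm{pdim}\,\mathcal{T}\leq 1$ to force the relevant $\mathrm{Ext}^1$ and $\mathrm{Ext}^2$ terms to vanish. The only cosmetic difference is that you make the epimorphism explicit via $\varTheta_M$ and Proposition \ref{TORD}, while the paper just cites the existence of an epimorphism $\coprod_j T_j\twoheadrightarrow M$ directly.
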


\begin{proof}
Let's assume $M$ is in $\mathscr T$. Then there exists a short exact
sequence
\begin{equation}
0\rightarrow \mathrm{Ker}(\alpha )\rightarrow \coprod_{j\in J}T_{j}%
\xrightarrow{\alpha}M\rightarrow 0\text{,}  \label{TORA}
\end{equation}%
where $\{T_{j}\}_{j\in J}$ is a family of objects in $\mathcal{T} $. Since $%
T $ is finitely presented, it follows $\mathrm{Ext}_{\mathcal{C}%
}^{1}(T,\;\;) $ commutes with arbitrary sums. By hypothesis, $\mathrm{Ext}_{%
\mathcal{C}}^{1}(T,T_{j})=0$ for every $j\in J$, and $T$ of projective
dimension one, implies for every object $K,$ $\mathrm{Ext}_{\mathcal{C}%
}^{2}(T,K)=0$. Applying $\mathrm{Hom}_{\mathcal{C}}(T,\;\;)$ to the above
exact sequence, it follows from the long homology sequence, that the
sequence $\mathrm{Ext}_{\mathcal{C}}^{1}(T,\coprod_{j\in J}T_{j})\rightarrow
\mathrm{Ext}_{\mathcal{C}}^{1}(T,M)\rightarrow \mathrm{Ext}_{\mathcal{C}%
}^{2}(T,K)$ is exact. Finally we have $\mathrm{Ext}_{\mathcal{C}}^{1}(T,M)=0$%
.
\end{proof}

\begin{proposition}
\label{TORG} The category $\mathscr{T}$ is closed under extensions, direct
sums and epimorphic images. This is: $\mathscr{T}$ is a torsion class of a
torsion theory of $\mathrm{\mathrm{Mod}}(\mathcal{C})$.
\end{proposition}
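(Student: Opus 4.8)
The plan is to check the three stated closure properties directly and then package $\mathscr{T}$ into a torsion theory. Throughout I would use the description of $\mathscr{T}$ supplied by Proposition \ref{TORD} together with the construction preceding Proposition \ref{Catilt}: a functor $M$ lies in $\mathscr{T}$ exactly when it is an epimorphic image of some coproduct $\coprod_{j}T_{j}$ with all $T_{j}\in\mathcal{T}$, equivalently when the trace $\mathrm{tr}_{\mathcal{T}}(M)=\mathrm{Im}\bigl(\coprod e^{(T_{i},M)}\bigr)$ equals $M$. I would also record at the outset that $\mathrm{tr}_{\mathcal{T}}(M)$ itself lies in $\mathscr{T}$ for every $M$, since by construction it is the image of a coproduct of objects of $\mathcal{T}$.

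The two easy properties come first. Closure under epimorphic images: if $\coprod_{j}T_{j}\twoheadrightarrow M$ and $M\twoheadrightarrow N$, the composite displays $N$ as an object of $\mathscr{T}$. Closure under arbitrary direct sums: given $\{M_{k}\}_{k}$ in $\mathscr{T}$, pick covers $\coprod_{j}T_{j}^{(k)}\twoheadrightarrow M_{k}$ and take the coproduct over $k$ to obtain a cover of $\coprod_{k}M_{k}$ by a coproduct of $\mathcal{T}$-objects.

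Closure under extensions is the substantive step, and the main obstacle; the care there goes into the $\mathrm{Ext}^{1}$ vanishing. Given an exact sequence $0\rightarrow M'\rightarrow M\rightarrow M''\rightarrow 0$ with $M',M''\in\mathscr{T}$, I would choose an epimorphism $\alpha\colon P\rightarrow M''$ with $P=\coprod_{j\in J}T_{j}$, $T_{j}\in\mathcal{T}$, and form the pullback $E=M\times_{M''}P$ in $\mathrm{Mod}(\mathcal{C})$. Since a pullback of an epimorphism is an epimorphism, $E\twoheadrightarrow M$, and the induced sequence $0\rightarrow M'\rightarrow E\rightarrow P\rightarrow 0$ is exact. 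The crux is that it splits: coproducts of projectives in $\mathrm{Mod}(\mathcal{C})$ are again projective and (by Ab5) coproducts are exact, so a coproduct of projective resolutions of the $T_{j}$ resolves $P$, whence $\mathrm{Ext}^{1}_{\mathcal{C}}(P,M')\cong\prod_{j}\mathrm{Ext}^{1}_{\mathcal{C}}(T_{j},M')$, and every factor vanishes by Proposition \ref{TORC} because $M'\in\mathscr{T}$. Hence $E\cong M'\oplus P\in\mathscr{T}$, and $M$ is an epimorphic image of $E$, so $M\in\mathscr{T}$. An alternative that also hands back the torsion radical: $M/\mathrm{tr}_{\mathcal{T}}(M)$ is a quotient of $M/M'\cong M''$, hence in $\mathscr{T}$, while $\mathrm{Ext}^{1}_{\mathcal{C}}(T,\mathrm{tr}_{\mathcal{T}}(M))=0$ forces $\mathrm{Hom}_{\mathcal{C}}(T,M/\mathrm{tr}_{\mathcal{T}}(M))=0$ for all $T\in\mathcal{T}$, and an object of $\mathscr{T}$ with vanishing $\mathcal{T}$-trace is $0$, so $M=\mathrm{tr}_{\mathcal{T}}(M)$.

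Finally, to present $\mathscr{T}$ as a torsion class I would set $\mathscr{F}=\{\,F\in\mathrm{Mod}(\mathcal{C}) : \mathrm{Hom}_{\mathcal{C}}(T,F)=0 \text{ for every } T\in\mathcal{T}\,\}$ and check that $\mathrm{Hom}_{\mathcal{C}}(\mathscr{T},\mathscr{F})=0$ (a morphism out of $M\in\mathscr{T}$ is zero once it vanishes on the jointly epimorphic family $\{T\rightarrow M\}$) together with the canonical sequence $0\rightarrow\mathrm{tr}_{\mathcal{T}}(M)\rightarrow M\rightarrow M/\mathrm{tr}_{\mathcal{T}}(M)\rightarrow 0$, whose left term lies in $\mathscr{T}$ and whose right term lies in $\mathscr{F}$ by the computation above. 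This realizes $(\mathscr{T},\mathscr{F})$ as a torsion theory with torsion radical $M\mapsto\mathrm{tr}_{\mathcal{T}}(M)$; alternatively one may just invoke the standard fact that in a Grothendieck category any full subcategory closed under quotients, coproducts and extensions is a torsion class.
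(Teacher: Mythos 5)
Your proof is correct, and for the substantive step — closure under extensions — it takes a genuinely different route than the paper. The paper (treating closure under coproducts and epimorphic images as immediate) applies $\mathcal{C}(T_i,-)$ to the short exact sequence, tensors the resulting exact sequences over $\mathbb{Z}$ with $T_i$, sums, and compares the evaluation maps in a three-column commutative diagram; the Snake Lemma then shows the middle evaluation map is epi, so $M_2\in\mathscr{T}$ by Proposition \ref{TORD}. You instead pull back a cover $P=\coprod_j T_j\twoheadrightarrow M''$ along $M\twoheadrightarrow M''$ to get $0\to M'\to E\to P\to 0$, note $\mathrm{Ext}^1_{\mathcal{C}}(P,M')\cong\prod_j\mathrm{Ext}^1_{\mathcal{C}}(T_j,M')=0$ by Proposition \ref{TORC}, so the sequence splits, $E\cong M'\oplus P\in\mathscr{T}$, and $M$ is an epimorphic image of $E$. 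Both arguments hinge on Proposition \ref{TORC}; yours trades the trace/evaluation-map diagram chase for a one-shot Ext-vanishing and is arguably the more transparent of the two, while the paper's stays closer to the trace machinery it sets up in Propositions \ref{Catilt}--\ref{TORD} and reuses immediately afterward in Proposition \ref{TORF}. Your final paragraph packaging $(\mathscr{T},\mathscr{F})$ as a torsion theory anticipates material the paper spreads across Proposition \ref{TORF} and the subsequent discussion, so it is not redundant to include, though strictly speaking the statement being proved only asks that $\mathscr{T}$ be a torsion class, which is the standard criterion (closed under quotients, coproducts, extensions in a Grothendieck category) you already cite.
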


\begin{proof}
We only need to see it is closed under extensions. Consider the exact
sequence%
\begin{equation*}
0\rightarrow M_{1}\rightarrow M_{2}\rightarrow M_{3}\rightarrow 0\text{,}
\end{equation*}%
with $M_{1},M_{3}$ in $\mathscr{T}$. Then $\mathrm{Ext}_{\mathcal{C}%
}^{1}(T_{i},M_{1})=\mathrm{Ext}_{\mathcal{C}}^{1}(T_{i},M_{3})=0$, by
Proposition \ref{TORC}.

For every object $T_{i}\in \{T_{i}\}_{i\in I}$ , we apply the functor $%
\mathcal{C}(T_{i},\;\;)$, to the above exact sequence to obtain, by the long
exact sequence, an exact sequence of abelian groups
\begin{equation*}
0\rightarrow \mathcal{C}(T_{i},M_{1})\rightarrow \mathcal{C}%
(T_{i},M_{2})\rightarrow \mathcal{C}(T_{i},M_{3})\rightarrow \mathrm{Ext}%
^{1}_{\mathcal{C}}(T_{i},M_{1})=0\text{.}
\end{equation*}

Applying the tensor product $\otimes _{\mathbb{Z}}T_{i}$ to the sequence,
and adding the exact sequences, we obtain the exact sequence:
\begin{equation*}
\coprod_{i\in I}\mathcal{C}(T_{i},M_{1})\otimes _{\mathbb{Z}%
}T_{i}\rightarrow \coprod_{i\in I}\mathcal{C}(T_{i},M_{2})\otimes _{\mathbb{Z%
}}T_{i}\rightarrow \coprod_{i\in I}\mathcal{C}(T_{i},M_{3})\otimes _{\mathbb{%
Z}}T_{i}\rightarrow 0\text{,}
\end{equation*}

which induces a commutative diagram:
\begin{equation*}
\dgARROWLENGTH=.3em\begin{diagram} \node{} \node{\coprod_{i\in
I}\mathcal{C}(T_i,M_1)\otimes T_i}\arrow{e,t}{}\arrow{s,l}{\coprod
e^{(T_i,M_1)}} \node{\coprod_{i\in I}\mathcal{C}(T_i,M_1)\otimes
T_i}\arrow{e,t}{}\arrow{s,l}{\coprod e^{(M_i,F_2)}} \node{\coprod_{i\in
I}\mathcal{C}(T_i,M_1)\otimes T_i\rightarrow 0}\arrow{s,l}{\coprod
e^{(T_i,M_3)}}\\ \node{0}\arrow{e} \node{M_1}\arrow{e,t}{}
\node{M_2}\arrow{e,t}{} \node{ M_3\rightarrow 0} \end{diagram}
\end{equation*}%
The morphisms $\coprod e^{(T_{i},M_{1})}$ and $\coprod e^{(T_{i},M_{3})}$
are epimorphisms, by Proposition \ref{TORD}. Then it follows, by the Snake
lemma, $\coprod e^{(T_{i},M_{2})}$ is an epimorphism. Again by Proposition %
\ref{TORD}, $M_{2}$ is in $\mathscr T$.
\end{proof}

We will prove next that the trace of $\mathcal{T}$ in $M$, denoted by $\tau_{%
\mathcal{T}}(M)$, is the idempotent radical corresponding to the torsion
theory with torsion class $\mathcal{T}$.

\begin{proposition}
\label{TORF} For any $M$ in $\mathrm{Mod}(\mathcal{C})$ and $T^{\prime }$ in
$\mathcal{T}$, $\mathrm{Hom}_{\mathcal{C}}(T^{\prime },M/\tau_{\mathcal{T}%
}M)=0$. In particular $\tau_{\mathcal{T}}(M/\tau_{\mathcal{T}}(M))=0$.
\end{proposition}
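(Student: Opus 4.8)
The plan is to show that any morphism $\eta\colon T'\to M/\tau_{\mathcal{T}}M$ with $T'\in\mathcal{T}$ is zero, by lifting it along the canonical projection $\pi\colon M\to M/\tau_{\mathcal{T}}M$ and using the defining property of the trace. First I would recall that $\tau_{\mathcal{T}}(M)$ is by construction the image of $\varTheta_M\colon T^{(X)}\to M$ (equivalently, of $\coprod e^{(T_i,M)}$), so that $\pi\circ\varTheta_M=0$. Now given $\eta\colon T'\to M/\tau_{\mathcal{T}}M$, I want to build a lift $\tilde\eta\colon T'\to M$ with $\pi\tilde\eta=\eta$. This is where the hypothesis $\mathrm{pdim}\,\mathcal{T}\le 1$ and condition (ii) of a tilting category enter, via Proposition \ref{TORC}: since $\tau_{\mathcal{T}}M$ lies in $\mathscr{T}$ (it is an epimorphic image of $T^{(X)}\in\mathcal{T}$, hence in $\mathscr{T}$ by definition, or by Proposition \ref{TORD}), we get $\mathrm{Ext}^1_{\mathcal{C}}(T',\tau_{\mathcal{T}}M)=0$. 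Applying $\mathrm{Hom}_{\mathcal{C}}(T',-)$ to the short exact sequence $0\to\tau_{\mathcal{T}}M\to M\to M/\tau_{\mathcal{T}}M\to 0$ then yields that $\mathrm{Hom}_{\mathcal{C}}(T',M)\to\mathrm{Hom}_{\mathcal{C}}(T',M/\tau_{\mathcal{T}}M)$ is surjective, so the desired lift $\tilde\eta$ exists.

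Next I would feed $\tilde\eta\colon T'\to M$ into Proposition \ref{Catilt}: that proposition provides a factorization $\tilde\eta=\varTheta_M\circ\gamma$ for some $\gamma\colon T'\to T^{(X)}$. Composing with $\pi$ gives
\begin{equation*}
\eta=\pi\tilde\eta=\pi\varTheta_M\gamma=0,
\end{equation*}
since $\pi\varTheta_M=0$ because $\mathrm{Im}\,\varTheta_M=\tau_{\mathcal{T}}M=\ker\pi$. This establishes $\mathrm{Hom}_{\mathcal{C}}(T',M/\tau_{\mathcal{T}}M)=0$ for all $T'\in\mathcal{T}$.

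For the final assertion, apply what has just been proven to the trace-defining family: $\tau_{\mathcal{T}}(M/\tau_{\mathcal{T}}M)$ is the image of $\coprod e^{(T_i,\,M/\tau_{\mathcal{T}}M)}\colon\coprod_{i}\mathcal{C}(T_i,M/\tau_{\mathcal{T}}M)\otimes_{\mathbb{Z}}T_i\to M/\tau_{\mathcal{T}}M$. Each group $\mathcal{C}(T_i,M/\tau_{\mathcal{T}}M)=\mathrm{Hom}_{\mathcal{C}}(T_i,M/\tau_{\mathcal{T}}M)$ vanishes by the first part (taking $T'=T_i$), so the source of this map is $0$ and hence $\tau_{\mathcal{T}}(M/\tau_{\mathcal{T}}M)=0$, as claimed.

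The main obstacle is the lifting step: one must be sure that $\tau_{\mathcal{T}}M$ genuinely belongs to $\mathscr{T}$ so that Proposition \ref{TORC} applies and gives the $\mathrm{Ext}^1$ vanishing needed to lift $\eta$ to $M$. This is immediate once one notes that $\tau_{\mathcal{T}}M$ is, by its very definition, the image of an epimorphism from a coproduct $T^{(X)}$ of objects of $\mathcal{T}$, and that $\mathscr{T}$ is closed under epimorphic images and direct sums (Proposition \ref{TORG}); so $\tau_{\mathcal{T}}M\in\mathscr{T}$. Everything else is a routine diagram chase using the two factorization/lifting properties already in hand.
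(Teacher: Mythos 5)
Your proof is correct and follows essentially the same line as the paper's: both establish $\tau_{\mathcal{T}}M\in\mathscr{T}$, use $\mathrm{Ext}^1_{\mathcal{C}}(T',\tau_{\mathcal{T}}M)=0$ (Proposition \ref{TORC}) to lift $\eta$ along $\pi$ (the paper phrases this as a pullback square whose top row splits; you phrase it as surjectivity of $\mathrm{Hom}_{\mathcal{C}}(T',M)\to\mathrm{Hom}_{\mathcal{C}}(T',M/\tau_{\mathcal{T}}M)$ in the long exact sequence, which is the same thing), and then conclude via the defining factorization property of $\varTheta_M$ that the lift lands in $\tau_{\mathcal{T}}M=\ker\pi$. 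Your write-up is, if anything, a bit more explicit than the paper's at the last step, where the paper simply appeals to "properties of the trace" while you invoke Proposition \ref{Catilt} directly.
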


\begin{proof}
Consider the exact sequence
\begin{equation}
0\rightarrow \tau _{\mathcal{T}}(M)\xrightarrow{j}M\xrightarrow{p}M/\tau _{%
\mathcal{T}}(M)\rightarrow 0  \label{trazaart1}
\end{equation}%
and a natural transformation $\eta :T^{\prime }\rightarrow M/\tau _{\mathcal{%
T}}(M)$. Taking the pull back of the maps $\eta $ and $p$ we get the
following commutative exact diagram:

\begin{equation*}
\begin{diagram}\dgARROWLENGTH=1.5em \node{0}\arrow{e,t}{}
\node{\tau_{\mathcal{T}}(M)}\arrow{e,t}{}\arrow{s,=,-}{}
\node{W}\arrow{e,t}{}\arrow{s,t}{} \node{T'}\arrow{e,t}{}\arrow{s,l}{\eta}
\node{0}\\ \node{0}\arrow{e,t}{} \node{\tau_{\mathcal{T}}(M)}\arrow{e,t}{j}
\node{M}\arrow{e,t}{p} \node{M/\tau_{\mathcal{T}}(M)}\arrow{e,t}{} \node{0}
\end{diagram}
\end{equation*}%
By Proposition \ref{TORC}, the top exact sequence splits, hence there exists
a map $f:$ $T^{\prime }\rightarrow M$, such that $pf$=$\eta $. By the
properties of the trace, $f(T^{\prime })\subset \tau _{\mathcal{T}}M$ and $%
\eta =0$.
\end{proof}

We have the following characterization of the torsion class:

\begin{proposition}
\label{TORH} Let $\mathcal{T}$ be a tilting category in $\mathrm{\mathrm{Mod}%
}(\mathcal{C})$. Then:
\begin{equation*}
{\mathscr T}=\{M\in \mathrm{\mathrm{Mod}}(C)|\mathrm{Ext}_{\mathcal{C}%
}^{1}(T,M)=0,\;T\in \mathcal{T}\}.
\end{equation*}
\end{proposition}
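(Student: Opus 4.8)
The plan is to prove the two inclusions separately. One direction is already done: Proposition \ref{TORC} states precisely that $M\in\mathscr T$ implies $\mathrm{Ext}^1_{\mathcal C}(T,M)=0$ for all $T\in\mathcal T$, so $\mathscr T\subseteq\{M\mid \mathrm{Ext}^1_{\mathcal C}(T,M)=0,\ T\in\mathcal T\}$. The real content is the reverse inclusion, and this is where condition (iii) of the definition of a tilting category must be used: it is the only hypothesis not yet exploited in this stretch of the paper.

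So suppose $M$ satisfies $\mathrm{Ext}^1_{\mathcal C}(T,M)=0$ for every $T\in\mathcal T$; I want to show $M\in\mathscr T$, i.e.\ (by Proposition \ref{TORD}) that the canonical map $\coprod e^{(T_i,M)}\colon\coprod_{i\in I}\mathcal C(T_i,M)\otimes_{\mathbb Z}T_i\to M$ is an epimorphism, equivalently that $\tau_{\mathcal T}(M)=M$. Consider the torsion sequence $0\to\tau_{\mathcal T}(M)\xrightarrow{j}M\xrightarrow{p}M/\tau_{\mathcal T}(M)\to0$ and write $\overline M=M/\tau_{\mathcal T}(M)$; by Proposition \ref{TORF} we have $\mathrm{Hom}_{\mathcal C}(T,\overline M)=0$ for all $T\in\mathcal T$. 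I claim $\mathrm{Ext}^1_{\mathcal C}(T,\overline M)=0$ for all $T\in\mathcal T$ as well: apply $\mathrm{Hom}_{\mathcal C}(T,-)$ to the torsion sequence; since $\tau_{\mathcal T}(M)\in\mathscr T$ (it is an epimorphic image of a coproduct of objects of $\mathcal T$, hence lies in $\mathscr T$ by Proposition \ref{TORG}), Proposition \ref{TORC} gives $\mathrm{Ext}^1_{\mathcal C}(T,\tau_{\mathcal T}(M))=0$, and $\mathrm{pdim}\,\mathcal T\le1$ forces $\mathrm{Ext}^2_{\mathcal C}(T,\tau_{\mathcal T}(M))=0$, so the long exact sequence yields $\mathrm{Ext}^1_{\mathcal C}(T,M)\twoheadrightarrow\mathrm{Ext}^1_{\mathcal C}(T,\overline M)$, and the left side vanishes by hypothesis.

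Now I use condition (iii): for every object $C\in\mathcal C$ there is an exact sequence $0\to\mathcal C(\;,C)\to T_1\to T_2\to0$ with $T_1,T_2\in\mathcal T$. Applying $\mathrm{Hom}_{\mathcal C}(-,\overline M)$ gives an exact sequence
\begin{equation*}
\mathrm{Hom}_{\mathcal C}(T_2,\overline M)\to\mathrm{Hom}_{\mathcal C}(T_1,\overline M)\to\mathrm{Hom}_{\mathcal C}(\mathcal C(\;,C),\overline M)\to\mathrm{Ext}^1_{\mathcal C}(T_2,\overline M).
\end{equation*}
The two outer terms involving $T_1,T_2$ vanish (the $\mathrm{Hom}$'s by Proposition \ref{TORF}, the $\mathrm{Ext}^1$ by the claim just established), while by Yoneda's lemma $\mathrm{Hom}_{\mathcal C}(\mathcal C(\;,C),\overline M)\cong\overline M(C)$. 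Hence $\overline M(C)=0$ for every $C\in\mathcal C$, so $\overline M=0$, i.e.\ $M=\tau_{\mathcal T}(M)\in\mathscr T$. This completes the reverse inclusion and hence the proof.

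The main obstacle is organizing the homological bookkeeping in the right order: one must first know that $\tau_{\mathcal T}(M)$ is itself a torsion object (so that Proposition \ref{TORC} applies to it), then transfer the $\mathrm{Ext}^1$-vanishing from $M$ to $M/\tau_{\mathcal T}(M)$, and only then feed the projective resolution from condition (iii) into $\mathrm{Hom}(-,M/\tau_{\mathcal T}(M))$ together with the $\mathrm{Hom}$-vanishing of Proposition \ref{TORF}. Each step is short, but using (iii) prematurely — before knowing $\mathrm{Ext}^1(T,M/\tau_{\mathcal T}(M))=0$ — would not close the argument. No new difficulty beyond correctly chaining the long exact sequences and invoking Yoneda to identify $\mathrm{Hom}_{\mathcal C}(\mathcal C(\;,C),-)$ with evaluation at $C$.
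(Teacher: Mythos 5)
Your proof is correct and follows essentially the same route as the paper: start from the torsion sequence $0\to\tau_{\mathcal T}(M)\to M\to M/\tau_{\mathcal T}(M)\to0$, transfer the $\mathrm{Ext}^1$-vanishing to $M/\tau_{\mathcal T}(M)$ using $\mathrm{pdim}\,\mathcal T\leq 1$, then apply $\mathrm{Hom}_{\mathcal C}(-,M/\tau_{\mathcal T}(M))$ to the coresolution of $\mathcal C(\;,C)$ from condition (iii) and invoke Yoneda. The only inessential difference is that you also record $\mathrm{Ext}^1_{\mathcal C}(T,\tau_{\mathcal T}(M))=0$ via Proposition \ref{TORC}, which is not needed for the step it accompanies (surjectivity of $\mathrm{Ext}^1(T,M)\to\mathrm{Ext}^1(T,M/\tau_{\mathcal T}(M))$ already follows from $\mathrm{Ext}^2(T,\tau_{\mathcal T}(M))=0$ alone).
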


\begin{proof}
Let's assume $M\in \mathrm{\mathrm{Mod}}(\mathcal{C})$ and for each $T$ in $%
\mathcal{T}$, $\mathrm{Ext}_{\mathcal{C}}^{1}(T,M)=0$. Then applying the
functor $\mathrm{Hom}_{\mathcal{C}}(T,-)$ to the sequence (\ref{trazaart1}),
it follows from the long homology sequence and the fact $\mathrm{pdim}T=1$
that
\begin{equation*}
0=\mathrm{Ext}_{\mathcal{C}}^{1}(T,M)\rightarrow \mathrm{Ext}_{\mathcal{C}%
}^{1}(T,M/\tau _{\mathcal{T}}(M))\rightarrow \mathrm{Ext}_{\mathcal{C}%
}^{2}(T,\tau _{\mathcal{T}}(M))=0
\end{equation*}%
is exact, and in consequence, $\mathrm{Ext}_{\mathcal{C}}^{1}(T,M/\tau _{%
\mathcal{T}}(M))=0$.

Let $C\in \mathcal{C}$. Then we have an exact sequence $0\rightarrow
\mathcal{C}(\;\;,C)\rightarrow T_{0}\rightarrow T_{1}\rightarrow 0$, with $%
T_{i}\in T$, $i=0,1$. After applying $\mathrm{Hom}_{\mathcal{C}}(\;\;,M/\tau
_{\mathcal{T}}(M))$ to the sequence, we get, by the long homology sequence,
the exact sequence
\begin{equation*}
0=\mathrm{Hom}(T_{0},M/\tau _{\mathcal{T}}(M))\rightarrow \mathrm{Hom}%
((\;\;,C),M/\tau _{\mathcal{T}}(M))\rightarrow \mathrm{Ext}^{1}(T_{1},M/\tau
_{\mathcal{T}}(M))=0\text{.}
\end{equation*}%
By Yoneda's Lemma, $0=\mathrm{Hom}((\;\;,C),M/\tau _{\mathcal{T}%
}(M))=M(C)/\tau _{\mathcal{T}}M(C)$, this is: $M=\tau _{\mathcal{T}}(M)$ is
in $\mathscr{T}$. The other inclusion is already proved in Proposition \ref%
{TORC}.
\end{proof}

The torsion class $\mathscr T$ induces a torsion pair $(\mathscr T,\mathscr %
F)$, where
\begin{equation*}
\mathscr F=\{N\in \mathrm{\mathrm{Mod}}(\mathcal{C})|\mathrm{Hom}%
(M,N)=0,\;M\in \mathscr T\}\text{,}
\end{equation*}%
[see S].

\begin{proposition}
$\mathscr{F}=\{N\in\mathrm{\mathrm{Mod}}(\mathcal{C})\}|\mathrm{Hom}%
(T,N)=0,\; T\in\mathcal{T}\}$.
\end{proposition}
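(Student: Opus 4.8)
The plan is to prove the two inclusions $\mathscr{F}\subseteq\{N\mid \mathrm{Hom}(T,N)=0,\ T\in\mathcal{T}\}$ and its reverse separately, using the description of $\mathscr{T}$ as epimorphic images of objects of $\mathcal{T}$ together with Proposition~\ref{TORH}.

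For the inclusion ``$\subseteq$'', suppose $N\in\mathscr{F}$, that is, $\mathrm{Hom}(M,N)=0$ for every $M\in\mathscr{T}$. Since every object $T\in\mathcal{T}$ is in particular an epimorphic image of itself, $\mathcal{T}\subseteq\mathscr{T}$, so in particular $\mathrm{Hom}(T,N)=0$ for all $T\in\mathcal{T}$. This direction is immediate and requires no work beyond observing $\mathcal{T}\subseteq\mathscr{T}$.

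For the reverse inclusion, suppose $N$ satisfies $\mathrm{Hom}_{\mathcal{C}}(T,N)=0$ for every $T\in\mathcal{T}$; I must show $\mathrm{Hom}_{\mathcal{C}}(M,N)=0$ for every $M\in\mathscr{T}$. Fix $M\in\mathscr{T}$. By definition of $\mathscr{T}$ (and Proposition~\ref{TORD}), there is a short exact sequence $0\to K\to\coprod_{j\in J}T_{j}\xrightarrow{\alpha}M\to 0$ with each $T_{j}\in\mathcal{T}$. Apply $\mathrm{Hom}_{\mathcal{C}}(-,N)$ to get the exact sequence $0\to\mathrm{Hom}_{\mathcal{C}}(M,N)\to\mathrm{Hom}_{\mathcal{C}}(\coprod_{j}T_{j},N)$. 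Now $\mathrm{Hom}_{\mathcal{C}}(\coprod_{j}T_{j},N)\cong\prod_{j}\mathrm{Hom}_{\mathcal{C}}(T_{j},N)=0$ since each factor vanishes by hypothesis. Hence $\mathrm{Hom}_{\mathcal{C}}(M,N)=0$, as required.

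The only point deserving care is the passage $\mathrm{Hom}_{\mathcal{C}}(\coprod_{j}T_{j},N)\cong\prod_{j}\mathrm{Hom}_{\mathcal{C}}(T_{j},N)$, which is just the universal property of the coproduct in the abelian category $\mathrm{Mod}(\mathcal{C})$ and holds for arbitrary (not necessarily finite) index sets $J$; no finiteness or compactness of the $T_{j}$ is needed here since the coproduct sits in the \emph{source}. Thus there is no real obstacle: the statement follows formally from the description of $\mathscr{T}$ as the class of epimorphic images of sums of objects of $\mathcal{T}$, exactly as in the classical module-theoretic setting, and one could alternatively phrase it by noting that a torsion-free class is determined by $\mathrm{Hom}$-vanishing against any generating set of the torsion class.
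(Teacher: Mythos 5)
Your proof is correct and takes essentially the same route as the paper: the paper likewise presents $M\in\mathscr{T}$ as an epimorphic image of a coproduct $\coprod_{j}T_{j}$ of objects of $\mathcal{T}$, applies $\mathrm{Hom}_{\mathcal{C}}(-,N)$ to get a monomorphism $\mathrm{Hom}_{\mathcal{C}}(M,N)\hookrightarrow\prod_{j}\mathrm{Hom}_{\mathcal{C}}(T_{j},N)=0$, and dismisses the reverse inclusion as clear. Your added remark that the $\mathrm{Hom}$-coproduct identity needs no compactness because the coproduct sits in the contravariant slot is a small but welcome clarification.
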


\begin{proof}
Let $M$ be an object in $\mathscr T$. Then there is an epimorphism $%
\coprod_{j\in J}T_{j}\rightarrow M\rightarrow 0$, with $\{T_{j}\}_{i\in J}$
a family of objects in $\mathcal{T}$, and let $N\in \{N\in \mathrm{\mathrm{%
Mod}}(\mathcal{C})|\mathrm{Hom}(T,N)=0,\;T\in \mathcal{T}\}$. Then there is
a monomorphism
\begin{equation*}
0\rightarrow \mathrm{Hom}_{\mathcal{C}}(M,N)\rightarrow\mathrm{Hom}_\mathcal{%
C} (\coprod_{j\in J}T_{j},N)\cong \prod_{j\in J}\mathrm{Hom}_{\mathcal{C}%
}(T_{j},N)=0\text{,}
\end{equation*}%
and $M$ is in $\mathscr F$. The other inclusion is clear.
\end{proof}

Let $\mathcal{T}$ be a tilting subcategory of $\mathrm{\mathrm{Mod}}(%
\mathcal{C})$, we define $\mathrm{Add}\mathcal{T}$ as the full subcategory
of $\mathrm{\mathrm{Mod}}(\mathcal{C})$ whose objects are direct summands of
arbitrary sums of objects in $\mathcal{T}$. Let $\phi :\mathrm{\mathrm{Mod}}(%
\mathcal{C})\rightarrow \mathrm{\mathrm{Mod}}(\mathcal{T})$ be the functor
defined as $\phi (M)=\mathrm{Hom}(\;\;,M)_{\mathcal{T}}$. We adapt Bongartz
's proof of Brenner-Butler's theorem to our situation. We start with a
version of [B Prop. 1.4]:

\begin{proposition}
\label{TORZ}

\begin{itemize}
\item[(i)] If $M_{3}\xrightarrow{h}M_{2}\xrightarrow{g}M_{1}\xrightarrow{f}%
M_{0}$ is an exact sequence in $\mathrm{\mathrm{Mod}}(\mathcal{C})$, such
that $M_{i}$ belongs to $\mathscr T$, then the sequence%
\begin{equation*}
\phi (M_{2})\rightarrow \phi (M_{1})\rightarrow \phi (M_{0})
\end{equation*}%
is exact.

\item[(ii)] For each $M\in \mathscr T$ there is an exact sequence
\begin{equation*}
\cdots \xrightarrow{t_{n+1}}T^{n}\rightarrow \cdots \rightarrow T^{1}%
\xrightarrow{t_1}T^{0}\xrightarrow{t_0}M\rightarrow 0
\end{equation*}%
with $T^{i}$ in $\mathrm{Add}\mathcal{T}$, such that the maps $T^{n}\overset{%
\delta_{n}}{\rightarrow }\mathrm{Im}t_{n}\rightarrow 0$, have the following
property: given $T$ in $\mathcal{T}$ and a map $f:T\rightarrow \mathrm{Im}t
_{n}$, there exist a map $h:T\rightarrow T^{n}$, such that $\delta_{n}h=f.$

\item[(iii)] For each $M$ in $\mathscr{T}$ there exists an isomorphism
\begin{equation*}
\phi (M)\otimes \mathcal{T}\rightarrow M\text{.}
\end{equation*}

\item[(iv)] $\mathrm{Tor}_{1}^{\mathcal{T}}(\phi (M),\mathcal{T})=0$.

\item[(v)] For any pair of objects $M,N$ in $\mathscr{T}$ we have an
isomorphism $\mathrm{Ext}_{\mathcal{C}}^{i}(M,N)=\mathrm{Ext}_{\mathcal{T}%
}^{i}(\phi (M),\phi (N))$.
\end{itemize}
\end{proposition}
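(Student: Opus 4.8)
The plan is to transcribe Bongartz's argument [B], the only genuinely new ingredient being that the objects of $\mathcal{T}$ are compact and that the universal map $\varTheta_{(-)}$ of Proposition \ref{Catilt} takes over the role of the classical trace map. I will use repeatedly that, by Propositions \ref{TORC}, \ref{TORG} and \ref{TORH}, one has $\mathscr{T}=\{X\in\mathrm{Mod}(\mathcal{C}):\mathrm{Ext}^{1}_{\mathcal{C}}(T,X)=0\ \text{for all}\ T\in\mathcal{T}\}$, that $\mathscr{T}$ is closed under epimorphic images, and that exactness in $\mathrm{Mod}(\mathcal{T})$ may be tested after evaluation at the objects of $\mathcal{T}$. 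For (i), I would fix $T\in\mathcal{T}$ and factor $g$ as $M_{2}\twoheadrightarrow K\hookrightarrow M_{1}$ with $K=\mathrm{Im}\,g=\mathrm{Ker}\,f$, and set $K'=\mathrm{Ker}\,g=\mathrm{Im}\,h$. Since $K$ and $K'$ are epimorphic images of $M_{2}$ and $M_{3}$, both lie in $\mathscr{T}$, so $\mathrm{Ext}^{1}_{\mathcal{C}}(T,K)=\mathrm{Ext}^{1}_{\mathcal{C}}(T,K')=0$. Applying $\mathrm{Hom}_{\mathcal{C}}(T,-)$ to $0\to K'\to M_{2}\to K\to0$ shows $\mathrm{Hom}_{\mathcal{C}}(T,M_{2})\to\mathrm{Hom}_{\mathcal{C}}(T,K)$ is onto, and applying it to $0\to K\to M_{1}\to M_{0}$ identifies $\mathrm{Hom}_{\mathcal{C}}(T,K)$ with the kernel of $\mathrm{Hom}_{\mathcal{C}}(T,M_{1})\to\mathrm{Hom}_{\mathcal{C}}(T,M_{0})$; composing gives exactness of $\phi(M_{2})\to\phi(M_{1})\to\phi(M_{0})$.

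For (ii) I would construct the resolution recursively. Set $M_{0}=M$; given $M_{n}\in\mathscr{T}$, let $T^{n}=T^{(X)}$ and $\delta_{n}=\varTheta_{M_{n}}:T^{n}\to M_{n}$ be the object and map built before Proposition \ref{Catilt}. Then $T^{n}\in\mathrm{Add}\mathcal{T}$, $\delta_{n}$ is an epimorphism by Proposition \ref{TORD} (since $M_{n}\in\mathscr{T}$), and $\delta_{n}$ has the stated lifting property by Proposition \ref{Catilt}. Put $M_{n+1}=\mathrm{Ker}\,\delta_{n}$; splicing $T^{n+1}\xrightarrow{\delta_{n+1}}M_{n+1}\hookrightarrow T^{n}$ gives an exact sequence with $\mathrm{Im}\,t_{n+1}=M_{n+1}=\mathrm{Ker}\,t_{n}$ and with the maps $\delta_{n}$ as required. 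The one point that has to be checked — and I expect this to be the crux of the whole proposition — is that $M_{n+1}\in\mathscr{T}$, so the recursion can continue: applying $\mathrm{Hom}_{\mathcal{C}}(T,-)$ to $0\to M_{n+1}\to T^{n}\xrightarrow{\delta_{n}}M_{n}\to0$, the map $\mathrm{Hom}_{\mathcal{C}}(T,T^{n})\to\mathrm{Hom}_{\mathcal{C}}(T,M_{n})$ is surjective by Proposition \ref{Catilt}, while $\mathrm{Ext}^{1}_{\mathcal{C}}(T,T^{n})=\coprod\mathrm{Ext}^{1}_{\mathcal{C}}(T,T_{i})=0$ because $T$ is finitely presented (Proposition \ref{TC}) and by the vanishing $\mathrm{Ext}^{1}_{\mathcal{C}}(T_{i},T_{j})=0$ in the definition of a tilting category; hence $\mathrm{Ext}^{1}_{\mathcal{C}}(T,M_{n+1})=0$ for every $T\in\mathcal{T}$, i.e. $M_{n+1}\in\mathscr{T}$.

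For (iii) and (iv) I would apply $\phi$ to the resolution $\cdots\to T^{1}\to T^{0}\to M\to0$ of (ii). Its terms lie in $\mathscr{T}$, so by (i) the complex $\cdots\to\phi(T^{1})\to\phi(T^{0})\to\phi(M)\to0$ is exact, surjectivity of $\phi(T^{0})\to\phi(M)$ being once more Proposition \ref{Catilt}. For $T\in\mathcal{T}$ we have $\phi(T)=\mathcal{T}(\;,T)$, which is projective; since $\phi$ commutes with coproducts and direct summands (Corollary to Proposition \ref{TC}), each $\phi(T^{i})$ is projective in $\mathrm{Mod}(\mathcal{T})$, so $\cdots\to\phi(T^{1})\to\phi(T^{0})\to\phi(M)\to0$ is a projective resolution. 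Next, the counit $\phi(T^{i})\otimes\mathcal{T}\to T^{i}$ is an isomorphism: this holds for $T\in\mathcal{T}$ by the Remark following Theorem \ref{AD13}, hence for arbitrary coproducts of objects of $\mathcal{T}$ (both sides preserving coproducts), hence for their direct summands, in particular for $T^{i}$. Since $-\otimes\mathcal{T}$ is right exact (being a left adjoint), applying it to the projective resolution and using the naturality of the counit identifies the resulting complex with the deleted resolution $\cdots\to T^{1}\to T^{0}\to0$; hence $\mathrm{Tor}^{\mathcal{T}}_{n}(\phi(M),\mathcal{T})$ is the $n$-th homology of $\cdots\to T^{1}\to T^{0}\to0$, equal to $M$ for $n=0$ and to $0$ for $n\ge1$ since $\cdots\to T^{1}\to T^{0}\to M\to0$ is exact. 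This yields $\phi(M)\otimes\mathcal{T}\xrightarrow{\sim}M$ of (iii) and $\mathrm{Tor}^{\mathcal{T}}_{1}(\phi(M),\mathcal{T})=0$ of (iv).

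For (v), $N\in\mathscr{T}$ gives $\mathrm{Ext}^{1}_{\mathcal{C}}(T,N)=0$ for $T\in\mathcal{T}$, and $\mathrm{pdim}\mathcal{T}\le1$ then forces $\mathrm{Ext}^{k}_{\mathcal{C}}(T,N)=0$ for all $k\ge1$; passing to coproducts and summands, $\mathrm{Ext}^{k}_{\mathcal{C}}(T^{i},N)=0$ for all $i$ and $k\ge1$. Thus the $\mathrm{Add}\mathcal{T}$-resolution $T^{\bullet}\to M$ of (ii) is $\mathrm{Hom}_{\mathcal{C}}(-,N)$-acyclic, so by the standard fact that acyclic resolutions compute derived functors, $\mathrm{Ext}^{i}_{\mathcal{C}}(M,N)\cong H^{i}(\mathrm{Hom}_{\mathcal{C}}(T^{\bullet},N))$; likewise $\mathrm{Ext}^{i}_{\mathcal{T}}(\phi(M),\phi(N))\cong H^{i}(\mathrm{Hom}_{\mathcal{T}}(\phi(T^{\bullet}),\phi(N)))$ since $\phi(T^{\bullet})\to\phi(M)$ is a projective resolution by (iii)/(iv). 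Finally the adjunction $(-\otimes\mathcal{T})\dashv\phi$ together with the isomorphisms $\phi(T^{i})\otimes\mathcal{T}\cong T^{i}$ give natural isomorphisms $\mathrm{Hom}_{\mathcal{C}}(T^{i},N)\cong\mathrm{Hom}_{\mathcal{C}}(\phi(T^{i})\otimes\mathcal{T},N)\cong\mathrm{Hom}_{\mathcal{T}}(\phi(T^{i}),\phi(N))$ that commute with the differentials, i.e. an isomorphism of cochain complexes, and passing to cohomology gives $\mathrm{Ext}^{i}_{\mathcal{C}}(M,N)\cong\mathrm{Ext}^{i}_{\mathcal{T}}(\phi(M),\phi(N))$. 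As indicated, the hard part is the step in (ii) ensuring the syzygies $\mathrm{Ker}\,\varTheta_{M_{n}}$ remain in $\mathscr{T}$; everything after that is a formal matter of the adjunction, the behaviour of $\phi$ and $-\otimes\mathcal{T}$ on $\mathrm{Add}\mathcal{T}$, and standard homological algebra.
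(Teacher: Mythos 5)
Your proof is correct and follows essentially the same route as the paper's: (i) by splicing the three short exact sequences obtained from the images (which lie in $\mathscr T$), (ii) by iterating the $\varTheta$-construction of Proposition \ref{Catilt} and checking the syzygy stays in $\mathscr T$ via the vanishing of $\mathrm{Ext}^1_{\mathcal{C}}(T,T^{(X)})$, (iii)--(iv) by applying $\phi$ and $-\otimes\mathcal{T}$ to the resulting $\mathrm{Add}\mathcal{T}$-resolution and using the counit isomorphism on $\mathrm{Add}\mathcal{T}$, and (v) by observing that the resolution is $\mathrm{Hom}_{\mathcal{C}}(-,N)$-acyclic and transferring through the adjunction, which is the paper's dimension-shift plus Yoneda argument phrased more abstractly.
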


\begin{proof}
(i) Since $\mathscr {T}$ is closed under epimorphic images, then $\mathrm{Im}%
(g)$, $\mathrm{Im}(f)$ and $\mathrm{Coker}(f)$ are in $\mathscr T$. From the
exact sequences: $0\rightarrow \mathrm{Im}h\rightarrow M_{2}\rightarrow
\mathrm{Im}g\rightarrow 0$, $0\rightarrow \mathrm{Im}g\rightarrow
M_{1}\rightarrow \mathrm{Im}f\rightarrow 0$, $0\rightarrow \mathrm{Im}%
f\rightarrow M_{0}\rightarrow \mathrm{Coker}f\rightarrow 0$, we obtain exact
sequences in $\mathrm{Mod}($ $\mathcal{T}):$
\begin{eqnarray*}
0 &\rightarrow &(\;\;,\mathrm{Im}h)_{\mathcal{T}}\rightarrow (\;\;,M_{2})_{%
\mathcal{T}}\rightarrow (\;\;,\mathrm{Im}g)_{\mathcal{T}}\rightarrow \mathrm{%
Ext}^{1}(\;\;,\mathrm{Im}h)_{\mathcal{T}}=0\text{,} \\
0 &\rightarrow &(\;\;,\mathrm{Im}g)_{\mathcal{T}}\rightarrow (\;\;,M_{1})_{%
\mathcal{T}}\rightarrow (\;\;,\mathrm{Im}f)_{\mathcal{T}}\rightarrow \mathrm{%
Ext}^{1}(\;\;,\mathrm{Im}g)_{\mathcal{T}}=0\text{,} \\
0 &\rightarrow &(\;\;,\mathrm{Im}f)_{\mathcal{T}}\rightarrow (\;\;,M_{0})_{%
\mathcal{T}}\rightarrow (\;\;,\mathrm{Coker}f)_{\mathcal{T}}\rightarrow
\mathrm{Ext}^{1}(\;\;,\mathrm{Im}f)_{\mathcal{T}}=0\text{.}
\end{eqnarray*}%
Gluing the sequences, the result follows.

(ii) By the Propositions \ref{Catilt} and \ref{TORD}, for each $M\in
\mathscr
T$ there is an exact sequence
\begin{equation}
0\rightarrow K_{0}\rightarrow T^{(X_{0})}\xrightarrow{\eta_0}M\rightarrow 0
\label{Catilt2}
\end{equation}%
such that; every map $\eta :T^{\prime }\rightarrow M$ factors through $%
\eta_0 $.

It follows that $\mathrm{Hom}_{\mathcal{C}}(T^{\prime },T^{(X_{0})})%
\xrightarrow{(T',\eta_0)}\mathrm{Hom}_{\mathcal{C}}(T^{\prime },M)$ is an
epimorphism. Since $\mathrm{Ext}^{1}_{\mathcal{C}}(T^{\prime
},T^{(X_{0})})=0 $, after applying $\mathrm{Hom}_{\mathcal{C}}(T^{\prime
},\;)$ to (\ref{Catilt2}), it follows by the long homology sequence, that $%
\mathrm{Ext}^{1}_{\mathcal{C}}(T^{\prime },K_{0})=0$, and $K_{0}$ is in $%
\mathscr{T}$. The claim follows by induction.

(iii) By part (ii) there exists an exact sequence $\cdots \rightarrow
T^{1}\rightarrow T^{0}\rightarrow M\rightarrow 0$. After applying $\phi $ we
obtain an exact sequence $\cdots \rightarrow \phi (T^{1})\rightarrow \phi
(T^{0})\rightarrow \phi (M)\rightarrow 0$. Since $\phi $ and $-\otimes
\mathcal{T}$ preserve arbitrary sums, for $i\geq 0$, we have, $\phi
(T^{i})\otimes \mathcal{T}=(\;\;,T^{i})_{\mathcal{T}}\otimes \mathcal{T}%
\cong T_{i}$. It follows, the existence of an isomorphism $\eta $, such that
the following diagram commutes:
\begin{equation*}
\begin{diagram}\dgARROWLENGTH=1.1em
\node{\phi(T^1)\otimes\mathcal{T}}\arrow{e,t}{}\arrow{s,l}{\cong}
\node{\phi(T^0)\otimes\mathcal{T}}\arrow{e,t}{}\arrow{s,l}{\cong}
\node{\phi(M)\otimes\mathcal{T}}\arrow{e,t}{}\arrow{s,l}{\eta} \node{0}\\
\node{T^1}\arrow{e,t}{} \node{T^0}\arrow{e,t}{} \node{M}\arrow{e,t}{}
\node{0} \end{diagram}
\end{equation*}

(iv) It is also clear from the above diagram $\mathrm{Tor}_{1}^{\mathcal{T}%
}(\phi (M),\mathcal{T})=0$.

(v) Using the sequence constructed in (ii) and the relation $\mathrm{Ext}_{%
\mathcal{C}}^{j}(T^{n},N)=0$ for all $j\geq 0$ and $n\geq 0$, we get by
dimension-shift $\mathrm{Ext}_{\mathcal{C}}^{j}(M,N)\cong \mathrm{Ext}^{1}_%
\mathcal{C}(\mathrm{Im}(t_{j-1}),N)$, for $j\geq 1$. The exact sequences
\begin{eqnarray*}
&T^{j+1}\rightarrow T^{j}\rightarrow \mathrm{Im}(t_{j})\rightarrow 0\text{,}&
\\
&0\rightarrow \mathrm{Im}(t_{j})\rightarrow T^{j-1}\rightarrow \mathrm{Im}%
(t_{j-1})\rightarrow 0\text{,}&
\end{eqnarray*}%
induce the following commutative diagram:

\xymatrix{
             &
             &0\ar[d]
             &
             &
             &\\
             &\mathrm{Hom}_{\mathcal{C}}(T^{j-1},N)\ar[rd]^{\mathrm{Hom}_{\mathcal{C}}(t_j,N)}\ar[r]^{\mathrm{Hom}_{\mathcal{C}}(q,N)}
             &\mathrm{Hom}_{\mathcal{C}}(\mathrm{Im}(t_j),N)\ar[r]\ar[d]^{\mathrm{Hom}_{\mathcal{C}}(p,N)}
             &\mathrm{Ext}^1_{\mathcal{C}}(\mathrm{Im}(t_{j-1}),N)\ar[r]^{}
             &0\\
             &
             &\mathrm{Hom}_{\mathcal{C}}(T^j,N)\ar[d]^{\mathrm{Hom}_{\mathcal{C}}(t_{j+1},N)}
             &
             &
             &\\
             &
             &\mathrm{Hom}_{\mathcal{C}}(T^{j+1},N)
             &
              &\\
        }

Hence, there are isomorphisms:
\begin{equation}  \label{BrennerB}
\mathrm{Ext}_{\mathcal{C}}^{j}(M,N)\cong \mathrm{Ext}^{1}_{\mathcal{C}}(%
\mathrm{Im}(t_{j-1}),N)\cong H^{j}(\mathrm{Hom}_{\mathcal{C}}(T.,N))\text{.}
\end{equation}

Applying $\phi $ to the exact sequence in (ii), we obtain the following
projective resolution of $\phi (M)$
\begin{equation*}
(\;\;,T.)\rightarrow (\;\;,M)_{\mathcal{T}}:\cdots \rightarrow
(\;\;,T^{1})\rightarrow (\;\;,T^{0})\rightarrow (\;\;,M)_{\mathcal{T}%
}\rightarrow 0\text{.}
\end{equation*}

By Yoneda's Lemma, $\mathrm{Hom}_{\mathcal{T}}((\;\;,T_{.}),(\;\;,N)_%
\mathcal{T})\cong \mathrm{Hom}_{\mathcal{C}}(T_{.},N)$, therefore:
\begin{equation}
\mathrm{Ext}_{\mathcal{T}}^{j}(\phi (M),\phi (N))=H^{j}((\;\;,T.),\phi
(N))=H^{j}(\mathrm{Hom}_\mathcal{C}(T.,N))\text{.}  \label{TORO}
\end{equation}%
The claim follows from (\ref{TORO}) and (\ref{BrennerB}).
\end{proof}

We will see next how to compute $-\otimes \mathcal{T}$, and its left derived
functors.

\begin{proposition}
\label{BB1} For each $C$ in $\mathcal{C}$, and $M\in \mathrm{\mathrm{Mod}}(%
\mathcal{T})$ there is a natural isomorphism in $\mathrm{Mod}($ $\mathcal{C)}
$,
\begin{equation*}
(M\otimes \mathcal{T})(C)\cong (\mathcal{C}(\;\;,C),\;\;)_{\mathcal{T}%
}\otimes _{\mathcal{T}}M\text{.}
\end{equation*}
\end{proposition}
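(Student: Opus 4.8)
The plan is to regard both sides as functors of the variable $M\in\mathrm{Mod}(\mathcal{T})$, to show each is right exact and commutes with arbitrary coproducts, to produce a natural transformation between them, and to check it is an isomorphism on the representable functors $\mathcal{T}(\;\;,T)$; since every object of $\mathrm{Mod}(\mathcal{T})$ is the cokernel of a map between coproducts of such representables, this will suffice. That $M\mapsto(M\otimes\mathcal{T})(C)$ is right exact and preserves coproducts is immediate: $-\otimes\mathcal{T}$, being a left adjoint of $\phi$ by Theorem \ref{AD13}, is right exact and preserves coproducts, while evaluation at a fixed $C$ is exact and commutes with coproducts because (co)limits in $\mathrm{Mod}(\mathcal{C})$ are computed objectwise. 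On the other hand, the tensor product $-\otimes_{\mathcal{C}}-$ recalled above (following [A]) applies verbatim to the skeletally small preadditive category $\mathcal{T}$, and its properties (a)(ii) and (b) give that $(\mathcal{C}(\;\;,C),\;\;)_{\mathcal{T}}\otimes_{\mathcal{T}}-$ is right exact and preserves coproducts; here $(\mathcal{C}(\;\;,C),\;\;)_{\mathcal{T}}$ is the $\mathcal{T}^{op}$-module $T\mapsto\mathrm{Hom}_{\mathcal{C}}(\mathcal{C}(\;\;,C),T)$, which by Yoneda's Lemma is isomorphic, naturally in $T$ and in $C$, to the evaluation functor $T\mapsto T(C)$.

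Next I would construct the comparison morphism from the unit $\eta\colon\mathrm{id}\Rightarrow\phi(-\otimes\mathcal{T})$ of the adjunction: for $T\in\mathcal{T}$, $m\in M(T)$ and $x\in T(C)$, the morphism $(\eta_{M})_{T}(m)\colon T\to M\otimes\mathcal{T}$ in $\mathrm{Mod}(\mathcal{C})$ has a component at $C$, and $(m,x)\mapsto(\eta_{M})_{T}(m)_{C}(x)$ is $\mathbb{Z}$-bilinear and dinatural in $T$ by naturality of $\eta$, hence factors through $-\otimes_{\mathcal{T}}-$ and yields a map $\Psi^{C}_{M}\colon(\mathcal{C}(\;\;,C),\;\;)_{\mathcal{T}}\otimes_{\mathcal{T}}M\to(M\otimes\mathcal{T})(C)$ natural in $M$ and in $C$. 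On a representable $M=\mathcal{T}(\;\;,T_{0})$ the source is $(\mathcal{C}(\;\;,C),\;\;)_{\mathcal{T}}(T_{0})=\mathrm{Hom}_{\mathcal{C}}(\mathcal{C}(\;\;,C),T_{0})\cong T_{0}(C)$ by property (c) of the tensor product over $\mathcal{T}$ together with Yoneda, and the target is $(\mathcal{T}(\;\;,T_{0})\otimes\mathcal{T})(C)=T_{0}(C)$ by the Remark after Theorem \ref{AD13} that $(\;\;,T_{0})_{\mathcal{T}}\otimes\mathcal{T}=T_{0}$; tracing $\Psi^{C}_{\mathcal{T}(\;\;,T_{0})}$ through these identifications (the identity of $T_{0}$ maps under $\eta$ to the identity of $T_{0}$) shows it is the identity of $T_{0}(C)$. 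Choosing a presentation $\coprod_{i}\mathcal{T}(\;\;,A_{i})\to\coprod_{j}\mathcal{T}(\;\;,B_{j})\to M\to 0$ and applying to it the two functors together with $\Psi^{C}$, one obtains a commutative diagram with exact rows whose two left vertical maps are isomorphisms (using that both functors and $\Psi^{C}$ commute with coproducts and the representable case just settled); passing to cokernels shows $\Psi^{C}_{M}$ is an isomorphism, and naturality in $C$ is inherited from that of $\Psi$.

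The step I expect to be the main obstacle is constructing the comparison map cleanly and verifying that on representables it reduces to the identity, i.e.\ correctly matching the unit of the $(-\otimes\mathcal{T},\phi)$ adjunction with the several Yoneda isomorphisms and with property (c) of the tensor product over $\mathcal{T}$. Conceptually the proposition only asserts that evaluation at $C$ commutes with the coend (left Kan extension) computing $M\otimes\mathcal{T}$, so once the tensor product over $\mathcal{T}$ is in place the rest is formal.
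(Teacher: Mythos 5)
Your proof is correct and follows essentially the same strategy as the paper: reduce to a free presentation of $M$, use right exactness and preservation of coproducts of both $(-\otimes\mathcal{T})(C)$ and $(\mathcal{C}(\;\;,C),\;\;)_{\mathcal{T}}\otimes_{\mathcal{T}}-$, and compare on representables via Yoneda. The paper passes over the construction of the comparison morphism with a one-line remark that the resulting isomorphism ``is easily verified to be natural,'' whereas you explicitly build $\Psi^{C}_{M}$ from the unit of the $(-\otimes\mathcal{T},\phi)$ adjunction and check it on representables, which is a more careful rendering of the same argument rather than a genuinely different one.
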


\begin{proof}
Let's consider the following presentation of $M$
\begin{equation}
\coprod_{i\in I}(\;\;,T_{i})\overset{((\;,f_{ij}))}{\rightarrow }%
\coprod_{j\in J}(\;\;,T_{j})\rightarrow M\rightarrow 0.  \label{BB2}
\end{equation}

Applying $\otimes \mathcal{T}$ and evaluating in $C$, we obtain the
following short exact sequence,
\begin{equation}
\coprod_{i\in I}T_{i}(C)\xrightarrow {(f_{ij})_C}\coprod_{j\in
J}T_{j}(C)\rightarrow M\otimes \mathcal{T}\;(C)\rightarrow 0\text{.}
\label{BrennerB7}
\end{equation}

By the properties of the tensor product, and Yoneda's Lemma, we have
isomorphisms:
\begin{equation*}
(\mathcal{C}(\;\;,C),\;\;)\otimes _{\mathcal{T}}\coprod_{i\in
I}(\;\;,T_{i})\cong \coprod_{i\in I}(\mathcal{C}(\;\;,C),\;\;)\otimes _{%
\mathcal{T}}(\;\;,T_{i})\cong \coprod_{i\in I}(\mathcal{C}%
(\;\;,C),T_{i})\cong \coprod_{i\in I}T_{i}(C)
\end{equation*}

Since $(\mathcal{C}(\;\;,C),\;\;)\otimes _{\mathcal{T}}$, is right exact,
when we apply it to the presentation (\ref{BB2}), and compare the exact
sequence we get, with the exact sequence (\ref{BrennerB7}), we obtain the
desired isomorphism. It is easy to verify that this isomorphism is natural.
\end{proof}

\begin{proposition}
\label{BB4} For each $C$ in $\mathcal{C}$, and $M\in \mathrm{\mathrm{Mod}}(%
\mathcal{T})$, and any non negative integer $n$, there exists an isomorphism
\begin{equation}
\mathrm{Tor}_{n}^{\mathcal{T}}((\mathcal{C}(\;\;,C),\;\;)_{\mathcal{T}%
},M)\cong \mathrm{Tor}_{n}^{\mathcal{T}}(M,\mathcal{T})(C)\text{,}
\end{equation}%
which is natural in $C$ and $M.$
\end{proposition}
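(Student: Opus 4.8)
The plan is to reduce everything to the degree-zero case, which is exactly Proposition~\ref{BB1}, and then run the standard dimension-shifting argument using a projective resolution of $M$ in $\mathrm{Mod}(\mathcal{T})$. First I would fix $C$ in $\mathcal{C}$ and set $P = (\mathcal{C}(\;\;,C),\;\;)_{\mathcal{T}}$, which is a projective object of $\mathrm{Mod}(\mathcal{T})$ (it is representable, being $(\;\;,(\mathcal{C}(\;\;,C)))$ under the identification coming from $\mathcal{C}(\;\;,C)\in\mathrm{Add}\,\mathcal{T}$, though the only fact I really need is that $\mathrm{Tor}^{\mathcal{T}}_n(P,-)=0$ for $n>0$; in fact $P$ need not be projective, so I should instead argue via a projective resolution of $M$). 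Concretely, choose a projective resolution
\begin{equation*}
\cdots \rightarrow Q_{1}\rightarrow Q_{0}\rightarrow M\rightarrow 0
\end{equation*}
of $M$ in $\mathrm{Mod}(\mathcal{T})$, with each $Q_{i}$ a sum of representable functors $(\;\;,T)$, $T\in\mathcal{T}$.

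Next I would apply the two functors in question to this resolution. On one side, $\mathrm{Tor}^{\mathcal{T}}_{n}(P,M)$ is by definition the $n$-th homology of $P\otimes_{\mathcal{T}} Q_{\bullet}$. On the other side, $\mathrm{Tor}^{\mathcal{T}}_{n}(-,\mathcal{T})$ is the $n$-th left derived functor of $-\otimes\mathcal{T}$, so $\mathrm{Tor}^{\mathcal{T}}_{n}(M,\mathcal{T})$ is the $n$-th homology of the complex $Q_{\bullet}\otimes\mathcal{T}$; evaluating at $C$ (which is exact, since evaluation at an object is exact in functor categories) gives that $\mathrm{Tor}^{\mathcal{T}}_{n}(M,\mathcal{T})(C)$ is the $n$-th homology of $\bigl(Q_{\bullet}\otimes\mathcal{T}\bigr)(C)$. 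Here I use that each $Q_{i}\otimes\mathcal{T}$ is $(\;\;,T_i^{(\text{index})})\otimes\mathcal{T}$-type, hence equals the corresponding sum of $T_i$'s by the Remark after Theorem~\ref{AD13}, so $Q_{\bullet}\otimes\mathcal{T}$ is an $\mathrm{Add}\,\mathcal{T}$-resolution and computes $\mathrm{Tor}$ correctly.

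The key step is then to identify these two chain complexes termwise and compatibly with differentials. Proposition~\ref{BB1} gives, for each fixed $i$, a natural isomorphism $(Q_{i}\otimes\mathcal{T})(C)\cong (\mathcal{C}(\;\;,C),\;\;)_{\mathcal{T}}\otimes_{\mathcal{T}} Q_{i} = P\otimes_{\mathcal{T}} Q_{i}$. Because that isomorphism is natural in the $\mathrm{Mod}(\mathcal{T})$-variable, it commutes with the maps $Q_{i+1}\rightarrow Q_{i}$, so we get an isomorphism of complexes $\bigl(Q_{\bullet}\otimes\mathcal{T}\bigr)(C)\cong P\otimes_{\mathcal{T}}Q_{\bullet}$. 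Taking homology in degree $n$ yields
\begin{equation*}
\mathrm{Tor}^{\mathcal{T}}_{n}(M,\mathcal{T})(C)\cong \mathrm{Tor}^{\mathcal{T}}_{n}\bigl((\mathcal{C}(\;\;,C),\;\;)_{\mathcal{T}},M\bigr),
\end{equation*}
as desired. Naturality in $M$ is inherited from the functoriality of projective resolutions and of Proposition~\ref{BB1}; naturality in $C$ follows since the isomorphism of Proposition~\ref{BB1} is natural in $C$ and evaluation is functorial in $C$.

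The main obstacle I anticipate is the bookkeeping around which resolutions compute which $\mathrm{Tor}$: one must be sure that $Q_{\bullet}\otimes\mathcal{T}$ is an acyclic-enough (i.e.\ $\mathrm{Add}\,\mathcal{T}$-, equivalently $(-\otimes\mathcal{T})$-acyclic) resolution to compute $\mathrm{Tor}^{\mathcal{T}}_{n}(M,\mathcal{T})$, and that evaluation at $C$ being exact lets this homology be taken before or after evaluation. Both points are routine in functor categories — evaluation at an object is exact, and representables are sent to objects of $\mathrm{Add}\,\mathcal{T}$ which are $(-\otimes\mathcal{T})$-acyclic — but they are the places where the argument could go wrong if stated carelessly.
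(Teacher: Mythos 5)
Your proof is correct and is essentially the same as the paper's: both rest on Proposition~\ref{BB1} (the $n=0$ case) together with dimension shifting, exploiting that $(\;\;,T)\otimes\mathcal{T}\cong T$ for sums of representables and that evaluation at $C$ is exact. The only difference is presentational: the paper carries out a single syzygy step $0\rightarrow \Omega M\rightarrow (\;\;,T)\rightarrow M\rightarrow 0$ and compares the resulting long exact sequences (giving $n=1$ explicitly, higher $n$ by implicit induction), whereas you run the whole projective resolution $Q_\bullet\rightarrow M$ at once and compare complexes termwise before taking homology. One small slip worth flagging: your initial parenthetical that $P=(\mathcal{C}(\;\;,C),\;\;)_{\mathcal{T}}$ is representable because $\mathcal{C}(\;\;,C)\in\mathrm{Add}\,\mathcal{T}$ is false in general ($\mathcal{C}(\;\;,C)$ has a $\mathcal{T}$-coresolution but need not lie in $\mathrm{Add}\,\mathcal{T}$, so $P$ typically has projective dimension one, not zero); you correctly note this does not matter since you resolve $M$, but the claim itself should be struck. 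Likewise the worry about $Q_\bullet\otimes\mathcal{T}$ needing to be ``acyclic enough'' is a red herring: $H_n(Q_\bullet\otimes\mathcal{T})$ is the definition of $\mathrm{Tor}_n^{\mathcal{T}}(M,\mathcal{T})$ once $Q_\bullet$ is a projective resolution of $M$, no further acyclicity is needed.
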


\begin{proof}
Let's consider an exact sequence
\begin{equation}
0\rightarrow \Omega M\rightarrow (\;\;,T)\rightarrow M\rightarrow 0\text{,}
\label{BB5}
\end{equation}%
where $T$ is a sum of objects in $\mathcal{T}$. Applying $\otimes \mathcal{T}
$ to (\ref{BB5}) we get the exact sequence
\begin{equation*}
0\rightarrow \mathrm{Tor}_{1}^{\mathcal{T}}(M,\mathcal{T})\rightarrow \Omega
M\otimes \mathcal{T}\rightarrow (\;\;,T)\otimes \mathcal{T}{,}
\end{equation*}%
and applying $(\mathcal{C}(\;\;,C),\;\;)\otimes _{\mathcal{T}}-$ to (\ref%
{BB5}), we get the exact sequence:
\begin{equation*}
0\rightarrow \mathrm{Tor}_{1}^{\mathcal{T}}(((\;\;,C),\;\;),M)\rightarrow
((\;\;,C),\;\;)\otimes \Omega M\rightarrow ((\;\;,C),\;\;)\otimes (\;\;,T)%
\text{.}
\end{equation*}

By Proposition \ref{BB1}, there exists an isomorphism $\gamma _{C}$, such
that the following diagram commutes

\xymatrix{
 0\ar[r]&\mathrm{Tor}_1^{\mathcal{T}}(((\;\;,C),\;\;),M)\ar[d]^{\gamma_C} \ar[r]
             &((\;\;,C),\;\;)\otimes\Omega M \ar[r]\ar[d]^{\cong}
             &((\;\;,C),\;\;)\otimes(\;\;,T)\ar[d]^{\cong}\\
 0\ar[r]&\mathrm{Tor}_1^{\mathcal{T}}(M,\mathcal{T})(C)\ar[r]
             &\Omega M\otimes \mathcal{T} (C) \ar[r]
             &T(C)
        }

We can easily verify that $\gamma =\{\gamma _{C}\}_{C\in \mathcal{C}}$ is a
natural transformation.

We leave to the reader the proof of the naturality in $M$.
\end{proof}

\begin{remark}
Let $C$ an $C^{\prime }$ be two objects in $\mathcal{C}$, and $\mathcal{T}$
a tilting subcategory of $\mathrm{\mathrm{Mod}}(\mathcal{C})$. Then we have
two exact sequences: $0\rightarrow \mathcal{C}(\;\;,C)\xrightarrow{g}%
T_{0}\rightarrow T_{1}\rightarrow 0$ and $0\rightarrow \mathcal{C}%
(\;\;,C^{\prime })\xrightarrow{g'}T_{0}^{\prime }\rightarrow T_{1}^{\prime
}\rightarrow 0$, with $T_{i}$ and $T_{i}^{\prime }$ in $\mathcal{T}$, for $%
i=0,1$. Let $f:C\rightarrow C^{\prime }$ be a morphism. Then, taking the
push out, we have the following commutative exact diagram:
\begin{equation*}
\begin{diagram}\dgARROWLENGTH=.7em \node{0}\arrow{e,}{}
\node{\mathcal{C}(\;\;,C)}\arrow{e,t}{g}\arrow{s,l}{g'(\mathcal{C}(\;\;,f))}
\node{T_0}\arrow{e,}{}\arrow{s,}{} \node{T_1}\arrow{e,}{}\arrow{s,l,=}{}
\node{0}\\ \node{0}\arrow{e,}{} \node{T'_0}\arrow{e,t}{}
\node{W}\arrow{e,}{} \node{T_1}\arrow{e,}{} \node{0} \end{diagram}
\end{equation*}%
Since $\mathrm{Ext}_{\mathcal{C}}^{1}(T_{1},T_{0}^{\prime })=0$, the lower
exact sequence in the diagram splits. Hence, there exist morphisms $u,v$,
such that the following exact diagram commutes:
\begin{equation}
\begin{diagram}\dgARROWLENGTH=.5em \node{0}\arrow{e,}{}
\node{\mathcal{C}(\;\;,C)}\arrow{e,t}{g}\arrow{s,l}{\mathcal{C}(\;\;,f)}
\node{T_0}\arrow{e,}{}\arrow{s,l}{u} \node{T_1}\arrow{e,}{}\arrow{s,l}{v}
\node{0}\\ \node{0}\arrow{e,}{} \node{\mathcal{C}(\;\;,C')}\arrow{e,t}{g'}
\node{T'_0}\arrow{e,}{} \node{T'_1}\arrow{e,}{} \node{0} \end{diagram}
\label{remark1}
\end{equation}
\end{remark}

\begin{proposition}
\label{BB9} Let $\mathcal{T}$ be a tilting subcategory of $\mathrm{\mathrm{%
Mod}}(\mathcal{C})$. Then for each $M$ in $\mathrm{\mathrm{Mod}}(\mathcal{C}%
) $, there is an exact sequence
\begin{equation*}
0\rightarrow \phi (M)\otimes \mathcal{T}\rightarrow M\rightarrow \mathrm{Tor}%
_{1}^{\mathcal{T}}(\mathrm{Ext}_{\mathcal{C}}^{1}(\;\;,M)_{\mathcal{T}},%
\mathcal{T})\rightarrow 0\text{.}
\end{equation*}%
Moreover, $\mathrm{Ext}_{\mathcal{C}}^{1}(\;\;,M)_{\mathcal{T}}\otimes
\mathcal{T}=0$.
\end{proposition}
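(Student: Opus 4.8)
The plan is to build the fundamental exact sequence by first resolving $M$ by representable functors and then comparing with the tilting resolutions of the representables.

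First I would take a projective presentation $\coprod_{i\in I}\mathcal{C}(\;\;,C_i)\xrightarrow{d_1}\coprod_{j\in J}\mathcal{C}(\;\;,C_j)\xrightarrow{d_0}M\to 0$ in $\mathrm{Mod}(\mathcal{C})$. Using condition (iii) of the definition of tilting category, each representable $\mathcal{C}(\;\;,C)$ sits in an exact sequence $0\to\mathcal{C}(\;\;,C)\to T_0\to T_1\to 0$ with $T_0,T_1\in\mathcal{T}$; taking sums this gives $0\to P_\bullet\to Q^0_\bullet\to Q^1_\bullet\to 0$ where $P_\bullet$ is the projective presentation of $M$ above and $Q^0_\bullet$, $Q^1_\bullet$ are complexes (in degrees $1,0$) of objects in $\mathrm{Add}\mathcal{T}$. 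By the functoriality discussed in the Remark preceding the statement (diagram (\ref{remark1})), the maps $d_1,d_0$ lift to maps of these $\mathcal{T}$-complexes, so I obtain a short exact sequence of two-term complexes over $\mathrm{Add}\mathcal{T}$ whose total complex computes $M$. Taking the long exact sequence in homology of the total complex, and using that $\phi$ is exact on such sequences together with Proposition~\ref{TORZ}(i),(iii), identifies $H_0$ with $\phi(M)\otimes\mathcal{T}$ and $H_1$ with the kernel term.

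Next I would identify the kernel term. Applying $\phi=\mathrm{Hom}(\;\;,-)_{\mathcal{T}}$ to the projective presentation and using Proposition~\ref{TORZ} (the $\mathcal{T}$-projectivity of objects in $\mathrm{Add}\mathcal{T}$, via Yoneda $\mathrm{Hom}_{\mathcal{T}}((\;\;,T),\phi(N))\cong\mathrm{Hom}_{\mathcal{C}}(T,N)$) gives a presentation of $\mathrm{Ext}^1_{\mathcal{C}}(\;\;,M)_{\mathcal{T}}$ as the cokernel of $\phi$ applied to the lifted differential. Then applying $-\otimes\mathcal{T}$ and invoking Proposition~\ref{BB1} and Proposition~\ref{BB4} (which compute $-\otimes\mathcal{T}$ and its first derived functor on representable functors in $\mathrm{Mod}(\mathcal{T})$ via evaluation) lets me recognize $H_1$ of the total complex as precisely $\mathrm{Tor}_1^{\mathcal{T}}(\mathrm{Ext}^1_{\mathcal{C}}(\;\;,M)_{\mathcal{T}},\mathcal{T})$. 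Finally, the statement $\mathrm{Ext}^1_{\mathcal{C}}(\;\;,M)_{\mathcal{T}}\otimes\mathcal{T}=0$ should follow by reading off the $\mathrm{Tor}_0$ term of the same spectral/filtration argument: since $M$ itself is recovered as $H_0$ with no extra contribution, the degree-$0$ tensor of the $\mathrm{Ext}^1$-functor must vanish; alternatively, applying $\phi$ to the sequence just constructed and using that $\phi(\mathrm{Tor}_1^{\mathcal{T}}(\text{something},\mathcal{T}))$ consists of functors killed appropriately, forces $\mathrm{Ext}^1_{\mathcal{C}}(\;\;,M)_{\mathcal{T}}\otimes\mathcal{T}=0$ directly from the right-exactness of $-\otimes\mathcal{T}$ and the presentation.

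The main obstacle I anticipate is the bookkeeping in the total-complex / double-complex argument: making sure the lifting of $d_0,d_1$ through the tilting resolutions of the representables is genuinely functorial enough that the two-term-complex maps assemble into an honest chain map, and that the associated long exact homology sequence cleanly separates into exactly the two terms $\phi(M)\otimes\mathcal{T}$ and $\mathrm{Tor}_1^{\mathcal{T}}(\mathrm{Ext}^1_{\mathcal{C}}(\;\;,M)_{\mathcal{T}},\mathcal{T})$ with nothing in higher degrees — this uses $\mathrm{pdim}\,\mathcal{T}\le 1$ in an essential way to cut the resolution off. Once the vanishing of the relevant higher $\mathrm{Tor}$ and $\mathrm{Ext}$ terms is pinned down (all of which is available from $\mathrm{pdim}\,\mathcal{T}\le 1$, condition (ii), and Proposition~\ref{TC}), the rest is a diagram chase.
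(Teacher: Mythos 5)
Your approach differs in spirit from the paper's, and it contains a genuine gap.

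The paper does not resolve $M$ at all. It fixes an object $C$, takes the defining tilting resolution $0\to\mathcal{C}(\;,C)\to T_0\to T_1\to 0$, and exploits two things simultaneously: (1) applying $\mathrm{Hom}_\mathcal{T}(\;,-)$ to it gives the two-term \emph{projective} resolution $0\to(T_1,\;)_\mathcal{T}\to(T_0,\;)_\mathcal{T}\to(\mathcal{C}(\;,C),\;)_\mathcal{T}\to 0$ in $\mathrm{Mod}(\mathcal{T}^{op})$; and (2) applying $\mathrm{Hom}_\mathcal{C}(\;,M)$ to it produces the six-term sequence $0\to(T_1,M)\to(T_0,M)\to M(C)\to\mathrm{Ext}^1(T_1,M)\to\mathrm{Ext}^1(T_0,M)\to 0$. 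Tensoring the projective resolution in (1) over $\mathcal{T}$ against $\phi(M)$ and against $\mathrm{Ext}^1_\mathcal{C}(\;,M)_\mathcal{T}$ — together with Propositions~\ref{BB1} and \ref{BB4}, which turn $(\mathcal{C}(\;,C),\;)_\mathcal{T}\otimes_\mathcal{T}-$ and its derived functor into evaluation of $-\otimes\mathcal{T}$ and $\mathrm{Tor}_1^\mathcal{T}(-,\mathcal{T})$ at $C$ — produces complexes whose terms are exactly $(T_i,M)$ and $\mathrm{Ext}^1(T_i,M)$. Two applications of the Snake Lemma against the sequence in (2) then give both the vanishing $\mathrm{Ext}^1(\;,M)_\mathcal{T}\otimes\mathcal{T}=0$ and the monomorphism and epimorphism that assemble into the required exact sequence, with naturality checked via diagram (\ref{remark1}).

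Your proposal instead wants to start from a projective presentation of $M$ in $\mathrm{Mod}(\mathcal{C})$, lift it through the tilting resolutions of the representables, and read the answer off a total complex. Two problems make this break down as written. First, you invoke Proposition~\ref{TORZ}(iii) to identify $H_0$ of your total complex with $\phi(M)\otimes\mathcal{T}$, but that statement is proved only for $M\in\mathscr{T}$; for general $M$ the counit $\phi(M)\otimes\mathcal{T}\to M$ is precisely \emph{not} an isomorphism — its cokernel is the object this proposition is trying to compute, so you cannot use it as an input. Second, and more fundamentally, $\phi(M)\otimes\mathcal{T}$ and $\mathrm{Tor}_1^\mathcal{T}(\mathrm{Ext}^1_\mathcal{C}(\;,M)_\mathcal{T},\mathcal{T})$ are computed by $\mathcal{T}$-projective resolutions in $\mathrm{Mod}(\mathcal{T})$ (or, via balance and Proposition~\ref{BB4}, by the $\mathcal{T}^{op}$-projective resolution of $(\mathcal{C}(\;,C),\;)_\mathcal{T}$ at each $C$); a $\mathcal{C}$-projective presentation of $M$ feeds into neither, because $\phi$ is not exact, so applying $\phi$ to your presentation of $M$ does not yield a resolution of $\phi(M)$ nor a clean presentation of $\mathrm{Ext}^1_\mathcal{C}(\;,M)_\mathcal{T}$ (the term $\mathrm{Ext}^1_\mathcal{C}(\;,P_0)_\mathcal{T}$ does not vanish just because $P_0$ is projective in the second argument). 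The step where you claim Propositions~\ref{BB1} and \ref{BB4} "recognize $H_1$" is exactly where you would be forced to evaluate at each $C$ and fall back on the tilting resolution of $\mathcal{C}(\;,C)$ — i.e., reproduce the paper's pointwise argument — so the double-complex machinery does no work and the claimed shortcut does not exist.
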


\begin{proof}
For each $C$ in $\mathcal{C}$ we have an exact sequence
\begin{equation}
0\rightarrow \mathcal{C}(\;,C)\rightarrow T_{0}\rightarrow T_{1}\rightarrow 0%
\text{,}  \label{BrennerB2}
\end{equation}%
which induces the following exact sequence
\begin{equation}
0\rightarrow \mathrm{Hom}_{\mathcal{C}}(T_{1},\;\;)_{\mathcal{T}}\rightarrow
\mathrm{Hom}_{\mathcal{C}}(T_{0},\;\;)_{\mathcal{T}}\rightarrow \mathrm{Hom}%
_{\mathcal{C}}(\mathcal{C}(\;\;,C),\;\;)_{\mathcal{T}}\rightarrow 0\text{.}
\label{BrennerB8}
\end{equation}

After applying $-\otimes _{\mathcal{T}}\mathrm{Ext}_{\mathcal{C}%
}^{1}(\;\;,M)_{\mathcal{T}}$ to (\ref{BrennerB8}), we get the following
exact sequence
\begin{eqnarray}  \label{BrennerB3}
0 &\rightarrow &\mathrm{Tor}_{1}^{\mathcal{T}}((\mathcal{C}(\;\;,C),\;)_{%
\mathcal{T}},\mathrm{Ext}_{\mathcal{C}}^{1}(\;\;,M)_{\mathcal{T}%
})\rightarrow \mathrm{Ext}_{\mathcal{C}}^{1}(T_{1},M)\rightarrow \\
&\rightarrow &\mathrm{Ext}_{\mathcal{C}}^{1}(T_{0},M)\rightarrow \mathrm{Hom}%
(\mathcal{C}(\;\;,C),\;\;)_{\mathcal{T}}\otimes _{\mathcal{T}}\mathrm{Ext}_{%
\mathcal{C}}^{1}(\;\;,M)_{\mathcal{T}}\rightarrow 0\text{.}  \notag
\end{eqnarray}

Applying $\mathrm{Hom}_{\mathcal{C}}(\;\;,M)$ to (\ref{BrennerB2}), we get
from the long homology sequence, the exact sequence
\begin{eqnarray}
0 &\rightarrow &\mathrm{Hom}_{\mathcal{C}}(T_{1},M)\rightarrow \mathrm{Hom}_{%
\mathcal{C}}(T_{0},M)\rightarrow \mathrm{Hom}_{\mathcal{C}}(\mathcal{C}%
(\;\;,C),M)\rightarrow  \label{BrennerB5} \\
&\rightarrow &\mathrm{Ext}_{\mathcal{C}}^{1}(T_{1},M)\rightarrow \mathrm{Ext}%
_{\mathcal{C}}^{1}(T_{0},M)\rightarrow \mathrm{Ext}_{\mathcal{C}}^{1}(%
\mathcal{C}(\;\;,C),M)=0\text{.}  \notag
\end{eqnarray}

Then, for each $C$ in $\mathcal{C}$ we have the following isomorphisms:
\begin{eqnarray*}
(\mathrm{Ext}_{\mathcal{C}}^{1}(\;\;,M)_{\mathcal{T}}\otimes \mathcal{T})(C)
&\cong &(\mathcal{C}(\;\;,C),\;\;)_{\mathcal{T}}\otimes _{\mathcal{T}}%
\mathrm{Ext}_{\mathcal{C}}^{1}(\;\;,M)_\mathcal{T},\text{ by Proposition \ref%
{BB1}} \\
&\cong &\mathrm{Ext}_{\mathcal{C}}^{1}(\mathcal{C}(\;\;,C),M)_{\mathcal{T}%
}=0,\text{ by (\ref{BrennerB3}) and (\ref{BrennerB5}),}
\end{eqnarray*}%
which proves the second part of the Proposition.

Now, by Proposition \ref{BB4} we know that
\begin{equation}  \label{BBR01}
\mathrm{Tor}_{1}^{\mathcal{T}}(\mathrm{Ext}^1_{\mathcal{C}}(\;\;,M)_{%
\mathcal{T}},\mathcal{T})(C)\cong \mathrm{Tor}_{1}^{\mathcal{T}}((\mathcal{C}%
(\;\;,C),\;)_\mathcal{T},\mathrm{Ext}_{\mathcal{C}}^{1}(\;\;,M)_{\mathcal{T}%
}).
\end{equation}%
It follows by \ref{BBR01}, and the isomorphism $\mathrm{Hom}_{\mathcal{C}}(%
\mathcal{C}(\;\;,C),M)\cong M(C),$ together with (\ref{BrennerB3}), and (\ref%
{BrennerB5}), that there exist a isomorphisms $\gamma _{C}$ such that the
following diagram commutes:
\begin{equation*}
\begin{diagram}\dgARROWLENGTH=.3em \node{}
\node{M(C)}\arrow{e}\arrow{s,l}{\gamma_C}
\node{\mathrm{Ext}^1_{\mathcal{C}}(T_1,M)}\arrow{e}\arrow{s,l}{1}
\node{\mathrm{Ext}^1_{\mathcal{C}}(T_0,M)\rightarrow 0}\arrow{s,l}{1} \\
\node{0}\arrow{e}
\node{\mathrm{Tor}^{\mathcal{T}}_1(\mathrm{Ext}^1_{\mathcal{C}}(\;\;,M)_{%
\mathcal{T}},\mathcal T)(C)}\arrow{e}
\node{\mathrm{Ext}^1_{\mathcal{C}}(T_1,M)}\arrow{e}
\node{\mathrm{Ext}^1_{\mathcal{C}}(T_0,M)} \end{diagram}
\end{equation*}%
Moreover, the Snake Lemma implies $\gamma _{C}$ is epimorphisms. Using
Remark \ref{remark1}, the reader can check that $\gamma =\{\gamma
_{C}\}_{C\in \mathcal{C}}$ is a natural transformation.

After applying $-\otimes _{\mathcal{T}}\phi (M)$ to the exact sequence (\ref%
{BrennerB8}) we obtain the following exact sequence
\begin{equation*}
\phi (M)(T_{1})\rightarrow \phi (M)(T_{0})\rightarrow \mathrm{Hom}(\mathcal{C%
}(\;\;,C),\;\;)\otimes _{\mathcal{T}}\phi (M)\rightarrow 0\text{.}
\end{equation*}

By the isomorphism $\phi (M)\otimes \mathcal{T}(C)\cong \mathrm{Hom}(%
\mathcal{C}(\;\;,C),\;\;)_\mathcal{T}\otimes _{\mathcal{T}}\phi (M)$, and
the exact sequence (\ref{BrennerB5}), there exist a morphisms $\eta _{C}$
such that the following diagram commutes:

\begin{equation*}
\begin{diagram}\dgARROWLENGTH=.3em \node{}
\node{\phi(M)(T_1)}\arrow{s,l}{\cong}\arrow{e}
\node{\phi(M)(T_0)}\arrow{e}\arrow{s,l}{\cong} \node{\phi(M)\otimes\mathcal
T(C)}\arrow{e}\arrow{s,l}{\eta_C} \node{0}\\ \node{0}\arrow{e}
\node{\mathrm{Hom}(T_1,M)}\arrow{e} \node{\mathrm{Hom}(T_0,M)}\arrow{e}
\node{\mathrm{Hom}(\mathcal{C}(\;C),M)} \end{diagram}
\end{equation*}

Using the Snake Lemma again, the map $\eta _{C}$ is monomorphism. The reader
can check $\eta =\{\eta _{C}\}_{C\in \mathcal{C}}$ is natural
transformation. In this way we have the following exact sequence
\begin{equation*}
0\rightarrow \phi (M)\otimes \mathcal{T}\xrightarrow{\eta}M%
\xrightarrow{\gamma}\mathrm{Tor}_{1}^{\mathcal{T}}(\mathrm{Ext}_{\mathcal{C}%
}^{1}(\;\;,M),\mathcal{T})\rightarrow 0\text{.}
\end{equation*}%
Proving the proposition.
\end{proof}

\begin{proposition}
Let $\mathcal{T}$ be a tilting subcategory of $\mathrm{\mathrm{Mod}}(%
\mathcal{C})$. Then for each $N$ in $\mathrm{Mod}(\mathcal{T)}$ there exists
an exact sequence
\begin{equation*}
0\rightarrow \mathrm{Ext}_{\mathcal{C}}^{1}(-,\mathrm{Tor}^{\mathcal{T}}_1(N,%
\mathcal{T}))\rightarrow N\rightarrow \phi (N\otimes \mathcal{T})\rightarrow
0.
\end{equation*}%
In addition $\phi (\mathrm{Tor}_{1}^{\mathcal{T}}(N,\mathcal{T}))=0$.
\end{proposition}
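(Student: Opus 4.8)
This statement is the ``dual'' counterpart of Proposition~\ref{BB9}: there we resolved an object $M\in\mathrm{Mod}(\mathcal{C})$ by $\mathrm{Add}\,\mathcal{T}$, here we want to reconstruct an object $N\in\mathrm{Mod}(\mathcal{T})$ from $N\otimes\mathcal{T}$ together with the obstruction $\mathrm{Tor}_1^{\mathcal{T}}(N,\mathcal{T})$. The natural approach is to run the same bookkeeping as in Proposition~\ref{BB9}, but starting from a projective presentation of $N$ in $\mathrm{Mod}(\mathcal{T})$ rather than from the tilting resolution of $\mathcal{C}(\;,C)$. So first I would fix the exact sequence
\begin{equation*}
0\rightarrow\Omega N\rightarrow(\;\;,T)\rightarrow N\rightarrow 0
\end{equation*}
with $T$ a sum of objects of $\mathcal{T}$ (as in the proof of Proposition~\ref{BB4}), and even better take a genuine projective presentation $\coprod_i(\;\;,T_i)\to\coprod_j(\;\;,T_j)\to N\to 0$ when I need two steps.

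**Key steps.** (1) Apply $-\otimes\mathcal{T}$ to the presentation of $N$; right-exactness gives $N\otimes\mathcal{T}$ together with the first $\mathrm{Tor}$ term, and by the Remark after Theorem~\ref{AD13} the projectives $(\;\;,T_i)$ go to the objects $T_i\in\mathrm{Add}\,\mathcal{T}$. (2) Apply $\phi$ to the resulting $\mathrm{Add}\,\mathcal{T}$-presentation of $N\otimes\mathcal{T}$; since $N\otimes\mathcal{T}$ lies in $\mathscr{T}$ (it is an epimorphic image of $T\in\mathscr{T}$), Proposition~\ref{TORZ}(i) applies and, by Yoneda, $\phi(T_i)\cong(\;\;,T_i)$, so I recover a presentation of $\phi(N\otimes\mathcal{T})$ that maps naturally from the original presentation of $N$. (3) Comparing the two presentations via the counit $N\to\phi(N\otimes\mathcal{T})$ produces, in the spirit of Proposition~\ref{BB9}, a natural morphism with kernel identifiable as $\mathrm{Ext}_{\mathcal{C}}^{1}(-,\mathrm{Tor}_1^{\mathcal{T}}(N,\mathcal{T}))$. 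To pin the kernel down I would use that $\mathrm{Tor}_1^{\mathcal{T}}(N,\mathcal{T})$ is a genuine $\mathcal{C}$-module, feed it into Proposition~\ref{TORZ}(v) (or directly into the long exact sequence coming from applying $\phi$, i.e. $\mathrm{Hom}_{\mathcal{C}}(\;\;,-)_{\mathcal{T}}$, to a short exact sequence representing $\mathrm{Tor}_1$), and assemble the two exactness statements with the Snake Lemma exactly as in Proposition~\ref{BB9}. (4) For the last assertion $\phi(\mathrm{Tor}_1^{\mathcal{T}}(N,\mathcal{T}))=0$: since $\phi=\mathrm{Hom}_{\mathcal{C}}(\;\;,-)_{\mathcal{T}}$ evaluated at $T'\in\mathcal{T}$ is $\mathrm{Hom}_{\mathcal{C}}(T',\mathrm{Tor}_1^{\mathcal{T}}(N,\mathcal{T}))$, and $\mathrm{Tor}_1^{\mathcal{T}}(N,\mathcal{T})$ is built from kernels of maps between objects of $\mathrm{Add}\,\mathcal{T}$, I expect it to lie in the torsion-free class $\mathscr{F}=\{X:\mathrm{Hom}(T',X)=0,\ T'\in\mathcal{T}\}$; concretely, applying $\phi$ to $0\to\mathrm{Tor}_1^{\mathcal{T}}(N,\mathcal{T})\to\Omega N\otimes\mathcal{T}\to(\;\;,T)\otimes\mathcal{T}$ and using that $\phi$ is left exact plus $\phi(\Omega N\otimes\mathcal{T})\hookrightarrow\phi((\;\;,T)\otimes\mathcal{T})$ behaves like the original inclusion $\Omega N\hookrightarrow(\;\;,T)$ forces $\phi(\mathrm{Tor}_1)=0$.

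**Main obstacle.** The routine part is the diagram chase; the real work is identifying the cokernel of the counit $N\to\phi(N\otimes\mathcal{T})$ with $\mathrm{Ext}_{\mathcal{C}}^{1}(-,\mathrm{Tor}_1^{\mathcal{T}}(N,\mathcal{T}))$ \emph{naturally in $N$}, and checking that $N\otimes\mathcal{T}$ really does land in $\mathscr{T}$ so that Proposition~\ref{TORZ}(i) and the computation of $\phi$ on $\mathrm{Add}\,\mathcal{T}$ objects are legitimate. I would handle naturality by choosing the presentation of $N$ functorially where possible and otherwise invoking the standard ``naturality up to the chosen presentation'' argument used implicitly in Propositions~\ref{BB1}, \ref{BB4} and \ref{BB9}, and leave the most tedious verifications (naturality in $N$, the explicit form of the connecting maps) to the reader, as the paper does elsewhere.
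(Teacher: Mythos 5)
Your overall orientation is correct: as in Proposition~\ref{BB9}, one starts from a projective presentation $L_{.}\to N$ in $\mathrm{Mod}(\mathcal{T})$, applies $-\otimes\mathcal{T}$, and compares with $\phi$. But there are two concrete problems with the route you propose for pinning down the kernel. First, the appeal to Proposition~\ref{TORZ}(i) does not apply: after tensoring the projective resolution $L_{.}$ of $N$ with $\mathcal{T}$, the resulting complex $L_{.}\otimes\mathcal{T}$ is \emph{not} exact (its first homology is precisely $\mathrm{Tor}_1^{\mathcal{T}}(N,\mathcal{T})$), whereas \ref{TORZ}(i) requires a four-term exact sequence of objects of $\mathscr{T}$. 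Second, invoking \ref{TORZ}(v) is also problematic because that isomorphism requires both arguments to lie in $\mathscr{T}$, and the whole point here is that $\mathrm{Tor}_1^{\mathcal{T}}(N,\mathcal{T})$ lives in $\mathscr{F}$, not $\mathscr{T}$ (indeed, $\phi(\mathrm{Tor}_1^{\mathcal{T}}(N,\mathcal{T}))=0$ is part of what one must prove).

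The ingredient that is missing from your sketch, and which the paper uses as the engine of the argument, is the \emph{second} resolution, namely the length-one projective resolution $0\to P_1\to P_0\to T\to 0$ in $\mathrm{Mod}(\mathcal{C})$ of each $T\in\mathcal{T}$. Because every term of $L_{.}\otimes\mathcal{T}$ lies in $\mathrm{Add}\,\mathcal{T}$, one has $\mathrm{Ext}^1_{\mathcal{C}}(T, L_{.}\otimes\mathcal{T})=0$, which gives a short exact sequence of complexes
\begin{equation*}
0\to \mathrm{Hom}_{\mathcal{C}}(T, L_{.}\otimes\mathcal{T})\to \mathrm{Hom}_{\mathcal{C}}(P_0, L_{.}\otimes\mathcal{T})\to \mathrm{Hom}_{\mathcal{C}}(P_1, L_{.}\otimes\mathcal{T})\to 0.
\end{equation*}
Combining the resulting long exact homology sequence with the identifications $L_{.}(T)\cong \mathrm{Hom}_{\mathcal{C}}(T, L_{.}\otimes\mathcal{T})$ and $H_1(\mathrm{Hom}_{\mathcal{C}}(P_i, L_{.}\otimes\mathcal{T}))\cong \mathrm{Hom}_{\mathcal{C}}(P_i,\mathrm{Tor}^{\mathcal{T}}_1(N,\mathcal{T}))$ (since $P_i$ is projective), and then comparing, via the Snake Lemma, with the sequences obtained by applying $\mathrm{Hom}_{\mathcal{C}}(-,\mathrm{Tor}^{\mathcal{T}}_1(N,\mathcal{T}))$ and $\mathrm{Hom}_{\mathcal{C}}(-,N\otimes\mathcal{T})$ to $0\to P_1\to P_0\to T\to 0$, is what simultaneously produces the exact sequence and $\phi(\mathrm{Tor}_1^{\mathcal{T}}(N,\mathcal{T}))(T)=0$. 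Without this double-resolution comparison, the identification of the kernel of $N\to\phi(N\otimes\mathcal{T})$ with $\mathrm{Ext}^1_{\mathcal{C}}(-,\mathrm{Tor}^{\mathcal{T}}_1(N,\mathcal{T}))$, evaluated at an arbitrary $T$, does not go through. (Also, a small slip: you write ``cokernel of the counit'' in your last paragraph, but the $\mathrm{Ext}^1$ term is the \emph{kernel} of the unit $N\to\phi(N\otimes\mathcal{T})$, which is surjective.)
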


\begin{proof}
We choose the projective resolutions
\begin{eqnarray}
L_{.}\rightarrow N &:&\cdots \rightarrow (-,T_{2})\xrightarrow{(-,h_2)}%
(-,T_{1})\xrightarrow {(-,h_1)}(-,T_{0})\rightarrow N\rightarrow 0
\label{BB17} \\
&&0\rightarrow P_{1}\rightarrow P_{0}\rightarrow T\rightarrow 0
\end{eqnarray}%
such that for $i\geq 0$, $T_{i}$ is a sum of objects in $\mathcal{T}$.

Applying $-\otimes \mathcal{T}$ to the complex $L.$, we obtain the complex $%
L.\otimes \mathcal{T}$, whose objects are in $\mathrm{Add}\mathcal{T}$, and
we have $\mathrm{Ext}_{\mathcal{C}}^{1}(T,L_{.}\otimes \mathcal{T})=0$. In
this way, we obtain the following exact sequence of complexes:
\begin{equation}  \label{BB18}
0\rightarrow \mathrm{Hom}_{\mathcal{C}}(T,L_{.}\otimes \mathcal{T}%
)\rightarrow \mathrm{Hom}_{\mathcal{C}}(P_{0},L_{.}\otimes \mathcal{T}%
)\rightarrow \mathrm{Hom}_{\mathcal{C}}(P_{1},L_{.}\otimes \mathcal{T}%
)\rightarrow 0\text{.}
\end{equation}

Observe that $L_{.}(T)$ and $\mathrm{Hom}_{\mathcal{C}}(T,L_{.}\otimes
\mathcal{T})$ are isomorphic, hence the exact sequence (\ref{BB18}) becomes%
\begin{equation*}
0\rightarrow L_{.}(T)\rightarrow \mathrm{Hom}_{\mathcal{C}%
}(P_{0},L_{.}\otimes \mathcal{T})\rightarrow \mathrm{Hom}_{\mathcal{C}%
}(P_{1},L_{.}\otimes \mathcal{T})\rightarrow 0\text{.}
\end{equation*}

By the above sequence, and the long homology sequence, we get an exact
sequence:
\begin{eqnarray}  \label{BB20}
0 =H_{1}(L_{.}(T))\rightarrow H_{1}(\mathrm{Hom}_{\mathcal{C}%
}(P_{0},L_{.}\otimes \mathcal{T}))\rightarrow H_{1}(\mathrm{Hom}_{\mathcal{C}%
}(P_{1},L_{.}\otimes \mathcal{T}))\rightarrow \\
\rightarrow H_{0}(\mathrm{Hom}_{\mathcal{C}}(L_{.}(T)))\rightarrow H_{0}(%
\mathrm{Hom}_{\mathcal{C}}(P_{0},L_{.}\otimes \mathcal{T}))\rightarrow H_{0}(%
\mathrm{Hom}_{\mathcal{C}}(P_{1},L_{.}\otimes \mathcal{T}))\rightarrow 0%
\text{.}  \notag
\end{eqnarray}

Since $P_{i}$, $i=0,1$ are projective, there exists an isomorphism
\begin{equation*}
H_{1}(\mathrm{Hom}_{\mathcal{C}}(P_{i},L_{.}\otimes \mathcal{T}))\cong
\mathrm{Hom}_{\mathcal{C}}(P_{i},H_{1}(L_{.}\otimes \mathcal{T}))=\mathrm{Hom%
}_{\mathcal{C}}(P_{i},\mathrm{Tor}_{1}^{\mathcal{T}}(N,\mathcal{T})).
\end{equation*}%
Finally, the exact sequence (\ref{BB20}) can be written as:
\begin{eqnarray}
0 &\rightarrow &\mathrm{Hom}_{\mathcal{C}}(P_{0},\mathrm{Tor}_{1}^{\mathcal{T%
}}(N,\mathcal{T}))\rightarrow \mathrm{Hom}_{\mathcal{C}}(P_{1},\mathrm{Tor}%
_{1}^{\mathcal{T}}(N,\mathcal{T}))\rightarrow  \label{BB21} \\
N(T) &\rightarrow &\mathrm{Hom}_{\mathcal{C}}(P_{0},N\otimes \mathcal{T}%
)\rightarrow \mathrm{Hom}_{\mathcal{C}}(P_{1},N\otimes \mathcal{T}%
)\rightarrow 0\text{.}  \notag
\end{eqnarray}

After applying $\mathrm{Hom}_{\mathcal{C}}(\;\;,\mathrm{Tor}^{\mathcal{T}%
}_1(N,\mathcal{T}))$ to the projective resolution of $T$, we obtain the
following exact sequence:
\begin{eqnarray}
0 &\rightarrow &\mathrm{Hom}_{\mathcal{C}}(T,\mathrm{Tor}_{1}^{\mathcal{T}%
}(N,\mathcal{T}))\rightarrow \mathrm{Hom}_{\mathcal{C}}(P_{0},\mathrm{Tor}%
_{1}^{\mathcal{T}}(N,\mathcal{T}))\rightarrow  \label{BB22} \\
&\rightarrow &\mathrm{Hom}_{\mathcal{C}}(P_{1},\mathrm{Tor}_{1}^{\mathcal{T}%
}(N,\mathcal{T}))\rightarrow \mathrm{Ext}_{\mathcal{C}}^{1}(T,\mathrm{Tor}%
_{1}^{\mathcal{T}}(N,\mathcal{T}))\rightarrow 0\text{.}  \notag
\end{eqnarray}

Comparing (\ref{BB21}) and (\ref{BB22}), we get $\phi (\mathrm{Tor}_{1}^{%
\mathcal{T}}(N,\mathcal{T}))(T)=0$, which proves part of the Proposition. In
addition (\ref{BB21}) and (\ref{BB22}), imply the existence of a morphism $%
\eta _{T}$, such that the following diagram commutes:

\begin{equation*}
\begin{diagram}\dgARROWLENGTH=.3em
\node{(P_{0},\mathrm{Tor}_{1}^{\mathcal{T}}(N,\mathcal{T}))}\arrow{s,l}{%
\cong}\arrow{e}
\node{(P_{1},\mathrm{Tor}_{1}^{\mathcal{T}}(N,\mathcal{T}))}\arrow{e}%
\arrow{s,l}{\cong}
\node{\mathrm{Ext}^1_{\mathcal{C}}(T,\mathrm{Tor}^\mathcal
T_1(N,\mathcal{T})\rightarrow 0}\arrow{s,l}{\eta_T} \\ \node{0\rightarrow
(P_{0},\mathrm{Tor}_{1}^{\mathcal{T}}(N,\mathcal{T}))}\arrow{e}
\node{(P_{1},\mathrm{Tor}_{1}^{\mathcal{T}}(N,\mathcal{T}))}\arrow{e}
\node{N(T)} \end{diagram}
\end{equation*}

By the Snake Lemma, $\eta _{T}$ is mono.

After applying $\mathrm{Hom}_{\mathcal{C}}(\;\;,N\otimes \mathcal{T})$ to
the projective resolution of $T$, we obtain the following exact sequence:
\begin{eqnarray}
0 &\rightarrow &\mathrm{Hom}_{\mathcal{C}}(T,N\otimes \mathcal{T}%
)\rightarrow \mathrm{Hom}_{\mathcal{C}}(P_{0},N\otimes \mathcal{T}%
)\rightarrow  \label{BB24} \\
&\rightarrow &\mathrm{Hom}_{\mathcal{C}}(P_{1},N\otimes \mathcal{T}%
)\rightarrow \mathrm{Ext}_{\mathcal{C}}(T,N\otimes \mathcal{T})\rightarrow 0%
\text{.}  \notag
\end{eqnarray}

The sequences (\ref{BB21}), and (\ref{BB24}), imply the existence of a
morphism $\gamma _{T}:N(T)\rightarrow \mathrm{Hom}_{\mathcal{C}}(T,N\otimes
\mathcal{T})=\phi (N\otimes \mathcal{T})$. It follows by the Snake Lemma
that $\gamma _{T}$ is epimorphism. Moreover,
\begin{equation}  \label{BBI}
\mathrm{Ext}_{\mathcal{C}}^{1}(\;\;,N\otimes \mathcal{T})=0\text{.}
\end{equation}

It's not hard to check that $\eta =\{\eta _{T}\}_{T\in \mathcal{T}}$ and $%
\gamma =\{\gamma _{T}\}_{T\in \mathcal{T}}$ are natural transformations. We
leave the details to the reader. In this way the following sequence of
functors
\begin{equation*}
0\rightarrow \mathrm{Ext}_{\mathcal{C}}^{1}(-,\mathrm{Tor}_{1}^{\mathcal{T}%
}(N,\mathcal{T}))\xrightarrow{\eta}N\xrightarrow {\gamma}\phi (N\otimes
\mathcal{T})\rightarrow 0
\end{equation*}%
is exact, proving the Proposition.
\end{proof}

We call $F,F^{\prime }:\mathrm{\mathrm{Mod}}(\mathcal{C})\rightarrow \mathrm{%
\mathrm{Mod}}(\mathcal{T})$ to the functors defined by:
\begin{eqnarray*}
F(M) &=&\phi (M)\text{,} \\
F^{\prime }(M) &=&\mathrm{Ext}_{\mathcal{C}}^{1}(\;\;,M)_{\mathcal{T}}\text{,%
}
\end{eqnarray*}

and, $G,G^{\prime }:\mathrm{\mathrm{Mod}}(\mathcal{T})\rightarrow \mathrm{%
\mathrm{Mod}}(\mathcal{C})$ to the functors:%
\begin{eqnarray*}
G(N) &=&N\otimes \mathcal{T}\text{,} \\
G^{\prime }(N) &=&\mathrm{Tor}_{1}^{\mathcal{T}}(N,\mathcal{T})\text{.}
\end{eqnarray*}

Let $\mathcal{T}$ be a tilting subcategory of $\mathrm{\mathrm{Mod}}(%
\mathcal{C})$, and $(\mathscr T,\mathscr F)$ the torsion theory considered
above. We look to the full subcategories $\mathscr{X}$ and $\mathscr{Y}$ of $%
\mathrm{\mathrm{Mod}}(\mathcal{T})$, defined by:
\begin{eqnarray*}
\mathscr{X} &=&\{N\in\mathrm{Mod}(\mathcal{T})|N\otimes \mathcal{T}=0\}\text{%
,} \\
\mathscr{Y} &=&\{N\in\mathrm{Mod}(\mathcal{T})|\mathrm{Tor}_{1}^{\mathcal{T}%
}(N,\mathcal{T})=0\}\text{.}
\end{eqnarray*}

From the previous results, the main theorem of the section follows:

\begin{theorem}[Brenner-Butler]
With the above notation, the following statements are true:

\begin{itemize}
\item[(i)] $F$ and $G$ induce an equivalence between $\mathscr{T}$ and $%
\mathscr{Y}.$

\item[(ii)] $F^{\prime }$ and $G^{\prime }$ induce an equivalence between $%
\mathscr{F}$ and $\mathscr{X}.$

\item[(iii)] The following equations $FG^{\prime }=F^{\prime }G=0$ and $%
G^{\prime }F=GF^{\prime }=0$ hold..
\end{itemize}
\end{theorem}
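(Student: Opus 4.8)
The plan is to deduce the theorem by assembling Propositions \ref{TORZ} and \ref{BB9} together with the last Proposition above; the argument is essentially bookkeeping. First I would verify that each of $F,F',G,G'$ sends all of $\mathrm{Mod}(\mathcal C)$ (resp.\ $\mathrm{Mod}(\mathcal T)$) into the expected class, before restricting to any torsion or torsion-free class. By the second assertion of Proposition \ref{BB9}, $F'(M)\otimes\mathcal T=\mathrm{Ext}^1_{\mathcal C}(\;\;,M)_{\mathcal T}\otimes\mathcal T=0$, so $F'(M)\in\mathscr X$; by the identity (\ref{BBI}) in the last Proposition, $\mathrm{Ext}^1_{\mathcal C}(\;\;,N\otimes\mathcal T)_{\mathcal T}=0$, whence $G(N)\in\mathscr T$ by Proposition \ref{TORH}; and the last Proposition also gives $\phi(\mathrm{Tor}^{\mathcal T}_1(N,\mathcal T))=0$, i.e.\ $\mathrm{Hom}_{\mathcal C}(T,\mathrm{Tor}^{\mathcal T}_1(N,\mathcal T))=0$ for every $T\in\mathcal T$, so $G'(N)\in\mathscr F$. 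For $F$ one first applies the left exact functor $\phi$ to $0\to\tau_{\mathcal T}M\to M\to M/\tau_{\mathcal T}M\to 0$; since $\phi(M/\tau_{\mathcal T}M)=0$ by Proposition \ref{TORF}, this gives $\phi(M)\cong\phi(\tau_{\mathcal T}M)$, and because $\tau_{\mathcal T}M$ is an epimorphic image of an object of $\mathrm{Add}\mathcal T$ --- hence lies in $\mathscr T$, which is closed under direct sums and epimorphic images by Proposition \ref{TORG} --- Proposition \ref{TORZ}(iv) yields $\mathrm{Tor}^{\mathcal T}_1(\phi(M),\mathcal T)\cong\mathrm{Tor}^{\mathcal T}_1(\phi(\tau_{\mathcal T}M),\mathcal T)=0$, so $F(M)\in\mathscr Y$. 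These four facts are already statement (iii): $GF'=0$ because $\mathrm{Ext}^1_{\mathcal C}(\;\;,M)_{\mathcal T}\otimes\mathcal T=0$, $G'F=0$ because $\mathrm{Tor}^{\mathcal T}_1(\phi(M),\mathcal T)=0$, $F'G=0$ by (\ref{BBI}), and $FG'=0$ since $\phi(\mathrm{Tor}^{\mathcal T}_1(N,\mathcal T))=0$ for all $N$.

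For (i) I would check that $F\colon\mathscr T\to\mathscr Y$ and $G\colon\mathscr Y\to\mathscr T$ are mutually quasi-inverse; both are well defined by the preceding paragraph (for $M\in\mathscr T$ one may cite Proposition \ref{TORZ}(iv) directly). For $M\in\mathscr T$, Proposition \ref{TORZ}(iii) supplies a natural isomorphism $GF(M)=\phi(M)\otimes\mathcal T\xrightarrow{\sim}M$. For $N\in\mathscr Y$, the exact sequence of the last Proposition, $0\to\mathrm{Ext}^1_{\mathcal C}(\;\;,\mathrm{Tor}^{\mathcal T}_1(N,\mathcal T))_{\mathcal T}\to N\to\phi(N\otimes\mathcal T)\to 0$, has vanishing left-hand term because $\mathrm{Tor}^{\mathcal T}_1(N,\mathcal T)=0$, so $N\xrightarrow{\sim}\phi(N\otimes\mathcal T)=FG(N)$. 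Part (ii) is entirely parallel: for $M\in\mathscr F$ one has $\phi(M)=0$, so the exact sequence of Proposition \ref{BB9} collapses to $M\xrightarrow{\sim}\mathrm{Tor}^{\mathcal T}_1(\mathrm{Ext}^1_{\mathcal C}(\;\;,M)_{\mathcal T},\mathcal T)=G'F'(M)$; and for $N\in\mathscr X$ one has $N\otimes\mathcal T=0$, so the exact sequence of the last Proposition collapses to $N\xrightarrow{\sim}\mathrm{Ext}^1_{\mathcal C}(\;\;,\mathrm{Tor}^{\mathcal T}_1(N,\mathcal T))_{\mathcal T}=F'G'(N)$. Hence $F'$ and $G'$ restrict to mutually inverse equivalences between $\mathscr F$ and $\mathscr X$.

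The only genuine work beyond quoting the earlier propositions is verifying that the isomorphisms produced in Propositions \ref{TORZ}(iii) and \ref{BB9} and in the last Proposition are natural in the relevant variable --- naturality in $C$, resp.\ in $T$, is proved there, and naturality in $M$, resp.\ in $N$, is routine --- and noticing that $G'F$ must be seen to vanish on all of $\mathrm{Mod}(\mathcal C)$, not just on $\mathscr T$, which is exactly what the trace reduction $\phi(M)\cong\phi(\tau_{\mathcal T}M)$ supplies. I anticipate no serious obstacle: the substance of the theorem is contained in the preceding propositions, and what remains is the orderly assembly above, together with the observation that $G=-\otimes\mathcal T$ is left adjoint to $F=\phi$ by Theorem \ref{AD13}, so that the equivalence of (i) is in fact an adjoint equivalence with unit and counit the isomorphisms just exhibited.
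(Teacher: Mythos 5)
Your assembly is correct and is exactly what the paper intends when it states that the theorem ``follows from the previous results'': the identities of (iii) come from the second assertion of Proposition \ref{BB9}, the identity (\ref{BBI}) and the vanishing $\phi(\mathrm{Tor}^{\mathcal T}_1(N,\mathcal T))=0$ in the following proposition, together with Proposition \ref{TORZ}(iii)--(iv); the equivalences of (i)--(ii) then come from the two exact sequences in Proposition \ref{BB9} and the subsequent proposition, which collapse on the relevant classes. Your observation that the trace reduction $\phi(M)\cong\phi(\tau_{\mathcal T}M)$ (via Proposition \ref{TORF} and left-exactness of $\phi$) is needed to get $G'F=0$ on all of $\mathrm{Mod}(\mathcal C)$ rather than only on $\mathscr T$ is a detail the paper leaves implicit, and is correctly supplied.
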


\begin{corollary}
The pair of subactegories $(\mathscr{X},\mathscr{Y})$ form a torsion theory
in $\mathrm{\mathrm{Mod}}(\mathcal{T}).$
\end{corollary}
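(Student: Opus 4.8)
The plan is to verify the two defining conditions of a torsion pair $(\mathscr{X},\mathscr{Y})$ in $\mathrm{Mod}(\mathcal{T})$: (a) $\mathrm{Hom}_{\mathcal{T}}(X,Y)=0$ for all $X\in\mathscr{X}$ and $Y\in\mathscr{Y}$, and (b) every $N$ in $\mathrm{Mod}(\mathcal{T})$ fits into a short exact sequence $0\to X_N\to N\to Y_N\to 0$ with $X_N\in\mathscr{X}$ and $Y_N\in\mathscr{Y}$. Granting (a) and (b), the torsion-pair axioms are automatic: if $N$ satisfies $\mathrm{Hom}_{\mathcal{T}}(N,Y)=0$ for all $Y\in\mathscr{Y}$, then the epimorphism $N\to Y_N$ vanishes, whence $Y_N=0$ and $N\cong X_N\in\mathscr{X}$; dually $\mathrm{Hom}_{\mathcal{T}}(X,N)=0$ for all $X\in\mathscr{X}$ forces $X_N=0$ and $N\cong Y_N\in\mathscr{Y}$. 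Thus $\mathscr{X}={}^{\perp}\mathscr{Y}$ and $\mathscr{Y}=\mathscr{X}^{\perp}$, which is the assertion (cf. [S]).

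For (b) I would invoke the proposition preceding the Brenner-Butler theorem: for each $N\in\mathrm{Mod}(\mathcal{T})$ there is an exact sequence $0\to F'G'(N)\to N\to FG(N)\to 0$, in which $F'G'(N)=\mathrm{Ext}_{\mathcal{C}}^{1}(\;\;,\mathrm{Tor}_{1}^{\mathcal{T}}(N,\mathcal{T}))$ and $FG(N)=\phi(N\otimes\mathcal{T})$. It remains to put the two outer terms in the right class. The term $F'G'(N)$ lies in $\mathscr{X}$ because, by the second assertion of Proposition \ref{BB9}, $\mathrm{Ext}_{\mathcal{C}}^{1}(\;\;,L)_{\mathcal{T}}\otimes\mathcal{T}=0$ for every $L$ in $\mathrm{Mod}(\mathcal{C})$ (equivalently $GF'=0$, part (iii) of the Brenner-Butler theorem); apply it with $L=\mathrm{Tor}_{1}^{\mathcal{T}}(N,\mathcal{T})$. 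The term $FG(N)$ lies in $\mathscr{Y}$ because $N\otimes\mathcal{T}\in\mathscr{T}$ — applying the right exact functor $-\otimes\mathcal{T}$ to a projective presentation of $N$ and using $(\;\;,T)\otimes\mathcal{T}=T$ exhibits $N\otimes\mathcal{T}$ as an epimorphic image of an object of $\mathrm{Add}\mathcal{T}$ — so Proposition \ref{TORZ}(iv) yields $\mathrm{Tor}_{1}^{\mathcal{T}}(\phi(N\otimes\mathcal{T}),\mathcal{T})=0$ (equivalently $G'F=0$).

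For (a) I would use the adjunction $\mathrm{Hom}_{\mathcal{C}}(N\otimes\mathcal{T},M)\cong\mathrm{Hom}_{\mathcal{T}}(N,\phi(M))$ between $G=-\otimes\mathcal{T}$ and $F=\phi$. Any $Y\in\mathscr{Y}$ has $G'(Y)=\mathrm{Tor}_{1}^{\mathcal{T}}(Y,\mathcal{T})=0$, so the sequence of (b) for $N=Y$ collapses to an isomorphism $Y\cong FG(Y)=\phi(Y\otimes\mathcal{T})$; in particular $Y\cong\phi(M)$ for $M=Y\otimes\mathcal{T}$. Hence, for $X\in\mathscr{X}$,
\begin{equation*}
\mathrm{Hom}_{\mathcal{T}}(X,Y)\cong\mathrm{Hom}_{\mathcal{T}}(X,\phi(M))\cong\mathrm{Hom}_{\mathcal{C}}(X\otimes\mathcal{T},M)=\mathrm{Hom}_{\mathcal{C}}(0,M)=0,
\end{equation*}
since $X\otimes\mathcal{T}=0$ by the definition of $\mathscr{X}$.

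No genuine obstacle is expected here: the substance is carried entirely by the Brenner-Butler theorem and by the two structural short exact sequences already established, and what is left is bookkeeping — correctly matching $F'G'(N)\otimes\mathcal{T}=0$ with membership in $\mathscr{X}$, matching $\mathrm{Tor}_{1}^{\mathcal{T}}(FG(N),\mathcal{T})=0$ with membership in $\mathscr{Y}$, and keeping the variance of the adjunction straight.
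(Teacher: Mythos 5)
Your proof is correct and follows essentially the same route as the paper: you use the same short exact sequence $0\to F'G'(N)\to N\to FG(N)\to 0$ and the same vanishings ($GF'=0$, $G'F=0$) to place the end terms in $\mathscr{X}$ and $\mathscr{Y}$. The only difference is that the paper asserts $\mathrm{Hom}_{\mathcal{T}}(X,Y)=0$ as "easy to check," while you supply the explicit verification via the adjunction $\mathrm{Hom}_{\mathcal{C}}(-\otimes\mathcal{T},-)\cong\mathrm{Hom}_{\mathcal{T}}(-,\phi(-))$ and the collapse $Y\cong\phi(Y\otimes\mathcal{T})$ for $Y\in\mathscr{Y}$ — a worthwhile piece of bookkeeping that the paper omits.
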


\begin{proof}
It is easy to check that for pair of objects $X$ in $\mathscr{X}$, and $Y$
in $\mathscr{Y}$, $\mathrm{Hom}_{\mathcal{T}}(X,Y)=0$.

For every $N$ in $\mathrm{\mathrm{Mod}}(\mathcal{T})$ there is an exact
sequence:

\begin{equation*}
0\rightarrow \mathrm{Ext}_{\mathcal{C}}^{1}(-,\mathrm{Tor}_{1}^{\mathcal{T}%
}(N,\mathcal{T}))\xrightarrow{\eta}N\xrightarrow {\gamma}\phi (N\otimes
\mathcal{T})\rightarrow 0\text{.}
\end{equation*}

By condition (iii) of the Theorem, $\mathrm{Ext}_{\mathcal{C}}^{1}(-,\mathrm{%
Tor}_{1}^{\mathcal{T}}(N,\mathcal{T}))$ is in $\mathscr{X}$ and $\phi
(N\otimes \mathcal{T})$ is in $\mathscr{Y}$, which implies the pair $(%
\mathscr{X},\mathscr{Y})$ is a torsion theory.
\end{proof}

We want next to generalize the following result on tilting of finite
dimensional algebras:

Let $A$ be a finite dimensional $K$-algebra and $_{A}\mathrm{mod}$ the
category of finitely generated left $A$-modules. Let's suppose that $_{A}T$
is a tilting $A$-module and $B=\mathrm{End}_{A}(T)$. Tilting theorem [B]
proves $T_{B}$ is a right tilting $B$-module and $A^{op}$ is isomorphic to $%
\mathrm{End}_{B}(T_{B})$.

\begin{proposition}
\label{TT} Let $\mathcal{T}$ be a tilting subcategory of $\mathrm{Mod}(%
\mathcal{C})$. Let's assume each $T$ in $\mathcal{T}$ has a projective
resolution of finitely generated projectives,
\begin{equation*}
0\rightarrow P_{1}\rightarrow P_{0}\rightarrow T\rightarrow 0\text{.}
\end{equation*}%
Then the following statements hold:

\begin{itemize}
\item[(a)] The full subcategory $\theta $ of $\mathrm{\mathrm{Mod}}(\mathcal{%
T}^{op})$, with objects $\{(\mathcal{C}(\;\;,C),\;\;)_{\mathcal{T}}\}_{C\in
\mathcal{C}}$ is a tilting subcategory in $\mathrm{\mathrm{Mod}}(\mathcal{T}%
^{op})$.

\item[(b)] The category $\theta =\{(\mathcal{C}(\;\;,C),\;\;)_{\mathcal{T}%
}\}_{C\in \mathcal{C}}$ is equivalent to $\mathcal{C}^{op}$.
\end{itemize}
\end{proposition}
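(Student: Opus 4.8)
The plan is to check, one by one, the three conditions in the definition of a tilting subcategory for $\theta$ inside $\mathrm{Mod}(\mathcal{T}^{op})$, extracting each from the corresponding property of $\mathcal{T}$, and then to produce an explicit equivalence $\mathcal{C}^{op}\to\theta$. The bookkeeping I would fix first is the dictionary coming from Yoneda's Lemma: for $T\in\mathcal{T}$ the representable projective of $\mathrm{Mod}(\mathcal{T}^{op})$ is $\mathcal{T}^{op}(\;\;,T)=\mathrm{Hom}_{\mathcal{C}}(T,\;\;)_{\mathcal{T}}$ (viewed as a covariant functor on $\mathcal{T}$), the functor $(\mathcal{C}(\;\;,C),\;\;)_{\mathcal{T}}$ is the evaluation functor $S\mapsto S(C)$, and $\mathrm{Hom}_{\mathcal{T}^{op}}\big(\mathcal{T}^{op}(\;\;,T),(\mathcal{C}(\;\;,C),\;\;)_{\mathcal{T}}\big)\cong T(C)$.

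For part (a), I would start from the resolution $0\to\mathcal{C}(\;\;,C)\to T_{0}\xrightarrow{\pi}T_{1}\to 0$ with $T_{i}\in\mathcal{T}$ provided by condition (iii) for $\mathcal{T}$ and apply $\mathrm{Hom}_{\mathcal{C}}(\;\;,S)$ for $S\in\mathcal{T}$. Because $\mathrm{Ext}^{1}_{\mathcal{C}}(T_{1},S)=0$ by condition (ii) in the definition of tilting category, this gives a short exact sequence
\begin{equation*}
0\to\mathcal{T}^{op}(\;\;,T_{1})\to\mathcal{T}^{op}(\;\;,T_{0})\to(\mathcal{C}(\;\;,C),\;\;)_{\mathcal{T}}\to 0 ,
\end{equation*}
a finite projective presentation; hence every object of $\theta$ is finitely presented and $\mathrm{pdim}\,\theta\leq 1$ (condition (i)). Applying $\mathrm{Hom}_{\mathcal{T}^{op}}(\;\;,(\mathcal{C}(\;\;,C'),\;\;)_{\mathcal{T}})$ to this presentation and using the dictionary above rewrites the map computing $\mathrm{Ext}$ as $\pi_{C'}\colon T_{0}(C')\to T_{1}(C')$; but $\pi_{C'}$ is an epimorphism (evaluate the exact sequence at $C'$), so $\mathrm{Ext}^{1}_{\mathcal{T}^{op}}$ vanishes between objects of $\theta$ (condition (ii)), and moreover $\ker\pi_{C'}=\mathcal{C}(C',C)$, a fact I will reuse for part (b).

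For condition (iii) the extra hypothesis enters: choose $0\to P_{1}\to P_{0}\to T\to 0$ with $P_{0},P_{1}$ finitely generated projective. Since $\mathcal{C}$ is an annuli variety each $P_{i}$ is a finite sum of representables, hence $P_{i}\cong\mathcal{C}(\;\;,C_{i})$ for suitable $C_{i}\in\mathcal{C}$; applying $\mathrm{Hom}_{\mathcal{C}}(\;\;,S)$ for $S\in\mathcal{T}$ and using $\mathrm{Ext}^{1}_{\mathcal{C}}(T,S)=0=\mathrm{Ext}^{1}_{\mathcal{C}}(P_{0},S)$ yields
\begin{equation*}
0\to\mathcal{T}^{op}(\;\;,T)\to(\mathcal{C}(\;\;,C_{0}),\;\;)_{\mathcal{T}}\to(\mathcal{C}(\;\;,C_{1}),\;\;)_{\mathcal{T}}\to 0 ,
\end{equation*}
which is exactly a resolution of the representable $\mathcal{T}^{op}(\;\;,T)$ by objects of $\theta$. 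This finishes (a). Finiteness of $P_{0},P_{1}$ is the crux here: for an infinite coproduct of representables $\mathrm{Hom}_{\mathcal{C}}(\;\;,S)$ returns a product, which in general does not lie in $\theta$.

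For part (b), $C\mapsto(\mathcal{C}(\;\;,C),\;\;)_{\mathcal{T}}$, $f\mapsto\mathrm{Hom}_{\mathcal{C}}(\mathcal{C}(\;\;,f),\;\;)_{\mathcal{T}}$, is a functor $\mathcal{C}^{op}\to\mathrm{Mod}(\mathcal{T}^{op})$ whose essential image is $\theta$ by definition, so only full faithfulness is at issue. Applying $\mathrm{Hom}_{\mathcal{T}^{op}}(\;\;,(\mathcal{C}(\;\;,C'),\;\;)_{\mathcal{T}})$ to the presentation of $(\mathcal{C}(\;\;,C),\;\;)_{\mathcal{T}}$ from the second paragraph and invoking Yoneda identifies $\mathrm{Hom}_{\mathcal{T}^{op}}\big((\mathcal{C}(\;\;,C),\;\;)_{\mathcal{T}},(\mathcal{C}(\;\;,C'),\;\;)_{\mathcal{T}}\big)$ with $\ker\pi_{C'}=\mathcal{C}(C',C)=\mathcal{C}^{op}(C,C')$; a routine naturality check shows this isomorphism is precisely the one induced by the functor, whence $\theta\simeq\mathcal{C}^{op}$. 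The genuine work throughout is not homological — the only inputs are the two Ext-vanishing conditions of $\mathcal{T}$ together with the finite-generation hypothesis — but lies in keeping the covariant/contravariant roles of $\mathcal{T}$ versus $\mathcal{T}^{op}$ straight and in verifying that the Yoneda isomorphisms in sight are compatible with the functors; that last compatibility is the main point to be careful about.
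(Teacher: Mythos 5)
Your proposal is correct and follows essentially the same route as the paper: apply $\mathrm{Hom}_{\mathcal{C}}(\;\;,-)_{\mathcal{T}}$ to the cotilting resolution $0\to\mathcal{C}(\;\;,C)\to T_0\to T_1\to 0$ to get a length-one projective presentation of $(\mathcal{C}(\;\;,C),\;\;)_{\mathcal{T}}$, compute $\mathrm{Hom}$ and $\mathrm{Ext}^{1}$ in $\mathrm{Mod}(\mathcal{T}^{op})$ by Yoneda and the evaluation-at-$C'$ identification, and use the finite projective resolution of $T$ to supply condition (iii); part (b) then falls out of the same $\mathrm{Hom}$ computation $((\mathcal{C}(\;\;,C),\;\;)_{\mathcal{T}},(\mathcal{C}(\;\;,C'),\;\;)_{\mathcal{T}})\cong\mathcal{C}(C',C)$. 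Your explicit remark on where finite generation of $P_0,P_1$ is indispensable (a product of representables would escape $\theta$) is a useful point that the paper leaves implicit.
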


\begin{proof}
(a)(i) For each object $C$ in $\mathcal{C}$ there is a resolution
\begin{equation}  \label{Tilop}
0\rightarrow \mathcal{C}(\;\;,C)\rightarrow T_{0}\xrightarrow{f}%
T_{1}\rightarrow 0\text{,}
\end{equation}%
with $T_{i}\in \mathcal{T}$, for $i=0,1$. By the long homology sequence,
there is an exact sequence of objects in $\mathrm{Mod}(\mathcal{C})$,
\begin{equation}
0\rightarrow (T_{1},\;\;)_{\mathcal{T}}\xrightarrow {(f,\;\;)}(T_{0},\;\;)_{%
\mathcal{T}}\rightarrow (\mathcal{C}(\;\;,C),\;\;)_{\mathcal{T}}\rightarrow
\mathrm{Ext}(T_{1},\;\;)_{\mathcal{T}}=0\text{,}  \label{TT1}
\end{equation}%
this is; $\mathrm{pdim}(\mathcal{C}(\;\;,C),\;\;)_{\mathcal{T}}\leq 1$.

(ii) Let $C^{\prime }$ be another object in $\mathcal{C}$. After applying
the functor $\mathrm{Hom}_{\mathcal{T}^{op}}(\;\;,(\mathcal{C}%
(\;\;,C^{\prime }),\;\;)_{\mathcal{T}})$ to (\ref{TT1}), we get an exact
sequence:
\begin{eqnarray}  \label{TT2}
0 \rightarrow ((\mathcal{C}(\;\;,C),\;\;)_{\mathcal{T}},(\mathcal{C}%
(\;\;,C^{\prime }),\;\;)_{\mathcal{T}})\rightarrow ((T_{0},\;\;),(\mathcal{C}%
(\;\;,C^{\prime }),\;\;)_{\mathcal{T}})\rightarrow \\
\rightarrow ((T_{1},\;\;),(\mathcal{C}(\;\;,C^{\prime }),\;\;)_{\mathcal{T}%
})\rightarrow \mathrm{Ext}_{\mathcal{T}^{op}}^{1}((\mathcal{C}%
(\;\;,C),\;\;)_{\mathcal{T}},(\mathcal{C}(\;\;,C^{\prime }),\;\;)_{\mathcal{T%
}})\rightarrow 0 \text{.}  \notag
\end{eqnarray}

By Yoneda's Lemma the following diagram:
\begin{equation*}
\begin{diagram}
\node{((T_0,\;\;),(\mathcal{C}(\;\;,C'),\;\;)_{\mathcal{T}})}\arrow{e,t}{}%
\arrow{s,l}{\cong}
\node{((T_1,\;\;),(\mathcal{C}(\;\;,C'),\;\;)_{\mathcal{T}})}\arrow{s,l}{%
\cong}\\
\node{(\mathcal{C}(\;\;,C'),T_0)}\arrow{e,t}{(\mathcal{C}(\;\;,C'),f)}%
\arrow{s,l}{\cong} \node{(\mathcal{C}(\;\;,C'),T_1)}\arrow{s,l}{\cong}\\
\node{T_0(C')}\arrow{e,t}{f_{C'}} \node{T_1(C')} \end{diagram}
\end{equation*}
commutes.

Evaluating (\ref{Tilop}) in $C^{\prime }$, and using Yoneda's Lemma,
together with (\ref{TT2}), there exist the following isomorphisms of abelian
groups:%
\begin{eqnarray}
((\mathcal{C}(\;\;,C),\;\;)_{\mathcal{T}},(\mathcal{C}(\;\;,C^{\prime
}),\;\;)_{\mathcal{T}}) &\cong &\mathcal{C}(C^{\prime },C)  \label{TT5} \\
\mathrm{Ext}_{\mathcal{T}^{op}}^{1}((\mathcal{C}(\;\;,C),\;\;)_{\mathcal{T}%
},(\mathcal{C}(\;\;,C^{\prime }),\;\;)_{\mathcal{T}}) &\cong &0 \text{.}
\notag
\end{eqnarray}

(iii) The sequence
\begin{equation*}
0\rightarrow (T,\;\;)_{\mathcal{T}}\rightarrow (P_{0},\;\;)_{\mathcal{T}%
}\rightarrow (P_{1},\;\;)_{\mathcal{T}}\rightarrow \mathrm{Ext}_{\mathcal{T}%
^{op}}^{1}(T,\;\;)_{\mathcal{T}}=0
\end{equation*}

is exact, and each $(P_{i},\;\;)_{\mathcal{T}}$ is in $\mathrm{add}\{(%
\mathcal{C}(\;\;,C),\;\;)_{\mathcal{T}}\}_{C\in \mathcal{C}}$.

(b) We define the functor:
\begin{equation*}
\alpha :\mathcal{C}^{op}\rightarrow \theta,\;\alpha (C)=(\mathcal{C}%
(\;\;,C),\;\;)_\mathcal{T}
\end{equation*}%
Which by (\ref{TT5}), is full and faithful, giving the desired equivalence.
\end{proof}

\subsection{ The Grothendieck Groups $K_{0}(\mathcal{C})$ and $K_{0}(%
\mathcal{T}$).}

Given a ring $A$ and a tilting $A$-module $T$ it is a classical theorem [B]
that the Grotendieck groups of $A$ and $B=\mathrm{End}_{A}(T)^{op}$ are
isomorphic. In this subsection we will prove that there is also an
isomorphism between the Grotendieck group, $K_{0}(\mathcal{C})$, of an
arbitrary skeletally small pre additive category $\mathcal{C},$ and the
Grotendieck group, $K_{0}(\mathcal{T}),$ of a tilting subcategory $\mathcal{T%
}$ of $\mathrm{\mathrm{Mod}}(\mathcal{C})$. The proof will follow closely
[CF].

\begin{definition}
Let $\mathcal{C}$ a skeletally small pre additive category $\mathcal{C}$ and
let $\mathcal{T}$ be a tilting subcategory of $\mathrm{\mathrm{Mod}}(%
\mathcal{C})$. Let's denote by $|\mathrm{\mathrm{Mod}}(\mathcal{C})|$ the
set of isomorphism classes of objects in $\mathrm{\mathrm{Mod}}(\mathcal{C})$%
. Let $\mathcal{A}$ be the free abelian group generated by $|\mathrm{\mathrm{%
Mod}}(\mathcal{C})|$ and $\mathcal{R}$ the subgroup of $\mathcal{A}$
generated by relations $M-K-L$ such that $0\rightarrow K\rightarrow
M\rightarrow L\rightarrow 0$ is a short exact sequence in $\mathrm{\mathrm{%
Mod}}(\mathcal{C})$. Then, the Groethendiek group of $\mathcal{C}$ is $K_{0}(%
\mathcal{C})=\mathcal{A}/\mathcal{R}$.
\end{definition}

\begin{proposition}
The Groethendieck groups $K_{0}(\mathcal{C})$ and $K_{0}(\mathcal{T})$ are
isomorphic.
\end{proposition}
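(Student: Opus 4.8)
The plan is to realize the isomorphism through the four Brenner--Butler functors $F=\phi$, $F'=\mathrm{Ext}^{1}_{\mathcal{C}}(\;\;,-)_{\mathcal{T}}$, $G=-\otimes\mathcal{T}$, $G'=\mathrm{Tor}^{\mathcal{T}}_{1}(-,\mathcal{T})$, following the pattern of [CF]. I would define $f\colon K_{0}(\mathcal{C})\to K_{0}(\mathcal{T})$ on classes of objects by $f([M])=[F(M)]-[F'(M)]$ and $g\colon K_{0}(\mathcal{T})\to K_{0}(\mathcal{C})$ by $g([N])=[G(N)]-[G'(N)]$, then show that $f$ and $g$ descend to the Grothendieck groups and are mutually inverse.

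For well-definedness of $f$, start from a short exact sequence $0\to M_{1}\to M_{2}\to M_{3}\to 0$ in $\mathrm{Mod}(\mathcal{C})$. The functor $F=\phi$ is left exact and $F'$ is its first right derived functor; the crucial point is that $\mathrm{Ext}^{n}_{\mathcal{C}}(\;\;,-)_{\mathcal{T}}=0$ for $n\ge 2$, since evaluation at $T\in\mathcal{T}$ gives $\mathrm{Ext}^{n}_{\mathcal{C}}(T,-)=0$ because $\mathrm{pdim}\,\mathcal{T}\le 1$. Hence the derived long exact sequence truncates to a six-term exact sequence
\[
0\to F(M_{1})\to F(M_{2})\to F(M_{3})\to F'(M_{1})\to F'(M_{2})\to F'(M_{3})\to 0
\]
in $\mathrm{Mod}(\mathcal{T})$, and taking the alternating sum of classes in $K_{0}(\mathcal{T})$ yields exactly $f([M_{2}])=f([M_{1}])+f([M_{3}])$, so $f$ kills all the defining relations of $K_{0}(\mathcal{C})$. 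Dually, for a short exact sequence in $\mathrm{Mod}(\mathcal{T})$ the functor $G=-\otimes\mathcal{T}$ is right exact with first left derived functor $G'$, and $\mathrm{Tor}^{\mathcal{T}}_{n}(-,\mathcal{T})=0$ for $n\ge 2$: by Proposition \ref{BB4} this reduces to $\mathrm{Tor}^{\mathcal{T}}_{n}((\mathcal{C}(\;\;,C),\;\;)_{\mathcal{T}},-)=0$ for $n\ge 2$, which holds because the exact sequence (\ref{BrennerB8}) presents $(\mathcal{C}(\;\;,C),\;\;)_{\mathcal{T}}$ with projective dimension $\le 1$ in $\mathrm{Mod}(\mathcal{T}^{op})$. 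Again this gives a six-term exact sequence and $g$ descends to $K_{0}(\mathcal{T})$.

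It then remains to check $g\circ f=\mathrm{id}$ and $f\circ g=\mathrm{id}$. For $M\in\mathrm{Mod}(\mathcal{C})$, Proposition \ref{BB9} provides the exact sequence $0\to GF(M)\to M\to G'F'(M)\to 0$, so $[M]=[GF(M)]+[G'F'(M)]$ in $K_{0}(\mathcal{C})$; since $g(f([M]))=[GF(M)]-[G'F(M)]-[GF'(M)]+[G'F'(M)]$ and the Brenner--Butler theorem gives $G'F=GF'=0$, we obtain $g(f([M]))=[M]$. Symmetrically, the dual exact sequence $0\to F'G'(N)\to N\to FG(N)\to 0$ gives $[N]=[F'G'(N)]+[FG(N)]$, and with $F'G=FG'=0$ one gets $f(g([N]))=[FG(N)]+[F'G'(N)]=[N]$. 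Hence $f$ and $g$ are inverse isomorphisms.

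I expect the only real obstacle to be the two vanishing statements $\mathrm{Ext}^{\ge 2}_{\mathcal{C}}(\;\;,-)_{\mathcal{T}}=0$ and $\mathrm{Tor}^{\mathcal{T}}_{\ge 2}(-,\mathcal{T})=0$, which are precisely what makes the derived long exact sequences six-term and thereby forces $f$ and $g$ to be additive; once these are in hand everything reduces to formal bookkeeping with the exact sequences and composition identities already established. If $K_{0}$ is to be read on finitely presented functors, one would additionally record that $F,F',G,G'$ preserve finite presentation --- which follows from the finite resolutions used above together with Proposition \ref{TC} --- so that the whole construction stays inside the relevant subcategory.
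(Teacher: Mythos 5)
Your proposal is correct and follows essentially the same route as the paper: both define the maps by the alternating sums $[F(M)]-[F'(M)]$ and $[G(N)]-[G'(N)]$, check additivity via the six-term exact sequences forced by $\mathrm{pdim}\,\mathcal{T}\leq 1$ and $\mathrm{pdim}\,(\mathcal{C}(\;\;,C),\;\;)_{\mathcal{T}}\leq 1$, and conclude with the two canonical short exact sequences together with $G'F=GF'=F'G=FG'=0$. Your sign bookkeeping in the computation of $g\circ f$ is in fact cleaner than the paper's, which contains a (self-cancelling) sign slip at that step.
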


\begin{proof}
We define the group homomorphism $\hat{\phi}:\mathcal{A}\rightarrow K_{0}(%
\mathcal{T})$, sending $\hat{\phi}(M)=|F(M)|-|F^{\prime }(M)|$, where $F$
and $F^{\prime }$are the functors given in Brenner-Buttler's theorem. We
claim $\mathcal{R}$ is contained in the kernel of $\hat{\phi}$. In fact, let
$M-K-L$ be a generator of $\mathcal{R}$. Then there exists an exact sequence
in $\mathrm{\mathrm{Mod}}(\mathcal{T})$,
\begin{eqnarray}
0 &\rightarrow &(\;\;,K)_{\mathcal{T}}\rightarrow (\;\;,M)_{\mathcal{T}%
}\rightarrow (\;\;,L)_{\mathcal{T}}\rightarrow \mathrm{Ext}_{\mathcal{C}%
}^{1}(\;\;,K)_{\mathcal{T}}\rightarrow  \notag  \label{GG1} \\
&\rightarrow &\mathrm{Ext}_{\mathcal{C}}^{1}(\;\;,M)_{\mathcal{T}%
}\rightarrow \mathrm{Ext}_{\mathcal{C}}^{1}(\;\;,L)_{\mathcal{T}}\rightarrow
\mathrm{Ext}_{\mathcal{C}}^{2}(\;\;,K)_{\mathcal{T}}=0\text{,}
\end{eqnarray}

hence, the alternating sum,
\begin{equation*}
-(|F(K)|-|F^{\prime }(K)|)-(|F(L)|-|F^{\prime }(L)|)+(|F(M)|+|F^{\prime
}(M)|)=0.
\end{equation*}%
Which means $\hat{\phi}(M-K-L)=0$. Then, there is a unique map $\phi :K_{0}(%
\mathcal{C})\rightarrow K_{0}(\mathcal{T})$, given by $\phi
(|M|)=|F(M)|-|F^{\prime }(M)|$.

For each object $C$ in $\mathcal{C}$, there is a short exact sequence $%
0\rightarrow \mathcal{C}(\;\;,C)\rightarrow T^{0}\rightarrow
T^{1}\rightarrow 0$ with $T^{i}$ in $\mathcal{T}$, for $i=0,1$, which
induces the exact sequence in $\mathrm{\mathrm{Mod}}(\mathcal{T}^{op})$:
\begin{equation*}
0\rightarrow (T_{1},\;\;)_{\mathcal{T}}\rightarrow (T_{0},\;\;)_{\mathcal{T}%
}\rightarrow (\mathcal{C}(\;\;,C),\;\;)_{\mathcal{T}}\rightarrow \mathrm{Ext}%
_{\mathcal{C}}^{1}(T_{1},\;\;)=0\text{.}
\end{equation*}%
Therefore: for $n>1$, and any object $N$ in $\mathrm{Mod}(\mathcal{T})$, $%
\mathrm{pdim}(\mathcal{C}(\;\;,C),\;\;)_{\mathcal{T}}\leq 1$ implies $%
\mathrm{Tor}_{n}^{\mathcal{T}}((\mathcal{C}(\;\;,C),\;\;)_{\mathcal{T}},N)=0$%
. For an exact sequence $0\rightarrow K\rightarrow N\rightarrow L\rightarrow
0$ in $\mathrm{\mathrm{Mod}}(\mathcal{T})$, and any object $C$ in $\mathcal{C%
}$, there is an exact sequence:
\begin{eqnarray*}
0 &\rightarrow &\mathrm{Tor}_{1}^{\mathcal{T}}(\mathcal{C}%
(\;\;,C),\;\;),K)\rightarrow \mathrm{Tor}_{1}^{\mathcal{T}}(\mathcal{C}%
(\;\;,C),\;\;),N)\rightarrow \mathrm{Tor}_{1}^{\mathcal{T}}(\mathcal{C}%
(\;\;,C),\;\;),L)\rightarrow \\
&\rightarrow &(\mathcal{C}(\;\;,C),\;\;)\otimes K\rightarrow (\mathcal{C}%
(\;\;,C),\;\;)\otimes N\rightarrow (\mathcal{C}(\;\;,C),\;\;)\otimes
L\rightarrow 0.
\end{eqnarray*}%
Which can be re written as follows:
\begin{eqnarray}
0 &\rightarrow &\mathrm{Tor}_{1}^{\mathcal{T}}(K,\mathcal{T})\rightarrow
\mathrm{Tor}_{1}^{\mathcal{T}}(N,\mathcal{T})\rightarrow \mathrm{Tor}_{1}^{%
\mathcal{T}}(L,\mathcal{T})\rightarrow  \notag  \label{GG2} \\
&\rightarrow &K\otimes \mathcal{T}\rightarrow N\otimes \mathcal{T}%
\rightarrow L\otimes \mathcal{T}\rightarrow 0
\end{eqnarray}

In an analogous way, there is a group homomorphism $\psi :K_{0}(\mathcal{T}%
)\rightarrow K_{0}(\mathcal{C})$, given by $\psi (|N|)=|G(N)|-|G^{\prime
}(N)|$.

By Brenner-Butler's theorem, there are isomorphisms:
\begin{eqnarray*}
\psi \phi (|M|) &=&\psi (|F(M)|-|F^{\prime }(M)|) \\
&=&\psi (|F(M)|)-\psi (|F^{\prime }(M)|) \\
&=&(|GF(M)|-|G^{\prime }F(M)|)-(|GF^{\prime }(M)|-|G^{\prime }F^{\prime
}(M)|) \\
&=&|GF(M)|-|G^{\prime }F^{\prime }(M)|\text{,}
\end{eqnarray*}%
and from the exact sequence
\begin{equation*}
0\rightarrow GF(M)\rightarrow M\rightarrow G^{\prime }F^{\prime
}(M)\rightarrow 0
\end{equation*}%
it follows: $|M|=|GF(M)|-|G^{\prime }F^{\prime }(M)|$, this is: $\psi \phi
=1_{K_{0}(\mathcal{C})}$. With a similar argument we prove $\phi \psi
=1_{K_{0}(\mathcal{T})}$.
\end{proof}

\subsection{ Global Dimension and Tilting}

In this subsection we will compare the global dimensions of a category $%
\mathcal{C}$ and its tilting category $\mathcal{T}$, obtaining results
similar to the ring situation. The proof given here uses the same line of
arguments as in [ASS].

Let $\mathcal{C}$ be a skeletally small pre additive category and $\mathcal{T%
}$ a tilting subcategory of $\mathrm{\mathrm{Mod}}(\mathcal{C})$. Let $%
\mathscr T$ be the torsion class of $\mathrm{\mathrm{Mod}}(\mathcal{C})$,
whose objects are epimorphic images of arbitrary sums of objects in $%
\mathcal{T}$. We proved $\mathscr T=\{M\in \mathrm{\mathrm{Mod}}(\mathcal{C}%
)|\mathrm{Ext}^{1}(\mathcal{T},M)=0\}$. We use this fact in the following:

\begin{lemma}
\label{GD1} Let $M$ be an object in $\mathscr{T}$ and assume $\mathrm{Ext}_{%
\mathcal{C}}^{1}(M,\;\;)_{\mathscr{T}}=0$. Then $M$ is in $\mathrm{Add}%
\mathcal{T}$.
\end{lemma}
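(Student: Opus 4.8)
The plan is to split the canonical presentation of $M$ by objects of $\mathrm{Add}\,\mathcal{T}$ that was produced in Propositions \ref{Catilt} and \ref{TORD}. Since $M\in\mathscr T$, Proposition \ref{TORD} (together with the construction preceding Proposition \ref{Catilt}) gives a short exact sequence
\begin{equation*}
0\rightarrow K_{0}\rightarrow T^{(X_{0})}\xrightarrow{\eta_{0}}M\rightarrow 0\text{,}
\end{equation*}
where $T^{(X_{0})}=\coprod_{i\in I}T_{i}^{(X_{i})}$ is an arbitrary sum of objects of $\mathcal{T}$, hence lies in $\mathrm{Add}\,\mathcal{T}$, and where, by Proposition \ref{Catilt}, every morphism $\eta\colon T'\rightarrow M$ with $T'\in\mathcal{T}$ factors through $\eta_{0}$. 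The goal is to show that this sequence splits; it then follows immediately that $M$ is a direct summand of $T^{(X_{0})}$, so $M\in\mathrm{Add}\,\mathcal{T}$.

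The first step is to check that $K_{0}$ again belongs to $\mathscr T$ — this is exactly the computation in the proof of Proposition \ref{TORZ}(ii), which I would repeat. Applying $\mathrm{Hom}_{\mathcal{C}}(T',\;\;)$ to the displayed sequence for an arbitrary $T'\in\mathcal{T}$ and using that $T'$ is finitely presented, so that $\mathrm{Ext}^{1}_{\mathcal{C}}(T',\;\;)$ commutes with the coproduct $T^{(X_{0})}$ (Proposition \ref{TC}), that $\mathrm{Ext}^{1}_{\mathcal{C}}(T',T_{i})=0$ by axiom (ii) of a tilting category, and that $\mathrm{pdim}\,T'\leq 1$, the long homology sequence combined with the surjectivity of $\mathrm{Hom}_{\mathcal{C}}(T',T^{(X_{0})})\rightarrow\mathrm{Hom}_{\mathcal{C}}(T',M)$ (the factorization property) forces $\mathrm{Ext}^{1}_{\mathcal{C}}(T',K_{0})=0$. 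As this holds for every $T'\in\mathcal{T}$, the characterization of the torsion class in Proposition \ref{TORH} yields $K_{0}\in\mathscr T$.

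Now I would invoke the hypothesis. Since $K_{0}\in\mathscr T$, the assumption $\mathrm{Ext}^{1}_{\mathcal{C}}(M,\;\;)_{\mathscr T}=0$ gives $\mathrm{Ext}^{1}_{\mathcal{C}}(M,K_{0})=0$; but the extension class of $0\rightarrow K_{0}\rightarrow T^{(X_{0})}\rightarrow M\rightarrow 0$ lives in $\mathrm{Ext}^{1}_{\mathcal{C}}(M,K_{0})$, so the sequence splits and $M$ is a direct summand of $T^{(X_{0})}\in\mathrm{Add}\,\mathcal{T}$, whence $M\in\mathrm{Add}\,\mathcal{T}$. The only real content is the verification that $K_{0}\in\mathscr T$, and this is already contained in the proof of Proposition \ref{TORZ}(ii); once it is available the Ext-vanishing hypothesis does the rest, so I do not expect a genuine obstacle here.
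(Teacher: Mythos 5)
Your proof is correct and follows the paper's argument essentially verbatim: take the short exact sequence $0\rightarrow K_{0}\rightarrow T^{(X_{0})}\rightarrow M\rightarrow 0$ from Proposition \ref{TORZ}(ii), observe $K_{0}\in\mathscr{T}$, and use the hypothesis $\mathrm{Ext}^{1}_{\mathcal{C}}(M,\;\;)_{\mathscr{T}}=0$ to split it. The only difference is that you re-derive the fact that $K_{0}\in\mathscr{T}$ while the paper simply cites it from Proposition \ref{TORZ}.
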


\begin{proof}
Let $M$ be an object in $\mathscr{T}$. By Proposition \ref{TORZ} there is a
short exact sequence
\begin{equation*}
0\rightarrow \mathrm{Ker}(\alpha )\rightarrow \coprod_{i\in I}T_{i}%
\xrightarrow{\alpha}M\rightarrow 0\text{,}
\end{equation*}%
with $\mathrm{Ker}(\alpha )$ in $\mathscr{T}$. Therefore: the sequence
splits and $M$ is in $\mathrm{Add}\mathcal{T}$.
\end{proof}

\begin{proposition}
If $M$ is in $\mathscr{T}$, then $\mathrm{pdim}\mathrm{Hom}_{\mathcal{C}%
}(\;\;,M)_{\mathcal{T}}\leq \mathrm{pdim}M$.
\end{proposition}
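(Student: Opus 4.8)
The plan is to prove the statement by induction on $n = \mathrm{pdim}\, M$, using the Brenner-Butler machinery already developed, in particular Proposition~\ref{TORZ} and the characterization of $\mathscr{T}$ as $\{M \mid \mathrm{Ext}^1_{\mathcal{C}}(\mathcal{T},M)=0\}$. The base case $n=0$ is immediate: a projective $M$ in $\mathscr{T}$ lies in $\mathrm{Add}\,\mathcal{T}$, since condition (iii) in the definition of a tilting category gives each $\mathcal{C}(\;\;,C)$ a resolution by objects of $\mathcal{T}$, and one can read off that projectives are summands of objects built from $\mathcal{T}$ — alternatively, if $\mathrm{pdim}\,M=0$ then $\mathrm{Ext}^1_{\mathcal{C}}(M,\;\;)=0$, so Lemma~\ref{GD1} applies directly and $M \in \mathrm{Add}\,\mathcal{T}$, hence $\phi(M) = (\;\;,M)_{\mathcal{T}}$ is projective in $\mathrm{Mod}(\mathcal{T})$.

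For the inductive step, I would use the exact sequence furnished by Proposition~\ref{TORZ}(ii): for $M \in \mathscr{T}$ there is a short exact sequence
\begin{equation*}
0 \rightarrow K_0 \rightarrow T^{(X_0)} \xrightarrow{\eta_0} M \rightarrow 0
\end{equation*}
with $T^{(X_0)} \in \mathrm{Add}\,\mathcal{T}$ and $K_0 \in \mathscr{T}$, and such that every map from an object of $\mathcal{T}$ to $M$ factors through $\eta_0$. Applying $\phi$ and using Proposition~\ref{TORZ}(i) (all three terms are in $\mathscr{T}$), we get a short exact sequence
\begin{equation*}
0 \rightarrow \phi(K_0) \rightarrow \phi(T^{(X_0)}) \rightarrow \phi(M) \rightarrow 0
\end{equation*}
in $\mathrm{Mod}(\mathcal{T})$ with $\phi(T^{(X_0)})$ projective. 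Since $\mathrm{pdim}\, K_0 \leq \mathrm{pdim}\, M - 1$ (the kernel of a surjection from a projective onto $M$ drops projective dimension by one, and $T^{(X_0)}$ is projective in $\mathrm{Mod}(\mathcal{C})$ because sums of objects in $\mathcal{T}$ of pdim $\leq 1$... wait — here I must be careful, $T^{(X_0)}$ has $\mathrm{pdim} \leq 1$, not necessarily $0$), I would instead argue: from $0 \to K_0 \to T^{(X_0)} \to M \to 0$ and $\mathrm{pdim}\, T^{(X_0)} \leq 1$ one deduces $\mathrm{pdim}\, K_0 \leq \max(\mathrm{pdim}\, M - 1, 0)$ when $\mathrm{pdim}\, M \geq 1$, and separately that $\mathrm{pdim}\, \phi(K_0) \leq \mathrm{pdim}\, K_0$ by induction. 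Then the displayed short exact sequence of $\mathcal{T}$-modules with projective middle term yields $\mathrm{pdim}\,\phi(M) \leq \mathrm{pdim}\,\phi(K_0) + 1 \leq \mathrm{pdim}\, K_0 + 1 \leq \mathrm{pdim}\, M$, closing the induction.

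The main obstacle I anticipate is the bookkeeping on projective dimensions in the presence of the $\mathrm{pdim}\,\mathcal{T} \leq 1$ hypothesis: unlike the classical ring case where one shifts along a genuine projective resolution, here the "building blocks" $T^{(X_0)}$ are not projective but have projective dimension one, so the naive dimension shift must be done carefully — one needs that $0 \to K_0 \to P \to M \to 0$ with $P \in \mathrm{Add}\,\mathcal{T}$ and $M, K_0, P \in \mathscr{T}$ still forces $\mathrm{pdim}\,K_0 < \mathrm{pdim}\,M$ whenever $\mathrm{pdim}\,M \geq 2$, and handles the boundary case $\mathrm{pdim}\,M = 1$ (where $K_0$ turns out projective, reducing to the base case) by a separate argument. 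Once that is settled, applying $\phi$ and invoking exactness of $\phi$ on sequences in $\mathscr{T}$ (Proposition~\ref{TORZ}(i)) together with the projectivity of $\phi(\mathrm{Add}\,\mathcal{T})$ in $\mathrm{Mod}(\mathcal{T})$ is routine.
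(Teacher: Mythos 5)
Your approach is the same as the paper's: induct on $\mathrm{pdim}\,M$ using the exact sequence $0\to K_0 \to T^{(X_0)}\to M\to 0$ from Proposition~\ref{TORZ}(ii), apply $\phi$ (which is exact on sequences in $\mathscr{T}$ by Proposition~\ref{TORZ}(i)), and shift projective dimensions. Your base case and your treatment of the step $n\geq 2$ are correct, and you correctly flag the real difficulty, namely that $T^{(X_0)}$ is not projective but only of projective dimension at most one, so the naive dimension shift fails at the boundary.

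Where the proposal goes wrong is in the handling of that boundary case $\mathrm{pdim}\,M = 1$. You conjecture that $K_0$ ``turns out projective, reducing to the base case,'' but that is not what happens and need not be true: from $0\to K_0\to T^{(X_0)}\to M\to 0$ with $\mathrm{pdim}\,T^{(X_0)}\leq 1$ and $\mathrm{pdim}\,M=1$, one only gets $\mathrm{pdim}\,K_0\leq 1$, and $K_0$ can genuinely have projective dimension one (for instance, when the trace cover involves non-projective summands of $\mathcal{T}$). The correct claim, and the one the paper establishes, is that $K_0\in\mathrm{Add}\,\mathcal{T}$, which is weaker than $K_0$ being projective in $\mathrm{Mod}(\mathcal{C})$ but is exactly what is needed, since then $\phi(K_0)$ is projective in $\mathrm{Mod}(\mathcal{T})$. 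The argument is: the long exact sequence gives $\mathrm{Ext}^1_{\mathcal{C}}(T^{(X_0)},\;\;)_{\mathscr{T}}\to\mathrm{Ext}^1_{\mathcal{C}}(K_0,\;\;)_{\mathscr{T}}\to\mathrm{Ext}^2_{\mathcal{C}}(M,\;\;)_{\mathscr{T}}$; the left term vanishes because $T^{(X_0)}\in\mathrm{Add}\,\mathcal{T}$ and the right term vanishes because $\mathrm{pdim}\,M=1$, so $\mathrm{Ext}^1_{\mathcal{C}}(K_0,\;\;)_{\mathscr{T}}=0$ and Lemma~\ref{GD1} (which you already invoke for the base case) yields $K_0\in\mathrm{Add}\,\mathcal{T}$. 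With that correction in place the induction closes exactly as you sketch.
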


\begin{proof}
By induction in $\mathrm{pdim}M$. If $\mathrm{pdim}M=0$, then $M$ is
projective, and since $M$ is in $\mathscr{T}$ there is an epimorphism $%
f:\coprod_{i\in I}T_{i}\rightarrow M\rightarrow 0$, with $T_{i}$ in $%
\mathcal{T}$, which splits and $M$ is a summand of $\coprod_{i\in I}T_{i}$.
It follows $M$ is in $\mathrm{Add}\mathcal{T}$, and $\mathrm{Hom}_{\mathcal{C%
}}(\;\;,M)_{\mathcal{T}}$ is summand of $\coprod_{i\in I} (\;\;,T_{i})$,
this is, $\mathrm{Hom}_{\mathcal{C}}(\;\;,M)_{\mathcal{T}}$ is projective
and $\mathrm{pdim}\mathrm{Hom}_{\mathcal{C}}(\;\;,M)=0$.

Let's assume $\mathrm{pdim}M=1$. There is an exact sequence
\begin{equation}  \label{GD2}
0\rightarrow L\rightarrow T_{0}\rightarrow M\rightarrow 0\text{,}
\end{equation}%
with $T_{0}$ in $\mathrm{Add}\mathcal{T}$ and $L$ in $\mathscr{T}$. The
sequence (\ref{GD2}) induces an exact sequence:
\begin{equation*}
0\rightarrow (\;\;,L)_{\mathcal{T}}\rightarrow (\;\;,T_{0})_{\mathcal{T}%
}\rightarrow (\;\;,M)_{\mathcal{T}}\rightarrow \mathrm{Ext}_{\mathcal{C}%
}^{1}(\;\;,L)_{\mathcal{T}}=0\text{.}
\end{equation*}

Since $\mathrm{pd}M=1$, then $\mathrm{Ext}_{\mathcal{C}}^{2}(M,\;\;)_{%
\mathscr{T}}=0$, and from (\ref{GD2}) and the long homology sequence, it
follows the sequence
\begin{equation*}
0=\mathrm{Ext}_{\mathcal{C}}^{1}(T_{0},\;\;)_{\mathscr{T}}\rightarrow
\mathrm{Ext}_{\mathcal{C}}^{1}(L,\;\;)_{\mathscr{T}}\rightarrow \mathrm{Ext}%
_{\mathcal{C}}^{2}(M,\;\;)_{\mathscr{T}}=0
\end{equation*}%
is exact. In consequence, $\mathrm{Ext}_{\mathcal{C}}^{1}(L,\;\;)_{\mathscr %
T}=0$, and by Lemma \ref{GD1}, $L$ is in $\mathrm{Add}\mathcal{T}$.
Therefore $\mathrm{Hom}_{\mathcal{C}}(\;\;,L)_{\mathcal{T}}$ is projective
and $\mathrm{Hom}_{\mathcal{C}}(\;\;,M)_{\mathcal{T}}$, has projective
dimension less or equal to one.

Suppose $n\geq 2$ and the claim is true for all objects in $\mathscr{T}$
with projective dimension less than $n$. Let $M$ be an object in $\mathscr{T}
$ with $\mathrm{pdim}M=n$. Then from (\ref{GD2}) and the long homology
sequence we get an exact sequence
\begin{equation*}
0=\mathrm{Ext}_{\mathcal{C}}^{n}(T_{0},\;\;)\rightarrow \mathrm{Ext}_{%
\mathcal{C}}^{n}(L,\;\;)\rightarrow \mathrm{Ext}_{\mathcal{C}%
}^{n+1}(M,\;\;)=0\text{.}
\end{equation*}%
Then $\mathrm{pdim}L\leq (n-1)$. By induction hypothesis $\mathrm{pd}\mathrm{%
Hom}_{C}(\;\;,L)_{\mathcal{T}}\leq n-1$ and $\mathrm{Hom}_{\mathcal{C}%
}(,\;\;T_{0})_{\mathcal{T}}$ is projective. Applying the contravariant
functor $\mathrm{Hom}_{\mathcal{T}}(-,\;)$ to the exact sequence (\ref{GD2}%
), we obtain by the long homology sequence the inequalities:
\begin{equation*}
\mathrm{pdim}\mathrm{Hom}_{\mathcal{C}}(\;\;,M)_{\mathcal{T}}\leq \mathrm{%
pdim}\mathrm{Hom}_{\mathcal{C}}(\;\;,L)_{\mathcal{T}}+1\leq 1+(n-1)
\end{equation*}
\end{proof}

\begin{theorem}
With the same assumptions as above, we have the inequality:
\begin{equation*}
\mathrm{gdim}(\mathcal{T})\leq 1+\mathrm{gdim}(\mathcal{C}).
\end{equation*}
\end{theorem}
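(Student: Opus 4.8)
The plan is to use the torsion pair $(\mathscr{X},\mathscr{Y})$ on $\mathrm{Mod}(\mathcal{T})$ produced by the Brenner-Butler Theorem to split the estimate into two pieces and then compute everything through the two functors $\phi$ and $\mathrm{Ext}^1_{\mathcal{C}}(\;\;,-)_{\mathcal{T}}$. Put $n=\mathrm{gdim}(\mathcal{C})$; we may assume $n<\infty$. Since $\mathrm{Mod}(\mathcal{T})$ has enough projectives and enough injectives, $\mathrm{gdim}(\mathcal{T})=\sup\{\mathrm{idim}_{\mathcal{T}}N:N\in\mathrm{Mod}(\mathcal{T})\}$, and likewise $n=\sup\{\mathrm{idim}_{\mathcal{C}}M\}$. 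For $N\in\mathrm{Mod}(\mathcal{T})$ the canonical sequence $0\to \mathrm{Ext}^1_{\mathcal{C}}(\;\;,\mathrm{Tor}^{\mathcal{T}}_1(N,\mathcal{T}))_{\mathcal{T}}\to N\to \phi(N\otimes\mathcal{T})\to 0$ of the torsion pair (Corollary after the Brenner-Butler Theorem) gives $\mathrm{idim}_{\mathcal{T}}N\le\max\{\mathrm{idim}_{\mathcal{T}}X_N,\mathrm{idim}_{\mathcal{T}}Y_N\}$, where $X_N=\mathrm{Ext}^1_{\mathcal{C}}(\;\;,L)_{\mathcal{T}}$ with $L=\mathrm{Tor}^{\mathcal{T}}_1(N,\mathcal{T})$, and $Y_N=\phi(M)$ with $M=N\otimes\mathcal{T}$; so it suffices to bound each of these by $n+1$.

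The heart of the argument is a local fact: for every injective object $I$ of $\mathrm{Mod}(\mathcal{C})$ one has $\mathrm{idim}_{\mathcal{T}}\phi(I)\le 1$. I would first prove this for the cogenerating injectives $I=D\mathcal{C}(C,\;)$ of §1. Using the Hom--tensor adjunction together with property (c) of the tensor product, one identifies $\phi(D\mathcal{C}(C,\;))$ with $D$ of the evaluation functor $(\mathcal{C}(\;\;,C),\;\;)_{\mathcal{T}}\colon T\mapsto T(C)$, regarded as an object of $\mathrm{Mod}(\mathcal{T}^{op})$. By the computation already carried out in the proof of Proposition~\ref{TT}(a)(i) — sequence~(\ref{TT1}), which uses only the tilting axioms (ii) and (iii) — this functor has projective dimension $\le 1$ over $\mathcal{T}^{op}$; since the $\mathbb{Z}$-duality $D$ is exact, contravariant and carries projectives of $\mathrm{Mod}(\mathcal{T}^{op})$ to injectives of $\mathrm{Mod}(\mathcal{T})$, it follows that $\mathrm{idim}_{\mathcal{T}}\phi(D\mathcal{C}(C,\;))\le 1$. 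As $\phi$ preserves products and products in $\mathrm{Mod}(\mathcal{T})$ are exact, the bound passes to arbitrary injectives $I$.

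Now I would treat the two pieces. For the torsion part, applying the right exact functor $-\otimes\mathcal{T}$ to a projective presentation of $N$ exhibits $M=N\otimes\mathcal{T}$ as an epimorphic image of an object of $\mathrm{Add}\,\mathcal{T}$, so $M\in\mathscr{T}$; hence $\mathrm{Ext}^{i}_{\mathcal{C}}(T,M)=0$ for all $T\in\mathcal{T}$ and $i\ge 1$ (Proposition~\ref{TORH} for $i=1$, and $\mathrm{pdim}\,\mathcal{T}\le 1$ for $i\ge 2$), so all higher derived functors $R^{i}\phi$ vanish on $M$. Applying the left exact functor $\phi$ to an injective coresolution of $M$ of length $\le n$ therefore yields an \emph{exact} coresolution of $\phi(M)$ by objects of injective dimension $\le 1$, whence $\mathrm{idim}_{\mathcal{T}}Y_N\le n+1$. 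For the torsion-free part, $L=\mathrm{Tor}^{\mathcal{T}}_1(N,\mathcal{T})$ satisfies $\phi(L)=0$ (last assertion of the proposition preceding Brenner-Butler), i.e. $L\in\mathscr{F}$; if $\mathrm{idim}_{\mathcal{C}}L=0$ then $X_N=\mathrm{Ext}^1_{\mathcal{C}}(\;\;,L)_{\mathcal{T}}=0$, and otherwise, for an injective coresolution $0\to L\to I^{0}\to\cdots\to I^{m}\to 0$ ($m=\mathrm{idim}_{\mathcal{C}}L\le n$) the complex $\phi(I^{\bullet})$ has $R^{0}\phi(L)=\phi(L)=0$ and $R^{i}\phi(L)=0$ for $i\ge 2$, hence is exact except in degree one, where its cohomology is $X_N$. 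Threading this complex successively through $0\to\phi(I^{0})\to Z^{1}\to X_N\to 0$ and $0\to Z^{1}\to\phi(I^{1})\to B^{1}\to 0$, where $B^{1}$ has an exact coresolution by $\phi(I^{2}),\dots,\phi(I^{m})$, a routine count using the local fact gives $\mathrm{idim}_{\mathcal{T}}B^{1}\le m-1$, then $\mathrm{idim}_{\mathcal{T}}Z^{1}\le m$, and finally $\mathrm{idim}_{\mathcal{T}}X_N\le\max\{m,2\}\le n+1$. Combining, $\mathrm{idim}_{\mathcal{T}}N\le n+1$ for every $N$, which is the asserted inequality.

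The step I expect to be the main obstacle is the local fact $\mathrm{idim}_{\mathcal{T}}\phi(I)\le 1$: once it is available the rest is essentially formal. Establishing it means transporting, through the duality $D$, the projective-dimension bound for $(\mathcal{C}(\;\;,C),\;\;)_{\mathcal{T}}$ in $\mathrm{Mod}(\mathcal{T}^{op})$ back to $\mathrm{Mod}(\mathcal{T})$, and checking carefully that $\phi$ commutes with $D$, with products, and that products in $\mathrm{Mod}(\mathcal{T})$ are exact. A lesser but genuine subtlety is the final bookkeeping for $X_N$: a careless estimate produces $n+2$, and the sharp value $n+1$ only emerges if one dismantles $\phi(I^{\bullet})$ in the order indicated, so that the extra unit of dimension introduced by $\mathrm{Ext}^{1}$ is absorbed by the degree shift.
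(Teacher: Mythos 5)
Your proof is correct, but it takes a genuinely different route from the one in the paper, so let me compare the two. The paper argues dually and much more economically: given $X\in\mathrm{Mod}(\mathcal{T})$, cover it by a projective $\coprod(\;\;,T_i)\twoheadrightarrow X$ with kernel $Y$; since the projectives lie in $\mathscr{Y}$ and $\mathscr{Y}$ is a torsion-free class, hence closed under subobjects, $Y\in\mathscr{Y}$, so by the equivalence $\phi:\mathscr{T}\to\mathscr{Y}$ one has $Y\cong\phi(M)$ for some $M\in\mathscr{T}$; the immediately preceding proposition gives $\mathrm{pdim}\,\phi(M)\le\mathrm{pdim}\,M\le\mathrm{gdim}(\mathcal{C})$, and the short exact sequence then yields $\mathrm{pdim}\,X\le 1+\mathrm{gdim}(\mathcal{C})$. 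So the paper works with projective dimensions, uses only the half of the Brenner--Butler equivalence $\phi:\mathscr{T}\cong\mathscr{Y}$, and leans on the already-proved bound $\mathrm{pdim}\,\phi(M)\le\mathrm{pdim}\,M$, which you did not use. You instead work with injective dimensions, split an arbitrary $N\in\mathrm{Mod}(\mathcal{T})$ by the canonical torsion sequence $0\to X_N\to N\to Y_N\to 0$, and reduce to your local fact $\mathrm{idim}_{\mathcal{T}}\phi(I)\le 1$ for injective $I$, which you establish by identifying $\phi(D\mathcal{C}(C,\;))$ with the $\mathbb{Q}/\mathbb{Z}$-dual of the evaluation functor $(\mathcal{C}(\;\;,C),\;\;)_{\mathcal{T}}\in\mathrm{Mod}(\mathcal{T}^{op})$ and invoking the projective resolution (\ref{TT1}). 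That identification, the preservation of products by the right adjoint $\phi$, the exactness of products in $\mathrm{Mod}(\mathcal{T})$, and the exactness of $D$ carrying projectives to injectives all check out, and the bookkeeping on the truncated complexes $\phi(I^{\bullet})$ does give $\mathrm{idim}_{\mathcal{T}}Y_N\le n+1$ and $\mathrm{idim}_{\mathcal{T}}X_N\le n+1$ (in fact the sharper estimate $\mathrm{idim}_{\mathcal{T}}X_N\le n$ drops out of the same computation; your $\max\{m,2\}$ is a slightly loose but still sufficient bound). The trade-off is that your argument is independent of the projective-dimension proposition the paper proves first, but pays for this independence by requiring the nontrivial local fact, which passes through the contravariant tilting setup of Proposition~\ref{TT}. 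The paper's route is the one to prefer in context precisely because the preceding proposition is available and makes the step almost immediate; yours would be the natural route if one wanted to avoid proving that proposition or wanted a statement phrased purely in terms of injective dimensions.
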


\begin{proof}
Let $X$ be an object in $\mathrm{\mathrm{Mod}}(\mathcal{T})$, and cover it
with a projective object, to get an exact sequence
\begin{equation}  \label{GD4}
0\rightarrow Y\rightarrow \coprod_{i\in I}(\;\;,T_{i})\rightarrow
X\rightarrow 0\text{.}
\end{equation}

The functor $\phi :\mathscr{T}\rightarrow \mathscr{Y}$ is an equivalence,
and $\coprod_{i\in I}(\;\;,T_{i})$ is in $\mathscr{Y}$, and $\mathscr{Y}$ is
closed under sub-objects, hence $Y$ is in $\mathscr{Y}$. Since $\phi $ is
dense, there exists an object $M$ in $\mathscr{T}$, such that $\phi (M)=%
\mathrm{Hom}_{\mathcal{C}}(\;\;,M)_{\mathcal{T}}\cong Y$, and $\mathrm{pdim}%
Y\leq \mathrm{pdim}M$. From the exact sequence (\ref{GD4}) we have the
following inequalities:
\begin{equation*}
\mathrm{pdim}X\leq 1+\mathrm{pdim}Y\leq 1+\mathrm{pdim}M\leq 1+\mathrm{gdim}(%
\mathcal{C})
\end{equation*}%
and $\mathrm{gdim}(\mathcal{T})\leq 1+\mathrm{gdim}(\mathcal{C})$.
\end{proof}

\subsection{Brenner-Butler's theorem for categories of finitely presented
functors.}

In this subsection we will prove that, under mild assumptions on the
categories $\mathcal{C}$ and $\mathcal{T}$, Brenner-Butler's theorem holds
in the categories of finitely presented functors. To prove it we need to see
under which conditions the functor $\phi :\mathrm{\mathrm{Mod}}(\mathcal{C}%
)\rightarrow \mathrm{\mathrm{Mod}}(\mathcal{T})$ restricts to the categories
of finitely presented functors, $\phi :\mathrm{mod}(\mathcal{C})\rightarrow
\mathrm{mod}(\mathcal{T})$.

It was recalled in Section 1, that the category of finitely presented
functors $\mathrm{mod}(\mathcal{C})$ is abelian, if and only if, $\mathcal{C}
$ has pseudokerneles [AR]. Hence; it is natural to assume $\mathcal{C}$ and $%
\mathcal{T}$ have pseudokerneles. Under these conditions we have the
following.

\begin{proposition}
\label{R2} Let's assume $\mathcal{C}$ and $\mathcal{T}$ have pseudokerneles.
Then the functor
\begin{equation*}
\phi |\mathrm{mod}(\mathcal{C}):\mathrm{mod}(\mathcal{C})\rightarrow \mathrm{%
\mathrm{Mod}}(\mathcal{T})\text{,}
\end{equation*}%
has image in $\mathrm{mod}(\mathcal{T})$.
\end{proposition}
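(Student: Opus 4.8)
The plan is to show that for any finitely presented functor $M\in\mathrm{mod}(\mathcal{C})$, the functor $\phi(M)=\mathrm{Hom}_{\mathcal{C}}(\;\;,M)_{\mathcal{T}}$ is finitely presented as a $\mathcal{T}$-module, i.e. it admits a presentation $(\;\;,T_1)\to(\;\;,T_0)\to\phi(M)\to 0$ with $T_0,T_1\in\mathcal{T}$. The natural starting point is a finite presentation of $M$ over $\mathcal{C}$,
\begin{equation*}
\mathcal{C}(\;\;,C_1)\xrightarrow{\ (\;,f)\ }\mathcal{C}(\;\;,C_0)\rightarrow M\rightarrow 0,
\end{equation*}
and then to replace the representables $\mathcal{C}(\;\;,C_i)$ by their tilting coresolutions from axiom (iii) of a tilting category: $0\to\mathcal{C}(\;\;,C_i)\to T^i_0\to T^i_1\to 0$ with $T^i_j\in\mathcal{T}$. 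The idea is that applying $\phi$ turns these short exact sequences into short exact sequences of representables in $\mathrm{Mod}(\mathcal{T})$ (since the terms lie in $\mathscr{T}$, where $\mathrm{Ext}^1$ against $\mathcal{T}$ vanishes, one gets $0\to(\;\;,T^i_1)_{\mathcal T}\to(\;\;,T^i_0)_{\mathcal T}\to\phi(\mathcal{C}(\;\;,C_i))\to 0$ exact, as already used in Propositions~\ref{TT} and the Grothendieck-group computation), so $\phi(\mathcal{C}(\;\;,C_i))$ is itself finitely presented over $\mathcal{T}$.

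The key steps, in order: (1) Note $M\in\mathscr{T}$, since $M$ is covered by a representable and representables lie in $\mathscr{T}$ (they have a $\mathcal{T}$-coresolution, hence $\mathrm{Ext}^1(\mathcal{T},\mathcal{C}(\;\;,C))=0$ by the long exact sequence, so $\mathcal{C}(\;\;,C)\in\mathscr{T}$ by Proposition~\ref{TORH}), and $\mathscr{T}$ is closed under epimorphic images by Proposition~\ref{TORG}; also $\mathrm{Ker}\,(\;,f)$, $\mathrm{Im}\,(\;,f)$ are in $\mathscr T$. (2) Apply $\phi$ to the presentation of $M$; since each term is in $\mathscr{T}$, Proposition~\ref{TORZ}(i) gives exactness of $\phi(\mathcal{C}(\;\;,C_1))\to\phi(\mathcal{C}(\;\;,C_0))\to\phi(M)\to 0$ (the right-exactness at $\phi(M)$ comes from the same argument applied to $0\to\mathrm{Im}\,(\;,f)\to\mathcal{C}(\;\;,C_0)\to M\to 0$). (3) Observe $\phi(\mathcal{C}(\;\;,C_i))=(\mathcal{C}(\;\;,C_i),\;\;)$ evaluated the right way — more precisely use the coresolution $0\to\mathcal{C}(\;\;,C_i)\to T^i_0\to T^i_1\to 0$ and the fact (from the proof of Proposition~\ref{TT}, sequence~(\ref{TT1})) that this yields an exact sequence $0\to(T^i_1,\;\;)_{\mathcal{T}}\to(T^i_0,\;\;)_{\mathcal{T}}\to\phi(\mathcal{C}(\;\;,C_i))\to 0$ in $\mathrm{Mod}(\mathcal{T}^{op})$; but here I want the statement for $\phi$, a covariant functor into $\mathrm{Mod}(\mathcal{T})$, so instead apply $\phi$ directly to $0\to\mathcal{C}(\;\;,C_i)\to T^i_0\to T^i_1\to 0$ and use Proposition~\ref{TORZ}(i) together with the vanishing $\mathrm{Ext}^1_{\mathcal{C}}(T,T^i_0)=0$ for $T\in\mathcal{T}$ to get $(\;\;,T^i_0)_{\mathcal T}\to(\;\;,T^i_1)_{\mathcal T}$ — wait, the direction must be tracked carefully — yielding a two-term presentation of $\phi(\mathcal{C}(\;\;,C_i))$ by representables $(\;\;,T^i_0)_{\mathcal T}$ and $(\;\;,T^i_1)_{\mathcal T}$. (4) Splice: a map between two finitely presented $\mathcal{T}$-modules has finitely presented cokernel, because $\mathcal{T}$ has pseudokernels so $\mathrm{mod}(\mathcal{T})$ is abelian and closed under cokernels in $\mathrm{Mod}(\mathcal{T})$ (Section~1, [AQM]). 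Hence $\phi(M)$, being the cokernel of a map $\phi(\mathcal{C}(\;\;,C_1))\to\phi(\mathcal{C}(\;\;,C_0))$ between finitely presented functors, is finitely presented.

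The main obstacle will be step (3)–(4): making sure that "$\phi$ of a representable $\mathcal{C}(\;\;,C)$ is finitely presented over $\mathcal{T}$" is genuinely available, and that the abelian-ness of $\mathrm{mod}(\mathcal{T})$ (which needs $\mathcal{T}$ to have pseudokernels, hence the hypothesis) really delivers closure of $\mathrm{mod}(\mathcal{T})$ under cokernels \emph{inside} $\mathrm{Mod}(\mathcal{T})$ — one must check that the cokernel computed in the big category $\mathrm{Mod}(\mathcal{T})$ agrees with the one in $\mathrm{mod}(\mathcal{T})$, which follows because the inclusion $\mathrm{mod}(\mathcal{T})\hookrightarrow\mathrm{Mod}(\mathcal{T})$ is exact. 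A secondary bookkeeping point is verifying the exactness claims when applying $\phi$: Proposition~\ref{TORZ}(i) is stated for four-term sequences with all terms in $\mathscr{T}$, so I would assemble $\mathcal{C}(\;\;,C_1)\to\mathcal{C}(\;\;,C_0)\to M\to 0$ into such a form (e.g. prepend a pseudokernel, which exists since $\mathcal{C}$ has pseudokernels, and note all the relevant subquotients are in $\mathscr T$) before invoking it. Once these are in place the conclusion is immediate.
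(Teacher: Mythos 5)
Your argument fails at step (1): it is not true that the representable functors $\mathcal{C}(\;\;,C)$ lie in the torsion class $\mathscr{T}$, and hence not true that an arbitrary $M\in\mathrm{mod}(\mathcal{C})$ does. From the coresolution $0\rightarrow \mathcal{C}(\;\;,C)\rightarrow T_{0}\rightarrow T_{1}\rightarrow 0$ the long exact sequence gives
\begin{equation*}
\mathrm{Hom}_{\mathcal{C}}(T,T_{0})\rightarrow \mathrm{Hom}_{\mathcal{C}}(T,T_{1})\rightarrow \mathrm{Ext}^{1}_{\mathcal{C}}(T,\mathcal{C}(\;\;,C))\rightarrow \mathrm{Ext}^{1}_{\mathcal{C}}(T,T_{0})=0\text{,}
\end{equation*}
so $\mathrm{Ext}^{1}_{\mathcal{C}}(T,\mathcal{C}(\;\;,C))$ is a \emph{quotient} of $\mathrm{Hom}_{\mathcal{C}}(T,T_{1})$, not zero; the connecting map is only onto, and nothing forces $\mathrm{Hom}(T,T_{0})\rightarrow\mathrm{Hom}(T,T_{1})$ to be surjective. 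A concrete counterexample: take $\mathcal{C}=\mathrm{add}(KQ)$ with $Q:1\rightarrow 2$ and $\mathcal{T}=\mathrm{add}(P_{1}\oplus S_{1})$ (a tilting subcategory, since $0\rightarrow P_{2}\rightarrow P_{1}\rightarrow S_{1}\rightarrow 0$); then $\mathrm{Ext}^{1}(S_{1},P_{2})\neq 0$, so the projective $P_{2}$ is not in $\mathscr{T}$. Because of this, steps (2) and (4) collapse: $\phi$ is left exact but not right exact on $\mathrm{mod}(\mathcal{C})$, so applying it to $\mathcal{C}(\;\;,C_{1})\rightarrow\mathcal{C}(\;\;,C_{0})\rightarrow M\rightarrow 0$ does not produce a presentation of $\phi(M)$. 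What one actually gets from $0\rightarrow K_{0}\rightarrow\mathcal{C}(\;\;,C_{0})\rightarrow M\rightarrow 0$ is
\begin{equation*}
0\rightarrow (\;\;,K_{0})_{\mathcal{T}}\rightarrow (\;\;,\mathcal{C}(\;\;,C_{0}))_{\mathcal{T}}\rightarrow (\;\;,M)_{\mathcal{T}}\rightarrow \mathrm{Ext}^{1}_{\mathcal{C}}(\;\;,K_{0})_{\mathcal{T}}\rightarrow \mathrm{Ext}^{1}_{\mathcal{C}}(\;\;,\mathcal{C}(\;\;,C_{0}))_{\mathcal{T}}\text{,}
\end{equation*}
so $(\;\;,M)_{\mathcal{T}}$ is an extension of a kernel by a cokernel, and one must show that both $(\;\;,K_{i})_{\mathcal{T}}$ and $\mathrm{Ext}^{1}_{\mathcal{C}}(\;\;,K_{i})_{\mathcal{T}}$ are finitely presented for all the syzygies $K_{i}$ of $M$. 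That is precisely the induction along the full projective resolution carried out in the paper's proof, and it is also where the hypothesis that $\mathcal{C}$ (not only $\mathcal{T}$) has pseudokernels is needed, to have such a resolution at all.

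There is a secondary confusion in step (3). Applying the covariant, left exact functor $\phi$ to $0\rightarrow\mathcal{C}(\;\;,C)\rightarrow T_{0}\rightarrow T_{1}\rightarrow 0$ exhibits $\phi(\mathcal{C}(\;\;,C))$ as the \emph{kernel} of $(\;\;,T_{0})_{\mathcal{T}}\rightarrow(\;\;,T_{1})_{\mathcal{T}}$, not as a cokernel, so it is not "presented by" these two representables; it is finitely presented only because $\mathrm{mod}(\mathcal{T})$ is abelian and closed under kernels, which is exactly where the pseudokernels of $\mathcal{T}$ enter (the exact sequence in $\mathrm{Mod}(\mathcal{T}^{op})$ from Proposition \ref{TT} that you cite lives in the opposite category and does not help here). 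In summary, the strategy "$\phi$ is right exact on $\mathrm{mod}(\mathcal{C})$ and sends projectives to finitely presented objects" is not available, and the proposal has a genuine gap that cannot be closed without the $\mathrm{Ext}^{1}$ bookkeeping.
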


\begin{proof}
(a) For each object $\mathcal{C}$, the functors $(\;\;,\mathcal{C}(\;\;,C))_{%
\mathcal{T}}$ and $\mathrm{Ext}^{1}_{\mathcal{C}}(\;\;,\mathcal{C}(\;\;,C))_{%
\mathcal{T}}$ are in $\mathrm{mod}(\mathcal{T})$.

To see this, consider the exact sequence
\begin{equation*}
0\rightarrow \mathcal{C}(\;\;,C)\rightarrow T_{0}\rightarrow
T_{1}\rightarrow 0\text{,}
\end{equation*}%
with $T_{0},T_{1}$ in $\mathcal{T}$. From the above exact sequence, and the
long homology sequence, we get an exact sequence:
\begin{eqnarray*}
0 &\rightarrow &(\;\;,\mathcal{C}(\;\;,C))_{\mathcal{T}}\rightarrow
(\;\;,T_{0})_{\mathcal{T}}\rightarrow (\;\;,T_{1})_{\mathcal{T}}\rightarrow
\\
&\rightarrow &\mathrm{Ext}^{1}_{\mathcal{C}}(\;\;,\mathcal{C}(\;\;,C))_{%
\mathcal{T}}\rightarrow \mathrm{Ext}^{1}_{\mathcal{C}}(\;\;,T_{0})_{\mathcal{%
T}}=0\text{.}
\end{eqnarray*}

The claim follows from the fact $\mathrm{mod}(\mathcal{T})$ is abelian.

(b) Let $M$ be in $\mathrm{mod}(\mathcal{C})$. Since $\mathcal{C}$ has
pseudokernels, then $M$ has a projective resolution
\begin{equation*}
\cdots \rightarrow \mathcal{C}(\;\;,C_{3})\xrightarrow{(\;\;,f_2)}\mathcal{C}%
(\;\;,C_{2})\xrightarrow{(\;\;,f_1)}\mathcal{C}(\;\;,C_{1})%
\xrightarrow{(\;\;,f_0)}\mathcal{C}(\;\;,C_{0})\rightarrow M\rightarrow 0
\end{equation*}%
Let $K_{i}$ be $\mathrm{Im}(\;\;,f_{i})$. Then for all $i\geq 0,$ the
functors $\mathrm{Ext}^{1}_{\mathcal{C}}(\;\;,K_{i})_{\mathcal{T}}$ and $%
(\;\;,K_{i})_{\mathcal{T}}$ are finitely presented. Indeed, from the exact
sequences,
\begin{equation*}
0\rightarrow K_{i+1}\xrightarrow{k_{i+1}}\mathcal{C}(\;\;,C_{i})%
\xrightarrow{p_i}K_{i}\rightarrow 0\text{,}
\end{equation*}%
the long homology sequence, and the fact $\mathrm{pdim}\mathcal{T}\leq 1$,
we obtain for each $i$ an exact sequence:
\begin{eqnarray*}
0\rightarrow (\;\;,K_{i+1})_{\mathcal{T}}\xrightarrow{(\;\;,k_{i+1})}(\;\;,%
\mathcal{C}(\;\;,C_{i}))_{\mathcal{T}}\xrightarrow{(\;\;,p_i)}(\;\;,K_{i})_{%
\mathcal{T}}\xrightarrow{\partial_i}\mathrm{Ext}_{\mathcal{C}%
}^{1}(\;\;,K_{i+1})_{\mathcal{T}}\rightarrow && \\
\xrightarrow{\mathrm{Ext}^1(\;\;,k_{i+1})}\mathrm{Ext}^{1}(\;\;,\mathcal{C}%
(\;\;,C_{i}))_{\mathcal{T}}\xrightarrow{\mathrm{Ext}^1(\;\;,p_{i})}\mathrm{%
Ext}^{1}(\;\;,K_{i})_{\mathcal{T}}\rightarrow \mathrm{Ext}%
^{2}(\;\;,K_{i+1})_{\mathcal{T}}=0\text{.} &&
\end{eqnarray*}

By (a), for all $i\geq 0,$ the functor $\mathrm{Ext}^{1}_{\mathcal{C}}(\;\;,%
\mathcal{C}(\;\;,C_{i}))_{\mathcal{T}}$ is finitely presented. Hence; each $%
\mathrm{Ext}^{1}(\;\;,K_{i})_{\mathcal{T}}$ is finitely generated .

From the exact sequence
\begin{equation*}
\mathrm{Ext}_{\mathcal{C}}^{1}(\;\;,K_{i+1})_{\mathcal{T}}%
\xrightarrow{\mathrm{Ext}^1_{\mathcal{C}}(\;\;,k_{i+1})}\mathrm{Ext}^{1}_{%
\mathcal{C}}(\;\;,\mathcal{C}(\;\;,C_{i}))_{\mathcal{T}}\xrightarrow{%
\mathrm{Ext}^1_{\mathcal{C}}(\;\;,p_{i})}\mathrm{Ext}^{1}_{\mathcal{C}%
}(\;\;,K_{i})_{\mathcal{T}}\rightarrow 0\text{,}
\end{equation*}
it follows $\mathrm{Ext}^{1}_{\mathcal{C}}(\;\;,K_{i})_{\mathcal{T}}$ is
actually finitely presented. Since $\mathrm{mod}(\mathcal{T})$ is abelian,
the kernel of $\mathrm{Ext}_{\mathcal{C}}^{1}(\;\;,K_{i+1})_\mathcal{T}%
\xrightarrow{\mathrm{Ext}^1_{\mathcal{C}}(\;\;,k_{i+1})}\mathrm{Ext}^{1}_{%
\mathcal{C}}(\;\;,\mathcal{C}(\;\;,C_{i}))_\mathcal{\ T} $ is finitely
presented.

In a similar way, each $(\;\;,K_{i})_{\mathcal{T}}$ is finitely generated,
and it follows that the cokernel of the map $0\rightarrow (\;\;,K_{i+1})_%
\mathcal{T}\xrightarrow{(\;\;,k_{i+1})}(\;\;,\mathcal{C}(\;\;,C_{i}))_%
\mathcal{T}$ is finitely presented.

We have proved each $(\;\;,K_{i})_{\mathcal{T}}$ is an extension of two
finitely presented functors, therefore: it is finitely presented.

(c) From the exact sequence $0\rightarrow K_{0}\xrightarrow{k_0}\mathcal{C}%
(\;\;,C_{0})\rightarrow M\rightarrow 0$, and the long homology sequence, we
have an exact sequence
\begin{equation*}
0\rightarrow (\;\;,K_{0})_{\mathcal{T}}\rightarrow (\;\;,\mathcal{C}%
(\;\;,C_{0}))_{\mathcal{T}}\rightarrow (\;\;,M)_{\mathcal{T}}\rightarrow
\mathrm{Ext}^{1}_{\mathcal{C}}(\;\;,K_{0})_{\mathcal{T}}\rightarrow \mathrm{%
Ext}^{1}_{\mathcal{C}}(\;\;,\mathcal{C}(\;\;,C_{0})_{\mathcal{T}}\text{.}
\end{equation*}%
Using again $\mathrm{mod}(\mathcal{T})$ is abelian, it follows $\phi
(M)=(\;\;,M)_{\mathcal{T}}$ is finitely presented.
\end{proof}

\begin{corollary}
Assume $\mathcal{C}$ and $\mathcal{T}$ have pseudokernels. Then $\mathcal{T}$
is contravariantly finite in $\mathrm{mod}(\mathcal{C)}.$
\end{corollary}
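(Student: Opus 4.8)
The plan is to reduce the statement to Proposition~\ref{R2}, which is exactly the input that upgrades the (infinite‐coproduct) universal morphism of Proposition~\ref{Catilt} to a finitely generated one. Recall that $\mathcal{T}$ being contravariantly finite in $\mathrm{mod}(\mathcal{C})$ means that every $M\in\mathrm{mod}(\mathcal{C})$ admits a right $\mathcal{T}$-approximation, i.e.\ a morphism $g\colon T_0\to M$ with $T_0\in\mathrm{add}\,\mathcal{T}$ such that $\mathrm{Hom}_{\mathcal{C}}(T,T_0)\to\mathrm{Hom}_{\mathcal{C}}(T,M)$ is surjective for every $T\in\mathcal{T}$; since a morphism out of a finite sum factors through $g$ exactly when each of its components does, it is harmless to allow $T_0$ in $\mathrm{add}\,\mathcal{T}$ rather than in $\mathcal{T}$, and under the pseudokernel hypothesis $\mathrm{add}\,\mathcal{T}\subseteq\mathrm{mod}(\mathcal{C})$ because objects of a tilting subcategory are finitely presented and $\mathrm{mod}(\mathcal{C})$ is abelian.

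First I would fix $M\in\mathrm{mod}(\mathcal{C})$ and invoke Proposition~\ref{R2}: since $\mathcal{C}$ and $\mathcal{T}$ have pseudokernels, $\phi(M)=\mathrm{Hom}_{\mathcal{C}}(\;\;,M)_{\mathcal{T}}$ is finitely presented, in particular finitely generated, in $\mathrm{Mod}(\mathcal{T})$. Hence there is an epimorphism $\coprod_{i\in I}(\;\;,T_i)_{\mathcal{T}}\to\phi(M)\to 0$ with $I$ finite and $T_i\in\mathcal{T}$. Putting $T_0=\coprod_{i\in I}T_i$ (the coproduct formed in $\mathrm{Mod}(\mathcal{C})$, so $T_0\in\mathrm{add}\,\mathcal{T}\subseteq\mathrm{mod}(\mathcal{C})$) and using that a finite coproduct of representables is the representable functor at the coproduct, this becomes an epimorphism $p\colon(\;\;,T_0)_{\mathcal{T}}\to\phi(M)$. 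By Yoneda's Lemma $p$ is of the form $\phi(g)$ for a unique morphism $g\colon T_0\to M$ in $\mathrm{Mod}(\mathcal{C})$, namely $g=p_{T_0}(\mathrm{id}_{T_0})$.

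Next I would verify that $g$ is a right $\mathcal{T}$-approximation. Let $T\in\mathcal{T}$ and $\eta\colon T\to M$; applying $\phi$ gives $\phi(\eta)\colon(\;\;,T)_{\mathcal{T}}\to\phi(M)$. The functor $(\;\;,T)_{\mathcal{T}}$ is representable, hence projective in $\mathrm{Mod}(\mathcal{T})$, so $\phi(\eta)$ lifts through the epimorphism $p=\phi(g)$ to a morphism $\bar h\colon(\;\;,T)_{\mathcal{T}}\to(\;\;,T_0)_{\mathcal{T}}$ with $\phi(g)\,\bar h=\phi(\eta)$; by Yoneda again, $\bar h=\phi(h)$ for a unique $h\colon T\to T_0$. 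Then $\phi(gh)=\phi(g)\phi(h)=\phi(\eta)$, and because the assignment $\psi\mapsto\phi(\psi)$ is a bijection from $\mathrm{Hom}_{\mathcal{C}}(T,M)$ onto $\mathrm{Hom}_{\mathrm{Mod}(\mathcal{T})}((\;\;,T)_{\mathcal{T}},\phi(M))$ — this is precisely the Yoneda isomorphism $\mathrm{Hom}_{\mathcal{C}}(T,M)\cong\phi(M)(T)$ — we conclude $gh=\eta$. Thus every morphism from an object of $\mathcal{T}$ to $M$ factors through $g$, so $g$ is a right $\mathcal{T}$-approximation and $\mathcal{T}$ is contravariantly finite in $\mathrm{mod}(\mathcal{C})$.

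The one place where the hypotheses genuinely intervene is the appeal to Proposition~\ref{R2} guaranteeing that $\phi(M)$ is finitely generated; that is the main obstacle, and it has already been dealt with. Everything afterwards is routine bookkeeping with Yoneda's Lemma and the projectivity of representable functors, the only minor care being to check that $\mathrm{add}\,\mathcal{T}$ sits inside $\mathrm{mod}(\mathcal{C})$ so that $g\colon T_0\to M$ really is a morphism of finitely presented functors.
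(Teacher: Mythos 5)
Your proof is correct and takes the approach the paper clearly intends (it states the corollary without a displayed proof, immediately after Proposition \ref{R2}): one uses the finite generation of $\phi(M)$ supplied by that proposition to produce a finite sum $T_0=\coprod_{i\in I}T_i$ of objects in $\mathcal{T}$ and an epimorphism $(\;\;,T_0)_{\mathcal{T}}\to\phi(M)$, and then Yoneda plus projectivity of representables turns this into a right $\mathcal{T}$-approximation $g\colon T_0\to M$. The only minor overstatement is the appeal to the abelianness of $\mathrm{mod}(\mathcal{C})$ to place $T_0$ in $\mathrm{mod}(\mathcal{C})$; since you only ever use finite coproducts, not idempotent summands, the inclusion of $T_0$ in $\mathrm{mod}(\mathcal{C})$ follows already from the finite presentation of each $T_i$ without any pseudokernel hypothesis.
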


\begin{proposition}
\label{BrennerB10} \bigskip Assume $\mathcal{C}$ and $\mathcal{T}$ have
pseudokernels. Then the following statements hold:

\begin{itemize}
\item[(i)] The functors $F,F^{\prime} ,G,G^{\prime} $ in Brenner-Buter's
theorem restrict to the subcategories of finitely presented functors.

\item[(ii)] Given a functor $M$ in $\mathscr T\cap \mathrm{mod}(\mathcal{C})$%
, there exists a resolution
\begin{equation*}
\rightarrow T_{n}\overset{t _{n}}{\rightarrow }\cdots\rightarrow T_{2}%
\overset{t_{2}}{\rightarrow }T_{1}\overset{t _{1}}{\rightarrow }T_{0}\overset%
{t_{0}}{\rightarrow }M\rightarrow 0
\end{equation*}

such that, each $T_{i}$ is in $\mathrm{add}\mathcal{T}$, and $T_{n}\overset{%
\delta _{n}}{\rightarrow }$ $\mathrm{Im}t _{n}$ is a $\mathcal{T}$%
-approximation of $\mathrm{Im}t _{n}.$

\item[(iii)] If $M$ is a functor in $\mathrm{mod}(\mathcal{C)}$, then the
the trace $\tau _{\mathcal{T}}(M)$ of $\mathcal{T}$ in $M$, and $M/\tau _{%
\mathcal{T}}(M)$ are finitely presented.

\item[(iv)] Denote by $t_{\mathscr{X}}$ the radical of the torsion theory $(%
\mathscr{X},\mathscr{Y})$ of $\mathrm{Mod}(\mathcal{T})$. Then for any
functor $N$ in $\mathrm{mod}(\mathcal{T}),$ $t_{\mathscr{X}}(N)$ and $N/t_{%
\mathscr{X}}(N)$ are finitely presented.

\item[(v)] For any pair of finitely presented functors $M,N$ in $\mathscr{T}$%
, we have an isomorphism $\mathrm{Ext}_{\mathcal{C}}^{i}(M,N)=\mathrm{Ext}_{%
\mathcal{T}}^{i}(\phi (M),\phi (N)).$
\end{itemize}
\end{proposition}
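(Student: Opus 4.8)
The plan is to bootstrap everything from Proposition~\ref{R2}, the Corollary that $\mathcal{T}$ is contravariantly finite in $\mathrm{mod}(\mathcal{C})$, the Brenner--Butler package (Propositions~\ref{TORZ}, \ref{BB9}, \ref{BB1}, \ref{BB4} and the two unnumbered Propositions following them), and the identities $FG'=F'G=G'F=GF'=0$ of the Brenner--Butler theorem. For (i), the case $F=\phi$ is Proposition~\ref{R2}; the same proof also gives $F'(M)=\mathrm{Ext}^1_{\mathcal{C}}(\;\;,M)_{\mathcal{T}}$ finitely presented, since from $0\to K_0\to\mathcal{C}(\;\;,C_0)\to M\to 0$ and $\mathrm{Ext}^2_{\mathcal{C}}(T,K_0)=0$ one sees $F'(M)$ is the cokernel of a map between the finitely presented functors $\mathrm{Ext}^1_{\mathcal{C}}(\;\;,K_0)_{\mathcal{T}}$ and $\mathrm{Ext}^1_{\mathcal{C}}(\;\;,\mathcal{C}(\;\;,C_0))_{\mathcal{T}}$. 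For $G$ and $G'$, I would take $N$ in $\mathrm{mod}(\mathcal{T})$, pick an exact sequence $0\to\Omega\to(\;\;,T_0)\to N\to 0$ with $T_0\in\mathcal{T}$ and $\Omega$ finitely presented (using that $\mathcal{T}$ has pseudokernels), apply the right exact functor $-\otimes\mathcal{T}$ and use $(\;\;,T)\otimes\mathcal{T}=T$: then $G(N)=N\otimes\mathcal{T}$ is the cokernel of a map between the finitely presented functors $\Omega\otimes\mathcal{T}$ and $T_0$, while $G'(N)=\mathrm{Tor}_1^{\mathcal{T}}(N,\mathcal{T})$ is the kernel of $\Omega\otimes\mathcal{T}\to T_0$; both lie in $\mathrm{mod}(\mathcal{C})$ because it is abelian and the objects of $\mathcal{T}$ are finitely presented.

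For (ii), given $M\in\mathscr{T}\cap\mathrm{mod}(\mathcal{C})$, I would take a right $\mathrm{add}\mathcal{T}$-approximation $t_0:T_0\to M$. It is an epimorphism: fixing a finite representable cover $\coprod_{j=1}^{n}\mathcal{C}(\;\;,C_j)\to M$ and using the tilting resolutions $0\to\mathcal{C}(\;\;,C_j)\to T_0^{j}\to T_1^{j}\to 0$ together with $\mathrm{Ext}^1_{\mathcal{C}}(T_1^{j},M)=0$ (Proposition~\ref{TORC}), every map $\mathcal{C}(\;\;,C_j)\to M$ extends over $T_0^{j}\in\mathcal{T}$ and hence factors through $t_0$, so $\mathrm{Im}\,t_0=M$. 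Setting $K_0=\ker t_0$, the approximation property makes $\mathrm{Hom}_{\mathcal{C}}(T,T_0)\to\mathrm{Hom}_{\mathcal{C}}(T,M)$ surjective; since $\mathrm{Ext}^1_{\mathcal{C}}(T,T_0)=0$ for $T\in\mathcal{T}$, the long exact sequence yields $\mathrm{Ext}^1_{\mathcal{C}}(T,K_0)=0$, so $K_0\in\mathscr{T}$ by Proposition~\ref{TORH}, and $K_0$ is finitely presented because $T_0$ and $M$ are and $\mathrm{mod}(\mathcal{C})$ is abelian. Iterating on $K_0$ gives the asserted resolution, with $\delta_n:T_n\to\mathrm{Im}\,t_n$ the chosen approximation.

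For (iii) and (iv) I would identify the canonical torsion sequences. The sequence of Proposition~\ref{BB9}, $0\to\phi(M)\otimes\mathcal{T}\to M\to\mathrm{Tor}_1^{\mathcal{T}}(\mathrm{Ext}^1_{\mathcal{C}}(\;\;,M)_{\mathcal{T}},\mathcal{T})\to 0$, has first term a quotient of an object of $\mathrm{Add}\,\mathcal{T}$, hence in $\mathscr{T}$, and last term in $\mathscr{F}$ because applying $\phi$ to it and using $\phi(\mathrm{Tor}_1^{\mathcal{T}}(N,\mathcal{T}))=0$ gives $0$; by uniqueness of the torsion decomposition, $\tau_{\mathcal{T}}(M)=\phi(M)\otimes\mathcal{T}=GF(M)$ and $M/\tau_{\mathcal{T}}(M)=G'F'(M)$, which are finitely presented by (i) whenever $M\in\mathrm{mod}(\mathcal{C})$. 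Dually, the Proposition just before the Brenner--Butler theorem gives $0\to F'G'(N)\to N\to FG(N)\to 0$, which by the identities $FG'=F'G=G'F=GF'=0$ is the canonical $(\mathscr{X},\mathscr{Y})$-torsion sequence, so $t_{\mathscr{X}}(N)=F'G'(N)$ and $N/t_{\mathscr{X}}(N)=FG(N)$ are finitely presented for $N\in\mathrm{mod}(\mathcal{T})$ by (i). Finally, (v) is immediate from Proposition~\ref{TORZ}(v), which already gives the isomorphism for arbitrary $M,N$ in $\mathscr{T}$; finite presentation only serves, via the existence of representable projective resolutions in $\mathrm{mod}(\mathcal{C})$ and $\mathrm{mod}(\mathcal{T})$, to let one compute both Ext groups inside the abelian categories of finitely presented functors.

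The main obstacle I anticipate is part (ii): one must verify simultaneously that the right $\mathcal{T}$-approximation of an object of $\mathscr{T}\cap\mathrm{mod}(\mathcal{C})$ is an epimorphism and that its kernel remains both in $\mathscr{T}$ and finitely presented, so the induction does not leave $\mathrm{mod}(\mathcal{C})$. The naturality checks for the morphisms assembled in (iii) and (iv), and the identifications of $\tau_{\mathcal{T}}$ and $t_{\mathscr{X}}$ with the respective torsion radicals, are routine.
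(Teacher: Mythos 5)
Your proof is correct and follows essentially the same skeleton as the paper's: part (i) from Proposition~\ref{R2} and presentations, (ii) by iterating a $\mathcal{T}$-approximation whose kernel is shown to stay in $\mathscr{T}\cap\mathrm{mod}(\mathcal{C})$, (iii) and (iv) from the canonical torsion sequences of the Brenner--Butler setup, and (v) from Proposition~\ref{TORZ}(v). Two places where you vary the route are worth noting. In (ii), you show the approximation $t_0$ is epi by lifting a finite representable cover of $M$ through the tilting resolutions $0\to\mathcal{C}(\;,C_j)\to T_0^j\to T_1^j\to 0$ and $\mathrm{Ext}^1(T_1^j,M)=0$; the paper instead compares $t_0$ with the canonical map $\varTheta_M$ of Proposition~\ref{Catilt} and observes that both have image $\tau_{\mathcal{T}}(M)$, which is all of $M$ when $M\in\mathscr{T}$. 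Both arguments are valid, and the paper's observation that $\mathrm{Im}\,t_0=\tau_{\mathcal{T}}(M)$ is then recycled directly for (iii). In (iii), you instead identify $\tau_{\mathcal{T}}(M)\cong GF(M)$ and $M/\tau_{\mathcal{T}}(M)\cong G'F'(M)$ via Proposition~\ref{BB9} and the uniqueness of the torsion decomposition and then cite (i); the paper takes the shorter path, noting from (ii) that $\tau_{\mathcal{T}}(M)$ is finitely generated as the image of $T_0\in\mathrm{add}\,\mathcal{T}$, so $M/\tau_{\mathcal{T}}(M)$ is finitely presented, and then $\tau_{\mathcal{T}}(M)$ itself is finitely presented since $\mathrm{mod}(\mathcal{C})$ is abelian. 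Your route buys uniformity with (iv), where the paper itself uses exactly this identification-via-torsion-sequence; the paper's route for (iii) is more elementary. You also supply the argument for $F'$ and $G'$ in (i), which the paper explicitly leaves to the reader, and your argument there (cokernel of a map between two functors already shown finitely presented in the proof of Proposition~\ref{R2}, resp. kernel of $\Omega\otimes\mathcal{T}\to T_0$) is the intended one.
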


\begin{proof}
(i) We proved $F$ preserves finitely presented functors. If $%
(\;,T_{1})\rightarrow (\;,T_{0})\rightarrow N\rightarrow 0$ is a
presentation of $N\in \mathrm{mod}(\mathcal{T})$ , $T_{0},T_{1}\in \mathrm{%
add}\mathcal{T}$, then tensoring the exact sequence with $\mathcal{T}$ , we
obtain an exact sequence $T_{0}\rightarrow T_{1}\rightarrow \mathcal{T}%
\otimes N\rightarrow 0.$ Since $T_{0},T_{1}$ are finitely presented, and $%
\mathrm{mod}(\mathcal{C})$ is abelian, $G(N)=\mathcal{T}\otimes N$ is
finitely presented.

We left to the reader to prove that the functors $F^{\prime }$ and $%
G^{\prime}$, preserve finitely presented functors.

(ii) Let $M$ be in $\mathrm{mod}(\mathcal{C})$ and a map $\delta
:T^{0}\rightarrow M$ with $T^{0}$ in $\mathrm{Add}(\mathcal{T})$, as in
Proposition \ref{Catilt}, its image is $\tau _{\mathcal{T}}(M)$. If $%
f:T_{0}\rightarrow M$ is a $\mathcal{T}$-approximation, then $f$ factors
through $\delta $, and $\delta $ factors through $f$. In consequence, $%
\mathrm{Im}f=\tau _{\mathcal{T}}(M).$ In particular, if $M\in $ $\mathscr T$%
, then $f$ is an epimorphism.

Using again the fact $\mathrm{mod}(\mathcal{T})$ is abelian, the kernel of $%
f $, $K_{0}$, is finitely presented. From the exact sequence

\begin{equation*}
0\rightarrow K_{0}\rightarrow T_{0}\rightarrow M\rightarrow 0\text{,}
\end{equation*}
the long homology sequence, and the fact $f:T_{0}\rightarrow M$ is a $%
\mathcal{T}$-approximation, it follows $\mathrm{Ext}_\mathcal{C}%
^{1}(\;,K_{0})_{\mathcal{T}}=0$. Hence, $K_{0}$ is in $\mathscr T$, and the
claim follows by induction.

(iii) Let $M$ be in $\mathrm{mod}(\mathcal{C}).$ Then by the proof of (ii), $%
\tau_{\mathcal{T}}(M)$ finitely generated, and $M$ finitely presented,
implies $M/\tau _{\mathcal{T}}(M)$ is finitely presented, and $\mathrm{mod}(%
\mathcal{C})$ abelian, implies $\tau _{\mathcal{T}}(M)$ is finitely
presented.

(iv) Assume $N$ in $\mathrm{mod}(\mathcal{T})$. Since the functors: $%
F,F^{\prime} ,G,G^{\prime}$ preserve finitely presented functors, all terms
in the exact sequence:

\begin{equation*}
0\rightarrow \mathrm{Ext}_{\mathcal{C}}^{1}(-,\mathrm{Tor}_{1}^{\mathcal{T}%
}(N,\mathcal{T}))\xrightarrow{\eta}N\xrightarrow {\gamma}\phi (N\otimes
\mathcal{T})\rightarrow 0
\end{equation*}%
are finitely presented. The claim follows by observing the isomorphisms: $t_{%
\mathscr{X}}(N)\cong $ $\mathrm{Ext}_{\mathcal{C}}^{1}(-,\mathrm{Tor}_{1}^{%
\mathcal{T}}(N,\mathcal{T}))$ and $N/$ $t_{\mathscr{X}}(N)\cong \phi
(N\otimes \mathcal{T})$.

(v) Follows as in Proposition \ref{TORZ}.
\end{proof}

We denote by $(\tilde{\mathscr{T}},\tilde{\mathscr{F}})$ and $(\tilde{%
\mathscr{X}},\tilde{\mathscr{Y}})$ the intersection of the torsion theories $%
(\mathscr T,\mathscr F)$ and ($\mathscr{X},\mathscr{Y})$ with the categories
of finitely presented functors, $\mathrm{mod}$($\mathcal{C})$ and $\mathrm{%
mod}(\mathcal{T)}$, respectively. From the previous proposition we obtain
the following:

\begin{theorem}[Brenner-Butler]
\label{bbre1} Let $\mathcal{T}$ be a tilting subcategory of $\mathrm{mod}(%
\mathcal{C})$ and assume $\mathcal{C}$ and $\mathcal{T}$ have pseudokernls.
With the above notation the following statements hold:

\begin{itemize}
\item[(i)] The functors $F$ and $G$ induce an equivalence between $\tilde{%
\mathscr{T}}$ and $\tilde{\mathscr{Y}}.$

\item[(ii)] The functors $F^{\prime}$ and $G^{\prime}$ induce an equivalence
between $\tilde{\mathscr{F}}$ and $\tilde{\mathscr{X}}.$

\item[(iii)] We also have: $FG^{\prime}=F^{\prime}G=0$ and $%
G^{\prime}F=GF^{\prime}=0$.
\end{itemize}
\end{theorem}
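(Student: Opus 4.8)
The plan is to obtain Theorem~\ref{bbre1} as a formal consequence of the Brenner--Butler Theorem already proved for $\mathrm{Mod}(\mathcal C)$ and $\mathrm{Mod}(\mathcal T)$ together with Proposition~\ref{BrennerB10}: the real work has been done, and what remains is to check that the equivalences restrict to the subcategories of finitely presented functors.

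First I would record what the unrestricted theorem supplies: quasi-inverse equivalences $F\colon\mathscr T\to\mathscr Y$ and $G\colon\mathscr Y\to\mathscr T$, quasi-inverse equivalences $F'\colon\mathscr F\to\mathscr X$ and $G'\colon\mathscr X\to\mathscr F$, natural isomorphisms $GF\cong\mathrm{id}_{\mathscr T}$, $FG\cong\mathrm{id}_{\mathscr Y}$, $G'F'\cong\mathrm{id}_{\mathscr F}$, $F'G'\cong\mathrm{id}_{\mathscr X}$, and the vanishing relations $FG'=F'G=0$ and $G'F=GF'=0$. Since $\mathcal C$ and $\mathcal T$ have pseudokernels, $\mathrm{mod}(\mathcal C)$ and $\mathrm{mod}(\mathcal T)$ are abelian, and by Proposition~\ref{BrennerB10}(i) each of $F,F',G,G'$ sends finitely presented functors to finitely presented functors. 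Combining this with the inclusions $F(\mathscr T)\subseteq\mathscr Y$, $G(\mathscr Y)\subseteq\mathscr T$, $F'(\mathscr F)\subseteq\mathscr X$, $G'(\mathscr X)\subseteq\mathscr F$ coming from the unrestricted theorem, one gets that $F$ and $G$ restrict to functors $\tilde{\mathscr T}\to\tilde{\mathscr Y}$ and $\tilde{\mathscr Y}\to\tilde{\mathscr T}$, and that $F'$ and $G'$ restrict to functors $\tilde{\mathscr F}\to\tilde{\mathscr X}$ and $\tilde{\mathscr X}\to\tilde{\mathscr F}$, where $\tilde{\mathscr T}=\mathscr T\cap\mathrm{mod}(\mathcal C)$ and similarly for the others.

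Next I would note that $\tilde{\mathscr T}$, $\tilde{\mathscr Y}$, $\tilde{\mathscr F}$, $\tilde{\mathscr X}$ are \emph{full} subcategories, so the restricted functors are still full and faithful, and the natural isomorphisms $GF\cong\mathrm{id}$, $FG\cong\mathrm{id}$ (respectively $G'F'\cong\mathrm{id}$, $F'G'\cong\mathrm{id}$) restrict verbatim to isomorphisms between the restricted composites and the identity functors of these subcategories. The only point needing a word is density: if $Y\in\tilde{\mathscr Y}$ then $G(Y)\in\tilde{\mathscr T}$ by the previous paragraph and $FG(Y)\cong Y$, so $F|_{\tilde{\mathscr T}}$ is dense, and symmetrically for $G|_{\tilde{\mathscr Y}}$, $F'|_{\tilde{\mathscr F}}$ and $G'|_{\tilde{\mathscr X}}$. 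This gives (i) and (ii). Statement (iii) is then immediate, since $FG'=F'G=0$ and $G'F=GF'=0$ hold as identities of functors on the ambient module categories, hence a fortiori after restriction.

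In short, the argument is pure bookkeeping once Proposition~\ref{BrennerB10} is in hand; the actual obstacle lies not in this theorem but in that proposition, namely in showing --- via the pseudokernel hypothesis and repeated appeals to the fact that $\mathrm{mod}(\mathcal T)$ is abelian --- that $\phi$, $-\otimes\mathcal T$, $\mathrm{Ext}^1_{\mathcal C}(\;\;,-)_{\mathcal T}$ and $\mathrm{Tor}_1^{\mathcal T}(-,\mathcal T)$ all preserve finite presentation, and that the torsion decomposition of a finitely presented functor stays within the finitely presented ones.
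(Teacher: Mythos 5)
Your argument is correct and follows exactly the route the paper takes: the paper's own proof is the one-liner ``clear from (i) in the above proposition,'' and you have simply unpacked the bookkeeping — that Proposition~\ref{BrennerB10}(i) lets the four functors restrict to finitely presented objects, that fullness and faithfulness are automatic on full subcategories, that density follows from the quasi-inverse, and that the vanishing relations descend a fortiori. No gap; you have merely made explicit what the paper leaves implicit.
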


\begin{proof}
The proof is clear from (i) in the above proposition.
\end{proof}

\subsection{Classical tilting for dualizing varieties.}

In order to have a complete analogy with tilting theory for finite
dimensional algebras, we need to add more restrictions in our categories, in
particular, we need the existence of duality. We will assume in this
subsection that $\mathcal{C}$ and $\mathcal{T}$ are dualizing varieties.

It was proved above that the category $\theta =\{\theta _{C}\}_{C\in
\mathcal{C}}$, where $\theta _{C}=(\mathcal{C}(\;\;,C),\;\;)_{\mathcal{T}}$,
is a tilting subcategory of $\mathrm{mod}(\mathcal{T}^{op}).$ Then by
Brenner-Butler's theorem, there are torsion pairs $(\mathscr{T}(\theta ),%
\mathscr{F}(\theta ))$ and $(\mathscr{X}(\theta ),\mathscr{Y}(\theta ))$ in $%
\mathrm{mod}(\mathcal{T}^{op})$ and $\mathrm{mod}(\theta )$, respectively,
and equivalence of categories

\begin{equation*}
\begin{diagram} \node{\mathscr{T}(\theta)}\arrow[3]{se,b}{\phi_{\theta}}
\node{} \node{} \node{} \node{} \node{} \node{}
\node{\mathscr{F}(\theta)}\arrow[3]{sw,b}{\mathrm{Ext}^1_{\mathcal{T}^{op}}(%
\;\;,-)_\theta}\\ \\ \\ \\ \\ \\ \\
\node{\mathscr{X}(\theta)}\arrow[3]{ne,t}{\mathrm{Tor}^\theta_1(\;\;,%
\theta)} \node{} \node{} \node{} \node{} \node{} \node{}
\node{\mathscr{Y}(\theta)}\arrow[3]{nw,t}{\otimes\theta} \end{diagram}
\end{equation*}

By Proposition \ref{TT}, there is an equivalence of categories $\alpha (C):%
\mathcal{C}^{op}\rightarrow \theta $, $\alpha (C)=\theta _{C}=(\mathcal{C}%
(\;\;,C),\;\;)_{\mathcal{T}}$ , which induces an equivalence $\alpha \ast :%
\mathrm{mod}(\theta )\rightarrow \mathrm{mod}(\mathcal{C}^{op})$ given by:
\begin{equation*}
\alpha \ast (H)(C)=H(\alpha (C))=H(\theta _{C})=H((\mathcal{C}(\;\;,C),\;\;)|%
\mathcal{T}),
\end{equation*}%
for each $C$ in $\mathcal{C}$, and $H$ in $\mathrm{mod}(\theta )$.

\begin{lemma}
\label{BrennerB11} Let $N$ be an object in $\mathrm{mod}(\mathcal{T})$ and $%
C $ one object in $\mathcal{C}$. Then the following statements hold:

\begin{itemize}
\item[(a)] There is an isomorphism $((\mathcal{C}(\;\;,C),\;\;)_{\mathcal{T}%
},DN)\cong D(N\otimes \mathcal{T})(C),$ such that the following square
\begin{equation*}
\begin{diagram} \node{\mathrm{mod}(\mathcal{C})}\arrow{s,l}{D}
\node{\mathrm{mod}(\mathcal{T})}\arrow{w,t}{\otimes\mathcal{T}}%
\arrow{s,l}{D}\\ \node{\mathrm{mod}(\mathcal{C}^{op})}
\node{\mathrm{mod}(\mathcal{T}^{op})}\arrow{w,t}{\alpha*\phi_{\theta}}
\end{diagram}
\end{equation*}
commutes.

\item[(b)] There is an isomorphism $\mathrm{Ext}_{\mathcal{T}^{op}}^{1}((%
\mathcal{C}(\;\;,C),\;\;)_{\mathcal{T}},DN)\cong D(\mathrm{Tor}_{1}^{%
\mathcal{T}}(N,\mathcal{T}))(C),$ such that the following square
\begin{equation*}
\begin{diagram} \node{\mathrm{mod}(\mathcal{C})}\arrow{s,l}{D} \node{}
\node{\mathrm{mod}(\mathcal{T})}\arrow[2]{w,t}{\mathrm{Tor}^\mathcal{T}_1(\;%
\;,\mathcal{T})}\arrow{s,l}{D}\\ \node{\mathrm{mod}(\mathcal{C}^{op})}
\node{}
\node{\mathrm{mod}(\mathcal{T}^{op})}\arrow[2]{w,t}{\alpha*\mathrm{Ext}^1(\;%
\;,-)|\theta} \end{diagram}
\end{equation*}

commutes. \newline

\item[(c)] We have the following equivalences of categories:

\begin{itemize}
\item[(i)] $D(\mathscr{X}(\mathcal{T}))\cong \mathscr{F}(\theta )$, $D(%
\mathscr{Y}(\mathcal{T}))\cong \mathscr{T}(\theta )$

\item[(ii)] $D(\mathscr{T}(\mathcal{T}))\cong \mathscr{Y}(\theta )$, $D(%
\mathscr{F}(\mathcal{T}))\cong \mathscr{X}(\theta )$
\end{itemize}
\end{itemize}
\end{lemma}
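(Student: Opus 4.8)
The plan is to deduce everything from the explicit descriptions of $-\otimes\mathcal{T}$ and $\mathrm{Tor}_1^{\mathcal{T}}(-,\mathcal{T})$ in Propositions \ref{BB1} and \ref{BB4}, combined with the $\mathrm{Hom}$--$\otimes$ duality available over a dualizing variety and the fact, from Proposition \ref{TT}(a), that each $\theta_C=(\mathcal{C}(\;\;,C),\;\;)_{\mathcal{T}}$ has projective dimension at most $1$ in $\mathrm{mod}(\mathcal{T}^{op})$.

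For part (a), I would begin with Proposition \ref{BB1}, which gives $(N\otimes\mathcal{T})(C)\cong\theta_C\otimes_{\mathcal{T}}N$, apply $D$, and then invoke the adjunction isomorphism $D(M\otimes_{\mathcal{T}}N)\cong\mathrm{Hom}_{\mathcal{T}^{op}}(M,DN)$; concretely this is $\mathrm{Hom}_R(M(T)\otimes_R N(T),I(R/r))\cong\mathrm{Hom}_R(M(T),DN(T))$ assembled over $T\in\mathcal{T}$, i.e.\ $D$ carries the coend defining $\otimes_{\mathcal{T}}$ to the end defining $\mathrm{Hom}_{\mathcal{T}^{op}}$. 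This yields $D(N\otimes\mathcal{T})(C)\cong\mathrm{Hom}_{\mathcal{T}^{op}}(\theta_C,DN)$. To obtain the square I unwind the other composite: $(\alpha\ast\phi_{\theta})(DN)(C)=\phi_{\theta}(DN)(\alpha(C))=\mathrm{Hom}_{\mathcal{T}^{op}}(\theta_C,DN)$, so the displayed isomorphism, being natural in $C$ (both the Proposition \ref{BB1} isomorphism and the duality isomorphism are), gives a natural equivalence $D\circ(-\otimes\mathcal{T})\cong(\alpha\ast\phi_{\theta})\circ D$.

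For part (b), I would use the resolution $0\to\mathcal{C}(\;\;,C)\to T_0\to T_1\to 0$ with $T_i\in\mathcal{T}$ and the exact sequence \eqref{TT1}, namely $0\to(T_1,\;\;)_{\mathcal{T}}\to(T_0,\;\;)_{\mathcal{T}}\to\theta_C\to 0$, as a projective resolution of $\theta_C$ in $\mathrm{mod}(\mathcal{T}^{op})$ (legitimate since $\mathrm{pdim}\,\theta_C\le 1$). Applying $-\otimes_{\mathcal{T}}N$ and using $(T_i,\;\;)_{\mathcal{T}}\otimes_{\mathcal{T}}N=N(T_i)$ identifies $\mathrm{Tor}_1^{\mathcal{T}}(\theta_C,N)$ with $\mathrm{Ker}\,(N(T_1)\to N(T_0))$; applying $\mathrm{Hom}_{\mathcal{T}^{op}}(-,DN)$ with Yoneda ($\mathrm{Hom}_{\mathcal{T}^{op}}((T_i,\;\;)_{\mathcal{T}},DN)=D(N(T_i))$) identifies $\mathrm{Ext}^1_{\mathcal{T}^{op}}(\theta_C,DN)$ with $\mathrm{Coker}\,(D(N(T_0))\to D(N(T_1)))$. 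Since $D$ is exact and contravariant these agree, and Proposition \ref{BB4} rewrites $\mathrm{Tor}_1^{\mathcal{T}}(\theta_C,N)=\mathrm{Tor}_1^{\mathcal{T}}((\mathcal{C}(\;\;,C),\;\;)_{\mathcal{T}},N)\cong\mathrm{Tor}_1^{\mathcal{T}}(N,\mathcal{T})(C)$; hence $\mathrm{Ext}^1_{\mathcal{T}^{op}}(\theta_C,DN)\cong D(\mathrm{Tor}_1^{\mathcal{T}}(N,\mathcal{T}))(C)$. The commuting square follows as in (a), once one observes $(\alpha\ast\mathrm{Ext}^1(\;\;,-)_{\theta})(DN)(C)=\mathrm{Ext}^1_{\mathcal{T}^{op}}(\theta_C,DN)$.

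For part (c) I would argue formally. By (a), $N\otimes\mathcal{T}=0$ iff $\mathrm{Hom}_{\mathcal{T}^{op}}(\theta_C,DN)=0$ for all $C$, i.e.\ $\mathrm{Hom}_{\mathcal{T}^{op}}(\theta,DN)=0$; since $D$ is a duality this means $D$ restricts to a duality $\mathscr{X}(\mathcal{T})\leftrightarrow\mathscr{F}(\theta)$. Likewise (b) together with $\mathscr{T}(\theta)=\{M\mid\mathrm{Ext}^1_{\mathcal{T}^{op}}(\theta,M)=0\}$ (Proposition \ref{TORH} for $\theta$) gives a duality $\mathscr{Y}(\mathcal{T})\leftrightarrow\mathscr{T}(\theta)$, which is (i). For (ii), Brenner--Butler for $\mathcal{C}$ presents $\mathscr{T}(\mathcal{T})$ as the essential image of $\mathscr{Y}(\mathcal{T})$ under $G=-\otimes\mathcal{T}$ and $\mathscr{F}(\mathcal{T})$ as that of $\mathscr{X}(\mathcal{T})$ under $G'=\mathrm{Tor}_1^{\mathcal{T}}(-,\mathcal{T})$; transporting through $D$ via the squares of (a) and (b) turns these into $\alpha\ast\phi_{\theta}(\mathscr{T}(\theta))$ and $\alpha\ast\mathrm{Ext}^1(\;\;,-)_{\theta}(\mathscr{F}(\theta))$, which equal $\alpha\ast(\mathscr{Y}(\theta))$ and $\alpha\ast(\mathscr{X}(\theta))$ by Brenner--Butler for $\theta$. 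Hence $(\alpha\ast)^{-1}D$ restricts to dualities $\mathscr{T}(\mathcal{T})\leftrightarrow\mathscr{Y}(\theta)$ and $\mathscr{F}(\mathcal{T})\leftrightarrow\mathscr{X}(\theta)$. The main obstacle I expect is not a single step but the bookkeeping: pinning down the tensor--Hom duality in the functor-category language and verifying that every isomorphism produced is natural in both variables, so that the squares genuinely commute as diagrams of functors.
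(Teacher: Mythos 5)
Your proposal is correct and follows essentially the same route as the paper: both prove (a) and (b) by playing the projective resolution $0\to(T_1,\;)_{\mathcal{T}}\to(T_0,\;)_{\mathcal{T}}\to\theta_C\to 0$ against $-\otimes_{\mathcal{T}}N$ and $\mathrm{Hom}_{\mathcal{T}^{op}}(-,DN)$ and matching the resulting four-term sequences under $D$, then deduce (c) by unwinding $\mathscr{T}(\theta)$ and $\mathscr{F}(\theta)$ via (a)/(b) and transporting the Brenner--Butler equivalences through the commuting squares. The only cosmetic difference is that you package part (a) as a direct tensor--Hom duality $D(\theta_C\otimes_{\mathcal{T}}N)\cong\mathrm{Hom}_{\mathcal{T}^{op}}(\theta_C,DN)$ rather than reading it off the compared exact sequences, which is a clean way to say the same thing.
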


\begin{proof}
Applying the functor $(\;\;,DN)$ to the exact sequence
\begin{equation}
0\rightarrow (T^{1},\;\;)\rightarrow (T^{0},\;\;)\rightarrow (\mathcal{C}%
(\;\;,C),\;\;)_{\mathcal{T}}\rightarrow 0\text{.}  \label{restor1}
\end{equation}%
We obtain by the long homology sequence, and Yoneda's Lemma, the following
exact sequence:
\begin{equation}
0\rightarrow ((\;\;,C),\;\;)_{\mathcal{T}},DN)\rightarrow
DN(T^{0})\rightarrow DN(T^{1})\rightarrow \mathrm{Ext}^{1}((\;\;,C),\;\;)_{%
\mathcal{T}},DN)\rightarrow 0\text{,}  \label{restor2}
\end{equation}%
and applying $\otimes N$ to (\ref{restor1}), by the long homology sequence,
we get an exact sequence
\begin{equation}
0\rightarrow \mathrm{Tor}_{1}^{\mathcal{T}}(N,\mathcal{T})(C)\rightarrow
N(T^{1})\rightarrow N(T^{0})\rightarrow (N\otimes \mathcal{T})(C)\rightarrow
0\text{.}  \label{restor3}
\end{equation}%
Dualizing (\ref{restor3}), and comparing it with (\ref{restor2}), we obtain
the isomorphisms in (a) and (b).

To see that the first square commutes, let $N$ be in $\mathrm{mod}(\mathcal{T%
})$. Then there are equalities:
\begin{eqnarray*}
\alpha \ast (\phi _{\theta }(DN))(C) &=&\alpha \ast ((\;\;,DN)|\theta )(C) \\
&=&(((\;\;,C),\;\;),DN)=D(N\otimes \mathcal{T})(C)\text{.}
\end{eqnarray*}

The equalities

\begin{eqnarray*}
\alpha \ast (\mathrm{Ext}^{1}(\;\;,-)|\theta (DN))(C) &=&\alpha \ast (%
\mathrm{Ext}^{1}(\;\;,DN)|\theta )(C) \\
&=&\mathrm{Ext}_{1}^{\mathcal{T}}(((\;\;,C),\;\;),DN)=D(\mathrm{Tor}_{1}^{%
\mathcal{T}}(N\otimes \mathcal{T}))(C)\text{,}
\end{eqnarray*}%
imply, the second square commutes.

It only remains to prove (c). By (a) it follows
\begin{equation*}
\mathscr{T}(\theta )=\{N\in \mathrm{mod}(\mathcal{T}^{op})|\mathrm{Ext}%
^{1}(\theta _{C},N)=0\}=D(\mathscr{Y}(\mathcal{T})).
\end{equation*}%
By Brenner-Butler's Theorem, there are equivalences of categories:
\begin{eqnarray*}
\phi _{\theta } &:&\mathscr F(\theta )\rightarrow \mathscr{X}(\theta ) \\
-\otimes \mathcal{T} &:&\mathscr Y(\mathcal{T})\rightarrow \mathscr{T}(%
\mathcal{T})\text{.}
\end{eqnarray*}

Then we have a commutative square%
\begin{equation*}
\begin{diagram} \node{\mathscr T(\mathcal{T})}\arrow{s,l}{D}
\node{\mathscr{Y}(\mathcal{T})}\arrow{w,t}{\otimes\mathcal{T}}\arrow{s,l}{D}%
\\ \node{\mathscr{Y}(\theta)}
\node{\mathscr{T}(\theta)}\arrow{w,t}{\alpha*\phi_{\theta}} \end{diagram}
\end{equation*}%
By part (b), it follows
\begin{equation*}
\mathscr{F}(\theta )=\{N\in \mathrm{mod}(\mathcal{T}^{op})|\mathrm{Hom}%
(\theta _{C},N)=0\}=D(\mathscr{X}(\mathcal{T}))
\end{equation*}

From the equivalence of categories given in Brenner-Butler's Theorem:
\begin{eqnarray*}
\mathrm{Tor}_{1}^{\mathcal{T}}(\;\;,\mathcal{T}) &:&\mathscr X(\mathcal{T}%
)\rightarrow \mathscr{F}(\mathcal{T}) \\
\mathrm{Ext}^{1}_{\mathcal{T}^{op}}(\;\;,-)_{\theta } &:&\mathscr T(\theta
)\rightarrow \mathscr{Y}(\theta )
\end{eqnarray*}

We have a commutative square:
\begin{equation*}
\begin{diagram} \node{\mathscr F(\mathcal{T})}\arrow{s,l}{D}
\node{\mathscr{X}(\mathcal{T})}\arrow{w,t}{\mathrm{Tor}_1^\mathcal{T}(\;\;,%
\mathcal{T})}\arrow{s,l}{D}\\ \node{\mathscr{X}(\theta)}
\node{\mathscr{F}(\theta)}\arrow{w,t}{\alpha* \mathrm{Ext}^1(\;\;,-)|\theta}
\end{diagram}
\end{equation*}
\end{proof}

\begin{definition}
\emph{[ASS]} Let $\mathcal{C}$ be Krull-Schmidt. A torsion theory $(\mathscr %
T,\mathscr F) $ in $\mathrm{mod}(\mathcal{C})$ \textbf{splits,} if every
indecomposable $M\in \mathrm{mod}(\mathcal{C})$ is, either in $\mathscr T$
or in $\mathscr F $.
\end{definition}

\begin{proposition}[ASS Prop. 1.7]
\label{Brenner-B} Let $\mathcal{C}$ be dualizing category and $(\mathscr T,%
\mathscr F)$ a torsion pair in $\mathrm{mod}(\mathcal{C})$. Then the
following conditions are equivalent:

\begin{itemize}
\item[(a)] The torsion theory $(\mathscr T,\mathscr F)$ splits.

\item[(b)] Let $\tau$ be the radical of the torsion theory. Then for any $M$
en $\mathrm{mod}(\mathcal{C})$, the exact sequence: 0$\rightarrow
\tau(M)\rightarrow M\rightarrow M/\tau(M)\rightarrow 0$, splits.

\item[(c)] For any $N$ in $\mathscr F$ and any $M$ in $\mathscr T$, $\mathrm{%
Ext}^{1}_{\mathcal{C}}(N,M)=0$.

\item[(c)] If $M\in \mathscr T$, then $\mathrm{TrD}M\in \mathscr T$.

\item[(d)] If $N\in \mathscr F$, then $\mathrm{DTr}N\in \mathscr F$.
\end{itemize}
\end{proposition}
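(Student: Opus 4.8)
The plan is to show $(a)\Leftrightarrow(b)\Leftrightarrow(c)$ by elementary torsion-theoretic arguments, and then to obtain the two Auslander--Reiten translate conditions out of $(c)$ by means of the Auslander--Reiten duality formula, available because $\mathcal{C}$ is a dualizing variety.

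First I would record the following remark: in any torsion pair, if $0\to M'\to X\to M''\to 0$ is exact with $M'\in\mathscr T$ and $M''\in\mathscr F$, then $\tau(X)=M'$, because $M'\subseteq\tau(X)$ while $\tau(X)/M'$ is a torsion quotient that embeds into the torsion-free object $M''$, hence is zero; so such a sequence is the canonical sequence of $X$. Granting this: $(a)\Rightarrow(b)$, since (using $(a)$ and Krull--Schmidt) $M=M'\oplus M''$ with $M'$ the sum of the indecomposable summands of $M$ in $\mathscr T$ and $M''$ the sum of those in $\mathscr F$, and the canonical sequence of $M$ is then the split sequence $0\to M'\to M\to M''\to 0$. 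Next $(b)\Rightarrow(c)$, since an extension $0\to M\to E\to N\to 0$ with $M\in\mathscr T$, $N\in\mathscr F$ is the canonical sequence of $E$, hence splits, so $\mathrm{Ext}^1_{\mathcal C}(N,M)=0$. Then $(c)\Rightarrow(b)$, since the canonical sequence of $M$ represents a class in $\mathrm{Ext}^1_{\mathcal C}(M/\tau(M),\tau(M))=0$. Finally $(b)\Rightarrow(a)$: for $M$ indecomposable the split canonical sequence forces $\tau(M)=M$ or $\tau(M)=0$.

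For the translate conditions I would use that, $\mathcal{C}$ being a dualizing variety, $\mathrm{mod}(\mathcal C)$ has almost split sequences, projective covers and injective envelopes, the Auslander--Reiten formulas $\mathrm{Ext}^1_{\mathcal C}(N,M)\cong D\,\overline{\mathrm{Hom}}_{\mathcal C}(M,\mathrm{DTr}N)\cong D\,\underline{\mathrm{Hom}}_{\mathcal C}(\mathrm{TrD}M,N)$ hold (with $\overline{\mathrm{Hom}}$, resp. $\underline{\mathrm{Hom}}$, denoting morphisms modulo those factoring through an injective, resp. projective, object), and $\mathrm{DTr}N$, resp. $\mathrm{TrD}M$, has no nonzero injective, resp. projective, direct summand. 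Then $(d)\Rightarrow(c)$ is immediate: if $N,\mathrm{DTr}N\in\mathscr F$ and $M\in\mathscr T$ then $\mathrm{Hom}_{\mathcal C}(M,\mathrm{DTr}N)=0$, so $\mathrm{Ext}^1_{\mathcal C}(N,M)=0$. For $(c)\Rightarrow(d)$, given $N\in\mathscr F$ put $T=\tau(\mathrm{DTr}N)$ with inclusion $i\colon T\hookrightarrow\mathrm{DTr}N$; since $T\in\mathscr T$ and $N\in\mathscr F$, the formula and $(c)$ give $\overline{\mathrm{Hom}}_{\mathcal C}(T,\mathrm{DTr}N)\cong D\,\mathrm{Ext}^1_{\mathcal C}(N,T)=0$, so $i$ factors through an injective; extending this factorization along the injective envelope $T\hookrightarrow E(T)$ and using that $T$ is essential in $E(T)$ shows $E(T)$ embeds as a direct summand of $\mathrm{DTr}N$, whence $E(T)=0$ and $T=0$, i.e. $\mathrm{DTr}N\in\mathscr F$. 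The equivalence of the $\mathrm{TrD}$-condition with $(c)$ follows by the same argument using the second Auslander--Reiten isomorphism and projective covers in place of injective envelopes (equivalently, by applying what was just proved to the torsion pair $(D\mathscr F,D\mathscr T)$ in $\mathrm{mod}(\mathcal C^{op})$ together with $D\,\mathrm{TrD}\cong\mathrm{DTr}\,D$).

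The main obstacle is not the argument but the infrastructure: one must make sure that for a dualizing variety $\mathcal C$ the category $\mathrm{mod}(\mathcal C)$ really does carry almost split sequences, the Auslander--Reiten duality formula, and injective envelopes/projective covers with their usual essentiality properties. These belong to Auslander--Reiten's theory of dualizing varieties, so I would invoke them from [AR] rather than reprove them; everything else reduces to the torsion-theoretic bookkeeping above.
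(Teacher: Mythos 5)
The paper gives no proof of this proposition; it simply cites [ASS Prop.\ 1.7], which establishes the analogous statement for module categories over artin algebras. Your proposal supplies a complete proof, and it is correct: the cycle $(a)\Rightarrow(b)\Rightarrow(c)\Rightarrow(b)\Rightarrow(a)$ is the standard torsion-theoretic bookkeeping, and the passage to the $\mathrm{DTr}$- and $\mathrm{TrD}$-conditions via the Auslander--Reiten formula, the essentiality of $T$ in its injective envelope, and the absence of injective (resp.\ projective) summands in $\mathrm{DTr}N$ (resp.\ $\mathrm{TrD}M$) is exactly the argument used in [ASS], transplanted to dualizing varieties. The infrastructure you flag as the real obstacle --- Krull--Schmidt, almost split sequences, the AR duality formula, projective covers and injective envelopes in $\mathrm{mod}(\mathcal C)$ --- is indeed available for dualizing $R$-varieties by [AR], so invoking those results rather than reproving them is appropriate and matches the paper's own level of citation.
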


We say that a tilting subcategory $\mathcal{T}$ of $\mathrm{mod}(\mathcal{C}%
) $ \textbf{separates,} if the torsion theory $(\mathscr{T}(\mathcal{T}),%
\mathscr{F}(\mathcal{T}))$ in $\mathrm{mod}(\mathcal{C})$ splits, and we say
it \textbf{splits,} if the torsion theory $(\mathscr{X}(\mathcal{T}),%
\mathscr{Y}(\mathcal{T}))$ in $\mathrm{mod}(\mathcal{T})$ splits.

\begin{lemma}
\label{BrennerB9} Let $\mathcal{T}$ be a tilting subcategory of $\mathrm{mod}%
(\mathcal{C})$ that splits. Then the following statements hold:

\begin{itemize}
\item[(a)] Let $0\rightarrow M\xrightarrow{f}E\xrightarrow{g}\mathrm{TrD}%
(M)\rightarrow 0$ be an almost split sequence, with $M$ in $\mathscr{T}(%
\mathcal{T})$. Then the three terms are in $\mathscr{T}(\mathcal{T})$, and $%
0\rightarrow \phi (M)\rightarrow \phi (E)\rightarrow \phi (\mathrm{TrD}%
(M))\rightarrow 0$ is an almost split sequence, whose terms are in $%
\mathscr{Y}(\mathcal{T}).$

\item[(b)] Let $0\rightarrow \mathrm{DTr}(M)\rightarrow E\rightarrow
M\rightarrow 0$ be an almost split sequence, with $M$ in $\mathscr{F}(%
\mathcal{T})$. Then the three terms are in $\mathscr{F}(\mathcal{T})$, and $%
0\rightarrow \mathrm{Ext}^1_\mathcal{C}(\;\;,\mathrm{Dtr}(M))_{\mathcal{T}%
}\rightarrow \mathrm{Ext}^1_\mathcal{C}(\;\;,E)_{\mathcal{T}}\rightarrow
\mathrm{Ext}^1_\mathcal{C}(\;\;,M)_{\mathcal{T}}\rightarrow 0$ is an almost
split sequence, whose terms are in $\mathscr{X}(\mathcal{T})$.
\end{itemize}
\end{lemma}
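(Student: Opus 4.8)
We prove (a); part (b) is obtained by running the same argument with $\phi=\mathrm{Hom}_{\mathcal C}(\;\;,-)_{\mathcal T}$ replaced by $\mathrm{Ext}^1_{\mathcal C}(\;\;,-)_{\mathcal T}$, the torsion class $\mathscr T(\mathcal T)$ by the torsion-free class $\mathscr F(\mathcal T)$, $\mathscr Y(\mathcal T)$ by $\mathscr X(\mathcal T)$, and $\mathrm{TrD}$ by $\mathrm{DTr}$. (On $\mathscr F(\mathcal T)$ the functor $\phi$ vanishes, since $\mathscr F(\mathcal T)=\{N\mid \mathrm{Hom}(T,N)=0,\ T\in\mathcal T\}$, and $\mathrm{Ext}^2_{\mathcal C}(T,-)=0$ for $T\in\mathcal T$; hence $\mathrm{Ext}^1_{\mathcal C}(\;\;,-)_{\mathcal T}$ is exact on $\mathscr F(\mathcal T)$, which is what makes the argument go through for (b).) For (a), set $Z=\mathrm{TrD}(M)$; note $M$ is indecomposable and non-injective. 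The plan has three steps: (1) show $E,Z\in\mathscr T(\mathcal T)$; (2) apply $\phi$ to get a short exact sequence in $\mathrm{mod}(\mathcal T)$ with terms in $\mathscr Y(\mathcal T)$; (3) check this sequence is almost split.

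Step 1. Since $\mathcal C$ and $\mathcal T$ are dualizing varieties and $\mathcal T$ splits, the splitting of $(\mathscr X(\mathcal T),\mathscr Y(\mathcal T))$ inside $\mathrm{mod}(\mathcal T)$ transports, through the equivalences of Lemma \ref{BrennerB11} and the criterion of Proposition \ref{Brenner-B}, to the statement that the torsion class $\mathscr T(\mathcal T)$ of $\mathrm{mod}(\mathcal C)$ is stable under the Auslander--Reiten translate $\mathrm{TrD}$; hence $Z\in\mathscr T(\mathcal T)$. As $\mathscr T(\mathcal T)$ is a torsion class it is closed under extensions (Propositions \ref{TORG} and \ref{TORH}), so from $0\to M\to E\to Z\to 0$ with $M,Z\in\mathscr T(\mathcal T)$ we obtain $E\in\mathscr T(\mathcal T)$. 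This transfer is the delicate point of the proof and is exactly where the splitting hypothesis is used.

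Step 2. By Proposition \ref{TORH}, $M\in\mathscr T(\mathcal T)$ gives $\mathrm{Ext}^1_{\mathcal C}(\;\;,M)_{\mathcal T}=0$; since this is the first right derived functor of the left exact functor $\phi$, the long exact sequence associated with $0\to M\to E\to Z\to 0$ collapses to a short exact sequence $0\to\phi(M)\to\phi(E)\to\phi(Z)\to 0$, with terms in $\mathrm{mod}(\mathcal T)$ by Proposition \ref{R2} (dualizing varieties have pseudokernels). By Brenner--Butler's Theorem $\phi=F$ carries $\mathscr T(\mathcal T)$ into $\mathscr Y(\mathcal T)$, so all three terms lie in $\mathscr Y(\mathcal T)$.

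Step 3. Recall $\phi$ restricts to an equivalence $\mathscr T(\mathcal T)\to\mathscr Y(\mathcal T)$ with quasi-inverse $-\otimes\mathcal T$. Hence $\phi(M)$ and $\phi(Z)$ are indecomposable, and the sequence is non-split: a splitting would, after applying $-\otimes\mathcal T$ and using $GF\cong\mathrm{id}$ on $\mathscr T(\mathcal T)$, split the original almost split sequence. For the right almost split property, take a non-retraction $h\colon W\to\phi(Z)$ in $\mathrm{mod}(\mathcal T)$; using the torsion decomposition $0\to X\to W\to Y\to 0$ with $X\in\mathscr X(\mathcal T)$, $Y\in\mathscr Y(\mathcal T)$ and $\mathrm{Hom}_{\mathcal T}(X,\phi(Z))=0$, we may assume $W=Y\in\mathscr Y(\mathcal T)$, hence $W\cong\phi(U)$ and $h=\phi(h_0)$ for some $U\in\mathscr T(\mathcal T)$ and $h_0\colon U\to Z$; then $h_0$ is a non-retraction, so it factors through $g\colon E\to Z$, and applying $\phi$ shows $h$ factors through $\phi(E)\to\phi(Z)$. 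A non-split short exact sequence with indecomposable ends that is right almost split is an almost split sequence, so $0\to\phi(M)\to\phi(E)\to\phi(Z)\to 0$ is almost split in $\mathrm{mod}(\mathcal T)$. The main obstacle is Step 1; everything after it is transport of structure along the Brenner--Butler equivalence.
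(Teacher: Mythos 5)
The overall three-step plan you follow is the same as the paper's: (1) place the three terms of the almost split sequence in $\mathscr{T}(\mathcal{T})$, (2) apply $\phi$ together with $\mathrm{Ext}^1_{\mathcal{C}}(\;\;,M)_{\mathcal{T}}=0$ to obtain a short exact sequence in $\mathscr{Y}(\mathcal{T})$, and (3) verify the image sequence is almost split. Steps 2 and 3 of your argument are sound and in line with the paper; your Step 3, which reduces an arbitrary non-retraction $W\to\phi(Z)$ to the case $W\in\mathscr{Y}(\mathcal{T})$ via the torsion sequence rather than invoking the splitting of $(\mathscr{X},\mathscr{Y})$ on an indecomposable $W$ as the paper does, is a small but legitimate variation.

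However, Step 1 contains a genuine gap. You claim that the splitting of $(\mathscr{X}(\mathcal{T}),\mathscr{Y}(\mathcal{T}))$ in $\mathrm{mod}(\mathcal{T})$ transports, through Lemma \ref{BrennerB11} and Proposition \ref{Brenner-B}, to $\mathrm{TrD}$-stability of $\mathscr{T}(\mathcal{T})$ in $\mathrm{mod}(\mathcal{C})$. Trace the duality in Lemma \ref{BrennerB11}(c) carefully: $D(\mathscr{X}(\mathcal{T}))\cong\mathscr{F}(\theta)$ and $D(\mathscr{Y}(\mathcal{T}))\cong\mathscr{T}(\theta)$, so the splitting of $(\mathscr{X}(\mathcal{T}),\mathscr{Y}(\mathcal{T}))$ is equivalent (under $D$) to the splitting of $(\mathscr{T}(\theta),\mathscr{F}(\theta))$ in $\mathrm{mod}(\mathcal{T}^{op})$, i.e.\ to $\theta$ being \emph{separating}. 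On the other hand, by Proposition \ref{Brenner-B}(c), $\mathrm{TrD}$-stability of $\mathscr{T}(\mathcal{T})$ is equivalent to $(\mathscr{T}(\mathcal{T}),\mathscr{F}(\mathcal{T}))$ splitting, i.e.\ to $\mathcal{T}$ being \emph{separating} — and under Lemma \ref{BrennerB11}(c)(ii) this dualizes to $\theta$ being \emph{splitting}, not separating. The two notions swap under the duality, so the implication you want does not come out of this transport; the chain of equivalences lands you at the wrong torsion pair.

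For comparison, the paper's proof here is a single sentence citing the "previous Lemma" (Proposition \ref{Brenner-B}); as stated, that citation too would require the separating hypothesis on $\mathcal{T}$ rather than the splitting one. The argument that actually works from the splitting of $(\mathscr{X},\mathscr{Y})$ alone — the one in the reference ASS VI.5.4 from which this material is adapted — additionally assumes $M$ is not $\mathrm{Ext}$-injective in $\mathscr{T}(\mathcal{T})$, deduces that $\phi(M)$ is not injective in $\mathrm{mod}(\mathcal{T})$, and then compares the almost split sequence for $\phi(M)$ in $\mathrm{mod}(\mathcal{T})$ with the $\phi$-image of the original sequence, using the splitting of $(\mathscr{X},\mathscr{Y})$ to control the torsion decomposition of the middle and end terms. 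Your proof (and the paper's terse first sentence) skips this and substitutes a transport that does not close the gap.

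A second, smaller point: in your parenthetical remark for part (b), the fact that $\phi$ vanishes on $\mathscr{F}(\mathcal{T})$ does not by itself make $\mathrm{Ext}^1_{\mathcal{C}}(\;\;,-)_{\mathcal{T}}$ exact on $\mathscr{F}(\mathcal{T})$; you also need $\mathrm{Ext}^2_{\mathcal{C}}(T,-)=0$ (which you do state) and the vanishing of $\phi$ on the cokernel term. This is a minor presentation issue and is easily fixed, unlike Step 1.
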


\begin{proof}
We will prove only (a), being (b) similar. By previous Lemma, $\mathrm{TrD}%
(M)\in \mathscr{T}(\mathcal{T})$, therefore: $E$ $\in \mathscr{T}(\mathcal{T}%
)$. By the long homology sequence, we get the exact sequence
\begin{equation}
0\rightarrow (\;\;,M)_{\mathcal{T}}\xrightarrow{(\;\;,f)}(\;\;,E)_{\mathcal{T%
}}\xrightarrow{(\;\;,g)}(\;\;,\mathrm{TrD}(M))_{\mathcal{T}}\rightarrow
\mathrm{Ext}^1_\mathcal{C}(\;\;,M)_{\mathcal{T}}=0\text{,}
\end{equation}%
whose terms are in $\mathscr{Y}(\mathcal{T})$.

The morphism $(\;\;,g)_{\mathcal{T}}$ is right minimal. Let $\overline{\eta }%
:(\;\;,E)_{\mathcal{T}}\rightarrow (\;\;,E)_{\mathcal{T}}$ be an
endomorphism, such that $(\;\;,g)_{\mathcal{T}}\tilde{\eta}=(\;\;,g)_{%
\mathcal{T}}$. Since $\phi:\mathscr{T}(\mathcal{T})\rightarrow \mathscr{Y}(%
\mathcal{T})$ is an equivalence of categories we have $((\;\;,E)_{\mathcal{T}%
},(\;\;,E)_{\mathcal{T}})\cong (E,E)$ and $\tilde{\eta}=(\;\;,\eta )_{%
\mathcal{T}}$, with $\eta :E\rightarrow E$. Then $g\eta =g$, and since $g$
is right minimal, it follows $\eta $ is an isomorphism, hence, $\tilde{\eta}$
is an isomorphism.

Let $N$ be indecomposable and $\tilde{\gamma}:N\rightarrow (\;\;,\mathrm{TrD}%
(M))$ a non isomorphism and non zero map. Then, either $N\in \mathscr{X}(%
\mathcal{T})$ or $N\in \mathscr{Y}(\mathcal{T})$. If $N\in \mathscr X(%
\mathcal{T})$, then $\tilde{\gamma}\in \mathrm{Hom}(\mathscr X(\mathcal{T}),%
\mathscr Y(\mathcal{T}))=0$, hence, $N=(\;\;,H)_\mathcal{T}$, with $H\in %
\mathscr T(\mathcal{T})$ and $\tilde{\gamma}=(\;\;,\gamma )_\mathcal{T}%
:(\;\;,H)_\mathcal{T}\rightarrow (\;\;,\mathrm{TrD}M)_\mathcal{T}$, where $%
\gamma $ is non isomorphism and non zero. Then $\eta $ factors through $E$,
and $\tilde{\eta}$ factors through $(\;\;,E)_\mathcal{T}$.
\end{proof}

As a consequence of Lemma \ref{BrennerB9}, and Proposition \ref{BrennerB10},
we have:

\begin{lemma}[ ASS Lemma 5.5]
Let $\mathcal{T}$ be a tilting subcategory of $\mathrm{mod}(\mathcal{C})$.
If $M\in \mathscr T(\mathcal{T})$ and $N\in \mathscr F(\mathcal{T})$, then
for any $j\geq 1$, there is an isomorphism
\begin{equation*}
\mathrm{Ext}_{\mathcal{C}}^{j}(M,N)\cong \mathrm{Ext}_{\mathcal{T}%
}^{j-1}(\phi (M),\mathrm{Ext}^{1}_{\mathcal{C}}(\;\;,N)_\mathcal{T})\text{.}
\end{equation*}
\end{lemma}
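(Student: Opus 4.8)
The plan is to compute both sides of the asserted isomorphism from the canonical resolution of $M$ furnished by Proposition~\ref{TORZ}(ii), and to observe that the two computations produce the cohomology of one and the same complex, shifted in degree by one.

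Since $M\in\mathscr{T}$, I would first fix an exact sequence
\[
\cdots \xrightarrow{t_{2}} T^{1}\xrightarrow{t_{1}} T^{0}\xrightarrow{t_{0}} M\rightarrow 0
\]
as in Proposition~\ref{TORZ}(ii), with each $T^{i}$ a coproduct of objects of $\mathcal{T}$ (so $T^{i}\in\mathrm{Add}\,\mathcal{T}\subseteq\mathscr{T}$), every syzygy $\Omega^{i}:=\mathrm{Im}\,t_{i}$ again in $\mathscr{T}$, and $T^{i}\to\Omega^{i}$ a $\mathcal{T}$-approximation. As in the proof of Proposition~\ref{TORZ}(v), applying $\phi$ gives a projective resolution $(\;\;,T^{\bullet})_{\mathcal{T}}\to\phi(M)$ in $\mathrm{Mod}(\mathcal{T})$; moreover, since each $T^{i}$ is a coproduct of objects of $\mathcal{T}$ and finitely presented functors are compact, $(\;\;,T^{i})_{\mathcal{T}}$ is the corresponding coproduct of representables, and Yoneda's Lemma identifies $\mathrm{Hom}_{\mathcal{T}}((\;\;,T^{i})_{\mathcal{T}},F'(N))\cong F'(N)(T^{i})=\mathrm{Ext}^{1}_{\mathcal{C}}(T^{i},N)$, the differential being induced by $\mathrm{Ext}^{1}_{\mathcal{C}}(t_{\bullet},N)$. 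Hence $\mathrm{Ext}^{j-1}_{\mathcal{T}}(\phi(M),F'(N))\cong H^{j-1}(C^{\bullet})$, where $C^{\bullet}$ is the complex with $C^{i}=\mathrm{Ext}^{1}_{\mathcal{C}}(T^{i},N)$.

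Next I would carry out the d\'evissage on the $\mathcal{C}$-side. The crucial point is that, as $N\in\mathscr{F}$, we have $\mathrm{Hom}_{\mathcal{C}}(T^{i},N)=0$, while $\mathrm{pdim}\,\mathcal{T}\leq 1$ forces $\mathrm{Ext}^{k}_{\mathcal{C}}(T^{i},N)=0$ for $k\geq 2$; thus $\mathrm{Ext}^{*}_{\mathcal{C}}(T^{i},N)$ is concentrated in degree one. Running the long homology sequence of $\mathrm{Hom}_{\mathcal{C}}(\;\;,N)$ on each short exact sequence $0\to\Omega^{i+1}\to T^{i}\to\Omega^{i}\to 0$ then gives $\mathrm{Hom}_{\mathcal{C}}(\Omega^{i},N)=0$, a four-term exact sequence
\[
0\to\mathrm{Ext}^{1}_{\mathcal{C}}(\Omega^{i},N)\xrightarrow{a_{i}}\mathrm{Ext}^{1}_{\mathcal{C}}(T^{i},N)\xrightarrow{b_{i}}\mathrm{Ext}^{1}_{\mathcal{C}}(\Omega^{i+1},N)\to\mathrm{Ext}^{2}_{\mathcal{C}}(\Omega^{i},N)\to 0 ,
\]
and dimension-shift isomorphisms $\mathrm{Ext}^{k+1}_{\mathcal{C}}(\Omega^{i},N)\cong\mathrm{Ext}^{k}_{\mathcal{C}}(\Omega^{i+1},N)$ for $k\geq 2$. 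Factoring $t_{i+1}$ as $T^{i+1}\twoheadrightarrow\Omega^{i+1}\hookrightarrow T^{i}$ shows the differential $C^{i}\to C^{i+1}$ equals $a_{i+1}b_{i}$; since every $a_{i}$ is mono, a short diagram chase gives $H^{0}(C^{\bullet})\cong\mathrm{Ext}^{1}_{\mathcal{C}}(M,N)$ and, for $i\geq 1$, $H^{i}(C^{\bullet})\cong\mathrm{Ext}^{1}_{\mathcal{C}}(\Omega^{i},N)/\mathrm{Im}\,b_{i-1}\cong\mathrm{Ext}^{2}_{\mathcal{C}}(\Omega^{i-1},N)$, which by the shifts above equals $\mathrm{Ext}^{i+1}_{\mathcal{C}}(M,N)$. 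Combining with the previous paragraph, $\mathrm{Ext}^{j}_{\mathcal{C}}(M,N)\cong H^{j-1}(C^{\bullet})\cong\mathrm{Ext}^{j-1}_{\mathcal{T}}(\phi(M),F'(N))$ for all $j\geq 1$, and naturality is automatic because everything is read off from the functorial complex $\mathrm{Ext}^{1}_{\mathcal{C}}(T^{\bullet},N)$.

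The main obstacle I anticipate is purely bookkeeping: verifying that the differential of $C^{\bullet}$ is indeed $a_{i+1}b_{i}$ and keeping the degree shift straight — equivalently, checking that the hyper-Ext spectral sequence of the (non-projective) resolution $T^{\bullet}\to M$ applied to $\mathrm{Hom}_{\mathcal{C}}(\;\;,N)$ collapses onto its $q=1$ row, so that $\mathrm{Ext}^{n}_{\mathcal{C}}(M,N)\cong E_{2}^{n-1,1}=H^{n-1}(\mathrm{Ext}^{1}_{\mathcal{C}}(T^{\bullet},N))$; one can organise this either through a Cartan--Eilenberg double complex or, as sketched above, through iterated long homology sequences. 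Everything else — the vanishing of $\mathrm{Hom}_{\mathcal{C}}(T^{i},N)$ and of $\mathrm{Ext}^{\geq 2}_{\mathcal{C}}(T^{i},N)$, the compactness identification of $(\;\;,T^{i})_{\mathcal{T}}$, and the dimension shifts — is routine given the results already established.
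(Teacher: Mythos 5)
Your proof is correct, and it fills a genuine gap: the paper does not actually prove this lemma, it simply cites ASS Lemma 5.5 with the remark that it is ``a consequence of Lemma \ref{BrennerB9} and Proposition \ref{BrennerB10}.'' Your reconstruction is, however, very much in the spirit of the argument the paper does spell out for Proposition \ref{TORZ}(v). There, $M,N$ both lie in $\mathscr{T}$, the complex $\mathrm{Hom}_{\mathcal{C}}(T^{\bullet},N)$ computes $\mathrm{Ext}^{j}_{\mathcal{C}}(M,N)$ on one side and (via Yoneda and the projective resolution $\phi(T^{\bullet})\to\phi(M)$) $\mathrm{Ext}^{j}_{\mathcal{T}}(\phi(M),\phi(N))$ on the other, with no degree shift. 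In the present statement $N\in\mathscr{F}$ kills all of $\mathrm{Hom}_{\mathcal{C}}(T^{i},N)$ and $\mathrm{Hom}_{\mathcal{C}}(\Omega^{i},N)$, which is exactly why the surviving complex becomes $\mathrm{Ext}^{1}_{\mathcal{C}}(T^{\bullet},N)$ and the degree shift by one appears. Your d\'evissage through the four-term sequences
\[
0\to\mathrm{Ext}^{1}_{\mathcal{C}}(\Omega^{i},N)\xrightarrow{a_{i}}\mathrm{Ext}^{1}_{\mathcal{C}}(T^{i},N)\xrightarrow{b_{i}}\mathrm{Ext}^{1}_{\mathcal{C}}(\Omega^{i+1},N)\to\mathrm{Ext}^{2}_{\mathcal{C}}(\Omega^{i},N)\to 0
\]
together with the identification of the differential as $a_{i+1}b_{i}$ and the monicity of each $a_{i}$ is exactly the bookkeeping that makes this work, and the dimension-shift isomorphisms $\mathrm{Ext}^{k}_{\mathcal{C}}(\Omega^{i-1},N)\cong\mathrm{Ext}^{k+1}_{\mathcal{C}}(\Omega^{i-2},N)$ for $k\geq 2$ close the computation. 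The only small points worth making explicit in a final write-up: the identification $\mathrm{Hom}_{\mathcal{T}}((\;\;,T^{i})_{\mathcal{T}},F'(N))\cong\mathrm{Ext}^{1}_{\mathcal{C}}(T^{i},N)$ uses that $T^{i}$ is a coproduct $\coprod_{j}T_{ij}$ of objects of $\mathcal{T}$ and that $\mathrm{Ext}^{1}_{\mathcal{C}}(-,N)$ turns this coproduct into a product, matching the product that $\mathrm{Hom}_{\mathcal{T}}(\coprod(\;\;,T_{ij})_{\mathcal{T}},-)$ produces; and the vanishing $\mathrm{Hom}_{\mathcal{C}}(T^{i},N)=0$ likewise uses that $\mathrm{Hom}$ sends the coproduct in the first variable to a product. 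Both are as you sketched. The hyper-Ext spectral sequence remark is a clean alternative packaging of the same computation.
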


From the fact $\theta =\{\theta _{C}=((\;\;,C),\;\;)_\mathcal{T}\}_{C\in
\mathcal{C}}$ is a tilting subcategory in $\mathrm{mod}(\mathcal{T}^{op})$,
we obtain the following:

\begin{theorem}
Let $\mathcal{T}$ be a tilting subcategory of $\mathrm{mod}(\mathcal{C}).$%
Then:

\begin{itemize}
\item[(a)] $\mathcal{T}$ separates, if and only if, for any $Y\in \mathscr{Y}%
(\mathcal{T})$, $\mathrm{pdim}Y=1$.

\item[(b)] $\mathcal{T}$ splits, if and only if, for any $N\in \mathscr{F}(%
\mathcal{T})$, $\mathrm{idim}N=1$.
\end{itemize}
\end{theorem}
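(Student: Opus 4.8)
The plan is to read both equivalences off the splitting criterion of Proposition~\ref{Brenner-B}, after identifying the first $\mathrm{Ext}$-group occurring between the two halves of the relevant torsion pair with a second $\mathrm{Ext}$-group in a module category, and then to trade the vanishing of that $\mathrm{Ext}^{2}$ for a homological dimension bound by means of the canonical torsion sequences. The symmetry between (a) and (b) will be produced by the duality $D$ together with the fact (Proposition~\ref{TT}) that $\theta=\{(\mathcal{C}(\;\;,C),\;\;)_{\mathcal{T}}\}_{C\in\mathcal{C}}$ is itself a tilting subcategory of the dualizing variety $\mathrm{mod}(\mathcal{T}^{op})$, whose Brenner--Butler torsion pairs are matched with those of $\mathcal{T}$ in Lemma~\ref{BrennerB11}; recall also that tilting subcategories of dualizing varieties automatically satisfy the pseudokernel hypotheses, so all the preceding machinery applies.

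I would establish (b) first. By definition $\mathcal{T}$ splits precisely when the torsion pair $(\mathscr{X}(\mathcal{T}),\mathscr{Y}(\mathcal{T}))$ of $\mathrm{mod}(\mathcal{T})$ splits; since $\mathcal{T}$ is dualizing, Proposition~\ref{Brenner-B} (with $\mathscr{X}(\mathcal{T})$ in the role of the torsion class) turns this into $\mathrm{Ext}_{\mathcal{T}}^{1}(Y,X)=0$ for all $Y\in\mathscr{Y}(\mathcal{T})$ and $X\in\mathscr{X}(\mathcal{T})$. By the Brenner--Butler theorem each such $Y$ is $\phi(M)$ with $M\in\mathscr{T}(\mathcal{T})$ and each such $X$ is $\mathrm{Ext}_{\mathcal{C}}^{1}(\;\;,N)_{\mathcal{T}}$ with $N\in\mathscr{F}(\mathcal{T})$, so the comparison isomorphism of the preceding lemma, taken with $j=2$, rewrites the condition as $\mathrm{Ext}_{\mathcal{C}}^{2}(M,N)=0$ for all $M\in\mathscr{T}(\mathcal{T})$ and $N\in\mathscr{F}(\mathcal{T})$. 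If $\mathrm{idim}\,N\le 1$ for every $N\in\mathscr{F}(\mathcal{T})$ this is automatic; for the converse, given $N\in\mathscr{F}(\mathcal{T})$ and an arbitrary $Z$ in $\mathrm{mod}(\mathcal{C})$, apply $\mathrm{Hom}_{\mathcal{C}}(-,N)$ to $0\to\tau_{\mathcal{T}}(Z)\to Z\to Z/\tau_{\mathcal{T}}(Z)\to 0$ and use $\mathrm{Ext}_{\mathcal{C}}^{2}(\tau_{\mathcal{T}}(Z),N)=0$; one is then reduced to showing $\mathrm{Ext}_{\mathcal{C}}^{2}(F,N)=0$ for $F,N\in\mathscr{F}(\mathcal{T})$, which gives $\mathrm{idim}\,N\le 1$.

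Promoting the vanishing from $\mathscr{T}(\mathcal{T})\times\mathscr{F}(\mathcal{T})$ to $\mathscr{F}(\mathcal{T})\times\mathscr{F}(\mathcal{T})$ is the technical heart of the argument and the step I expect to be hardest. I would obtain it by dualizing: $\mathrm{Ext}_{\mathcal{C}}^{2}(F,N)\cong\mathrm{Ext}_{\mathcal{C}^{op}}^{2}(DN,DF)$, and by Lemma~\ref{BrennerB11}(c) the duality $D$ sends $\mathscr{F}(\mathcal{T})$ into the torsion class $\mathscr{X}(\theta)$ of the Brenner--Butler torsion pair of $\theta$ on $\mathrm{mod}(\theta)\simeq\mathrm{mod}(\mathcal{C}^{op})$; running the first two steps for the tilting subcategory $\theta$ then shows this last $\mathrm{Ext}^{2}$ is controlled by the splitting of the torsion pair $(\mathscr{T}(\theta),\mathscr{F}(\theta))$ of $\mathrm{mod}(\mathcal{T}^{op})$, which, again by Lemma~\ref{BrennerB11}(c), is equivalent to the hypothesis that $\mathcal{T}$ splits. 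One could instead bypass the detour by first proving the torsion-free analogue of Proposition~\ref{TORZ}(v), namely an isomorphism $\mathrm{Ext}_{\mathcal{C}}^{i}(F,N)\cong\mathrm{Ext}_{\mathcal{T}}^{i}(F'(F),F'(N))$ for $F,N\in\mathscr{F}(\mathcal{T})$, and then invoking the splitting to bound the projective dimension of the relevant objects of $\mathscr{X}(\mathcal{T})$.

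Finally, (a) follows from (b) by the same duality. The category $\mathcal{T}$ separates iff $(\mathscr{T}(\mathcal{T}),\mathscr{F}(\mathcal{T}))$ splits in $\mathrm{mod}(\mathcal{C})$; applying $D$ and Lemma~\ref{BrennerB11}(c), this is equivalent to the splitting of $(\mathscr{X}(\theta),\mathscr{Y}(\theta))$ in $\mathrm{mod}(\theta)$, i.e. to $\theta$ being a splitting tilting subcategory of $\mathrm{mod}(\mathcal{T}^{op})$, and by part (b) applied to $\theta$ this holds iff $\mathrm{idim}\,N'\le 1$ for all $N'\in\mathscr{F}(\theta)$. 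Since $\mathscr{F}(\theta)=D(\mathscr{X}(\mathcal{T}))$ by Lemma~\ref{BrennerB11}(c) and $D$ interchanges projective and injective resolutions, this translates into the projective dimension bound of part (a). Beyond the $\mathscr{F}\times\mathscr{F}$ step, the remaining care needed is the bookkeeping of the several dualities and equivalences of Lemma~\ref{BrennerB11} and of the Brenner--Butler theorem, and checking at each stage that the standing hypotheses (dualizing-ness of $\mathcal{C}$ and $\mathcal{T}$, and the pseudokernels they supply) are in force.
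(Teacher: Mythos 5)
Your overall plan — deduce splitting from the vanishing of $\mathrm{Ext}_{\mathcal{T}}^{1}$ between the two halves of $(\mathscr{X},\mathscr{Y})$, convert that to an $\mathrm{Ext}_{\mathcal{C}}^{2}$ via the comparison lemma, and obtain (a) from (b) by dualizing to the tilting subcategory $\theta$ — is the right skeleton, and your forward direction of (b) and your reduction of (a) to (b) agree with the paper. The problem is in the converse direction of (b), and you have correctly flagged it yourself: the ``promote from $\mathscr{T}\times\mathscr{F}$ to $\mathscr{F}\times\mathscr{F}$'' step is where the difficulty lies, and neither of your two proposed routes actually closes it. Dualizing $\mathrm{Ext}_{\mathcal{C}}^{2}(F,N)$ for $F,N\in\mathscr{F}(\mathcal{T})$ lands you in $\mathrm{Ext}^{2}$ between two objects of the \emph{torsion} class $\mathscr{X}(\theta)$ of $\mathrm{mod}(\theta)$; but ``running the first two steps for $\theta$'' only controls $\mathrm{Ext}^{1}_{\theta}(\mathscr{Y}(\theta),\mathscr{X}(\theta))$, equivalently $\mathrm{Ext}^{2}_{\mathcal{T}^{op}}(\mathscr{T}(\theta),\mathscr{F}(\theta))$, which is a $\mathscr{T}\times\mathscr{F}$-type group, not the $\mathscr{X}\times\mathscr{X}$-type group you now need. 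You have simply reproduced the same gap in the dual picture. The alternative route via a torsion-free analogue of Proposition~\ref{TORZ}(v) has the same issue: it would hand you $\mathrm{Ext}_{\mathcal{T}}^{2}(X,X')$ with $X,X'\in\mathscr{X}(\mathcal{T})$, and the splitting of $(\mathscr{X},\mathscr{Y})$ does not by itself bound the projective or injective dimension of objects of $\mathscr{X}$ against arbitrary objects, only against objects of $\mathscr{Y}$.

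The paper avoids the $\mathscr{F}\times\mathscr{F}$ problem entirely by a cleaner homological observation. Given $N\in\mathscr{F}(\mathcal{T})$, take an injective coresolution $0\to N\to I^{0}\to I^{1}\to\cdots$ with cosyzygies $L^{0},L^{1}$. Since injective objects lie in $\mathscr{T}(\mathcal{T})$ and $\mathscr{T}(\mathcal{T})$ is closed under quotients, $L^{1}\in\mathscr{T}(\mathcal{T})$. Then $\mathrm{idim}\,N\le 1$ is equivalent to the splitting of $0\to L^{0}\to I^{1}\to L^{1}\to 0$, i.e.\ to the vanishing of the single group $\mathrm{Ext}_{\mathcal{C}}^{1}(L^{1},L^{0})\cong\mathrm{Ext}_{\mathcal{C}}^{2}(L^{1},N)$ (dimension shift along $0\to N\to I^{0}\to L^{0}\to 0$). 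That is an $\mathscr{T}\times\mathscr{F}$ $\mathrm{Ext}^{2}$, so the comparison lemma turns it into $\mathrm{Ext}^{1}_{\mathcal{T}}(\phi(L^{1}),\mathrm{Ext}^{1}_{\mathcal{C}}(\;,N)_{\mathcal{T}})$, which vanishes by the splitting of $(\mathscr{X}(\mathcal{T}),\mathscr{Y}(\mathcal{T}))$. No $\mathscr{F}\times\mathscr{F}$ vanishing is required; it then follows a posteriori (from $\mathrm{idim}\,N\le 1$), but it is not an input. The rest of your proposal — the use of $\theta$ being tilting, $D(\mathscr{F}(\mathcal{T}))\cong\mathscr{X}(\theta)$, $D(\mathscr{Y}(\mathcal{T}))\cong\mathscr{T}(\theta)$, and so on from Lemma~\ref{BrennerB11} to derive (a) from (b) — is exactly what the paper does once (b) is in hand.
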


\begin{proof}
(b) Is proved in [ASS Theo. 5.6], but, for he benefit of the reader, we
repeat the proof here.

First, the sufficiency of the condition. Assume that for every $N\in %
\mathscr{F}(\mathcal{T})$, we have $\mathrm{idim}N=1$. Let $X\in \mathscr{X}(%
\mathcal{T})$ and $Y\in \mathscr{Y}(\mathcal{T})$. Then there exist $M\in %
\mathscr{T}(\mathcal{T})$ and $N\in \mathscr{F}(\mathcal{T})$ such that $%
X\cong \mathrm{Ext}_{\mathcal{C}}^{1}(\;,N)_{\mathcal{T}}$ and $Y\cong \phi
(M)$, by Brenner-Buttler theorem. Hence, by the above Lemma
\begin{equation*}
\mathrm{Ext}_{\mathcal{T}}^{1}(Y,X)\cong \mathrm{Ext}_{\mathcal{T}}^{1}(\phi
(M),\mathrm{Ext}_{\mathcal{C}}^{1}(\;,N)_{\mathcal{T}})\cong \mathrm{Ext}_{%
\mathcal{C}}^{2}(M,N)=0,
\end{equation*}
Conversely, assume that $(\mathscr{X}(\mathcal{T}),\mathscr{Y}(\mathcal{T}))$
is spplitting and let $N\in \mathscr{F}(\mathcal{T})$. Take an injective
resolution of $N$
\begin{equation*}
0\rightarrow N\xrightarrow{d^0}I^{0}\xrightarrow{d^1}I^{1}\xrightarrow{d^2}%
I^{2}\rightarrow \cdots
\end{equation*}

Let $L^{0}=\mathrm{Im}d^{1}$ and $L^{1}=\mathrm{Im}d^{2}$. Since $\mathscr{T}%
(\mathcal{T})=\mathrm{Ker}\mathrm{Ext}_{\mathcal{C}}^{1}(\mathcal{T},\;)$
contais the injective objects and it is closed under epimorphic images, and $%
N\in \mathscr{F}(\mathcal{T})$, it follows $L^{1}\in \mathscr{T}(\mathcal{T}%
) $. Then, by the above Lemma, we have:

\begin{equation*}
\mathrm{Ext}_{\mathcal{C}}^{1}(L^{1},L^{0})\cong \mathrm{Ext}_{\mathcal{C}%
}^{2}(L^{1},N)\cong \mathrm{Ext}_{\mathcal{T}}^{1}(\phi (L^{1}),\mathrm{Ext}%
_{\mathcal{C}}^{1}(\;,N)_{\mathcal{T}})
\end{equation*}%
But, $\phi (L^{1})\in \mathscr{Y}(\mathcal{T})$, $\mathrm{Ext}_{\mathcal{C}%
}^{1}(\;,N)_{\mathcal{T}}\in \mathscr{X}(\mathcal{T})$ and $(\mathscr{X}(%
\mathcal{T}),\mathscr{Y}(\mathcal{T}))$ is splitting, by hypothesis. By
Proposition \ref{Brenner-B}, this implies $\mathrm{Ext}_{\mathcal{T}%
}^{1}(\phi (L^{1}),\mathrm{Ext}_{\mathcal{C}}^{1}(\;,N)_{\mathcal{T}})=0$,
proving that the exact sequence $0\rightarrow L^{0}\rightarrow
I^{1}\rightarrow L^{1}\rightarrow 0$ splits . Therefore, $L^{0}$ is
injective and $\mathrm{idim}N\leq 1$. Finally, $N\in \mathscr{F}(\mathcal{T}%
) $, implies $N$ is not injective, thus $\mathrm{idim}N=1$.

We prove that (a) follows from (b):

By definition, $\mathcal{T}$ separates, if and only if, the torsion pair $(%
\mathscr{T}(\mathcal{T}),\mathscr{F}(\mathcal{T}))$ in $\mathrm{mod}(%
\mathcal{C})$ splits. By duality, this occurs if and only if, the pair $(%
\mathscr{X}(\theta ),\mathscr{Y}(\theta ))=(D\mathscr{F}(\mathcal{T}),D%
\mathscr{T}(\mathcal{T}))$, is a splitting torsion theory in $\mathrm{mod}(%
\mathcal{C}^{op})\cong \mathrm{mod}(\theta )$. By (b), $\theta $ is a
tilting subcategory that splits in $\mathrm{mod}(\mathcal{T}^{op})$, if and
only if for all $N\in \mathscr F(\theta )$, $\mathrm{idim}N=1$. But $%
\mathscr{F}(\theta )=D\mathscr{Y}(\mathcal{T})$, and (a) follows.
\end{proof}

\begin{corollary}
If $\mathrm{gdim}\mathcal{C}\leq 1$, then any tilting subcategory $\mathcal{T%
}\subset \mathrm{mod}(\mathcal{C})$ splits.
\end{corollary}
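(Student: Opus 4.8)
The plan is to reduce everything to part (b) of the preceding theorem, which states that a tilting subcategory $\mathcal{T}$ of $\mathrm{mod}(\mathcal{C})$ splits if and only if $\mathrm{idim}\,N=1$ for every (nonzero) object $N$ of $\mathscr{F}(\mathcal{T})$. Thus it suffices to prove that, when $\mathrm{gdim}\,\mathcal{C}\leq 1$, every nonzero object of $\mathscr{F}(\mathcal{T})$ has injective dimension exactly one.

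For the upper bound, recall that throughout this subsection $\mathcal{C}$ is a dualizing variety, so $\mathrm{mod}(\mathcal{C})$ has enough injectives as well as enough projectives. In an abelian category with both enough projectives and enough injectives the global dimension equals both the supremum of the projective dimensions and the supremum of the injective dimensions; concretely, the duality $D$ carries a projective resolution of $DN$ in $\mathrm{mod}(\mathcal{C}^{op})$ to an injective resolution of $N$ in $\mathrm{mod}(\mathcal{C})$, whence $\mathrm{idim}\,N=\mathrm{pdim}\,DN\leq\mathrm{gdim}\,\mathcal{C}\leq 1$ for every $N$ in $\mathrm{mod}(\mathcal{C})$, in particular for $N\in\mathscr{F}(\mathcal{T})$.

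For the lower bound, I would use that by Proposition \ref{TORH} the torsion class is $\mathscr{T}(\mathcal{T})=\{M\mid\mathrm{Ext}^{1}_{\mathcal{C}}(T,M)=0\text{ for all }T\in\mathcal{T}\}$, so every injective object of $\mathrm{mod}(\mathcal{C})$ lies in $\mathscr{T}(\mathcal{T})$. Since $(\mathscr{T}(\mathcal{T}),\mathscr{F}(\mathcal{T}))$ is a torsion pair, $\mathscr{T}(\mathcal{T})\cap\mathscr{F}(\mathcal{T})=0$, hence a nonzero $N\in\mathscr{F}(\mathcal{T})$ cannot be injective and $\mathrm{idim}\,N\geq 1$. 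Combining the two bounds gives $\mathrm{idim}\,N=1$ for every nonzero $N\in\mathscr{F}(\mathcal{T})$, so $\mathcal{T}$ splits by part (b) of the preceding theorem. There is no real obstacle here: the only point that genuinely uses the dualizing hypothesis is the passage from the bound $\mathrm{pdim}\leq 1$ on all finitely presented functors to the corresponding bound on their injective dimensions, which is immediate once one knows $\mathrm{mod}(\mathcal{C})$ has enough injectives.
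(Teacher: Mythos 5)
Your proof is correct and follows what is clearly the intended argument: combine the bound $\mathrm{idim}\,N\le\mathrm{gdim}\,\mathcal{C}\le 1$ (valid because $\mathcal{C}$ is a dualizing variety, so $\mathrm{mod}(\mathcal{C})$ has enough injectives and $D$ converts projective resolutions into injective ones), the observation that injective objects lie in $\mathscr{T}(\mathcal{T})=\{M\mid\mathrm{Ext}^1_{\mathcal{C}}(T,M)=0\}$ and hence no nonzero object of $\mathscr{F}(\mathcal{T})$ is injective, and part (b) of the preceding theorem. The paper states the corollary without proof, and your argument supplies exactly the expected one; your explicit attention to the ``nonzero'' caveat in applying part (b) is a correct reading of the theorem, whose own proof tacitly assumes $N\ne 0$.
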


As a consequence of the previous theorem and Lemma \ref{BrennerB11}, we have
the following:

\begin{corollary}
If $\mathcal{C}$ and $\mathcal{T}$ are hereditary, then $\mathcal{T}$ splits
and separates
\end{corollary}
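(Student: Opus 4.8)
The plan is to treat the two conclusions separately, each deduced from the corresponding half of the preceding theorem, and to use the $D$-duality of Lemma \ref{BrennerB11} to convert "$\mathcal T$ separates" into a "splits" statement for the dual tilting subcategory $\theta$.

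First I would check that $\mathcal T$ splits. Since $\mathcal C$ is hereditary, $\mathrm{gdim}\,\mathcal C\le 1$, so every object of $\mathrm{mod}(\mathcal C)$ has injective dimension at most one; in particular each nonzero $N\in\mathscr F(\mathcal T)$ satisfies $\mathrm{idim}\,N\le 1$, and as $\mathscr T(\mathcal T)=\mathrm{Ker}\,\mathrm{Ext}^1_{\mathcal C}(\mathcal T,\;)$ contains all injectives, such an $N$ is not injective, so $\mathrm{idim}\,N=1$. By part (b) of the preceding theorem this gives that $\mathcal T$ splits; equivalently, this is the preceding corollary applied to $\mathcal C$, which may simply be cited.

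Then, to see that $\mathcal T$ separates, I would argue via $\theta$. By definition $\mathcal T$ separates exactly when the torsion pair $(\mathscr T(\mathcal T),\mathscr F(\mathcal T))$ of $\mathrm{mod}(\mathcal C)$ splits. Applying $D$ and Lemma \ref{BrennerB11}(c), this pair is carried to $(D\mathscr F(\mathcal T),D\mathscr T(\mathcal T))=(\mathscr X(\theta),\mathscr Y(\theta))$ in $\mathrm{mod}(\theta)\cong\mathrm{mod}(\mathcal C^{op})$ — note $D$ interchanges the torsion and torsion-free classes — and a torsion pair splits if and only if its image under a duality does. Hence $\mathcal T$ separates if and only if $\theta$ splits as a tilting subcategory of $\mathrm{mod}(\mathcal T^{op})$ (it is one, by Proposition \ref{TT}(a)). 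To finish I would note that $\mathcal T$ being hereditary, together with the fact that for the dualizing variety $\mathcal T$ the duality $D$ turns projective resolutions in $\mathrm{mod}(\mathcal T)$ into injective resolutions in $\mathrm{mod}(\mathcal T^{op})$, yields $\mathrm{gdim}\,\mathcal T^{op}=\mathrm{gdim}\,\mathcal T\le 1$; then the preceding corollary, applied with $\mathcal T^{op}$ in place of $\mathcal C$, shows that every tilting subcategory of $\mathrm{mod}(\mathcal T^{op})$, and in particular $\theta$, splits. Therefore $\mathcal T$ separates.

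The step I expect to be the only real point of care is the duality bookkeeping: one must be sure that "separating in $\mathrm{mod}(\mathcal C)$" corresponds under $D$ to "$\theta$ splitting in $\mathrm{mod}(\mathcal T^{op})$", and not to "$\theta$ separating", and that $D$ swaps torsion with torsion-free classes. This is precisely the content of Lemma \ref{BrennerB11}(c), and it is the same step as "(a) follows from (b)" in the proof of the preceding theorem, so no genuinely new verification is needed.
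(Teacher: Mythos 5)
Your proof is correct, and the ingredients — the preceding theorem, the preceding corollary, and Lemma \ref{BrennerB11}(c) — are exactly the ones the paper intends (the paper gives no explicit proof, merely citing these). For the ``splits'' half you argue exactly as expected. For the ``separates'' half you take a slightly roundabout route: you re-derive, step by step, the duality equivalence ``$\mathcal T$ separates $\iff$ $\theta$ splits in $\mathrm{mod}(\mathcal T^{op})$'' and then reduce $\theta$'s splitting to $\mathrm{gdim}\,\mathcal T^{op}\le 1$. This works, but it duplicates the ``(a) follows from (b)'' argument already carried out inside the proof of the preceding theorem, and it forces you to check the extra fact $\mathrm{gdim}\,\mathcal T^{op}=\mathrm{gdim}\,\mathcal T$. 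It is shorter, and closer to the spirit of ``consequence of the previous theorem,'' to invoke part (a) of that theorem directly: since $\mathcal T$ is hereditary, every $Y\in\mathscr Y(\mathcal T)\subset\mathrm{mod}(\mathcal T)$ has $\mathrm{pdim}\,Y\le 1$, which is precisely the criterion there for $\mathcal T$ to separate, with no detour through $\theta$ or through $\mathcal T^{op}$ needed.

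One small point worth flagging in either version: the theorem's conditions are written as equalities ($\mathrm{pdim}\,Y=1$, $\mathrm{idim}\,N=1$), which cannot hold for $Y$ projective or $N=0$; the intended reading is $\le 1$ for the nonzero non-projective (resp.\ non-injective) objects, as the theorem's own proof makes clear. You handle this correctly in your ``splits'' paragraph by observing that $\mathscr T(\mathcal T)$ contains the injectives, so nonzero $N\in\mathscr F(\mathcal T)$ is not injective; the same caveat applies to whichever form of the criterion you use for ``separates.''
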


The last theorems in this subsection will have important implications in the
hereditary case, that we will consider in next section.

\section{Infinite quivers}

We begin this section showing that there exist natural examples of the
notions discussed in the previous section.

We prove first that for a finite dimensional algebra $\Lambda $, having a
finitely generated generator $M$, such that there exists a tilting $\mathrm{%
End}_{\Lambda }(M)^{op}$-module $T$, the module $T$ can be extended to a
tilting category of $\mathrm{mod}_{\Lambda }.$

We study next locally finite infinite quivers $Q$, and consider the quiver
algebra 
-category. We prove that for locally finite infinite quivers, a section on
the preprojective component produces a tilting category. To apply the
tilting functor is analogous to apply an infinite sequence of partial
Coxeter functors, to change the orientation of the quiver. We next use these
results to computate the Auslander-Reiten components of the locally finite
infinite Dynkin quivers.

To describe the shape of all Auslander-Reiten components, it is enough to
compute them for a fixed orientation, and then to apply tilting. We choose
quivers with only sinks and sources.

A simple modification of the arguments given by [ABPRS], proves that regular
Auslander-Reiten components of locally finite infinite quivers are of type $%
A_{\infty }$.

\subsection{Extending tilting}

In this subsection we will prove that there exist examples of tilting
subcategories. The first source of examples is produced in the following way:

Let $\mathcal{C}$ be a $K$-category, $K$ a field, which is $\mathrm{Hom}$%
-finite, with an additive generator $\varLambda$, such that there is a
tilting $\mathrm{End}(\varLambda)^{op}$-module $T$, $\mathcal{C}^{\prime }=\{%
\mathrm{add}\varLambda\}$ is a subcategory of $\mathcal{C}$ containing the
generator of $\mathcal{C}$, $\mathrm{\mathrm{Mod}}(\mathcal{C}^{\prime })$
is equivalent to $\mathrm{\mathrm{Mod}}(\mathrm{End}(\varLambda)^{op})$ and
it has tilting subcategory which corresponds to $T$ under the equivalence.
We will extend first the tilting subcategory of $\mathrm{\mathrm{Mod}}(%
\mathcal{C}^{\prime })$ to a partial tilting subcategory of $\mathrm{\mathrm{%
Mod}}(\mathcal{C})$ and then using Bongartz' argument [B], [CF] we will
extend it to a tilting subcategory of $\mathrm{\mathrm{Mod}}(\mathcal{C})$.

\begin{lemma}
\label{ET1} Let $\mathcal{C}^{\prime }$ be a subcategory of $\mathcal{C}$.
If $\mathcal{T}$ is a self orthogonal subcategory of $\mathrm{\mathrm{Mod}}(%
\mathcal{C}^{\prime })$, consisting of finitely presented objects, then $%
\mathcal{C}\otimes _{\mathcal{C}^{\prime }}\mathcal{T}$ is a self orthogonal
subcategory of $\mathrm{\mathrm{Mod}}(\mathcal{C})$.
\end{lemma}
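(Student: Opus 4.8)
I would establish this by showing $\mathrm{Ext}^{1}_{\mathcal{C}}(\mathcal{C}\otimes_{\mathcal{C}'}T,\,\mathcal{C}\otimes_{\mathcal{C}'}T')=0$ for all $T,T'$ in $\mathcal{T}$; that $\mathcal{C}\otimes_{\mathcal{C}'}T$ is again finitely presented is immediate, since by Proposition~\ref{cap1.9}(c) the right exact functor $\mathcal{C}\otimes_{\mathcal{C}'}-$ carries a presentation $\mathcal{C}'(\;\;,C_{1})\to\mathcal{C}'(\;\;,C_{0})\to T\to 0$ to a presentation $\mathcal{C}(\;\;,C_{1})\to\mathcal{C}(\;\;,C_{0})\to\mathcal{C}\otimes_{\mathcal{C}'}T\to 0$. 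The plan is to push the vanishing of $\mathrm{Ext}^{1}_{\mathcal{C}'}(T,T')$ along $\mathcal{C}\otimes_{\mathcal{C}'}-$, using only the formal properties in Proposition~\ref{cap1.9}: it is right exact, it preserves projectives, it is fully faithful, and $\mathrm{res}\circ(\mathcal{C}\otimes_{\mathcal{C}'}-)=\mathrm{id}$. Combining the last two, $\mathrm{Hom}_{\mathcal{C}}(\mathcal{C}\otimes_{\mathcal{C}'}X,\,\mathcal{C}\otimes_{\mathcal{C}'}T')\cong\mathrm{Hom}_{\mathcal{C}'}(X,T')$, naturally in $X$ in $\mathrm{Mod}(\mathcal{C}')$.

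First I would pick a projective presentation $0\to K\xrightarrow{\iota}P\to T\to 0$ of $T$ in $\mathrm{Mod}(\mathcal{C}')$ with $P$ projective (a representable, as $T$ is finitely presented). Applying $\mathrm{Hom}_{\mathcal{C}'}(\;\;,T')$ and using $\mathrm{Ext}^{1}_{\mathcal{C}'}(P,T')=0$, self orthogonality of $\mathcal{T}$ says exactly that $\iota^{*}\colon\mathrm{Hom}_{\mathcal{C}'}(P,T')\to\mathrm{Hom}_{\mathcal{C}'}(K,T')$ is surjective. Now apply the right exact functor $\mathcal{C}\otimes_{\mathcal{C}'}-$ to the sequence and set $K'=\ker(\mathcal{C}\otimes_{\mathcal{C}'}P\to\mathcal{C}\otimes_{\mathcal{C}'}T)$; then $0\to K'\to\mathcal{C}\otimes_{\mathcal{C}'}P\to\mathcal{C}\otimes_{\mathcal{C}'}T\to 0$ is exact, $\mathcal{C}\otimes_{\mathcal{C}'}P$ is projective, and $\mathcal{C}\otimes_{\mathcal{C}'}\iota$ factors as $\mathcal{C}\otimes_{\mathcal{C}'}K\xrightarrow{q}K'\hookrightarrow\mathcal{C}\otimes_{\mathcal{C}'}P$ with $q$ an epimorphism. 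Applying $\mathrm{Hom}_{\mathcal{C}}(\;\;,\mathcal{C}\otimes_{\mathcal{C}'}T')$ and using projectivity of $\mathcal{C}\otimes_{\mathcal{C}'}P$, the group $\mathrm{Ext}^{1}_{\mathcal{C}}(\mathcal{C}\otimes_{\mathcal{C}'}T,\mathcal{C}\otimes_{\mathcal{C}'}T')$ is the cokernel of $\rho\colon\mathrm{Hom}_{\mathcal{C}}(\mathcal{C}\otimes_{\mathcal{C}'}P,\mathcal{C}\otimes_{\mathcal{C}'}T')\to\mathrm{Hom}_{\mathcal{C}}(K',\mathcal{C}\otimes_{\mathcal{C}'}T')$. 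Since $q$ is an epimorphism, $q^{*}\colon\mathrm{Hom}_{\mathcal{C}}(K',\mathcal{C}\otimes_{\mathcal{C}'}T')\to\mathrm{Hom}_{\mathcal{C}}(\mathcal{C}\otimes_{\mathcal{C}'}K,\mathcal{C}\otimes_{\mathcal{C}'}T')$ is a monomorphism, and $q^{*}\rho=(\mathcal{C}\otimes_{\mathcal{C}'}\iota)^{*}$, which under the natural isomorphism of the first paragraph is $\iota^{*}$, hence surjective. A surjection factoring through a monomorphism has its first factor surjective, so $\rho$ is surjective and $\mathrm{Ext}^{1}_{\mathcal{C}}(\mathcal{C}\otimes_{\mathcal{C}'}T,\mathcal{C}\otimes_{\mathcal{C}'}T')=0$.

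The step that needs care, and which forces the argument through the epimorphism $q$ rather than an identification $K'=\mathcal{C}\otimes_{\mathcal{C}'}K$, is that $\mathcal{C}\otimes_{\mathcal{C}'}-$ is only right exact and may fail to send the monomorphism $\iota$ to a monomorphism; this non-exactness is the main technical obstacle. Should "self orthogonal" be meant in the stronger sense $\mathrm{Ext}^{i}_{\mathcal{C}'}(\mathcal{T},\mathcal{T})=0$ for all $i\ge 1$, the same mechanism is iterated: $\mathrm{res}\,K'$ is the $\mathcal{C}'$-syzygy $\Omega T$, whose positive Ext groups against $T'$ are shifts of those of $T$ and hence vanish, so a dimension shift along $0\to K'\to\mathcal{C}\otimes_{\mathcal{C}'}P\to\mathcal{C}\otimes_{\mathcal{C}'}T\to 0$ together with the comparison of $K'$ with $\mathcal{C}\otimes_{\mathcal{C}'}\Omega T$ through the (here epimorphic) counit reduces $\mathrm{Ext}^{i}_{\mathcal{C}}(\mathcal{C}\otimes_{\mathcal{C}'}T,\mathcal{C}\otimes_{\mathcal{C}'}T')$ for $i\ge 2$ to lower-degree instances; equivalently one reads it off the hyper-$\mathrm{Ext}$ spectral sequence $E_{2}^{p,q}=\mathrm{Ext}^{p}_{\mathcal{C}}(L_{q}(\mathcal{C}\otimes_{\mathcal{C}'}-)(T),\mathcal{C}\otimes_{\mathcal{C}'}T')\Rightarrow\mathrm{Ext}^{p+q}_{\mathcal{C}'}(T,T')$, the entry $E_{2}^{1,0}$ being untouched by differentials while the abutment vanishes in positive total degree. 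Either way the essential computation is the one of the previous paragraph.
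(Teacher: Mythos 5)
Your proof is correct, and it runs along a genuinely different track from the paper's. The paper's argument is extension-theoretic: given an arbitrary short exact sequence $\mathbf{e}\colon 0\to\mathcal{C}\otimes_{\mathcal{C}'}T_{2}\to F\to\mathcal{C}\otimes_{\mathcal{C}'}T_{1}\to 0$, it invokes the Horse Shoe Lemma on the tensored projective presentations of $T_{1}$ and $T_{2}$ to exhibit $F$ as projectively presented over $\mathcal{C}'$, applies the exact functor $\mathrm{res}$ (using $\mathrm{res}\circ(\mathcal{C}\otimes_{\mathcal{C}'}-)=\mathrm{id}$) to land on a short exact sequence $0\to T_{2}\to\mathrm{res}\,F\to T_{1}\to 0$ in $\mathrm{Mod}(\mathcal{C}')$ that splits by self-orthogonality, and then tensors the splitting back, using $\mathcal{C}\otimes_{\mathcal{C}'}\mathrm{res}\,F\cong F$ to conclude $\mathbf{e}$ splits. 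You instead compute $\mathrm{Ext}^{1}_{\mathcal{C}}(\mathcal{C}\otimes_{\mathcal{C}'}T,\mathcal{C}\otimes_{\mathcal{C}'}T')$ directly as the cokernel of $\rho$ along the short exact sequence $0\to K'\to\mathcal{C}\otimes_{\mathcal{C}'}P\to\mathcal{C}\otimes_{\mathcal{C}'}T\to 0$, and transport surjectivity across via the epi $q\colon\mathcal{C}\otimes_{\mathcal{C}'}K\twoheadrightarrow K'$ together with the full-faithfulness isomorphism $\mathrm{Hom}_{\mathcal{C}}(\mathcal{C}\otimes_{\mathcal{C}'}-,\mathcal{C}\otimes_{\mathcal{C}'}T')\cong\mathrm{Hom}_{\mathcal{C}'}(-,T')$. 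The two handle the failure of exactness of $\mathcal{C}\otimes_{\mathcal{C}'}-$ differently: the paper sidesteps it by only ever tensoring a split sequence, whereas you isolate the defect in the epimorphism $q$ and observe it cannot hurt because pre-composition with an epi is mono on Hom. Your route avoids both the Horse Shoe Lemma and the identification of $F$ in the essential image of the tensor functor, replacing them by the single formal input of full faithfulness (Proposition~\ref{cap1.9}(d)); the paper's route is more concrete in that it exhibits the splitting explicitly. Your closing remarks on higher $\mathrm{Ext}$ are a bonus not needed here, since the paper uses self-orthogonality only in degree one.
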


\begin{proof}
Let $T_{1},T_{2}$ be objects of $\mathcal{T}$, with presentations $\mathcal{C%
}^{\prime }(\;\;,C_{1}^{\prime })\rightarrow \mathcal{C}^{\prime
}(\;\;,C_{0}^{\prime })\rightarrow T_{1}\rightarrow 0$, $\mathcal{C}^{\prime
}(\;\;,C_{1}^{\prime \prime })\rightarrow \mathcal{C}^{\prime
}(\;\;,C_{0}^{\prime \prime })\rightarrow T_{2}\rightarrow 0$. Let's
consider the functors: $\mathrm{res}:\mathrm{\mathrm{Mod}}(\mathcal{C}%
)\rightarrow \mathrm{\mathrm{Mod}}(\mathcal{C}^{\prime })$, $\mathcal{C}%
\otimes _{\mathcal{C}^{\prime }}:\mathrm{\mathrm{Mod}}(\mathcal{C}^{\prime
})\rightarrow \mathrm{\mathrm{Mod}}(\mathcal{C})$, as in [A].

We claim $\mathrm{Ext}_{\mathcal{C}}^{1}(\mathcal{C}\otimes _{\mathcal{C}%
^{\prime }}T_{1},\mathcal{C}\otimes _{\mathcal{C}^{\prime }}T_{2})=0$.
Indeed, let $\mathbf{e}$ be an element of $\mathrm{Ext}_{\mathcal{C}}^{1}(%
\mathcal{C}\otimes _{\mathcal{C}^{\prime }}T_{1},\mathcal{C}\otimes _{%
\mathcal{C}^{\prime }}T_{2})$, $\mathbf{e}:0\rightarrow \mathcal{C}\otimes _{%
\mathcal{C}^{\prime }}T_{2}\rightarrow F\rightarrow \mathcal{C}\otimes _{%
\mathcal{C}^{\prime }}T_{1}\rightarrow 0$. We apply $\mathcal{C}\otimes _{%
\mathcal{C}^{\prime }}$ to the projective presentations of $T_{1}$ and $%
T_{2} $, to get projective presentations of the corresponding extension. By
the Horse Shoe Lemma, we obtain the following commutative exact diagram:
\begin{equation*}
\begin{diagram} \dgARROWLENGTH=.1em \node{0}\arrow{s}{}\arrow{e,!}{}
\node{0}\arrow{s}{}\arrow{e,!}{} \node{0}\arrow{s}{}\arrow{e,!}{} \node{}\\
\node{\mathcal{C}(\;\;,C'_1)}\arrow{s}{}\arrow{e}{}
\node{\mathcal{C}(\;\;,C'_0)}\arrow{s}{}\arrow{e,}{}
\node{\mathcal{C}\otimes_{\mathcal{C}'} T_1}\arrow{s}{}\arrow{e}{}
\node{0}\\ \node{\mathcal{C}(\;\;,C'_1\coprod C''_1)}\arrow{s}{}\arrow{e}{}
\node{\mathcal{C}(\;\;,C'_0\coprod C''_0)}\arrow{s}{}\arrow{e,}{}
\node{F}\arrow{s}{}\arrow{e}{} \node{0}\\
\node{\mathcal{C}(\;\;,C''_1)}\arrow{s}{}\arrow{e}{}
\node{\mathcal{C}(\;\;,C''_0)}\arrow{s}{}\arrow{e,}{}
\node{\mathcal{C}\otimes_{\mathcal{C}'} T_2}\arrow{s}{}\arrow{e}{}
\node{0}\\ \node{0}\arrow{e,!}{} \node{0}\arrow{e,!}{} \node{0}\arrow{e,!}{}
\node{} \end{diagram}
\end{equation*}%
By [A Prop. 3.2], there exists an isomorphism $\mathcal{C}\otimes _{\mathcal{%
C}^{\prime }}\mathrm{res}(F)\cong \mathrm{Id}_{\mathrm{\mathrm{Mod}}(%
\mathcal{C})}(F)=F$.

Since the functor $\mathrm{res}$ is exact, we have the following commutative
exact diagram:
\begin{equation*}
\begin{diagram}\dgARROWLENGTH=.2em \node{0}\arrow{s}{}\arrow{e,!}{}
\node{0}\arrow{s}{}\arrow{e,!}{} \node{0}\arrow{s}{}\arrow{e,!}{} \node{}\\
\node{\mathcal{C}'(\;\;,C'_1)}\arrow{s}{}\arrow{e}{}
\node{\mathcal{C}'(\;\;,C'_0)}\arrow{s}{}\arrow{e,}{}
\node{T_1}\arrow{s}{}\arrow{e}{} \node{0}\\
\node{\mathcal{C}'(\;\;,C'_1\coprod C''_1)}\arrow{s}{}\arrow{e}{}
\node{\mathcal{C}'(\;\;,C'_0\coprod C''_0)}\arrow{s}{}\arrow{e,}{}
\node{\mathrm{res}F}\arrow{s}{}\arrow{e}{} \node{0}\\
\node{\mathcal{C}'(\;\;,C''_1)}\arrow{s}{}\arrow{e}{}
\node{\mathcal{C}'(\;\;,C''_0)}\arrow{s}{}\arrow{e,}{}
\node{T_2}\arrow{s}{}\arrow{e}{} \node{0}\\ \node{0}\arrow{e,!}{}
\node{0}\arrow{e,!}{} \node{0}\arrow{e,!}{} \node{} \end{diagram}
\end{equation*}

By assumption, the column at the right of the diagram splits, hence
tensoring with $\mathcal{C}\otimes _{\mathcal{C}^{\prime }}$ and using $%
\mathcal{C}\otimes _{\mathcal{C}^{\prime }}\mathrm{res}(F)\cong F$, the
columns in the first diagram split, hence $\mathbf{e}$ splits.
\end{proof}

\begin{definition}
A full subcategory $\mathcal{T}$ of $\mathrm{\mathrm{Mod}}(C)$ is a \textbf{%
partial tilting}, if its objects satisfy the following two conditions:

\begin{itemize}
\item[(i)] For each object $T$ in $\mathcal{T}$ there is an exact sequence $%
0\rightarrow P_{1}\rightarrow P_{0}\rightarrow T\rightarrow 0$, with $P_{i}$
finitely generated projective.

\item[(ii)] For every pair of objects $T_{i}$ and $T_{j}$ in $\mathcal{T}$, $%
\mathrm{Ext}_{\mathcal{C}}^{1}(T_{i},T_{j})=0$.
\end{itemize}
\end{definition}

\begin{proposition}
\label{ET2} Let $\mathcal{C}^{\prime }$ be a subcategory of $\mathcal{C}$
containing an additive generator $\Lambda $ of $\mathcal{C}$. If $\mathcal{T}
$ is a partial tilting subcategory of $\mathrm{\mathrm{Mod}}(\mathcal{C}%
^{\prime })$, then $\mathcal{C}\otimes _{\mathcal{C}^{\prime }}\mathcal{T}$
is a partial tilting subcategory of $\mathrm{\mathrm{Mod}}(\mathcal{C})$.
\end{proposition}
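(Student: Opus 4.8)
The plan is to verify conditions (i) and (ii) of the definition of partial tilting for the subcategory $\mathcal{C}\otimes_{\mathcal{C}'}\mathcal{T}$ of $\mathrm{Mod}(\mathcal{C})$. Condition (ii), i.e.\ self-orthogonality, is exactly the content of Lemma~\ref{ET1}, since a partial tilting subcategory is in particular self orthogonal and its objects are finitely presented; so that half is already done and needs only to be quoted. The real work is condition (i): I must show that each $\mathcal{C}\otimes_{\mathcal{C}'}T$, for $T\in\mathcal{T}$, admits a finite projective resolution $0\to P_1\to P_0\to \mathcal{C}\otimes_{\mathcal{C}'}T\to 0$ by \emph{finitely generated} projectives in $\mathrm{Mod}(\mathcal{C})$.

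The key steps, in order, are as follows. First, since $\mathcal{T}$ is partial tilting in $\mathrm{Mod}(\mathcal{C}')$, each $T$ fits in an exact sequence $0\to Q_1\to Q_0\to T\to 0$ with $Q_0,Q_1$ finitely generated projective in $\mathrm{Mod}(\mathcal{C}')$; because $\mathcal{C}'=\mathrm{add}\,\Lambda$ has a single additive generator, each $Q_i$ is a direct summand of a finite sum of copies of $\mathcal{C}'(\;\;,\Lambda)$, hence itself of the form $\mathcal{C}'(\;\;,C_i')$ up to the splitting of idempotents (or at worst a summand of such). Second, I apply the functor $\mathcal{C}\otimes_{\mathcal{C}'}-$: by Proposition~\ref{cap1.9}(a) it is right exact and preserves arbitrary sums, by (c) it sends $\mathcal{C}'(\;\;,C')$ to $\mathcal{C}(\;\;,C')$, and by (e) it preserves projectives; moreover it is exact on the sequence $0\to Q_1\to Q_0\to T\to 0$ because that sequence, having projective kernel $Q_1$, is split, and any additive functor preserves split exact sequences. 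Hence $0\to \mathcal{C}\otimes_{\mathcal{C}'}Q_1\to \mathcal{C}\otimes_{\mathcal{C}'}Q_0\to \mathcal{C}\otimes_{\mathcal{C}'}T\to 0$ is exact with the two left terms finitely generated projective in $\mathrm{Mod}(\mathcal{C})$ (finitely generated because $\mathcal{C}\otimes_{\mathcal{C}'}$ takes a representable to a representable and commutes with the finite sums/summands involved). This gives condition (i). Third, I invoke Lemma~\ref{ET1} with the self-orthogonal subcategory $\mathcal{T}$ to conclude $\mathrm{Ext}^1_{\mathcal{C}}(\mathcal{C}\otimes_{\mathcal{C}'}T_i,\mathcal{C}\otimes_{\mathcal{C}'}T_j)=0$, which is condition (ii). Together these show $\mathcal{C}\otimes_{\mathcal{C}'}\mathcal{T}$ is partial tilting.

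The main obstacle I anticipate is the bookkeeping around ``finitely generated'': one must be a little careful that $\mathcal{C}\otimes_{\mathcal{C}'}$ of a finitely generated projective $\mathcal{C}'$-module is again finitely generated projective in $\mathrm{Mod}(\mathcal{C})$, and not merely projective. This follows from Proposition~\ref{cap1.9}(c),(e) together with the fact that $\mathcal{C}\otimes_{\mathcal{C}'}-$ is additive and commutes with finite direct sums, so a summand of $\mathcal{C}'(\;\;,\Lambda)^n$ goes to a summand of $\mathcal{C}(\;\;,\Lambda)^n$; since idempotents split in $\mathcal{C}$ (an annuli variety) this is a finitely generated projective object. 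A secondary point worth stating explicitly is that the short exact sequence $0\to Q_1\to Q_0\to T\to 0$ \emph{splits} as a sequence of $\mathcal{C}'$-modules (because $Q_1$ is projective), so no derived-functor subtlety arises when applying the only right-exact functor $\mathcal{C}\otimes_{\mathcal{C}'}-$; this is what lets us keep the resolution of length one. Everything else is a direct citation of Proposition~\ref{cap1.9} and Lemma~\ref{ET1}.
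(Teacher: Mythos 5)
Your handling of condition (ii) is fine: it is indeed just a citation of Lemma \ref{ET1}. But your argument for condition (i) contains a genuine error at the decisive step. You claim that the short exact sequence $0\to Q_1\to Q_0\to T\to 0$ splits ``because the kernel $Q_1$ is projective.'' That is false: a short exact sequence splits when its \emph{cokernel} term is projective (or its kernel term injective), not when its kernel is projective. A sequence $0\to P_1\to P_0\to T\to 0$ with both $P_i$ projective is precisely the statement that $\mathrm{pdim}\,T\le 1$; if it split, $T$ would be a summand of $P_0$ and hence projective, which would trivialize the entire tilting setup. So you cannot conclude that the right-exact functor $\mathcal{C}\otimes_{\mathcal{C}'}-$ preserves exactness of this sequence by appealing to split exactness, and without that the left-hand map in $\mathcal{C}(\;\;,C_1')\to\mathcal{C}(\;\;,C_0')\to\mathcal{C}\otimes_{\mathcal{C}'}T\to 0$ is only known to give a presentation, not a length-one projective resolution.

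The tell is that your argument never really uses the hypothesis that $\mathcal{C}'$ contains an additive generator $\Lambda$ of $\mathcal{C}$ — and that hypothesis is exactly what rescues condition (i). The paper's proof evaluates at an arbitrary object $C$ of $\mathcal{C}$: since $\Lambda$ generates, there is an epimorphism $g:\Lambda^n\to C$, which induces monomorphisms $\mathcal{C}(C,C_i')\hookrightarrow\mathcal{C}(\Lambda^n,C_i')$ for $i=0,1$. Because $\Lambda^n$ lies in $\mathcal{C}'$, one has $\mathcal{C}(\Lambda^n,C_i')=\mathcal{C}'(\Lambda^n,C_i')$, and the bottom row of the resulting commutative square is the original exact sequence evaluated at $\Lambda^n$, where $\mathcal{C}'(\Lambda^n,f)$ is injective. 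Chasing the square shows $\mathcal{C}(C,f)$ is injective for every $C$, i.e.\ $\mathcal{C}(\;\;,f)$ is a monomorphism and the resolution has length one. You should replace your splitting claim with this evaluation argument; the rest of your write-up (identification of finitely generated projectives via Proposition \ref{cap1.9}(c),(e) and the citation of Lemma \ref{ET1}) is correct.
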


\begin{proof}
By Lemma \ref{ET1}, $\mathcal{C}\otimes _{\mathcal{C}^{\prime }}\mathcal{T}$
is self orthogonal. (ii) Let $T$ be an object in $\mathcal{T}$, and $%
0\rightarrow \mathcal{C}^{\prime }(\;\;,C_{1}^{\prime })\xrightarrow{%
\mathcal{C}'(\;\;,f)}\mathcal{C}(\;\;,C_{0}^{\prime })\rightarrow
T\rightarrow 0$, a projective resolution with $C_{0}^{\prime }$ and $%
C_{1}^{\prime }$ in $\mathcal{C}^{\prime }$. Applying $\mathcal{C}\otimes -$
, we get the projective presentation $\mathcal{C}(\;\;,C_{1}^{\prime })%
\xrightarrow{\mathcal{C}(\;\;,f)}\mathcal{C}(\;\;,C_{0}^{\prime
})\rightarrow \mathcal{C}\otimes _{\mathcal{C}^{\prime }}T\rightarrow 0$.
Since $\varLambda$ is in $\mathcal{C}^{\prime }$, then $\mathcal{C}^{\prime
}(\varLambda,C_{0}^{\prime })=\mathcal{C}(\varLambda,C_{0}^{\prime })$ and $%
\mathcal{C}^{\prime }(\varLambda,C_{1}^{\prime })=\mathcal{C}(\varLambda%
,C_{1}^{\prime })$. In consequence, $\mathcal{C}\otimes _{\mathcal{C}%
^{\prime }}T(\varLambda)=T(\varLambda)$. We need to see $\mathcal{C}(\;\;,f)$
is monomorphism. Let $C$ be an object in $\mathcal{C}$. Since $\varLambda$
is an additive generator of $\mathcal{C}$, then there is an epimorphism $g:%
\varLambda^{n}\rightarrow C\rightarrow 0$, and hence a monomorphisms $%
0\rightarrow \mathcal{C}(C,C_{i}^{\prime })\rightarrow \mathcal{C}(\varLambda%
,C_{i}^{\prime })$, for $i=0,1$.

In this way, we obtain a commutative exact diagram:%
\begin{equation*}
\divide\dgARROWLENGTH by2\begin{diagram} \node{}\arrow{s,!}{}\arrow{e,!}{}
\node{0}\arrow{s}{}\arrow{e,!}{} \node{0}\arrow{s}{}\arrow{e,!}{} \node{}
\node{}\\ \node{}\arrow{s,!}{}\arrow{e,!}{}
\node{\mathcal{C}(C,C'_1)}\arrow{s,r}{\mathcal{C}(g,C_1')}\arrow{e,t}{%
\mathcal{C}(C,f)}
\node{\mathcal{C}(C,C'_0)}\arrow{s,r}{\mathcal{C}(g,C_0')}\arrow{e,t}{}
\node{\mathcal{C}\otimes_{\mathcal{C}'}
T(C)}\arrow{s,r}{\mathcal{C}\otimes_{\mathcal{C}'} T(g)}\arrow{e,t}{}
\node{0}\\ \node{}\arrow{s,!}{}\arrow{e,!}{} \node{\mathcal{C}(\varLambda^
n,C'_1)}\arrow{s,=,-}{}\arrow{e,t}{\mathcal{C}(\varLambda^n,f)}
\node{\mathcal{C}(\varLambda^n,C'_0)}\arrow{s,=,-}{}\arrow{e,}{}
\node{\mathcal{C}\otimes_{\mathcal{C}'}
T(\varLambda^n)}\arrow{s,=,-}{}\arrow{e}{} \node{0}\\
\node{0}\arrow{s,!}{}\arrow{e,t}{}
\node{\mathcal{C}'(\varLambda^n,C_1')}\arrow{e,t}{\mathcal{C}'(%
\varLambda^n,f)} \node{\mathcal{C}'(\varLambda^n,C_0')}\arrow{e,}{}
\node{T(\varLambda^ n)}\arrow{e}{} \node{0} \end{diagram}
\end{equation*}%
and for any object $C$ in $\mathcal{C}$ the map $\mathcal{C}(C,f)$ is a
monomorphism.
\end{proof}

\begin{theorem}
Let $\mathcal{C}$ be an $\mathrm{Hom}$-finite, $K$-category with additive
generator $\varLambda$ and let $R_{\varLambda}$ be the endomorphism ring $R_{%
\varLambda}=\mathrm{End}(\varLambda)^{op}$. Assume $\mathrm{\mathrm{Mod}}(R_{%
\varLambda}) $ has a partial tilting module. Then $\mathrm{\mathrm{Mod}}(%
\mathcal{C})$ has a tilting subcategory.
\end{theorem}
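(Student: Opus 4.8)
The plan is to realise $R_{\varLambda}$ through the finite subcategory $\mathcal{C}'=\mathrm{add}\,\varLambda\subseteq\mathcal{C}$, transport the partial tilting module along $\mathcal{C}\otimes_{\mathcal{C}'}-$ using Proposition \ref{ET2}, and then carry out Bongartz's completion argument inside $\mathrm{Mod}(\mathcal{C})$.

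First I would set $\mathcal{C}'=\mathrm{add}\,\varLambda$, so that $\varLambda$ is an additive generator of $\mathcal{C}$ lying in $\mathcal{C}'$ and, as recalled above, $\mathrm{Mod}(\mathcal{C}')\simeq\mathrm{Mod}(R_{\varLambda})$. Under this equivalence the given partial tilting $R_{\varLambda}$-module $T$ corresponds to a partial tilting subcategory $\mathrm{add}\,T$ of $\mathrm{Mod}(\mathcal{C}')$; since $R_{\varLambda}=\mathrm{End}(\varLambda)^{op}$ is a finite dimensional $K$-algebra, $T$ has a projective resolution $0\to\mathcal{C}'(\;,C_1')\to\mathcal{C}'(\;,C_0')\to T\to 0$ with $C_0',C_1'$ in $\mathcal{C}'$. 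By Proposition \ref{ET2}, $\mathcal{T}_0:=\mathcal{C}\otimes_{\mathcal{C}'}(\mathrm{add}\,T)=\mathrm{add}(\mathcal{C}\otimes_{\mathcal{C}'}T)$ is a partial tilting subcategory of $\mathrm{Mod}(\mathcal{C})$; write $\widetilde{T}:=\mathcal{C}\otimes_{\mathcal{C}'}T$. Moreover, the computation carried out in the proof of Proposition \ref{ET2} shows that $\mathcal{C}\otimes_{\mathcal{C}'}-$ sends the above resolution to an exact sequence $0\to\mathcal{C}(\;,C_1')\to\mathcal{C}(\;,C_0')\to\widetilde{T}\to 0$, so $\widetilde{T}$ admits a finitely generated projective resolution of length one.

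Next I would Bongartz-complete $\mathcal{T}_0$. Fix $C\in\mathcal{C}$. Applying $\mathrm{Hom}_{\mathcal{C}}(-,\mathcal{C}(\;,C))$ to the resolution of $\widetilde{T}$ and invoking Yoneda's Lemma shows that $\mathrm{Ext}^1_{\mathcal{C}}(\widetilde{T},\mathcal{C}(\;,C))$ is a subquotient of $\mathrm{Hom}_{\mathcal{C}}(\mathcal{C}(\;,C_1'),\mathcal{C}(\;,C))\cong\mathcal{C}(C_1',C)$, hence a finite dimensional $K$-space; let $d_C$ be its dimension. Choosing a $K$-basis of $\mathrm{Ext}^1_{\mathcal{C}}(\widetilde{T},\mathcal{C}(\;,C))$ and reading it as an element of $\mathrm{Ext}^1_{\mathcal{C}}(\widetilde{T}^{\,d_C},\mathcal{C}(\;,C))$, form the associated universal extension
\begin{equation*}
0\to\mathcal{C}(\;,C)\to E_C\to\widetilde{T}^{\,d_C}\to 0,
\end{equation*}
and set $\mathcal{T}:=\mathrm{add}\bigl(\mathcal{T}_0\cup\{E_C\mid C\in\mathcal{C}\}\bigr)$. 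For axiom (i): every object of $\mathcal{T}_0$ is finitely presented of projective dimension $\le 1$, and the Horseshoe Lemma applied to the displayed sequence (whose outer terms have finitely generated projective resolutions of length $\le 1$) produces one for $E_C$ as well, so every object of $\mathcal{T}$ is finitely presented of projective dimension $\le 1$. Axiom (iii) holds by construction: the displayed sequence exhibits $\mathcal{C}(\;,C)$ as a subobject of $E_C\in\mathcal{T}$ with quotient $\widetilde{T}^{\,d_C}\in\mathcal{T}$ — here finiteness of $d_C$ is exactly what places $\widetilde{T}^{\,d_C}$ in $\mathcal{T}$ rather than only in $\mathrm{Add}\,\mathcal{T}$.

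The remaining and main point is axiom (ii), the vanishing of $\mathrm{Ext}^1_{\mathcal{C}}$ on $\mathcal{T}\times\mathcal{T}$; it suffices to check it on the generators $\mathcal{T}_0\cup\{E_C\}$. On $\mathcal{T}_0\times\mathcal{T}_0$ this is Proposition \ref{ET2}. For $\mathrm{Ext}^1_{\mathcal{C}}(\widetilde{T},E_C)$, apply $\mathrm{Hom}_{\mathcal{C}}(\widetilde{T},-)$ to the universal extension: the connecting homomorphism $\mathrm{Hom}_{\mathcal{C}}(\widetilde{T},\widetilde{T}^{\,d_C})\to\mathrm{Ext}^1_{\mathcal{C}}(\widetilde{T},\mathcal{C}(\;,C))$ is surjective because composing the extension class with the coordinate inclusions $\widetilde{T}\hookrightarrow\widetilde{T}^{\,d_C}$ recovers the chosen basis, while $\mathrm{Ext}^1_{\mathcal{C}}(\widetilde{T},\widetilde{T}^{\,d_C})=0$ by self-orthogonality of $\mathcal{T}_0$; hence $\mathrm{Ext}^1_{\mathcal{C}}(\widetilde{T},E_C)=0$, and since each object of $\mathcal{T}_0$ is a summand of a finite sum of copies of $\widetilde{T}$, $\mathrm{Ext}^1_{\mathcal{C}}(T',E_C)=0$ for all $T'\in\mathcal{T}_0$. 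Applying $\mathrm{Hom}_{\mathcal{C}}(-,T')$ and $\mathrm{Hom}_{\mathcal{C}}(-,E_{C'})$ to the universal extension of $E_C$ and using that $\mathrm{Ext}^1_{\mathcal{C}}(\mathcal{C}(\;,C),-)=0$ together with $\mathrm{Ext}^1_{\mathcal{C}}(\widetilde{T}^{\,d_C},T')=\mathrm{Ext}^1_{\mathcal{C}}(\widetilde{T}^{\,d_C},E_{C'})=0$ then yields $\mathrm{Ext}^1_{\mathcal{C}}(E_C,T')=0$ and $\mathrm{Ext}^1_{\mathcal{C}}(E_C,E_{C'})=0$. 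As all generators have projective dimension $\le 1$, these computations pass to all of $\mathcal{T}$, and $\mathcal{T}$ is the desired tilting subcategory. The only genuine obstacle is the careful bookkeeping in this last step — choosing which universal extension to feed to which $\mathrm{Hom}$ functor, and checking that finite presentation and finitely generated projectivity survive — everything else being a transcription of the classical Bongartz completion to the language of $\mathrm{Mod}(\mathcal{C})$, with Proposition \ref{ET2} supplying the starting partial tilting subcategory.
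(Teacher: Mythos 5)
Your proposal is correct and follows essentially the same route as the paper: realize $R_\varLambda$ as $\mathrm{add}\,\varLambda\subset\mathcal{C}$, transport the partial tilting module via $\mathcal{C}\otimes_{\mathcal{C}'}-$ (Proposition \ref{ET2}), use $\mathrm{Hom}$-finiteness of $\mathcal{C}$ together with the finitely generated projective presentation of $\widetilde{T}$ to see each $\mathrm{Ext}^1_{\mathcal{C}}(\widetilde{T},\mathcal{C}(\;,C))$ is finite dimensional, form the Bongartz universal extensions $E_C$, and set $\mathcal{T}=\mathrm{add}\{\widetilde{T},E_C\}_{C\in\mathcal{C}}$. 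Your verification of axiom (ii) — surjectivity of the connecting map to kill $\mathrm{Ext}^1(\widetilde{T},E_C)$, then propagating the vanishing by applying $\mathrm{Hom}(-,T')$ and $\mathrm{Hom}(-,E_{C'})$ to the universal extension sequence — mirrors the paper's argument step for step.
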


\begin{proof}
From the equivalence $\mathrm{\mathrm{Mod}}(R_{\varLambda})$ and $\mathrm{%
\mathrm{Mod}}(\mathrm{add}\{\varLambda\})$, it follows the category $\mathrm{%
\mathrm{Mod}}(\mathrm{add}\{\varLambda\})$ has a partial tilting object. The
category $\mathcal{C}^{\prime }=\mathrm{add}\{\varLambda\}$, is a
subcategory of $\mathcal{C}$ containing $\varLambda$. By Proposition \ref%
{ET2}, there exists an object $T $ in $\mathrm{\mathrm{Mod}}(\mathcal{C})$
which is a partial tilting in $\mathrm{\mathrm{Mod}}(\mathcal{C})$. Since $T$
is finitely presented, we have an exact sequence
\begin{equation}
0\rightarrow L\rightarrow \mathcal{C}(\;\;,C_{0})\rightarrow T\rightarrow 0%
\text{,}  \label{ET3}
\end{equation}%
with $L$ finitely generated.

Let $C$ be an object in $\mathcal{C}$. Applying $\mathrm{Hom}_{\mathcal{C}%
}(\;\;,\mathcal{C}(\;\;,C))$ to (\ref{ET3}), and using the long homology
sequence, we obtain the following exact sequence:
\begin{equation*}
\mathrm{Hom}_{\mathcal{C}}(\mathcal{C}(\;\;,C_{0}),\mathcal{C}%
(\;\;,C))\rightarrow \mathrm{Hom}_{\mathcal{C}}(L,\mathcal{C}%
(\;\;,C))\rightarrow \mathrm{Ext}_{\mathcal{C}}(T,\mathcal{C}%
(\;\;,C))\rightarrow 0\text{.}
\end{equation*}

Since $L$ is finitely generated, there is an epimorphism $\mathcal{C}%
(\;\;,C_{1})\rightarrow L\rightarrow 0$, and hence a monomorphism $%
0\rightarrow \mathrm{Hom}_{\mathcal{C}}(L,\mathcal{C}(\;\;,\mathcal{C}%
))\rightarrow \mathrm{Hom}_{\mathcal{C}}(\mathcal{C}(\;\;,C_{1}),\mathcal{C}%
(\;\;,C))\cong \mathrm{Hom}_{\mathcal{C}}(C_{1},C)$. Since $\mathcal{C}$ is $%
\mathrm{Hom}$-finite, then $\mathrm{Hom}_{\mathcal{C}}(C_{1},C)$ is a finite
dimensional $K$-vector space. It follows $\mathrm{Hom}_{\mathcal{C}}(L,%
\mathcal{C}(\;\;,C))$ and $\mathrm{Ext}_{\mathcal{C}}^{1}(T,\mathcal{C}%
(\;\;,C))$ are finite dimensional $K$-vector spaces.

We proceed now as in [B]:

Let $\mathbf{e}_{1},\cdots ,\mathbf{e}_{d}$ be a $K$-base of $\mathrm{Ext}_{%
\mathcal{C}}^{1}(T,\mathcal{C}(\;\;,C))$. Represent each $\mathbf{e_{i}}$ as
a short exact sequence
\begin{equation*}
0\rightarrow \mathcal{C}(\;\;,C)\xrightarrow{f_i}E_{i}\xrightarrow{g_i}%
T\rightarrow 0\text{.}
\end{equation*}%
Consider the following diagram with exact raws:
\begin{equation*}
\begin{diagram}\dgARROWLENGTH=.5em \node{0}\arrow{e}{}
\node{\mathcal{C}(\;\;,C)^d}\arrow{s,l}{\nabla}\arrow{e,t}{f}
\node{\coprod_{i=1}^d E_i}\arrow{s,l}{u}\arrow{e,t}{g}
\node{T^d}\arrow{s,l}{1}\arrow{e}{} \node{0}\\ \node{0}\arrow{e}{}
\node{\mathcal{C}(\;\;,C)}\arrow{e,t}{v} \node{E_C}\arrow{e,t}{w}
\node{T^d}\arrow{e}{} \node{0} \end{diagram}
\end{equation*}%
where $f$ and $g$ are sums of morphisms $f_{i}$, and $g_{i}$ respectively
and $\nabla =[1,\ldots ,1]$. Let's denote by $\mathbf{e}_{C}$ the element of
$\mathrm{Ext}_{\mathcal{C}}^{1}(T^{d},\mathcal{C}(\;\;,C))$ represented by
the exact sequence
\begin{equation*}
\mathbf{e}_{C}:0\rightarrow \mathcal{C}(\;\;,C)\xrightarrow{v}E_{C}%
\xrightarrow{w}T^{d}\rightarrow 0\text{.}
\end{equation*}%
Let $u_{i}:T\rightarrow T^{d}$ be the $i$-th inclusions. We claim for each $%
i=1,\ldots ,d$, $\mathbf{e}_{i}=\mathrm{Ext}_{\mathcal{C}}^{1}(u_{i},%
\mathcal{C}(\;\;,C))\mathbf{e}_{C}$. Indeed, let's consider the following
commutative diagram with exact raws:
\begin{equation*}
\begin{diagram}\dgARROWLENGTH=.3em \node{\;\;\;\;0}\arrow{e}{}
\node{\mathcal{C}(\;\;,C)}\arrow{s,l}{u_i^{''}}\arrow{e,t}{f_i}
\node{E_i}\arrow{s,l}{u_i'}\arrow{e,t}{g_i}
\node{T}\arrow{s,l}{u_i}\arrow{e}{} \node{0}\\ \node{\;\;\;\;0}\arrow{e}{}
\node{\mathcal{C}(\;\;,C)^d}\arrow{s,l}{\nabla}\arrow{e,t}{f}
\node{\coprod_{i=1}^d E_i}\arrow{s,l}{u}\arrow{e,t}{g}
\node{T^d}\arrow{s,l}{1}\arrow{e}{} \node{0}\\ \node{0}\arrow{e}{}
\node{\mathcal{C}(\;\;,C)}\arrow{e,t}{v} \node{E_C}\arrow{e,t}{w}
\node{T^d}\arrow{e}{} \node{0} \end{diagram}
\end{equation*}%
where $u_{i}^{\prime },u_{i}^{\prime \prime }$ denote the corresponding
inclusions. Since $\nabla u_{i}^{\prime \prime }=1_{\mathcal{C}(\;\;,C)}$,
we get the following commutative diagram with exact raws:
\begin{equation*}
\begin{diagram}\dgARROWLENGTH=.5em \node{0}\arrow{e}{}
\node{\mathcal{C}(\;\;,C)}\arrow{s,l}{1}\arrow{e,t}{f_i}
\node{E_i}\arrow{s,l}{uu_i'}\arrow{e,t}{g_i}
\node{T}\arrow{s,l}{u_i}\arrow{e}{} \node{0}\\ \node{0}\arrow{e}{}
\node{\mathcal{C}(\;\;,C)}\arrow{e,t}{v} \node{E_C}\arrow{e,t}{w}
\node{T^d}\arrow{e}{} \node{0} \end{diagram}
\end{equation*}%
hence, the claim follows.

Applying $\mathrm{Hom}_{\mathcal{C}}(T,\;\;)$ to the sequence $\mathbf{e}%
_{C} $, we obtain by he long homology sequence, an exact sequence
\begin{equation*}
\mathrm{Hom}_{\mathcal{C}}(T,T^{d})\xrightarrow{\delta}\mathrm{Ext}_{%
\mathcal{C}}^{1}(T,\mathcal{C}(\;\;,C))\rightarrow \mathrm{Ext}_{\mathcal{C}%
}^{1}(T,E_C)\rightarrow \mathrm{Ext}_{\mathcal{C}}^{1}(T,T^{d})=0\text{.}
\end{equation*}

Since $\mathbf{e_{i}}=\mathrm{Ext}_{\mathcal{C}}^{1}(u_{i},\mathcal{C}%
(\;\;,C))\mathbf{e}_{C}=\delta (u_{i})$, it follows each basic element of
the vector space $\mathrm{Ext}_{\mathcal{C}}^{1}(T,\mathcal{C}(\;\;,C))$ is
in the image of the connecting morphism $\delta $, which is then surjective.
Therefore: $\mathrm{Ext}_{\mathcal{C}}^{1}(T,E_{C})=0$.

We will see that the full subcategory $\mathcal{T}$ of $\mathrm{\mathrm{Mod}}%
(\mathcal{C})$, with $\mathcal{T}=\{T\coprod E_{C}\}_{C\in \mathcal{C}}$, is
a tilting category.

Since $E_{C}$ is an extension of finitely presented functors, it is finitely
presented. Hence, for any object $\mathcal{C}$ in $\mathrm{\mathrm{Mod}}(%
\mathcal{C)}, $ the sum $T\coprod E_{C}$ is finitely presented.

(i) $\mathrm{pdim}T\coprod E_{C}\leq 1$. Since $T$ is a partial tilting
object object in $\mathrm{\mathrm{Mod}}(\mathcal{C})$, it has $\mathrm{pdim}%
T\leq 1$, and from the exact sequence $\mathbf{e}_{C}$ it follows $\mathrm{%
pdim}E_{C}\leq 1$. Therefore: $\mathrm{pdim}T\coprod E_{C}\leq 1$.

(ii) For any pair of objects $C$ and $C^{\prime }$ in $\mathcal{C}$, $%
\mathrm{Ext}_{\mathcal{C}}^1(T\coprod E_{C},T\coprod E_{C^{\prime }})=0$.
Applying $\mathrm{Hom}_{\mathcal{C}}(\;\;,T)$ and $\mathrm{Hom}_{\mathcal{C}%
}(\;\;,E_{C^{\prime}})$ to $\mathbf{e}_{C},$ we obtain by the long homology
sequence, exact sequences:
\begin{eqnarray*}
0 &=&\mathrm{Ext}_{\mathcal{C}}^{1}(T^{d},T)\rightarrow \mathrm{Ext}_{%
\mathcal{C}}^{1}(E_{C},T)\rightarrow \mathrm{Ext}_{\mathcal{C}}^{1}(\mathcal{%
C}(\;\;,C),T)=0, \\
0 &=&\mathrm{Ext}_{\mathcal{C}}^{1}(T^{d},E_{C^{\prime}})\rightarrow \mathrm{%
Ext}_{\mathcal{C}}^{1}(E_{C},E_{C^{\prime}})\rightarrow \mathrm{Ext}_{%
\mathcal{C}}^{1}(\mathcal{C}(\;\;,C),E_{C^{\prime}})=0.
\end{eqnarray*}%
Hence, for any pair of objects $C$ and $C^{\prime }$ in $\mathrm{\mathrm{Mod}%
}(\mathcal{C})$, we have:
\begin{equation*}
\mathrm{Ext}_{\mathcal{C}}^{1}(T,E_{C^{\prime}})=\mathrm{Ext}_{\mathcal{C}%
}^{1}(E_{C},T)=\mathrm{Ext}_{\mathcal{C}}^{1}(E_{C},E_{C^{\prime}})=0\text{.}
\end{equation*}

Using the fact $\mathrm{Ext}^1_{\mathcal{C}}(T,T)=0$ and the above
equalities, the condition (ii) follows. (iii) is immediate.
\end{proof}

\subsection{The Hereditary Case}

In this subsection $Q=(Q_{0},Q_{1})$ will be a \textbf{locally finite, }%
infinite quiver. We will consider the quiver algebra $KQ$ as a subadditive $%
K $-category $\mathcal{C},$ and the finite dimensional representations%
\textbf{\ }of the quiver, will be identified with the category of finitely
presented contravariant functors $\mathrm{mod}(KQ)$ from $KQ$ to he category
of finite dimensional $K$-vector spaces. The category $\mathcal{C}$ is $%
\mathrm{Hom}$-finite, dualizing and Krull-Schmidt. By [AR] $\mathrm{mod}(%
\mathcal{C})$ has almost split sequences. We will describe the
Auslander-Reiten components, beginning with he preprojective components.

The preprojective component $\mathcal{K}$ of the Auslander-Reiten $\Gamma
(KQ)$, of $KQ$, can be computed as in the finite quiver situation. It is
easy to prove that it is a translation quiver of the form $(-\mathbb{N}%
\Delta ,\tau )$, $\Delta =Q^{op}$. The category $\mathcal{K}$ is the quiver
algebra $K(-\mathbb{N}\Delta )$ module the mesh relations $\mathcal{K=}K(-%
\mathbb{N}\Delta )/<m_{x}>$ [R, Lemma 3, Section 2.3].

We will need the following combinatorial lemma, which can be proved as in
[BGP].

\begin{lemma}
\label{Dinher} Let $Q$ be a locally finite infinite quiver that is not of
type $A_{\infty }$, $A_{\infty }^{\infty }$, $D_{\infty }$. Given a finite
subquiver $Q^{\prime \prime }$of $Q$, there exists a finite full connected
subquiver $Q^{\prime }\subset Q$ such that $Q^{\prime \prime }$ is a
subquiver of $Q^{\prime }$ and $Q^{\prime }$ is not Dynkin.
\end{lemma}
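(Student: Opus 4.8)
The plan is to reduce the assertion to a purely combinatorial statement about the underlying graph of $Q$ and then run the structural analysis of [BGP]. Throughout I assume, as is implicit in the statement, that $Q$ is connected; since $Q$ is locally finite and connected, any two of its vertices are joined by a finite walk.

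First I would enlarge $Q''$ to a \emph{finite connected} subquiver $Q_{1}''$ of $Q$ by joining the finitely many connected components of $Q''$ with finitely many walks in $Q$ and adjoining the vertices and arrows occurring on them. It then suffices to produce a finite connected full subquiver $Q'$ of $Q$ with $Q_{1}''\subseteq Q'$ that is not Dynkin, and for this it is enough to exhibit a single finite subquiver $R\subseteq Q$ whose underlying graph is an extended Dynkin (affine) diagram. Indeed, pick a finite walk in $Q$ from a vertex of $R$ to a vertex of $Q_{1}''$, let $S$ be the finite union of the vertex sets of $R$, of $Q_{1}''$ and of that walk, and let $Q'$ be the full subquiver of $Q$ on $S$. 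Then $Q'$ is finite and connected, contains $Q''$ as a subquiver, and contains $R$ as a subquiver; evaluating the Euler quadratic form of $Q'$ on the positive radical vector of $R$ (extended by zero) yields a value $\le 0$ at a nonzero vector, so that form is not positive definite and $Q'$ is not Dynkin. This step uses only the standard dictionary: the underlying graph of a connected quiver is Dynkin exactly when its Euler form is positive definite, equivalently when it contains no extended Dynkin subdiagram.

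It remains to prove that a connected, locally finite, infinite quiver $Q$ not of type $A_{\infty}$, $A_{\infty}^{\infty}$ or $D_{\infty}$ contains a finite subquiver with extended Dynkin underlying graph, and I would argue by cases on the shape of $Q$ as in [BGP]. If $Q$ has a loop, a multiple arrow, or a cycle, then already a small full subquiver has non‑positive‑definite Euler form (a loop, $\widetilde{A}_{1}$, or $\widetilde{A}_{n}$). Otherwise the underlying graph of $Q$ is a tree. If some vertex has degree $\ge 4$ we get $\widetilde{D}_{4}$; if there are two vertices of degree $\ge 3$, the unique path between them together with two further edges at each of its two ends spans a $\widetilde{D}_{m}$; if there is exactly one vertex of degree $3$, then $Q$ is a three‑legged ``spider'' with leg lengths $1\le\ell_{1}\le\ell_{2}\le\ell_{3}$, $\ell_{3}=\infty$, and a short computation with $T_{1,1,n}$ (Dynkin, type $D$) versus $T_{1,2,5}=\widetilde{E}_{8}$, $T_{2,2,2}=\widetilde{E}_{6}$, $T_{1,3,3}=\widetilde{E}_{7}$ shows that either $\ell_{1}=\ell_{2}=1$, forcing $Q=D_{\infty}$, or $Q$ contains one of these affine spiders; and if $Q$ has no vertex of degree $\ge 3$ then $Q$ is a path, hence $A_{\infty}$ or $A_{\infty}^{\infty}$. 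Since $Q$ is none of $A_{\infty}$, $A_{\infty}^{\infty}$, $D_{\infty}$, every surviving case produces the required finite extended Dynkin subquiver, and combining this with the reduction above finishes the argument.

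The main obstacle is the bookkeeping in this final case analysis: verifying in the one‑branch‑point case that the only ``locally Dynkin'' infinite spider is $D_{\infty}$, and checking in the two‑branch‑point case that the induced subgraph one writes down really is a $\widetilde{D}_{m}$ — the distinctness of the chosen vertices, which is automatic because one works inside a tree. The rest is routine once the Dynkin/positive‑definiteness dictionary is in place.
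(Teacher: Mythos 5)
Your proof is correct and supplies the details that the paper leaves implicit by referring to [BGP]; the reduction to exhibiting a finite subquiver with non–positive-definite Tits form (a loop, a multiple edge, a cycle, or a tree containing an affine spider $\widetilde D_m$, $\widetilde E_6$, $\widetilde E_7$, $\widetilde E_8$), together with the BGP dictionary ``Dynkin $\Leftrightarrow$ positive definite Euler form'', is exactly the argument the citation is pointing at. Two small presentational remarks: the phrase ``it is enough to exhibit a finite subquiver whose underlying graph is extended Dynkin'' should really read ``whose Tits form is not positive definite,'' since you also allow a loop (which is not an affine diagram in the usual sense but still kills positive definiteness), and it is worth stating explicitly that $Q$ may be assumed connected (the paper takes this for granted throughout its quiver discussion); with those cosmetic fixes the argument is complete.
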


\begin{definition}
\emph{[ASS]} Let $(\Gamma ,\tau )$ be a connected translation quiver. A
connected full subquiver $\Sigma $ is a \textbf{section} of $\Gamma $ if the
following conditions are satisfied:

\begin{itemize}
\item[(S1)] If $X_{0}\rightarrow X_{1}\rightarrow \cdots \rightarrow X_{t}$
is a path in $\Sigma $ of lenght $l\geq 1$, then $X_{0}\neq X_{t}$.

\item[(S2)] For each $X\in\Gamma_0$, there exist a unique $n\in\mathbb{Z}$
such that $\tau^n X\in\Sigma_0$.

\item[(S3)] If $X_0\rightarrow X_1\rightarrow \cdots\rightarrow X_t$ is a
path in $\Gamma$ with $X_0,X_t\in\Sigma$, then $X_i\in\Sigma_0$ for all $i$
such that $0\le i\le t$.
\end{itemize}
\end{definition}

\begin{definition}
\emph{[ARS]} Let $I$ be the integers in one of the intervals $(-\infty,n]$, $%
[n,\infty)$, $[m,n]$ for $m<n$ or $\{1,\ldots,n\}$ modulo $n$. Let $%
\cdots\rightarrow X_i\xrightarrow{f_i}X_{i+1}\rightarrow \cdots$ be a path
in the Auslander-Reiten quiver $(\Gamma,\tau)$ with each index in $I$. This
path is said to be \textbf{sectional} if $\tau X_{i+2}\ncong X_i$.
\end{definition}

\begin{lemma}
Let $\mathcal{C}$ be a dualizing variety, and $X_{1}\overset{f_{1}}{%
\rightarrow }X_{2}\overset{f_{2}}{\rightarrow }\cdots X_{n-1}\overset{f_{n-1}%
}{\rightarrow }X_{n}$ sectional path in $\mathrm{mod}(\mathcal{C)}$. Then
the composition $f_{n-1}f_{n-2}...f_{1}$ is not zero.
\end{lemma}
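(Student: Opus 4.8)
The plan is to adapt the classical proof of this fact for module categories over Artin algebras (see [ARS]), arguing by induction on the length $n-1$ of the path. This is legitimate because, $\mathcal{C}$ being a dualizing variety, $\mathrm{mod}(\mathcal{C})$ has almost split sequences by [AR], so the full Auslander--Reiten machinery is available: minimal right and left almost split morphisms exist, the inclusion $\mathrm{rad}\,P\hookrightarrow P$ of an indecomposable projective is minimal right almost split, and the component of a minimal left almost split morphism into an indecomposable direct summand of its codomain is irreducible. Throughout write $\tau=\mathrm{DTr}$, and recall that the $X_i$ are indecomposable and the $f_i$ are irreducible.

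If $n=2$ the composition is $f_1$, which is irreducible, hence nonzero. For $n\geq3$, suppose $f_{n-1}\cdots f_1=0$ and put $u=f_{n-2}\cdots f_1$, which is nonzero by the inductive hypothesis applied to the sectional path $X_1\to\cdots\to X_{n-1}$; thus $f_{n-1}u=0$. Let $p\colon E\to X_n$ be minimal right almost split; since $f_{n-1}$ is irreducible it is not a split epimorphism, so $f_{n-1}=p\,h$ with $h\colon X_{n-1}\to E$ a split monomorphism onto an indecomposable summand $\cong X_{n-1}$ of $E$. If $X_n$ is projective, $p$ is a monomorphism, so $hu=0$ and then $u=0$ (compose with a retraction of $h$), a contradiction. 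If $X_n$ is not projective, the almost split sequence $0\to\tau X_n\xrightarrow{a}E\xrightarrow{p}X_n\to0$ together with $p(hu)=0$ yields $hu=av$ for some $v\colon X_1\to\tau X_n$, whence $u=a_1v$, where $a_1\colon\tau X_n\to X_{n-1}$ is, up to the isomorphism $h$, the component of $a$ into that $X_{n-1}$-summand of $E$, and so irreducible.

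We now descend along the path, maintaining at the $k$-th stage ($1\le k\le n-2$) a relation $f_{n-k-1}\cdots f_1=a_k\,v^{(k)}$ with $a_k\colon\tau X_{n-k+1}\to X_{n-k}$ irreducible and $v^{(k)}\colon X_1\to\tau X_{n-k+1}$; the case $k=1$ is the one just obtained. For $1\le k\le n-3$ we pass to stage $k+1$ as follows. Write $f_{n-k-1}\cdots f_1=f_{n-k-1}w$ with $w=f_{n-k-2}\cdots f_1$, so $f_{n-k-1}w=a_kv^{(k)}$. Both $f_{n-k-1}\colon X_{n-k-1}\to X_{n-k}$ and $a_k\colon\tau X_{n-k+1}\to X_{n-k}$ are irreducible morphisms into $X_{n-k}$, and by sectionality $\tau X_{n-k+1}=\tau X_{(n-k-1)+2}\not\cong X_{n-k-1}$; hence $X_{n-k-1}$ and $\tau X_{n-k+1}$ are non-isomorphic indecomposable summands of the domain $E'$ of the minimal right almost split morphism $p'\colon E'\to X_{n-k}$, carrying split monomorphisms that combine into a split monomorphism $X_{n-k-1}\oplus\tau X_{n-k+1}\hookrightarrow E'$. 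Pushing $f_{n-k-1}w=a_kv^{(k)}$ through $p'$ and then through a retraction of this combined monomorphism gives $w=0$ when $X_{n-k}$ is projective (as $p'$ is then monic), contradicting $w=f_{n-k-2}\cdots f_1\ne0$ from the inductive hypothesis; otherwise, pulling back along the almost split sequence ending in $X_{n-k}$, it gives $w=a_{k+1}v^{(k+1)}$ with $a_{k+1}\colon\tau X_{n-k}\to X_{n-k-1}$ irreducible.

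After at most $n-2$ steps we reach $f_1=a_{n-2}\,v^{(n-2)}$ with $a_{n-2}\colon\tau X_3\to X_2$ irreducible and $v^{(n-2)}\colon X_1\to\tau X_3$. Since $f_1$ is irreducible and $a_{n-2}$ is not a split epimorphism, $v^{(n-2)}$ is a split monomorphism, so the indecomposable $X_1$ is a summand of the indecomposable $\tau X_3$, i.e.\ $X_1\cong\tau X_3=\tau X_{1+2}$, contradicting sectionality. Hence $f_{n-1}\cdots f_1\ne0$. The point requiring the most care is the repeated claim that the morphisms $a_k$ obtained by pulling back along almost split sequences are genuinely irreducible: this must be deduced from the ``components into indecomposable summands are irreducible'' principle applied to a \emph{fixed} decomposition of the relevant middle term (an arbitrary retraction would only produce a possibly-trivial linear combination of irreducible maps), and one must also keep track of the fact that $\tau X_{n-k+1}$ does occur as a summand of that middle term, precisely because $a_k$ is irreducible into $X_{n-k}$. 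All of this is standard for dualizing varieties by [AR], but the decompositions must be chosen consistently throughout the descent.
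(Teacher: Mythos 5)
Your proof is correct and is essentially the argument the paper is pointing to: the paper gives no proof of its own, deferring entirely to the citation [ARS Theo.\ 2.4], and your write-up carries out that classical Bongartz-style inductive descent in the dualizing-variety setting, where the requisite AR machinery (minimal almost split morphisms, irreducibility of components into indecomposable summands, the radical inclusion as minimal right almost split into a projective) is available by [AR]. The one subtlety you flag --- assembling the two split monomorphisms $X_{n-k-1}\hookrightarrow E'$ and $\tau X_{n-k+1}\hookrightarrow E'$ into a split monomorphism from the direct sum, and identifying $a_{k+1}$ with a genuine component of the minimal left almost split map with respect to that fixed decomposition --- is exactly the point that needs the Krull--Schmidt property of $\mathrm{mod}(\mathcal{C})$, and it goes through because $X_{n-k-1}\not\cong\tau X_{n-k+1}$ by sectionality.
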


\begin{proof}
The proof is as in [ARS Theo. 2.4].
\end{proof}

\begin{remark}
Let $Q$ be a locally finite infinite quiver, $\mathcal{K}$ the preprojective
component of the Auslander-Reiten quiver of $KQ$, that we identify with the
quiver algebra with quiver $(-\mathbb{N}Q,\tau )$ and the mesh relations.
Then given an object $X$ in $\mathcal{K}$ and a positive integer $n$, there
exists a finite number of directed paths in $(-\mathbb{N}Q,\tau )$ with
starting vertex $X$ and length $\leq $ $n$.
\end{remark}

\begin{theorem}
\label{directedpaths1} Let $Q$ be a locally finite infinite quiver and $%
\Sigma $ a section of $(-\mathbb{N}Q,\tau )$ with no infinite directed
paths, then the following is true

\begin{itemize}
\item[(a)] For any vertex $X$ of $(-\mathbb{N}Q,\tau )$ the number of
directed paths from $X$ to the section is finite and there is a non zero
path from $X$ to the section.

\item[(b)] For any vertex $X$ of $(-\mathbb{N}Q,\tau )$ the number of
directed paths from the section to $X$ is finite and there is a non zero
path from the section to $X$.
\end{itemize}
\end{theorem}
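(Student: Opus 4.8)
The plan is to exploit the combinatorial structure of the translation quiver $(-\mathbb{N}Q,\tau)$ together with the Remark just above, which guarantees that for any vertex $X$ and any $n$ there are only finitely many directed paths of length $\le n$ starting at $X$. First I would fix a vertex $X$ and argue by a level/depth argument on the translation quiver. Since $\Sigma$ is a section, property (S2) assigns to $X$ a unique integer $n$ with $\tau^n X \in \Sigma_0$; the sign and size of $n$ measures how far $X$ sits from the section. For part (a), I would set up an induction on this $n$: if $X \in \Sigma_0$ there is nothing to prove (the trivial path), and otherwise, since $(-\mathbb{N}Q,\tau)$ has no loops and $\Sigma$ is a section, every directed path from $X$ eventually either reaches $\Sigma$ or moves strictly "away"; the no-infinite-directed-paths hypothesis on $\Sigma$ prevents the pathological case of running forever parallel to $\Sigma$. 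Combining this with the Remark (finiteness of bounded-length paths from $X$) and the fact that $Q$ is locally finite, I would conclude that the total number of directed paths from $X$ to $\Sigma$ is finite.

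The existence of a \emph{non-zero} such path is where the sectional-path lemma above enters. In a translation quiver with mesh relations, a composite of arrows is zero essentially only when it factors through a mesh, i.e. when the path is not sectional. I would choose a directed path from $X$ to $\Sigma$ that is sectional — this is possible because, moving from $X$ toward $\Sigma$, at each vertex one can pick an arrow that does not create the configuration $\tau X_{i+2}\cong X_i$ (if the "obvious" continuation did, one replaces it using the mesh, shortening or redirecting, and the no-infinite-paths condition ensures this terminates). Once a sectional path from $X$ to $\Sigma$ is in hand, the Lemma ``the composition $f_{n-1}\cdots f_1$ is not zero'' for sectional paths in $\mathrm{mod}(\mathcal{C})$ gives that this path is non-zero.

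Part (b) is the dual statement: directed paths \emph{from} $\Sigma$ to $X$. Here I would apply the same argument to the opposite translation quiver, equivalently pass through the duality $D$ on the dualizing variety (the preprojective component of $KQ$ corresponds, under $D$ and $\mathrm{op}$, to a preinjective-type component, and $\Sigma$ with no infinite directed paths dualizes to a section with no infinite directed paths). The finiteness of paths from $\Sigma$ to $X$ again follows from local finiteness of $Q$ and the Remark applied in the dual picture, and the non-vanishing of a suitably chosen sectional path from $\Sigma$ to $X$ again follows from the sectional-path Lemma.

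The main obstacle I anticipate is the termination argument that, starting from $X$, one can actually \emph{reach} $\Sigma$ by a directed path at all (not merely that such paths, if they exist, are finite in number) — this is exactly where the hypothesis ``$\Sigma$ has no infinite directed paths'' must be used, and it requires a careful analysis of how directed paths in $(-\mathbb{N}Q,\tau)$ interact with the section's three defining properties (S1)–(S3). Making the induction on the distance $n$ from $X$ to $\Sigma$ genuinely decrease along a well-chosen arrow, rather than merely not-increase, is the delicate point; the rest is bookkeeping with the Remark and the sectional-path Lemma.
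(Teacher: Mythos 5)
Your high-level plan agrees with the paper's: both argue by induction on the least $n\ge 0$ with $\tau^{-n}X\in\Sigma_0$, both invoke the Remark that there are only finitely many directed paths of bounded length issuing from a fixed vertex, both finish the non-vanishing assertion by pointing at the sectional-path lemma, and both handle (b) by duality. So you have identified the right ingredients.

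However, there is a genuine gap, and you flag it yourself: the ``termination argument'' — that every directed path emanating from $X$ must hit $\Sigma$ within a uniformly bounded number of steps, so that the total count is finite — is precisely the content of the theorem, and your proposal does not carry it out. The paper's proof is essentially \emph{only} this argument. In the base case $n=1$ the paper takes the almost split sequence $0\to X\to\coprod E_i\to\tau^{-1}X\to 0$ with $\tau^{-1}X\in\Sigma_0$, singles out an arrow $\alpha_i:E_i\to\tau^{-1}X$ with $E_i\notin\Sigma_0$, and then performs a careful bookkeeping with the translates: letting $n$ be the maximal length of a directed path in $\Sigma$ starting at $\tau^{-1}X$ (finite because $\Sigma$ has no infinite directed paths), any directed path $Y\to Y_1\to\cdots\to Y_n$ that avoids the known paths from $\tau^{-1}X$ to $\Sigma$ produces, via the ladder of irreducible maps and their $\sigma$-translates, a vertex $\tau^{-1}Y_{k-1}\in\Sigma_0$ for some $k\le n$, forcing the original path to meet $\Sigma$ after boundedly many steps. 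The inductive step repeats this with the bound replaced by the maximal length of a path from $\tau^{-1}X$ to $\Sigma$ supplied by the inductive hypothesis. Your appeal to ``moves strictly away'' and ``no loops'' does not by itself produce such a bound; the hypothesis that $\Sigma$ has no infinite directed paths enters only through this quantitative ladder argument, which is the step you would need to make explicit and make the induction genuinely decrease. Similarly, your proposed replacement of non-sectional steps ``using the mesh'' to manufacture a sectional path is an extra claim the paper does not need and you do not justify; the paper simply observes that among the finitely many paths constructed, the sectional ones are non-zero by the sectional-path lemma.
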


\begin{proof}
(a) The proof will be by induction on the least $n$ $\geq 0$ such that $\tau
^{-n}X$ is in $\Sigma $.

In the case there is not such an $n$, then $X$ is a successor of the section
and the number of paths to the section is zero.

If $n=0$, then $X$ is on the section and the claim is true, since the number
of directed paths on the section is finite.

Assume $n=1$ and consider $X$ an element of the preprojective component with
almost split sequence:

\begin{equation*}
0\rightarrow X\overset{(\sigma \alpha _{i})}{\rightarrow }\overset{n}{%
\underset{i=1}{\coprod }}E_{i}\overset{(\alpha _{i})}{\rightarrow }\tau
^{-1}X\rightarrow 0
\end{equation*}

If all $E_{i}$ are in the section, then there is nothing to prove.

Let $\alpha _{i}:E_{i}\rightarrow \tau ^{-1}X$ be a map that it is not in
the section. Then $\sigma ^{-1}\alpha _{i}:\tau ^{-1}X\rightarrow \tau
^{-1}E_{i}$ is in the section, assume there is a irreducible map $\beta $ : $%
E_{i}\rightarrow Y$, different from $\alpha _{i}$. Then there is an
irreducible map $\sigma ^{-1}\beta :Y\rightarrow \tau ^{-1}E_{i}.$ Let $n$
be the maximum of the lengths of the paths in $\Sigma $ starting at $\tau
^{-1}X.$ By the above remark, there exists only a finite number of paths of
length $\leq n$ starting at $Y$ and assume $Y\overset{\beta }{\rightarrow }%
Y_{1}\overset{\beta _{1}}{\rightarrow }Y_{2}\overset{\beta _{2}}{\rightarrow
}...\overset{\beta _{n-1}}{\rightarrow }Y_{n}$ is a path that does not meet
any of the paths starting at $\tau ^{-1}X$ and ending at $\Sigma .$ Then for
$k\leq n$ we have the following diagram of irreducible maps:

\begin{equation}  \label{directedfinite}
\begin{diagram}\dgARROWLENGTH=1em \node{\tau^{-1}Y_{k-1}}
\node{\tau^{-1}Y_{k-2}}\arrow{w,t}{\sigma^{-2}\beta_{k-1}}
\node{\tau^{-1}Y_{k}}\arrow{w,..,-}
\node{\tau^{-1}E_1}\arrow{w,t}{\sigma^{-2}\beta_{k-1}}
\node{\tau^{-1}X}\arrow{w,t}{\sigma^{-1}\alpha}
\node{E_2}\arrow{w,t}{\alpha_2}\\ \node{Y_{k}}\arrow{n,l}{\sigma^{1}\beta_k}
\node{Y_{k-1}}\arrow{w,t}{\beta_{k}}\arrow{n,l}{\sigma^{-1}\beta_{k-1}}
\node{Y_{1}}\arrow{w,..,-}\arrow{n,l}{\sigma^{-1}\beta_1}
\node{Y}\arrow{w,t}{\beta_{1}}\arrow{n,l}{\sigma^{-1}\beta}
\node{E_1}\arrow{w,t}{\beta}\arrow{n,l}{\alpha}
\node{X}\arrow{w,t}{\sigma\alpha}\arrow{n,l}{\sigma\alpha_2} \end{diagram}
\end{equation}

with $\tau ^{-1}Y_{k-1}$ in $\Sigma $, and there is no irreducible map $\tau
^{-1}$Y$_{k-1}\rightarrow $Z with Z on $\Sigma $. By the definition of
section, $\sigma ^{-1}\beta _{k}$ :Y$_{k}\rightarrow \tau ^{-1}$Y$_{k-1}$ is
in $\Sigma $.

Assume there is only a finite number of paths for any $X$ in $(-\mathbb{N}%
Q,\tau )$ with $\tau ^{-m+1}X$ $\in \Sigma _{0}.$

As before, we consider an almost split sequence starting at $X$ and a map $%
\alpha _{i}:E_{i}\rightarrow \tau ^{-1}X$ that is not in the section. Then,
by induction hypothesis, there is a finite number of paths from $\tau ^{-1}X
$ to the section. Let $n$ be the length of the largest such path. Assume
there is a irreducible map $\beta $ : $E_{i}\rightarrow Y$, different from $%
\alpha _{i}$. Arguing as above, we consider a path $Y\overset{\beta }{%
\rightarrow }Y_{1}\overset{\beta _{1}}{\rightarrow }Y_{2}\overset{\beta _{2}}%
{\rightarrow }...\overset{\beta _{n-1}}{\rightarrow }Y_{n}$ that does not
meet any of the paths starting at $\tau ^{-1}X$. Then we obtain a diagram of
irreducible maps as in (\ref{directedfinite}) and for some integer $k\leq n$%
, $\tau ^{-1}$Y$_{k-1}$ in $\Sigma .$ Then either $\sigma ^{-1}\beta _{k}$ :Y%
$_{k}\rightarrow \tau ^{-1}$Y$_{k-1}$ is in $\Sigma $, or $\tau ^{-1}Y_{k}$
is in $\Sigma _{0}$ . In any case, it follows from the case $n=1$, there is
only a finite number of paths from $X$ to $\Sigma .$ Moreover, since
sectional paths have non zero composition, there is a non zero path from $X$
to the section.

(b) Is proved using dual arguments or going to the opposite category.
\end{proof}

\begin{proposition}
\label{herin1} Let $\mathcal{K}$ be a preprojective component $\Gamma (KQ)$
and $\Sigma $ a section of $\mathcal{K}$ without infinite directed paths.
Let $P$ be an indecomposable projective. Then there exist a short exact
sequence, $0\rightarrow P\rightarrow T^{0}\rightarrow T^{1}\rightarrow 0$,
with $T^{0}$, $T^{1}\in \mathrm{add}\Sigma _{0}$.
\end{proposition}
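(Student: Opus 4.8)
The goal is to show every indecomposable projective $P$ in $\mathrm{mod}(KQ)$ sits at the top of a short exact sequence $0\to P\to T^0\to T^1\to 0$ with $T^0,T^1$ in $\mathrm{add}\,\Sigma_0$; this is exactly condition (iii) of the definition of a tilting category, so the point is to produce the two-term resolution by $\Sigma$-objects. The natural approach is to walk $P$ toward the section along irreducible maps, using the almost split sequences of the preprojective component $\mathcal{K}\cong K(-\mathbb{N}\Delta)/\langle m_x\rangle$, and to control finiteness with Theorem~\ref{directedpaths1}(b): there are only finitely many directed paths from $\Sigma$ to any given vertex, and there is a nonzero such path. Concretely, since $P$ is projective it is a source of $\mathcal{K}$, and I would build the map $P\to T^0$ as the sum of all irreducible maps out of $P$ landing ``on the section side'', then splice the cokernels.

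\textbf{Key steps.} First, locate $P$ relative to $\Sigma$: by (S2) there is a unique $n\ge 0$ with $\tau^{-n}P\in\Sigma_0$ (it is $\ge 0$ because $P$, being projective, has no predecessors, so it cannot be a proper successor of the section). Induct on $n$. If $n=0$, $P\in\Sigma_0$ and we may take $T^0=P$, $T^1=0$. If $n\ge 1$, take the almost split sequence $0\to P\xrightarrow{(\sigma\alpha_i)}\coprod_{i=1}^m E_i\xrightarrow{(\alpha_i)}\tau^{-1}P\to 0$. Each $E_i$ has $\tau^{-k_i}E_i\in\Sigma$ for some $k_i<n$ (the section separates $P$ from $\tau^{-1}P$, so the $E_i$ lie ``strictly closer''), so by induction each $E_i$ admits a resolution $0\to E_i\to T^0_i\to T^1_i\to 0$ with $T^j_i\in\mathrm{add}\,\Sigma_0$. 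The plan is then to paste: the composite $P\to\coprod E_i\hookrightarrow\coprod T^0_i$ is a monomorphism (its restriction to the $E_i$-summand is the irreducible monomorphism $(\sigma\alpha_i)$, followed by a monomorphism), set $T^0=\coprod T^0_i\in\mathrm{add}\,\Sigma_0$, and let $N$ be its cokernel; one checks via the snake lemma / horseshoe-type diagram that $N$ is an extension of $\tau^{-1}P$ by $\coprod T^1_i$, hence has an $\mathrm{add}\,\Sigma_0$-presentation of the required length provided $\tau^{-1}P$ does. But $\tau^{-1}P$ is one step closer to $\Sigma$ than $P$, so a secondary induction (or simply invoking the case $n-1$ for $\tau^{-1}P$ directly, since $\tau^{-1}P$ need not be projective but $\tau$-shifts of the argument still apply within $\mathcal{K}$) finishes it. Finally, $\mathrm{pdim}\,\mathrm{mod}(KQ)\le 1$ since $\mathcal{C}=KQ$ is hereditary, which guarantees the relevant kernels are projective and keeps all resolutions two-term.

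\textbf{The main obstacle.} The delicate point is the pasting step: ensuring that, after combining the inductively given resolutions of the $E_i$ with the almost split sequence, the resulting object $T^1$ genuinely lies in $\mathrm{add}\,\Sigma_0$ and that the middle term stays in $\mathrm{add}\,\Sigma_0$ rather than merely in $\mathrm{add}$ of the component. This is where finiteness matters: Theorem~\ref{directedpaths1} is needed to know the sums $\coprod E_i$, $\coprod T^j_i$ are \emph{finite} direct sums (so that $T^0,T^1$ are honest objects of $\mathrm{add}\,\Sigma_0$ and not infinite coproducts), and the ``nonzero path'' clause guarantees the maps one writes down are not accidentally zero, so the diagrams are exact. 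I expect the bookkeeping of the double induction (on distance to $\Sigma$, and within that on the horseshoe diagram) to be the only real work; once the finiteness is in hand, the homological algebra is the standard tilting computation and everything collapses because the global dimension is $1$.
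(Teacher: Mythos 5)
Your approach differs fundamentally from the paper's. The paper does not work inductively along almost split sequences at all: it uses Theorem~\ref{directedpaths1} only to establish that the set of vertices of $\Sigma$ and of $\mathcal{K}$ ``relevant to $P$'' is finite, then (via Lemma~\ref{Dinher}) embeds everything into a finite, full, connected, non-Dynkin subquiver $Q'\subset Q$, invokes the classical finite-dimensional result of Happel--Ringel [HR Theo.~7.2] for $KQ'$, and lifts the resulting short exact sequence back to $\mathrm{mod}(KQ)$ (with a separate argument for $Q$ of type $A_\infty$, $A_\infty^\infty$, $D_\infty$). So you are trying to re-derive the Happel--Ringel slice result from scratch rather than reduce to it.

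There is a genuine gap in your induction. The claim that every middle term $E_i$ of the almost split sequence $0\to P\to\coprod E_i\to\tau^{-1}P\to 0$ satisfies $\tau^{-k_i}E_i\in\Sigma$ with $k_i<n$ is false. Writing $d(X)$ for the unique integer with $\tau^{-d(X)}X\in\Sigma_0$, an irreducible map $P\to E_i$ translates under $\tau^{-n}$ to an irreducible map $\tau^{-n}P\to\tau^{-n}E_i$ with source on $\Sigma$; the section axioms only force $d(E_i)\in\{n-1,n\}$, and $d(E_i)=n$ actually occurs whenever $\tau^{-n}P\to\tau^{-n}E_i$ is an arrow lying inside $\Sigma$. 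A concrete instance: for $Q=A_3: 1\to 2\to 3$ and $\Sigma=\{S_2,\,[1,2],\,P_1\}$ (here $[1,2]$ is the module with support $\{1,2\}$), the projective $P_3$ has $d(P_3)=1$, its almost split sequence is $0\to P_3\to P_2\to S_2\to 0$, and $d(P_2)=1$ as well, since $\tau^{-1}P_2=[1,2]\in\Sigma_0$. So ``each $E_i$ lies strictly closer'' fails, and your primary induction on $n$ is not well-founded. A correct version would need a secondary induction at fixed distance $n$, using the hypothesis that $\Sigma$ has no infinite directed paths to bound the chain of same-distance middle terms — but that is exactly the finiteness bookkeeping you deferred, and it is not a cosmetic repair.

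Separately, the pasting step is under-argued. From $0\to P\to\coprod T^0_i\to N\to 0$ with $N$ an extension $0\to\tau^{-1}P\to N\to\coprod T^1_i\to 0$, even granting a two-term $\mathrm{add}\Sigma_0$-resolution $0\to\tau^{-1}P\to S^0\to S^1\to 0$, you only reach a \emph{three}-term resolution of $P$ by a naive splice. To compress it to two terms you must form a pushout and then kill the resulting extension of $\mathrm{add}\Sigma_0$-objects using $\mathrm{Ext}^1_{\mathcal{C}}(T,T')=0$ for $T,T'\in\Sigma_0$. That vanishing is condition~(ii) of a tilting subcategory, proved in Theorem~\ref{tilthr} via the Auslander--Reiten formula and independently of Proposition~\ref{herin1}, so it is available — but you never invoke it, and without it the cokernels you produce need not lie in $\mathrm{add}\Sigma_0$.
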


\begin{proof}
(1) We will separate the proof in two cases, assuming first $Q$ is not of
type $A_{\infty }$, $A_{\infty }^{\infty }$ or $D_{\infty }$. We use the
isomorphism $\mathcal{K}$ $\cong K(-\mathbb{N}\Delta )/<m_{X}>$, where $%
m_{X} $ denotes the set of mesh relations. The projective $P$ is identified
with a vertex $v_{0}$ in $\Delta _{0}$. Let $\mathcal{V}_{0}^{-}=\{v_{i}^{-}%
\in Q_{0}\mid $there is a path $v_{i}^{-}\rightarrow v_{0}\}$, let $\mathcal{%
V}_{0}=\{v_{1},v_{2}...,v_{n}\}$ be the vertices of $\Sigma $ that are the
ending vertex of a path starting at some $v_{i}^{-}$. Since $\Sigma $ has no
infinite directed paths, the set $\mathcal{V}_{1}=\{v_{0},v_{1}...,v_{m}\}$
of all vertices of $\Sigma $ that are connected with a length zero or
directed path to vertices of $\mathcal{V}_{0}$, is also finite and contains $%
\mathcal{V}_{0}$, $\Sigma ^{\prime \prime }$ is the full subquiver of $%
\Sigma $ with vertices $\mathcal{V}_{1}$. Denote by $\mathcal{V}_{P}$ the
collection of all paths starting at some vertex of $\mathcal{V}_{0}^{-}$ and
ending at $\Sigma $.

Let $\{T_{0},T_{1}...,T_{m}\}$ be the objects of $\mathcal{K}$ corresponding
to the vertices $\mathcal{V}_{1}$ and denote by $\mathcal{P}$ denote the
finite set of indecomposable projective that appear as summands in the
minimal projective presentations of any of the $T_{i}$ for $1\leq i\leq m$.
The objects in $\mathcal{P}$ correspond to vertices of a finite subquiver $%
Q^{\prime \prime }$ of $Q$, which by Lemma \ref{Dinher} is contained in a
finite, full, non Dynkin subquiver $Q^{\prime }$of $Q$.

Let $\Delta ^{\prime }=Q^{\prime op}$, since $\Delta ^{\prime }$ is not
Dynkin, $(-\mathbb{N}\Delta ^{\prime })$ is a connected, full subquiver of $%
(-\mathbb{N}\Delta )$ and $\mathcal{V}_{P}$ is completely contained in $(-%
\mathbb{N}\Delta ^{\prime })$. We identify the preprojective component $%
\mathcal{K}^{\prime }$ of the Auslander-Reiten quiver $\Gamma (KQ^{\prime })$%
, with $K(-\mathbb{N}\Delta ^{\prime })/<m_{X}^{\prime }>$, where $%
m_{X}^{\prime }$ is the set of mesh relations in $K(-\mathbb{N}\Delta
^{\prime })$. Let's consider the ideal $I$ of $K(-\mathbb{N}\Delta )$
defined by:
\begin{equation*}
f\in I(X,Y),\text{ if and only if, $f$ factors through $\tau ^{-i}Z$, for
some $Z\in \Delta _{0}/\Delta _{0}^{\prime }$, $i\in \mathbb{N}$}
\end{equation*}

By the universal properly of quiver algebras, there exists a functor:

\begin{equation*}
\overset{-}{F}:K(-\mathbb{N}\Delta ^{\prime })\rightarrow \frac{K(-\mathbb{N}%
\Delta )}{I+<m_{X}>}
\end{equation*}

The set of meshes can be separated in three kinds: (i) Meshes $%
m_{X_{j}}^{\prime }$ which are also meshes in $K(-\mathbb{N}\Delta )$, (ii)
The mesh $m_{X_{k}}$ of $K(-\mathbb{N}\Delta )$ that is of the form $\Sigma
_{\{\alpha \in \Gamma (\varLambda)|F(\alpha )=X_{k}\}}\alpha \sigma (\alpha
) $, where $\alpha \sigma (\alpha )$ factors through an object in $(-\mathbb{%
N}\Delta )\smallsetminus(-\mathbb{N}\Delta ^{\prime })$ and (iii) The meshes
$m_{X_{l}}$ of $K(-\mathbb{N}\Delta )$ of the form $m_{X_{l}}=\rho
_{j}^{1}+\rho _{l}^{2} $, where $\rho _{l}^{1}$ is a sum of morphisms
factoring through some object in $(-\mathbb{N}\Delta )\smallsetminus(-%
\mathbb{N}\Delta ^{\prime })$ and $\rho _{l}^{2}$ consists of morphims which
do not factor through $(-\mathbb{N}\Delta )\smallsetminus(-\mathbb{N}\Delta
^{\prime })$ . Then it is clear that the kernel of $\overset{-}{F}$ is $%
<m_{X}^{\prime }>$ and there exists a full, faithful and dense functor:

\begin{equation*}
F:\frac{K(-\mathbb{N}\Delta ^{\prime })}{<m_{X}^{\prime }>}\rightarrow \frac{%
K(-\mathbb{N}\Delta )}{I+<m_{X}>}
\end{equation*}

Let $\Sigma ^{\prime }$ be the subquiver of $\Sigma $ consisting of all
vertices which correspond to orbits under the inverse Auslander-Reiten
translation of projective corresponding to the vertices of $Q^{\prime }$,
since $Q^{\prime }$ is non Dynkin, $\Sigma ^{\prime }$ is a section of $%
\mathcal{K}^{\prime }$.

(2) According to [HR Theo. 7.2], there exists a short exact sequence of $%
KQ^{\prime }$-modules
\begin{equation*}
0\rightarrow P\xrightarrow{(f_i)_i}\coprod_{i}T_{i}%
\xrightarrow{(g_{ji})_{ji}}\coprod_{j}T_{j}\rightarrow 0
\end{equation*}%
with $T_{i}$ and $T_{j}$ corresponding to vertices in $\Sigma _{0}^{\prime }$%
, and $f_{i},g_{j_{i}}$ are $K$-linear combinations of paths in $K(-\mathbb{N%
}\Delta ^{\prime })$.

By the equivalence in part (1), $(f_{i})_{i}$ is a monomorphism and $%
\sum_{i} g_{ji}f_{i}=0$ in $\mathcal{K}$.

We have the exact sequence of $KQ$-modules
\begin{equation*}
0\rightarrow P\xrightarrow{(f_i)_i}\coprod_{i}T_{i}%
\xrightarrow{(h_{ki})_{ki}}\coprod_{k}C_{k}\rightarrow 0
\end{equation*}
where $C=\mathrm{Coker}(g_{ji})_{ji}$, is the cokernel of $(g_{ji})_{ji}$ in
$\mathcal{K}$ and $C=\coprod_{k}C_{k}$ its decomposition in sum of
indecomposable $KQ$-modules.

By the universal property of the cokernel, there exists a morphism $%
(l_{jk})_{jk}:\coprod_{k}C_{k}\rightarrow \coprod_{j}T_{j}$ such that the
map $g_{ij}$ is equal to $\sum_{k}l_{jk}h_{ki}:T_{i}\rightarrow T_{j}$. By
condition (S3) of the definition of section, each $C_{k}\in \Sigma _{0}$.

If $Q$ is of type $A_{\infty }$, $A_{\infty }^{\infty }$ or $D_{\infty }$,
then we can choose $\Delta ^{\prime }=Q^{\prime op}$ large enough in order
to the injective $KQ^{\prime }$-modules of the Auslander-Reiten quiver of $%
\Gamma (KQ^{\prime })$ appear as successor of $\Sigma ^{\prime \prime }$ and
then apply an argument similar to the first case.
\end{proof}

We next prove he main result in this subsection.

\begin{theorem}
\label{tilthr} Let $Q$ be a locally finite infinite quiver, $K$ a field and $%
KQ$ the quiver algebra consider as a preadditive category. Let $\mathcal{K}$
be a preprojective component of the Auslander-Reiten quiver of $KQ$ and $%
\Sigma $ a section of $\mathcal{K}$ without infinite oriented paths. Then, $%
\mathrm{add}\Sigma _{0}$ is a tilting subcategory of $\mathrm{mod}(KQ)$.
\end{theorem}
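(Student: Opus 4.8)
The plan is to verify the three axioms of a tilting subcategory from the definition, using the section $\Sigma$ and the structure theory already developed for the preprojective component $\mathcal{K}\cong K(-\mathbb{N}\Delta)/\langle m_X\rangle$. First I would check finite presentedness and axiom (i): every object of $\mathrm{add}\,\Sigma_0$ is an indecomposable module in the preprojective component, hence finitely presented, and since $\mathcal{C}=KQ$ is hereditary (the path category of a quiver with no relations) every finitely presented functor has projective dimension at most $1$; in particular $\mathrm{pdim}\,\mathrm{add}\,\Sigma_0\le 1$. For axiom (ii), I need $\mathrm{Ext}^1_{\mathcal{C}}(X,Y)=0$ for all $X,Y\in\Sigma_0$. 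Here I would use the duality $\mathrm{Ext}^1_{\mathcal{C}}(X,Y)\cong D\,\overline{\mathrm{Hom}}(Y,\mathrm{TrD}\,X)$ (the Auslander-Reiten formula, available since $\mathcal{C}$ is a dualizing variety with almost split sequences) together with the key geometric fact: any nonzero map between modules in $\mathcal{K}$ is a sum of compositions of irreducible maps, i.e. of directed paths in $(-\mathbb{N}\Delta,\tau)$. Since $\mathrm{TrD}\,X=\tau^{-1}X$ lies strictly to the right of $\Sigma$, a nonzero path $Y\to \tau^{-1}X$ with $Y\in\Sigma$ would, by condition (S3) of a section together with the absence of infinite directed paths in $\Sigma$, force $\tau^{-1}X\in\Sigma_0$ — contradicting (S2). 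Hence $\overline{\mathrm{Hom}}(Y,\tau^{-1}X)=0$ and axiom (ii) holds. (I would be careful to also rule out non-radical maps, but maps into $\tau^{-1}X$ from a different indecomposable are automatically in the radical.)

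The heart of the argument is axiom (iii): every indecomposable projective $P=\mathcal{C}(\;\;,C)$ must fit into a short exact sequence $0\to P\to T^0\to T^1\to 0$ with $T^0,T^1\in\mathrm{add}\,\Sigma_0$. This is exactly the content of Proposition~\ref{herin1}, which was proved just above by reducing to a suitable finite non-Dynkin subquiver $Q'$, invoking the classical result [HR, Theorem 7.2] that a section of a preprojective component of a finite-dimensional hereditary algebra gives a tilting module (so one obtains the required sequence for $KQ'$-modules), and then transporting it back to $\mathrm{mod}(KQ)$ via the full, faithful, dense functor $F$ between the appropriate mesh categories and checking that the cokernel lands in $\mathrm{add}\,\Sigma_0$ by (S3). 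So for axiom (iii) I would simply cite Proposition~\ref{herin1}, noting that a representable functor over $KQ$ is precisely (the functor corresponding to) an indecomposable projective $KQ$-module, and that every such projective lies in the preprojective component $\mathcal{K}$.

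Finally I would assemble the three verifications: finitely presented plus (i) from heredity, (ii) from the Auslander-Reiten formula together with the section axioms and the non-vanishing of sectional compositions (the Lemma stating sectional paths compose to nonzero maps), and (iii) from Proposition~\ref{herin1}. The main obstacle is conceptual rather than computational: one must be sure that "directed path to $\Sigma$" arguments genuinely control all of $\mathrm{Hom}$ and $\mathrm{Ext}^1$ in the infinite setting — that is, that Theorem~\ref{directedpaths1} (finiteness of directed paths to and from the section, and existence of nonzero such paths) really does replace the finite-type combinatorics used in the classical proof. Once that is granted, the verification of (i) and (ii) is short, and (iii) is already done. I expect the write-up to therefore be quite compact, essentially bookkeeping that reduces each axiom to a previously established statement.
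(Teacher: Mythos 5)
The overall skeleton of your proof matches the paper exactly: heredity of $KQ$ (from local finiteness of $Q$) gives axiom (i), Proposition~\ref{herin1} gives axiom (iii), and the Auslander--Reiten formula combined with the section axioms is meant to give axiom (ii). But your verification of (ii) contains a genuine error. You state the Auslander--Reiten formula as $\mathrm{Ext}^1_{\mathcal{C}}(X,Y)\cong D\,\overline{\mathrm{Hom}}(Y,\mathrm{TrD}\,X)$; the correct form is $\mathrm{Ext}^1_{\mathcal{C}}(X,Y)\cong D\,\underline{\mathrm{Hom}}(Y,\mathrm{DTr}\,X)$, so the target module is $\tau X$, one column to the \emph{left} of $\Sigma$, not $\tau^{-1}X$ to the right. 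This is not a harmless sign convention: the vanishing you would need with your version, $\mathrm{Hom}(Y,\tau^{-1}X)=0$ for $X,Y\in\Sigma_0$, is false in general. For example, for the $A_\infty$ quiver of the paper one computes $\tau^{-1}M_0^2=M_1^4$ and $\mathrm{Hom}(M_0^2,M_1^4)\cong K$, and neither module is injective, so the stable Hom is nonzero as well.

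Even granting the corrected translate $\tau X$, the way you invoke (S3) is incomplete. Condition (S3) constrains paths whose \emph{two endpoints} both lie in $\Sigma$; a path $Y\to\tau X$ starts in $\Sigma$ but ends outside it, so (S3) by itself says nothing about $\tau X$. The missing step, which is exactly what the paper does, is to continue the path through the almost split sequence starting at $\tau X$: a nonzero $\varphi\colon Y\to\tau X$ yields a chain of morphisms between indecomposables $Y\to\tau X\to E_i\to X$ with \emph{both} endpoints $Y,X\in\Sigma_0$; now (S3) applies and forces $\tau X\in\Sigma_0$, contradicting (S2). (Your appeal to the ``no infinite directed paths'' hypothesis in this step is a red herring; that hypothesis is used in Proposition~\ref{herin1} to produce the coresolutions needed for (iii), not to prove (ii).) With the variance of the AR formula corrected and the path extended through the AR sequence, your argument reduces to the paper's proof; axioms (i) and (iii) are handled exactly as the paper does.
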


\begin{proof}
Since $Q$ is locally finite, $KQ$ is hereditary and (iii) was proved in
Proposition \ref{herin1}, we only need to prove condition (ii). Let $T_{1}$
and $T_{2}$ be non projective objects in $\Sigma _{0}$, since $KQ$ is a
dualizing variety, Auslander-Reiten formula $\mathrm{Ext}%
_{KQ}^{1}(T_{1},T_{2})$=D($\mathrm{Hom}_{KQ}(T_{2},\tau T_{1}))$ holds. Let $%
\varphi :T_{2}\rightarrow \tau T_{1}$ be a non zero morphism, then using the
Ausalnder-Reiten sequence we have morphisms between indecomposable objects:%
\begin{equation*}
T_{2}\rightarrow \tau T_{1}\rightarrow E_{i}\rightarrow T_{1}
\end{equation*}%
From condition (S3), $\tau T_{1}\in \Sigma $, a contradiction.
\end{proof}

\begin{corollary}
\label{Sectilt} If $\mathcal{T}=\mathrm{add}\Sigma _{0}$ is a tilting
subcategory, then $\mathrm{\mathrm{Mod}}(\mathcal{T})$ has global dimension
one.
\end{corollary}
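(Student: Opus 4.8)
The plan is to recognize $\mathcal{T}=\mathrm{add}\,\Sigma_{0}$ as (the finitely generated projectives over) the path category of the quiver $\Sigma$, so that $\mathrm{Mod}(\mathcal{T})$ becomes the functor category of a hereditary category; the corollary then follows from the fact that path categories of quivers carrying no relations are hereditary. (Note that the general bound $\mathrm{gdim}(\mathcal{T})\leq 1+\mathrm{gdim}(\mathcal{C})$ proved earlier only yields $\mathrm{gdim}\,\mathrm{Mod}(\mathcal{T})\leq 2$ in the present situation, so the slicing structure of $\Sigma$ is what is responsible for the sharp value.)

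The first step, and the genuinely delicate one, is to compute the morphism spaces of $\mathcal{T}$. By construction $\mathcal{T}$ is the full additive subcategory of $\mathcal{K}\cong K(-\mathbb{N}\Delta)/\langle m_{X}\rangle$ on the vertices of the section $\Sigma$, so a morphism between two objects $T_{i},T_{j}$ of $\Sigma_{0}$ is a $K$-linear combination of paths of $(-\mathbb{N}\Delta,\tau)$ joining them, read modulo the ideal generated by the mesh relations $m_{X}$. By condition (S3) every path of $(-\mathbb{N}\Delta,\tau)$ with source and target in $\Sigma_{0}$ lies entirely in $\Sigma$, and by (S2) no object of $\Sigma_{0}$ has its $\tau$-translate again in $\Sigma_{0}$. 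Consequently a summand $\alpha\sigma(\alpha)$ of a mesh $m_{X}$ can never be completed, by pre- and post-composition with morphisms of $\mathcal{K}$, to a nonzero contribution in $\mathrm{Hom}_{\mathcal{T}}(T_{i},T_{j})$: such a contribution would, after expanding into paths, give a path $T_{i}\rightsquigarrow\tau X\rightarrow E_{\alpha}\rightarrow X\rightsquigarrow T_{j}$ between section objects, whence (S3) would force $\tau X,X\in\Sigma_{0}$, contradicting (S2). Hence the mesh ideal meets each $\mathrm{Hom}_{\mathcal{T}}(T_{i},T_{j})$ trivially, so the $K$-functor $K\Sigma\to\mathcal{T}$ sending $v$ to $T_{v}$ is fully faithful; extending it additively and comparing idempotent completions (idempotents split in $\mathcal{T}$) yields an equivalence $\mathcal{T}\simeq\mathfrak{p}(K\Sigma)$.

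By the equivalence $\mathrm{res}\colon\mathrm{Mod}(\mathfrak{p}(\mathcal{D}))\to\mathrm{Mod}(\mathcal{D})$ recalled in Section~1, this gives $\mathrm{Mod}(\mathcal{T})\simeq\mathrm{Mod}(K\Sigma)$. Now $\Sigma$ is a section of the locally finite translation quiver $(-\mathbb{N}\Delta,\tau)$, hence is itself a locally finite quiver, and by (S1) it has no oriented cycles, so $K\Sigma$ is the path category of a quiver with no relations. Every $M$ in $\mathrm{Mod}(K\Sigma)$ then fits into the canonical two-term projective resolution $0\to\bigoplus_{\alpha\in\Sigma_{1}}(\,\cdot\,,s\alpha)\otimes_{K}M(t\alpha)\to\bigoplus_{v\in\Sigma_{0}}(\,\cdot\,,v)\otimes_{K}M(v)\to M\to 0$, whose first two terms are coproducts of representables and hence projective; therefore $\mathrm{gdim}\,\mathrm{Mod}(K\Sigma)\leq 1$. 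Since $Q$ is connected and infinite, $\Sigma$ has at least one arrow, and the simple functor at the head of that arrow is not projective, so it has projective dimension exactly $1$. Hence $\mathrm{gdim}\,\mathrm{Mod}(\mathcal{T})=1$, as claimed.

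I expect the main obstacle to be precisely the morphism computation in the step above — the verification that the mesh relations of $\mathcal{K}$ impose nothing on the full subcategory supported on $\Sigma_{0}$. Everything after that is bookkeeping with results already available in the paper together with the hereditariness of path categories; the argument also parallels the classical statement that tilting at a complete slice of a preprojective component returns the path algebra of the underlying (reversed) quiver of the slice.
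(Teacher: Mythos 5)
Your proof is correct, but it takes a genuinely different route from the paper's. The paper works entirely inside the module categories and uses the Brenner--Butler machinery it has just built: via the equivalence $\phi:\mathscr{T}\to\mathscr{Y}$, a subobject $Y$ of an indecomposable projective $(\;\;,T)$ is written as $(\;\;,M)_{\mathcal{T}}$ with $M\in\mathscr{T}$ indecomposable, a chain of nonzero maps $T'\to M\to T$ with $T',T\in\Sigma_0$ is produced, and (S3) is invoked to place $M$ on $\Sigma$, forcing $Y$ to be projective; so subobjects of indecomposable projectives are projective. You instead compute the ring $\mathcal{T}$ itself: you show the mesh ideal of $\mathcal{K}\cong K(-\mathbb{N}\Delta)/\langle m_X\rangle$ meets $\mathrm{Hom}(T_i,T_j)$ trivially for $T_i,T_j\in\Sigma_0$ --- a generator $q\,m_X\,p$ landing there would yield a path $T_i\to\cdots\to\tau X\to E_\alpha\to X\to\cdots\to T_j$, (S3) would then put both $\tau X$ and $X$ on $\Sigma$, and (S2) forbids this --- which, combined with the fact (again by (S3)) that all paths between $T_i$ and $T_j$ stay inside $\Sigma$, gives $\mathcal{T}\simeq\mathfrak{p}(K\Sigma)$ with $K\Sigma$ the hereditary path category of the locally finite acyclic quiver $\Sigma$. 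Both proofs turn on (S3), but you also need (S2), which the paper's argument does not explicitly use here. Your approach is the more structural one: it delivers the explicit identification $\mathrm{Mod}(\mathcal{T})\simeq\mathrm{Mod}(K\Sigma)$, the functor-category form of the classical statement that tilting at a complete slice returns the path algebra of the slice quiver, which is strictly more information than the corollary asserts; the paper's proof stays within the Brenner--Butler framework, never identifies $\mathcal{T}$ as a category, and is slightly shorter because that machinery is already in place. Your key step --- the vanishing of the mesh ideal on $\Sigma$, which you correctly flagged as the delicate point --- is sound and is precisely where the section axioms do their work.
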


\begin{proof}
Let $P$ be an indecomposable projective in $\mathrm{mod}(\mathcal{T})$.
Since $\mathcal{T}$ is Krull-Schmidt $P\cong (\;\;,T)$ with $T$ in $\Sigma
_{0}$. By Brenner-Butler's theorem, the subcategory $\mathscr Y=\{N|\mathrm{%
Tor}_{\mathcal{T}}^{1}(N,\mathcal{T})=0\}\subset \mathrm{mod}(\mathcal{T})$
is a torsion free class containing the projective, and it is equivalent to $%
\mathscr T=\{M|\mathrm{Ext}_{\mathcal{C}}^{1}(T_{i},M)=0,T_{i}\in \Sigma
_{0}\}\subset \mathrm{mod}(\mathcal{C})$, via the functor $\phi $.

Let $Y$ be a non zero sub-object of $P$. Then there is a monomorphism $%
\alpha :Y\rightarrow P$. Since $P$ is contained in the subcategory $\mathscr %
Y\subset \mathrm{mod}(\mathcal{T})$, then $Y\in \mathscr Y$, since it is
closed under sub-objects. By the equivalence $\phi :\mathscr T\rightarrow %
\mathscr Y$, there exists an indecomposable object $M$ in $\mathscr T$ such
that $\phi (M)=(\;\;M)_{\mathcal{T}}\cong Y$. Moreover, the inclusion $%
Y\hookrightarrow P$ is of the form $(\;\;,f):(\;\;,M)_{\mathcal{T}%
}\rightarrow (\;\;,T)$, with $f:M\rightarrow T$ a non zero morphism. Since $%
Y $ is a non zero functor, there exists an object $T^{\prime }\in \Sigma
_{0} $ such that $0\neq Y(T^{\prime })=\mathrm{Hom}(T^{\prime },M)$. Let $%
g\in \mathrm{Hom}(T^{\prime },M)$ be a non zero morphism, then there is a
chain of morphisms
\begin{equation*}
T^{\prime }\xrightarrow{g}M\xrightarrow{f}T
\end{equation*}%
Then by Property (S3) of a section, $M$ is in $\Sigma _{0}$ and we conclude $%
Y$ is projective.
\end{proof}

\subsubsection{Representations of Infinite Dynkin Diagrams.}

\bigskip In this sub section we will apply the results of the previous subsection
 to compute the Auslander-Reiten quivers of the infinite Dynkin
quivers $A_{\infty }$, $A_{\infty }^{\infty }$ or $D_{\infty }$ without
infinite paths. 

The Auslander Reiten quivers of the infinite Dynkin quivers  were computed
in [ReVan III. 3]  for a fixed orientation. We will apply here the tilting
theory so far developed to compute the Auslander Reiten quivers for
arbitrary orientations.

We give first the computation of the Auslander Reiten for an infinite Dynkin
quiver with only sinks and sources,  then we change the orientation by tilting with
the objects in a section of the preprojective component with no infinite
paths, and we prove that tilting with objects in a section behaves as in the
finite dimensional case, it removes a portion from the preprojecive
component and glues it on the preinjective component leaving the other
components invariant.

\begin{proposition}
Let $Q$ be an infinite Dynkin quiver with only sinks and sources, $\Gamma
(Q) $ the Auslander-Reiten quiver of $Q$ then:

\begin{itemize}
\item[(a)] $\Gamma (A_{\infty })$ have only two
components: the preprojective and the preinjective components.
\item[(b)]  $\Gamma (D_{\infty })$ have three
components: the preprojective,  the preinjective and a regular component, 
the regular component are of type $A_{\infty }$.

\item[(c)] $\Gamma (A_{\infty }^{\infty })$ has 4 components: the
preprojective, the preinjective and two regular components, the regular
components are of type $A_{\infty }$.
\end{itemize}
\end{proposition}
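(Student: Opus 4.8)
The plan is to compute the Auslander--Reiten quiver $\Gamma(Q)$ for each of the three orientations by the knitting algorithm, exploiting that $KQ$ is hereditary (as $Q$ is a tree with no relations) and the description recalled above: by [R, Lemma 3, Section 2.3] the preprojective component $\mathcal{K}$ of $\Gamma(Q)$ is the mesh category $K(-\mathbb{N}\Delta)/\langle m_X\rangle$ with $\Delta=Q^{op}$, together with the dual description of the preinjective component $\mathcal{I}$. First I would fix the (essentially unique) orientation of $Q$ with only sinks and sources and record the projectives and injectives: under such an orientation $Q$ has no path of length $\geq 2$, so every $P_i$ and every $I_i$ is supported on $\{i\}$ together with the neighbours of $i$, hence is finite dimensional. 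Knitting upward from $\{P_i\}_{i}$ produces $\mathcal{K}$; since $-\mathbb{N}\Delta$ is a connected translation quiver, $\mathcal{K}$ is a single component containing every projective, and additivity of dimension vectors along meshes lets me write down the dimension vector of each module in $\mathcal{K}$ explicitly. Dually, knitting downward from $\{I_i\}_{i}$ yields the single preinjective component $\mathcal{I}$ and the dimension vectors of its modules. In every case $\Delta$ has the same underlying graph as $Q$.

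The next ingredient is the classification of indecomposables. Since $Q$ is locally finite, every indecomposable $M\in\mathrm{mod}(KQ)$ has finite connected support, which is a finite full subquiver of $Q$; such a subquiver is of type $A_n$ when $Q=A_\infty$ or $A_\infty^\infty$, and of type $A_n$ or $D_n$ when $Q=D_\infty$. Hence, restricting $M$ to its support and applying Gabriel's theorem to these finite Dynkin subquivers, the indecomposables of $\mathrm{mod}(KQ)$ are in bijection with the positive roots of the finite subsystems --- in the $A$ cases, simply the interval modules. Comparing this list with the dimension vectors produced by the knitting settles $A_\infty$ immediately: every indecomposable lies in $\mathcal{K}$ or in $\mathcal{I}$, and comparing dimension vectors shows $\mathcal{K}$ and $\mathcal{I}$ are distinct (hence disjoint) components, so $\Gamma(A_\infty)=\mathcal{K}\sqcup\mathcal{I}$. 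This proves (a).

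For $D_\infty$ and $A_\infty^\infty$ there remain indecomposables in neither $\mathcal{K}$ nor $\mathcal{I}$ --- by definition the \emph{regular} modules --- and the point is to count their components. Using the path-counting remark preceding Theorem \ref{directedpaths1} and the non-zero-path statements of that theorem (which control which modules are reachable from the projectives, and from a section, by non-zero composites of irreducible maps), I would show that the regular indecomposables form a single $\tau$-stable family for $D_\infty$ and split into exactly two $\tau$-stable families for $A_\infty^\infty$, distinguished by a parity/orientation invariant of the interval. Each such family contains no projective and no injective --- all of which already appear in $\mathcal{K}\cup\mathcal{I}$ --- so it is a union of \emph{stable} components; since $Q$ is a tree there are no oriented cycles, hence no tubes, and by the modification of the argument of [ABPRS] recalled in the introduction to this section every stable regular component is of type $A_\infty$. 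A final connectedness check identifies each family with a single component, so $\Gamma(D_\infty)$ has three components and $\Gamma(A_\infty^\infty)$ has four, proving (b) and (c).

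The dimension-vector bookkeeping in the knitting is routine. The main obstacle will be the regular case: proving that $\mathcal{K}$, $\mathcal{I}$ and the regular part are pairwise disjoint and jointly exhaust the indecomposables, and that the regular part has \emph{exactly} the stated number of connected components. This is genuinely delicate for $A_\infty^\infty$, which has no extreme vertex, so the trichotomy preprojective / regular / preinjective cannot be read off from a ``distance to a boundary'' but must be extracted from the $\tau$-orbit combinatorics of the interval modules; this is precisely where Theorem \ref{directedpaths1} and the accompanying finiteness-of-paths remark do the real work. Once disjointness and exhaustiveness are in place, the $A_\infty$ shape of the regular components follows from the cited result, and connectedness of each regular family is a short verification.
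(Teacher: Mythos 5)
Your overall strategy --- classify the indecomposables by their finite connected support via Gabriel's theorem for the finite Dynkin subquivers, then sort them into components --- is the paper's strategy, and your treatment of $A_{\infty}$ is fine in outline. But for $D_{\infty}$ and $A_{\infty}^{\infty}$ you defer the two essential steps to tools that do not deliver them. First, the claim that every regular component is of type $A_{\infty}$ cannot be imported from the ABPRS-style theorem: in this paper that theorem (the bound $\alpha(M)\leq 2$ in the subsection on regular components) is proved by passing to a finite \emph{non-Dynkin} full subquiver via Lemma \ref{Dinher}, which explicitly excludes $A_{\infty}$, $A_{\infty}^{\infty}$ and $D_{\infty}$, and its proof opens by assuming the shape of the components of the infinite Dynkin quivers is already known --- i.e.\ it cites the very proposition you are proving. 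Your alternative justification ``$Q$ is a tree, hence no oriented cycles, hence no tubes'' is also not valid reasoning: $\widetilde{D}_{4}$ is a tree with no oriented cycles and its regular components are tubes. Second, Theorem \ref{directedpaths1} concerns counting paths to a section of the preprojective component; it does not decide whether a given indecomposable is preprojective, regular or preinjective, nor does it count the regular $\tau$-orbits.

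What actually closes these gaps --- and what the paper does --- is an explicit computation of $\mathrm{DTr}$ on each family of indecomposables. For $A_{\infty}^{\infty}$ one checks $\mathrm{TrD}(M_{2k}^{2m})=M_{2(k+1)}^{2(m-1)}$ (so both endpoints even gives preprojective, both odd gives preinjective), while $\mathrm{DTr}(M_{a}^{b})=M_{a+2}^{b+2}$ when $a$ is even and $b$ odd, and $M_{a-2}^{b-2}$ when $a$ is odd and $b$ even; these formulas simultaneously show the $\tau$-orbit never meets a projective or injective (hence the module is regular), separate the regular part into exactly two families by the parity pattern, and, by writing down the almost split sequences $0\rightarrow M_{a+2}^{b+2}\rightarrow M_{a}^{b+2}\rightarrow M_{a}^{b}\rightarrow 0$ on the border (with indecomposable middle term) and the two-summand ones off the border, exhibit each family as a connected component of shape $\mathbb{Z}A_{\infty}$ without any appeal to the general regular-component theorem. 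For $D_{\infty}$ the bookkeeping is heavier because the support can be of type $D_{n}$, so besides the interval modules one must list the families $N_{0}^{m}$, $N_{1}^{m}$, $L_{l}^{m}$, $L^{m}$ and track the $\mathrm{TrD}$-orbits of $P(0)$, $P(1)$ and the $\mathrm{DTr}$-orbits of $I(0)$, $I(1)$ explicitly. Your proposal correctly guesses the parity invariant but, as written, the regular case rests on a circular citation and an invalid ``no tubes'' argument; replacing both by the direct $\tau$-computation is what the proof requires.
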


\begin{proof}
In the case of locally finite infinite quivers, we have almost split
sequences and we can compute them, and the preprojective component, as in
the finite dimensional case; proceeding by induction starting with the
indecomposable projective.

Every indecomposable representation has finite support, hence it can be
considered as a representation of a finite Dynkin quiver.

(a) Consider the quiver $A_{\infty }:0\leftarrow 1\rightarrow 2\leftarrow
3\rightarrow \cdots $, for each pair of integers $b\geq a\geq 0$, let $%
M_{a}^{b}\in \mathrm{rep}(A_{\infty })$, be the representation defined as $%
(M_{a}^{b})_{i}=K$ if $a\leq i\leq b$ and $0$ in the remaining vertices. The
simple projective objects are $P(2n)=M_{2n}^{2n}$, and the non simple
projective are $P(2n+1)=M_{2n}^{2(n+1)}$, with $n\geq 0$. The non projective
representation $M_{a}^{b}$ with $b$ even, can be written as $M_{2k}^{2m}$, $%
M_{2k+1}^{2m}$, with $m>k+1$, $k\geq 0$ and $M_{1}^{2m}$, with $m\geq 1$. A
computation shows $\mathrm{DTr}M_{2k}^{2m}=M_{2(k+1)}^{2(m-1)}$, $\mathrm{DTr%
}(M_{2k+1}^{2m})=M_{2k-1}^{2(m-1)}$, and $\mathrm{DTr}%
(M_{1}^{2m})=M_{0}^{2(m-1)}$, hence each $M_{a}^{b}$ with $b$ even, is in
the preprojective component. By a similar computation $M_{a}^{b}$ with $b$
odd is in the preinjective component.

We look now to the quiver $D_{\infty }$
\begin{equation*}
\begin{diagram}\dgARROWLENGTH=.3em \node{0}
\node{2}\arrow{w}\arrow{e}\arrow{s} \node{3} \node{4}\arrow{w}\arrow{e}
\node{\cdots}\\ \node{} \node{1} \node{} \node{} \node{} \end{diagram}
\end{equation*}

We use the fact that every indecomposable representation has support in a
finite Dynkin quiver whose representations we know, (See [R]).

For each pair of integers $m$,$n$, with $m\ge n\ge 2$ we have indecomposable
representations, $M_{n}^{m}$, defined as follows: $(M_{n}^{m})_{i}=K$, if $%
n\leq i\leq n$, and $0$ in any other vertex. For $m\geq 2$ we define the
indecomposable representations
\begin{equation*}
(N_{0}^{m})_{i}=%
\begin{cases}
K & \text{if } 1\le i\leq m , \\
0 & \text{if } i=0\text{ or } i>m.%
\end{cases}%
,\;(N_{1}^{m})_{i}=%
\begin{cases}
K & \text{if } i=0\text{ or } 1<i\leq m, \\
0 & \text{ in any  other vertex.}%
\end{cases}%
\end{equation*}%
For integers $m$ and  $l $,  with $m>l \geq 2$, let's define
\begin{equation*}
(L_{l}^{m})_{i}=%
\begin{cases}
K^{2} & \text{if } 2\le i\le l, \\
K & \text{if } i\in \{0,1\} \text{ or } l+1\le i\le m, \\
0 & \text{in the other vertices}.%
\end{cases}%
\end{equation*}%
For each integer  $m\ge 0$,  let's define
\begin{equation*}
(L^{m})_{i}=%
\begin{cases}
K & \text{if } 0\le i\le m, \\
0 & \text{in the other vertices}.%
\end{cases}%
\end{equation*}%

To compute the preprojective components of the Auslander-Reiten quiver $%
K(D_{\infty })$ we compute the orbits of $P(1)$ and $P(0)$ under $\mathrm{Tr%
}D$ and to compute the preinjective component we take the orbits of $I(1)$
and $I(0)$ under $D\mathrm{Tr}$, to obtain
\begin{eqnarray*}
&&\{P(1),N_{1}^{3},N_{0}^{5},N_{1}^{7},N_{0}^{9},\ldots
\},\;\{P(0),N_{0}^{3},N_{1}^{5},N_{0}^{7},N_{1}^{9},\ldots \} \\
&&\{I(1),N_{1}^{4},N_{0}^{6},N_{1}^{8},N_{0}^{10},\ldots
\},\;\{I(0),N_{0}^{4},N_{1}^{6},N_{0}^{8},N_{1}^{10},\ldots \}
\end{eqnarray*}%
We see that the representations that lies in the preproyective component are:
$N_1^m$, $N_0^m$, $L^m$ and $M_n^m$ with $n$ and $m$ odd or $0$, and
 $L_l^m$ with $l$ and $m$ odd. 
The representations that lies in the preinyective  component are:
$N_1^m$, $N_0^m$,  $M_n^m$ with $n$ and $m$ odd or $0$, and $N_l^m$ with $l$ and $m$ even. 

Finally, the representations $M_n^m$, $L_n^m$ with $m+n$ and $l+m$ odd, and $L^m$, $M_0^m$ 
with $m$ even, lies in a component of type $\mathbb{Z}A_{\infty}$.

\[
 \begin{diagram}\dgARROWLENGTH=.4em
  \node{}
   \node{}
    \node{}
     \node{\cdots}
      \node{N_{5}^6}\arrow{se}
       \node{\cdots}
        \node{}
         \node{}
          \node{}\\
\node{}
   \node{}
    \node{\cdots}
     \node{N_{3}^6}\arrow{se}\arrow{ne}
      \node{}
       \node{N_{4}^5}\arrow{se}\arrow[2]{w,..}
        \node{\cdots}
         \node{}
          \node{}\\
\node{}
   \node{\cdots}
    \node{N^6}\arrow{se}\arrow{ne}
     \node{}
     \node{N_{3}^4}\arrow[2]{w,..}\arrow{ne}
      \node{}
       \node{N_{2}^5}\arrow{se}\arrow[2]{w,..}
        \node{\cdots}
         \node{}\\
\node{\cdots}
    \node{M_{3}^6}\arrow{se}\arrow{ne}
     \node{}
      \node{N^4}\arrow{se}\arrow{ne}\arrow[2]{w,..}
       \node{}
        \node{N_2^3}\arrow{se}\arrow{ne}\arrow[2]{w,..}
         \node{}
          \node{M_2^5}\arrow{se}\arrow[2]{w,..}
          \node{\cdots}\\
   \node{M_{5}^6}\arrow{ne}
    \node{}
     \node{M_{3}^4}\arrow{ne}\arrow[2]{w,..}
      \node{}
       \node{N^2}\arrow{ne}\arrow[2]{w,..}
        \node{}
         \node{M_2^3}\arrow{ne}\arrow[2]{w,..}
          \node{}
           \node{M_4^5}\arrow[2]{w,..}
 \end{diagram}
\]

\vspace{.5cm}

(b) We consider next the quiver $A_{\infty }^{\infty }:\cdots \rightarrow
-2\leftarrow -1\rightarrow 0\leftarrow 1\rightarrow 2\leftarrow 3\rightarrow
\cdots $ As above, for each pair of integers $b\geq a$, let $M_{a}^{b}\in
\mathrm{rep}(A_{\infty }^{\infty })$, be the representation defined as $%
(M_{a}^{b})_{i}=K$ if $a\leq i\leq b$ and $0$ in the remaining vertices. The
projective simple are $P(2n)=M_{2n}^{2n}$, and the non simple projective are
$P(2n+1)=M_{2n}^{2(n+1)}$, with $n\in \mathbb{Z}$. The non projective
representations $M_{a}^{b}$ with $a$ and $b$ even can be written as $%
M_{2k}^{2m}$ with $m>k+1$, and $k\geq 0$, and we have $\mathrm{TrD}%
(M_{2k}^{2m})=M_{2(k+1)}^{2(m-1)}$, and by induction $M_{a}^{b}$ with $a$
and $b$ even are in the preprojective component. Using the same argument, we
can see that in case $a$ and $b$ are odd, then $M_{a}^{b}$ is in the
preinjective component.

If $a$ is even and $b$ odd, then $\mathrm{DTr}(M_{a}^{b})=M_{a+2}^{b+2}$,
and all these representations are in a regular component, if $a$ is odd and $%
b$ is even, then $\mathrm{DTr}(M_{a}^{b})=M_{a-2}^{b-2}$ and we obtain the
elements of the second regular component.

If $a$ is even and $b$ odd. The almost split sequences are of the form
\begin{eqnarray*}
0\rightarrow M_{a+2}^{b+2}\rightarrow M_{a}^{b+2}\rightarrow
M_{a}^{b}\rightarrow 0 \\
0\rightarrow M_{a+2}^{b+2}\rightarrow M_{a+2}^{b}\coprod
M_{a}^{b+2}\rightarrow M_{a}^{b}\rightarrow 0
\end{eqnarray*}
the first appears in on the border of the regular component

If $a$ is odd and $b$ even. The almost split sequences are of the form
\begin{eqnarray*}
0\rightarrow M_{a-2}^{b-2}\rightarrow M_{a-2}^b\rightarrow
M_{a}^{b}\rightarrow 0 \\
0\rightarrow M_{a-2}^{b-2}\rightarrow M_{a}^{b-2}\coprod
M_{a-2}^b\rightarrow M_{a}^{b}\rightarrow 0
\end{eqnarray*}
the first appears in on the border of the regular component

Proceeding as in the finite dimensional case, we see that in the three cases
the preprojective components are of the form $(-\mathbb{N}Q,\tau )$ and the
preinjective components of the form $(\mathbb{N}Q,\tau )$.
\end{proof}

If $Q$ is a locally finite quiver then $KQ$ is a dualizing variety, if $%
\Sigma $ is a section without infinite paths, then $\mathcal{T}=\mathrm{add}%
\Sigma _{0}$ of $\mathrm{mod}(KQ)$, the functor $\phi :\mathrm{\mathrm{Mod}}(%
\mathcal{C})\rightarrow \mathrm{\mathrm{Mod}}(\mathcal{T})$ restricts to the
category of finitely presented functors $\phi _{\mathrm{mod}(\mathcal{C})}:%
\mathrm{mod}(\mathcal{C})\rightarrow \mathrm{mod}(\mathcal{T})$. We also
proved $\mathcal{T}$ is a dualizing hereditary category, therefore both $KQ$
and $\mathcal{T}$ splits.

Choosing a section without infinite paths corresponds with changes in the
orientation of the quiver $Q$. We observe next that tilting with the objects
of a section without infinite paths in the preprojective component, behave
as in the finite quiver situation; The Auslander Reiten components of the
tilted category are as follows: we cut all the predecessors of the section
in the preprojective component to get the preprojective component of the
tilted category. We glue what we cut as successors of the injective to build
the preinjective component. The regular components remain without changes.

Let $\mathcal{P}(\mathcal{C})$ be the preprojective component of the
Auslander-Reiten quiver, $\Gamma (KQ)$ and $\mathcal{Q}(\mathcal{T})$ the
preinjective component of the Auslander-Reiten quiver de $\Gamma (\mathcal{T}%
)$. By Auslander-Reiten formula we have
\begin{eqnarray*}
\mathscr{T}(\mathcal{T})&=&\{X\in \mathrm{mod}(KQ)|\mathrm{Ext}^{1}(T,X)=0%
\text{, }T\in \Sigma _{0}\} \\
{} &=&\{X\in \mathrm{mod}(KQ)|\mathrm{Hom}(X,\tau T)=0\text{, }T\in \Sigma
_{0}\}.
\end{eqnarray*}

We also know%
\begin{equation*}
\mathscr{F}(\mathcal{T})=\{X\in \mathrm{mod}(KQ)|\mathrm{Hom}(T,X)=0\text{, }%
T\in \Sigma _{0}\}\text{.}
\end{equation*}

\begin{definition}
The set of predeccessors of $\Sigma $ (successors, $succ\Sigma $), $%
pre\Sigma $, is the set of indecomposables $X$ such that there is a $T\in
\Sigma _{0}$ and an integer $n>0$ with $T=\tau ^{-n}X$ ($T=\tau ^{n}X$).
\end{definition}

\begin{lemma}
Let $X$ be an indecomposable object in $\mathrm{mod}(KQ)$. Then, $X$ is in $%
\mathscr{T}(\mathcal{T})$, if and only if, $X$ is not a predecessor of $%
\Sigma $. Moreover, $\mathscr{F}(\mathcal{T})=pre\Sigma $.
\end{lemma}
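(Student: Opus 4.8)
The plan is to translate both torsion classes into statements about paths in the preprojective component $\mathcal{K}$, exploiting that $KQ$ is hereditary and a dualizing variety. By Theorem~\ref{tilthr} we already know $\mathcal{T}=\mathrm{add}\,\Sigma_{0}$ is tilting, and by the corollary that a hereditary tilting subcategory separates, the torsion pair $(\mathscr{T}(\mathcal{T}),\mathscr{F}(\mathcal{T}))$ in $\mathrm{mod}(KQ)$ splits; so it is enough to prove that the indecomposables of $\mathscr{F}(\mathcal{T})$ are precisely the objects of $pre\,\Sigma$, the assertion about $\mathscr{T}(\mathcal{T})$ then being the complement. I will use the descriptions recalled just above, $\mathscr{F}(\mathcal{T})=\{X\mid \mathrm{Hom}(T,X)=0,\ T\in\Sigma_{0}\}$, and (Proposition~\ref{TORH} together with the Auslander--Reiten formula) $\mathscr{T}(\mathcal{T})=\{X\mid \mathrm{Ext}^{1}(T,X)=0,\ T\in\Sigma_{0}\}=\{X\mid \mathrm{Hom}(X,\tau T)=0,\ T\in\Sigma_{0}\ \text{non-projective}\}$.

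The inclusion $pre\,\Sigma\subseteq\mathscr{F}(\mathcal{T})$ is the combinatorial core. I would show that for $X\in pre\,\Sigma$ and $T\in\Sigma_{0}$ there is no directed path from $T$ to $X$ in $\mathcal{K}$; since $\mathcal{K}$ is a preprojective component it is standard — a nonzero morphism between its indecomposables is a finite sum of compositions of irreducible morphisms, hence produces such a path (this is where local finiteness of $Q$, and if needed a passage to a finite full non-Dynkin subquiver as in the proof of Proposition~\ref{herin1}, enters) — so the absence of a path forces $\mathrm{Hom}_{KQ}(T,X)=0$, i.e.\ $X\in\mathscr{F}(\mathcal{T})$. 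To exclude the path, for a vertex $Y$ of $\mathcal{K}$ let $h(Y)$ be the unique integer with $\tau^{h(Y)}Y\in\Sigma_{0}$ (axiom (S2)); then $h<0$ on $pre\,\Sigma$, $h=0$ on $\Sigma_{0}$ and $h>0$ on $succ\,\Sigma$. Using axiom (S3) and the fact that in a preprojective component there is always a path from $W$ to $\tau^{-1}W$, one checks that $h$ is non-decreasing along every arrow of $\mathcal{K}$: if $\alpha\colon Y\to Z$ is an arrow and $\tau^{h(Y)}Y\in\Sigma_{0}$, then translating $\alpha$ makes $\tau^{h(Y)}Z$ an immediate successor of a section vertex, hence — by (S3) — not a proper predecessor of $\Sigma$, so $h(Z)\ge h(Y)$. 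Thus a directed path starting at $T$, where $h=0$, remains in $\{h\ge 0\}$ and never reaches $X$.

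For $\mathscr{F}(\mathcal{T})\subseteq pre\,\Sigma$, take an indecomposable $X\in\mathscr{F}(\mathcal{T})$. Since $X\neq 0$ and $\mathscr{T}(\mathcal{T})\cap\mathscr{F}(\mathcal{T})=0$, we have $X\notin\mathscr{T}(\mathcal{T})$, so $\mathrm{Ext}^{1}(T,X)\neq 0$ for some $T\in\Sigma_{0}$; by the Auslander--Reiten formula $T$ is non-projective and $\mathrm{Hom}(X,\tau T)\neq 0$, and since $\tau T$ is preprojective while $\mathrm{Hom}$ from a non-preprojective indecomposable to a preprojective one vanishes over the hereditary $KQ$, this forces $X\in\mathcal{K}$. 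Also $X\notin\Sigma_{0}$ (otherwise $\mathrm{Hom}(X,X)\neq 0$ contradicts $X\in\mathscr{F}(\mathcal{T})$), and $X\notin succ\,\Sigma$: a proper successor receives a nonzero path from $\Sigma$ by Theorem~\ref{directedpaths1}(b), whence $\mathrm{Hom}(T',X)\neq 0$ for some $T'\in\Sigma_{0}$, again contradicting $X\in\mathscr{F}(\mathcal{T})$. Therefore $X\in pre\,\Sigma$. Combining the two inclusions gives $\mathscr{F}(\mathcal{T})=pre\,\Sigma$, and since the torsion pair splits, an indecomposable $X$ lies in $\mathscr{T}(\mathcal{T})$ if and only if $X\notin\mathscr{F}(\mathcal{T})$, i.e.\ if and only if $X$ is not a predecessor of $\Sigma$.

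I expect the main obstacle to be exactly the path bookkeeping in the infinite component $\mathcal{K}=(-\mathbb{N}\Delta,\tau)$: on one hand that $\mathrm{Hom}\neq 0$ between indecomposables of $\mathcal{K}$ is witnessed by an honest path (standardness of preprojective components, carried over to the locally finite infinite setting via Lemma~\ref{Dinher}), and on the other hand the separating/height-monotonicity property of a section, which is classical for finite translation quivers (as in [BGP], [ARS], [ASS]) but here has to be transported to the infinite case. The homological ingredients — the Auslander--Reiten formula for the dualizing variety $KQ$, the vanishing of $\mathrm{Hom}$ between the preprojective part and the regular or preinjective parts of the hereditary category, and the description of $\mathscr{T}(\mathcal{T})$ via $\mathrm{Ext}^{1}(\mathcal{T},-)$ (Proposition~\ref{TORH}) — are already available and only need to be quoted.
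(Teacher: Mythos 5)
Your proof is correct, and it reaches the lemma by the same circle of ideas the paper uses — the Auslander–Reiten formula, the standardness of the preprojective component $\mathcal{K}\cong K(-\mathbb{N}\Delta)/\langle m_{X}\rangle$, Theorem~\ref{directedpaths1}, and the fact that $\mathcal{T}=\mathrm{add}\,\Sigma_{0}$ separates — but it organizes them the other way around and replaces one of the paper's steps by a different combinatorial argument. The paper proves directly that $X\in\mathscr{T}(\mathcal{T})$ iff $X\notin pre\,\Sigma$ (for the backward implication it produces a composite $T_{0}\to X\to\tau T\to E\to T$ and derives a contradiction with axiom (S3) because $\tau T\notin\Sigma_{0}$), and then reads off $\mathscr{F}(\mathcal{T})=pre\,\Sigma$ from the splitting of the torsion pair. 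You instead prove $\mathscr{F}(\mathcal{T})=pre\,\Sigma$ directly and deduce the statement about $\mathscr{T}(\mathcal{T})$ by complementation; for the inclusion $pre\,\Sigma\subseteq\mathscr{F}(\mathcal{T})$ you isolate a clean auxiliary observation (the height $h(Y)$ with $\tau^{h(Y)}Y\in\Sigma_{0}$ is non-decreasing along arrows of $\mathcal{K}$, which is indeed a direct consequence of (S3) together with the path $\tau Z\to Y\to Z$), concluding that no path leaves $\Sigma$ into $pre\,\Sigma$ and hence $\mathrm{Hom}(T,X)=0$; for the reverse inclusion you use the AR formula and Theorem~\ref{directedpaths1}(b) to rule out $X\in\Sigma_{0}\cup succ\,\Sigma$ and conclude by exhaustion within $\mathcal{K}$. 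The paper's argument is more compact but requires assembling the four-arrow composite at the point of contradiction; your height-function reformulation packages the same (S3) content into a reusable monotonicity lemma, which is arguably more transparent and also justifies, along the way, why the theorem on directed paths must be read as excluding the "wrong" side of the section. Both proofs rely in the same way on standardness of $\mathcal{K}$ to pass between morphisms and paths, and both invoke the separation/splitting result proved just before the lemma.
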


\begin{proof}
Assume $X$ is in $\mathscr{T}(\mathcal{T})$. If $X$ is a predeccesssor of $%
\Sigma $, then, by Theorem \ref{directedpaths1}, there is a non zero path
from $X$ to $\tau \Sigma $, a contradiction.

Assume now $X$ is not a predeccessor of $\Sigma $. If there exists, $T\in
\Sigma _{0}$ and a non zero map $f:X\rightarrow \tau T$, then for some
positive integer $k$, $\tau ^{k}T$ is projective, hence $X$ is preprojective
and it has to be a succesor of $\Sigma $. Then there is a $T_{0}\in \Sigma
_{0}$ and a non zero map $g:T_{0}\rightarrow X.$

Hence we have maps: $T_{0}\rightarrow X\rightarrow \tau T\rightarrow
E\rightarrow T$, contradicting the fact, $T_{0},T\in \Sigma _{0}.$

The last claim follows from the fact that the tilting category $\mathcal{T}$
separates.
\end{proof}

We have the following:

\begin{proposition}
The follwing statements hold:

\begin{itemize}
\item[(a)] For each $T\in \Sigma _{0}$, the $\mathcal{T}$-module $\mathrm{Ext%
}^{1}(\;\;,\tau T)_{\mathcal{T}}$ is injective.

\item[(b)] For any positive integer $k$, and any object $T$ in $\Sigma $,
there is an isomorphism $\tau ^{k}\mathrm{Ext}^{1}(\;\;,\tau T)_{\mathcal{T}%
}\cong \mathrm{Ext}^{1}(\;\;,\tau ^{k+1}T)_{\mathcal{T}}$.

\item[(c)] Given an indecomposable projective $(-,C)$ there is a natural
isomorphism in $\mathrm{mod}(\mathcal{T}^{op}):D(\mathrm{Ext}^{1}(-,(-,C))_{%
\mathcal{T}})\cong \tau (((-,C),-)_{\mathcal{T}})$.
\end{itemize}
\end{proposition}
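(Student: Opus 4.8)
The plan is to handle the three parts in the order (b), (a), (c), exploiting the Auslander--Reiten theory of the hereditary dualizing varieties $\mathrm{mod}(\mathcal C)=\mathrm{mod}(KQ)$ and $\mathrm{mod}(\mathcal T)$ (the latter hereditary by Corollary \ref{Sectilt}), and hence also of $\mathrm{mod}(\mathcal T^{op})$. Throughout I use the description of $(\mathscr T(\mathcal T),\mathscr F(\mathcal T))$ obtained just above: for $T\in\Sigma_0$ and every integer $j\ge 1$ the functor $\tau^{j}T$ (translate taken in $\mathrm{mod}(\mathcal C)$) is a predecessor of $\Sigma$, hence indecomposable, non-projective, and in $\mathscr F(\mathcal T)$; whereas $\mathcal C(\;\;,C)$ lies in $\mathscr T(\mathcal T)$, unless it lies in $\Sigma_0$, in which case it lies in $\mathcal T$.

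\emph{Part (b).} I would argue by induction on $k$. For $k\ge 1$, the almost split sequence ending at $\tau^{k}T$ has the form $0\to\tau^{k+1}T\to E_{k}\to\tau^{k}T\to 0$ in $\mathrm{mod}(\mathcal C)$, and its right-hand term $\tau^{k}T$ lies in $\mathscr F(\mathcal T)$; so Lemma \ref{BrennerB9}(b) applies and produces the almost split sequence $0\to\mathrm{Ext}^1_{\mathcal C}(\;\;,\tau^{k+1}T)_{\mathcal T}\to\mathrm{Ext}^1_{\mathcal C}(\;\;,E_{k})_{\mathcal T}\to\mathrm{Ext}^1_{\mathcal C}(\;\;,\tau^{k}T)_{\mathcal T}\to 0$ in $\mathrm{mod}(\mathcal T)$. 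Reading off the leftmost term, $\mathrm{Ext}^1_{\mathcal C}(\;\;,\tau^{k+1}T)_{\mathcal T}\cong\tau\,\mathrm{Ext}^1_{\mathcal C}(\;\;,\tau^{k}T)_{\mathcal T}$ in $\mathrm{mod}(\mathcal T)$, and iterating this relation from $k=1$ gives $\tau^{k}\,\mathrm{Ext}^1_{\mathcal C}(\;\;,\tau T)_{\mathcal T}\cong\mathrm{Ext}^1_{\mathcal C}(\;\;,\tau^{k+1}T)_{\mathcal T}$.

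\emph{Part (a).} The plan is to identify $\mathrm{Ext}^1_{\mathcal C}(\;\;,\tau T)_{\mathcal T}$ with an indecomposable injective of $\mathrm{mod}(\mathcal T)$. By Auslander--Reiten duality in $\mathrm{mod}(\mathcal C)$ one has, naturally in $T'\in\mathcal T$, an isomorphism $\mathrm{Ext}^1_{\mathcal C}(T',\tau T)\cong D\overline{\mathrm{Hom}}_{\mathcal C}(\tau T,\tau T')$, and since the preprojective component $\mathcal K$ contains no injective object the bar may be dropped. The translate $\tau$ then identifies the functor $T'\mapsto\mathrm{Hom}_{\mathcal C}(\tau T,\tau T')$ on $\mathcal T$ with $\mathcal T(T,\;\;)$, so that $\mathrm{Ext}^1_{\mathcal C}(\;\;,\tau T)_{\mathcal T}\cong D\mathcal T(T,\;\;)=D\phi(T)$, which is exactly the indecomposable injective of $\mathrm{mod}(\mathcal T)$ attached to $T$; hence it is injective. (If $T$ is projective then $\tau T=0$ and the statement is trivial.) I expect the delicate step of the whole proposition to be precisely this morphism-space identification when $\Sigma_0$ contains projective objects: it must be carried out as in the classical slice argument of [ASS, Ch.\ VIII], using property (S3) to control morphisms between objects of $\mathrm{add}\,\Sigma_0$ that factor through a projective.

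\emph{Part (c).} I would use Proposition \ref{R2}(a). Choose $0\to\mathcal C(\;\;,C)\xrightarrow{\;}T_0\xrightarrow{f}T_1\to 0$ with $T_i\in\mathcal T$ so that the induced sequence $0\to(T_1,\;\;)_{\mathcal T}\xrightarrow{(f,\;\;)}(T_0,\;\;)_{\mathcal T}\to\theta_C\to 0$, which is the sequence (\ref{TT1}) of Proposition \ref{TT}, is a minimal projective resolution of $\theta_C=(\mathcal C(\;\;,C),\;\;)_{\mathcal T}$ in $\mathrm{mod}(\mathcal T^{op})$. Applying $\phi=(\;\;,-)_{\mathcal T}$ to the first sequence, Proposition \ref{R2}(a) exhibits $\mathrm{Ext}^1_{\mathcal C}(\;\;,\mathcal C(\;\;,C))_{\mathcal T}$ as the cokernel of $\phi(T_0)\xrightarrow{\phi(f)}\phi(T_1)$, a map between projectives of $\mathrm{mod}(\mathcal T)$; applying the duality $D$ gives an exact sequence $0\to D\,\mathrm{Ext}^1_{\mathcal C}(\;\;,\mathcal C(\;\;,C))_{\mathcal T}\to D\phi(T_1)\xrightarrow{D\phi(f)}D\phi(T_0)$, so that the left-hand term is $\ker(D\phi(f))$, a kernel of a map between injectives of $\mathrm{mod}(\mathcal T^{op})$. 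On the other hand, in the hereditary dualizing variety $\mathrm{mod}(\mathcal T^{op})$ the Nakayama functor $\nu$ sends the projective $(T_i,\;\;)_{\mathcal T}$ to the injective $D\phi(T_i)$ and carries $(f,\;\;)_{\mathcal T}$ to $D\phi(f)$ (both being the Yoneda image of $f\colon T_0\to T_1$), and by definition $\tau_{\mathcal T^{op}}(\theta_C)=\ker\bigl(\nu(T_1,\;\;)_{\mathcal T}\to\nu(T_0,\;\;)_{\mathcal T}\bigr)$. Comparing the two descriptions yields $D\,\mathrm{Ext}^1_{\mathcal C}(\;\;,\mathcal C(\;\;,C))_{\mathcal T}\cong\tau_{\mathcal T^{op}}(\theta_C)$; naturality in $C$ is automatic because every step is functorial, and the degenerate case $\mathcal C(\;\;,C)\in\Sigma_0$ (where $\theta_C$ is projective, $T_1=0$, and both sides vanish) is covered by the same computation.
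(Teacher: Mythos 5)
Your argument for parts (b) and (c) matches the paper's proof essentially step for step (in (b) the almost split sequence $0\to\tau^{k+1}T\to E_k\to\tau^kT\to 0$ transported by Lemma~\ref{BrennerB9}(b) plus induction, in (c) the two induced sequences from $0\to\mathcal C(\;\;,C)\to T_0\to T_1\to 0$, one dualized and one fed into the transpose/Nakayama computation), and your part (a) also follows the paper in using Auslander--Reiten duality, merely packaging the conclusion more explicitly as $\mathrm{Ext}^1_{\mathcal C}(\;\;,\tau T)_{\mathcal T}\cong D(T,\;\;)_{\mathcal T}$ rather than presenting it, as the paper does, via a diagram built from a projective resolution of a test object $X$. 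One small imprecision in (a): the form of the AR formula you want is $\mathrm{Ext}^1_{\mathcal C}(T',\tau T)\cong D\underline{\mathrm{Hom}}_{\mathcal C}(\tau T,\tau T')\cong D\overline{\mathrm{Hom}}_{\mathcal C}(T,T')$ (not $D\overline{\mathrm{Hom}}_{\mathcal C}(\tau T,\tau T')$), and the justification for dropping the bar should be phrased for the latter, namely that a map $T\to T'$ between objects of the preprojective component cannot factor nontrivially through an injective; this does not affect the validity of the argument.
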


\begin{proof}
(a) For each $X\in \mathrm{mod}(\mathcal{T}),$ and each non projective $T\in
\mathcal{T}$ there is an isomorphism
\begin{equation*}
(X,\mathrm{Ext}^{1}(\;\;,\tau T)_{\mathcal{T}})\cong DX(T)\text{.}
\end{equation*}%
Indeed, let $0\rightarrow (\;\;,T_{1})\rightarrow (\;\;,T_{0})\rightarrow
X\rightarrow 0$ be a projective resolution of $X$, and $T$ in $\mathcal{T}$
non projective. Applying Auslander-Reiten formula, there is an isomorphism $%
\eta :(X,\mathrm{Ext}^{1}(\;\;,\tau T)_{\mathcal{T}})\rightarrow DX(T)$,
such that the following diagram commutes
\begin{equation*}
\begin{diagram}\dgARROWLENGTH=.3em
\node{0\rightarrow(X,\mathrm{Ext}^1(\;\;,\tau T))}\arrow{s,l}{\eta}\arrow{e}
\node{((\;\;,T_0),\mathrm{Ext}^1(\;\;,\tau T))}\arrow{s,l}{\cong}\arrow{e}
\node{((\;\;,T_1),\mathrm{Ext}^1(\;\;,\tau T))}\arrow{s,l}{\cong}\\
\node{0\rightarrow D(X(T))}\arrow{e} \node{D(T,T_0)}\arrow{e} \node{D(T,
T_1)} \end{diagram}
\end{equation*}

(b) Consider an almost split sequence in $\mathscr{F}(\mathcal{T})$:%
\begin{equation*}
0\rightarrow \tau ^{2}T\rightarrow E\rightarrow \tau T\rightarrow 0
\end{equation*}

By Lemma \ref{BrennerB9}, it induces an almost split sequence in $\mathscr{X}%
(\mathcal{T})$:

\begin{equation*}
0\rightarrow \mathrm{Ext}^{1}(-,\tau ^{2}T)_{\mathcal{T}}\rightarrow \mathrm{%
Ext}^{1}(-,E)_{\mathcal{T}}\rightarrow \mathrm{Ext}^{1}(-,\tau T)_{\mathcal{T%
}}\rightarrow 0
\end{equation*}

from which it follows $\tau \mathrm{Ext}^{1}(\;\;,\tau T)_{\mathcal{T}}\cong
\mathrm{Ext}^{1}(\;\;,\tau ^{2}T)_{\mathcal{T}}.$

The rest is by induction.

(c) Let $(-,C)$ be an indecomposable projective in $\mathrm{mod}(KQ)$, then
there is an exact sequence:
\begin{equation*}
0\rightarrow (-,C)\rightarrow T_{1}\rightarrow T_{0}\rightarrow 0
\end{equation*}

which induces by the long homology sequence exact sequences:%
\begin{equation*}
0\rightarrow (-,(-,C))\rightarrow (-,T_{1})\rightarrow (-,T_{0})\rightarrow
\mathrm{Ext}^{1}(-,(-,C))_\mathcal{T}\rightarrow \;\;\text{0}
\end{equation*}%
\begin{equation*}
0\rightarrow (T_{0}-)\rightarrow (T_{1},-)\rightarrow ((-,C),-)_\mathcal{T}%
\rightarrow 0
\end{equation*}

The second exact sequence gives a projective presentation of $((-,C),-)_%
\mathcal{T}$, taking the transpose and dualizing we obtain the exact
sequence:%
\begin{equation*}
0\rightarrow \tau (((-,C),-)_{\mathcal{T}})\rightarrow
D((-,T_{0}))\rightarrow D((-,T_{1}))\rightarrow 0
\end{equation*}

Dualizing the first exact sequence we obtain an exact sequence:%
\begin{equation*}
0\rightarrow D(\mathrm{Ext}^{1}(-,(-,C))_{\mathcal{T}}\mathcal{)}\rightarrow
D((-,T_{0}))\rightarrow D((-,T_{1}))\rightarrow 0
\end{equation*}

which implies:

\begin{equation*}
D(\mathrm{Ext}^{1}(-,(-,C))_{\mathcal{T}}\mathcal{)}\cong \tau (((-,C),-)_{%
\mathcal{T}}\mathcal{)}
\end{equation*}
\end{proof}

The results in the proposition can be interpreted as the construction of the
Auslander-Reiten components of $\mathrm{mod}(\mathcal{T})$ by gluing the
predecessors of $\Sigma $ as successors of the injectives and leaving the
remaining components unchanged, as illustrated in the following diagram:

\begin{center}
\begin{picture}(250,150)

  \put(0,0){\line(1,0){40}}
   \put(40,0){\line(0,1){20}}
    \put(20,40){\line(1,-1){20}}
      \put(20,40){\line(1,1){20}}
        \put(0,60){\line(1,0){40}}

\put(60,0){\line(1,0){100}}
 \put(60,0){\line(0,1){20}}
   \put(60,20){\line(-1,1){20}}
       \put(60,20){\line(-1,1){20}}
        \put(40,40){\line(1,1){20}}
         \put(60,60){\line(1,0){100}}

\put(160,0){\line(-1,1){20}}
 \put(140,20){\line(0,1){20}}
   \put(140,40){\line(1,1){20}}

  \put(0,80){\line(1,0){100}} \put(80,80){\vector(0,-1){20}} \put(80,68){$\mathrm{Ext}^1(\;,-)_\mathcal{T}$}
   \put(150,80){\line(0,-1){10}}
    \put(150,70){\line(-1,0){15}}
     \put(75,70){\line(-1,0){50}}
      \put(25,70){\vector(0,-1){10}}
       \put(15,70){$\phi$}

   \put(100,80){\line(1,1){20}}
       \put(120,100){\line(0,1){20}}
          \put(120,120){\line(-1,1){20}}
           \put(0,140){\line(1,0){100}}
    \put(0,80){\line(0,1){20}}
     \put(0,100){\line(1,1){20}}
      \put(0,140){\line(1,-1){20}}

\put(120,80){\line(1,0){40}}
 \put(120,80){\line(1,1){20}}
   \put(140,100){\line(0,1){20}}
    \put(140,120){\line(-1,1){20}}
         \put(120,140){\line(1,0){40}}

\put(30,105){$\mathscr{F}(\mathcal{T})$}
 \put(95,105){$\mathrm{Dtr}\Sigma$}
  \put(130,105){$\Sigma$}
   \put(150,105){$\mathscr{T}(\mathcal T)$}
   \put(-5,30){$\mathscr{Y}(\mathcal{T})$}
       \put(80,30){$\mathscr{X}(\mathcal{T})$}
               \put(145,30){$\mathrm{Ext}^1_\mathcal{C}(\;,\mathrm{DTr}\Sigma)_\mathcal{T}$}
\put(240,30){$\mathcal{Q}(\mathcal{T})$}
   \put(240,105){$\mathcal{P}(\mathcal{T})$}
\end{picture}
\end{center}
\vspace{.5cm}
As a corollary we obtain the main theorem of the subsection.

\begin{theorem}
Let $Q$ be a locally finite infinite Dynkin quiver, $\Gamma (Q)$ the
Auslander-Reiten quiver of $Q$ then:

\begin{itemize}
\item[(a)] $\Gamma (A_{\infty })$ have only two
components: the preprojective and the preinjective components.
\item[(b)]  $\Gamma (D_{\infty })$ have three
components: the preprojective,  the preinjective and a regular component, 
the regular component are of type $A_{\infty }$.

\item[(c)] $\Gamma (A_{\infty }^{\infty })$ has 4 components: the
preprojective, the preinjective and two regular components, the regular
components are of type $A_{\infty }$.
\end{itemize}
\end{theorem}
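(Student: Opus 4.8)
The plan is to reduce the general case to the Proposition already proved above, where $Q$ has only sinks and sources, by moving between orientations through tilting with a section of the preprojective component.

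Given a locally finite infinite Dynkin quiver $Q$ with no infinite directed paths, I would first fix a sinks-and-sources orientation $Q_0$ of the same underlying graph. By the Proposition proved above, $\Gamma(KQ_0)$ has precisely the asserted components, and its preprojective component $\mathcal{K}$ is the mesh category $K(-\mathbb{N}\Delta_0)/\langle m_X\rangle$ with $\Delta_0=Q_0^{op}$. Inside $\mathcal{K}$ I would choose a section $\Sigma$ whose underlying quiver is $Q$; this is possible because, just as for finite quivers, complete slices of $(-\mathbb{N}\Delta_0,\tau)$ realize the orientations of the underlying graph, and one checks this in the locally finite infinite case using Lemma \ref{Dinher} together with the finiteness of directed paths of bounded length noted before Theorem \ref{directedpaths1}. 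Since $Q$ has no infinite directed paths, neither does $\Sigma$, so Theorem \ref{tilthr} applies and $\mathcal{T}=\mathrm{add}\,\Sigma_0$ is a tilting subcategory of $\mathrm{mod}(KQ_0)$.

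Next I would identify $\mathrm{mod}(\mathcal{T})$ with $\mathrm{mod}(KQ)$. The full subcategory of a mesh category supported on a complete slice is hereditary and isomorphic to the path category of the slice, so $\mathcal{T}\cong K\Sigma=KQ$ as $K$-categories, whence an equivalence $\mathrm{mod}(\mathcal{T})\simeq\mathrm{mod}(KQ)$ carrying $\Gamma(\mathcal{T})$ onto $\Gamma(KQ)$; this is compatible with Corollary \ref{Sectilt}, which already gives $\mathrm{gdim}\,\mathrm{Mod}(\mathcal{T})=1$.

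Finally I would read off $\Gamma(\mathcal{T})$ from $\Gamma(KQ_0)$ via the description obtained in the Proposition immediately preceding this statement, together with the Lemma identifying $\mathscr{T}(\mathcal{T})$ with the indecomposables that are not predecessors of $\Sigma$. That analysis shows that tilting with $\Sigma$ removes the predecessors $\mathrm{pre}\,\Sigma$ from the preprojective component of $KQ_0$, leaving a component again of the form $(-\mathbb{N}Q^{op},\tau)$; glues $\mathrm{pre}\,\Sigma$ onto the preinjective component as successors of the injectives, leaving a component again of the form $(\mathbb{N}Q^{op},\tau)$; and fixes every regular component, in particular keeping it of type $A_\infty$. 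Hence $\Gamma(KQ)\cong\Gamma(\mathcal{T})$ has one preprojective component, one preinjective component, and $0$, $1$, or $2$ regular components of type $A_\infty$ according as the underlying graph is $A_\infty$, $D_\infty$, or $A_\infty^\infty$. The step I expect to be the main obstacle is the very first one: showing that an arbitrary orientation without infinite directed paths is realized by a complete slice lying \emph{entirely} inside the preprojective component of $KQ_0$ (so that $\mathcal{T}$ really is a subcategory of $\mathrm{mod}(KQ_0)$ to which Theorem \ref{tilthr} can be applied); this requires the infinite-quiver analogue of the classical slice combinatorics, after which the remainder is bookkeeping on top of the results already established.
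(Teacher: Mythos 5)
Your proposal follows essentially the same route the paper takes: start from the sinks-and-sources orientation $Q_0$ handled in the preceding Proposition, realize the desired orientation as a section $\Sigma$ without infinite paths inside the preprojective component of $\Gamma(KQ_0)$, tilt by $\mathrm{add}\,\Sigma_0$ using Theorem \ref{tilthr}, and then read off $\Gamma(KQ)\cong\Gamma(\mathcal{T})$ from the immediately preceding Lemma and Proposition, which show that tilting removes $\mathrm{pre}\,\Sigma$ from the preprojective component, glues it onto the preinjective side, and leaves the regular components unchanged. You have also correctly flagged the one genuinely delicate step (that an arbitrary orientation without infinite directed paths is realized by a complete slice lying entirely in the preprojective component), a point the paper likewise treats as implicit, and you rightly note the tacit standing hypothesis that $Q$ has no infinite directed paths.
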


\subsubsection{The regular components}

\bigskip In the previous subsection we were concerned with the preprojective
components and with all the components of a locally finite infinite quiver
of type: $A_{\infty }$, $A_{\infty }^{\infty }$ or $D_{\infty }$. In this
subsection we study the regular components of an arbitrary locally finite
infinite quiver and prove that the regular components are of type $A_{\infty
}$. To prove this, we will follow very closely the proof given by ABPRS,
[see ARS]. Since $KQ$ is hereditary and the radical of $\mathrm{mod}(KQ)$
has properties very similar to the finite dimensional case, we can also use
length arguments. We follow the first part of ABPRS's proof to conclude that
the number of indecomposable summands, $\alpha (M)$, of a $KQ$-module $M$ in
a regular component, is at most three, and in case $\alpha (M)=3$, there
exist chains of irreducible monomorphisms $B_{i,t_{i}}\rightarrow
B_{i,t_{i-1}}\rightarrow \cdots \rightarrow B_{i,1}=B_{i}\rightarrow M$,
with $\alpha (B_{i,t_{i}})=1$ and $\alpha (B_{i,j})=2$ for $j<t_{i}$, where
\begin{equation*}
0\rightarrow \mathrm{DTr}M\xrightarrow {\bigl(\begin{smallmatrix}
f_1\\f_2\\f_3\end{smallmatrix}\bigr)}B_{1}\coprod B_{2}\coprod B_{3}%
\xrightarrow{\bigl(\begin{smallmatrix} g_1&g_2&g_3\end{smallmatrix}\bigr)}%
M\rightarrow 0
\end{equation*}%
is an almost split sequence (see [ARS Prop. 4.11]). To exclude the case $%
\alpha (M)=3$, we will reduce to a finite quiver situation to obtain a
contradiction. We will make use of the following lemma:

\begin{lemma}
\label{herescsd4} Let $\mathcal{C}$ be a $\mathrm{Hom}$-finite dualizing
locally finite Krull-Schmidt $K$-category. Let $\mathcal{B}=\{B_{i}\}_{i\in
I}$ be a finite family of objects in $\mathrm{mod}(\mathcal{C})$. For each $%
i\in I$, consider almost split sequences in $\mathrm{mod}(\mathcal{C})$
\begin{equation*}
0\rightarrow \mathrm{DTr}(B_{i})\xrightarrow{f_i}E_{i}\xrightarrow{g_i}%
B_{i}\rightarrow 0\text{.}
\end{equation*}%
Then, there is a finite subcategory $\mathcal{C}^{\prime }\subset \mathcal{C}
$, such that the restriction
\begin{equation*}
0\rightarrow \mathrm{DTr}(B_{i})|\mathcal{C}^{\prime }\xrightarrow{g_i|%
\mathcal{C}'}E_{i}|\mathcal{C}^{\prime }\xrightarrow{f_i|\mathcal{C}'}B_{i}|%
\mathcal{C}^{\prime }\rightarrow 0
\end{equation*}%
is an almost split sequence in $\mathrm{mod}(\mathcal{C}^{\prime })$
\end{lemma}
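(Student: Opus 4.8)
The plan is to take for $\mathcal{C}'$ a sufficiently large \emph{finite} full subcategory of $\mathcal{C}$, closed under summands (so that it is again an annuli variety), and to transport the almost split property back and forth along the restriction functor $\mathrm{res}\colon\mathrm{mod}(\mathcal{C})\to\mathrm{mod}(\mathcal{C}')$ and the functor $\mathcal{C}\otimes_{\mathcal{C}'}-$ of Proposition~\ref{cap1.9}, which is a one-sided inverse to $\mathrm{res}$. The point is that over a finite $K$-category $\mathcal{C}'$ the category $\mathrm{mod}(\mathcal{C}')$ is the module category of the finite dimensional algebra $\bigoplus_{i,j}\mathcal{C}'(C_i,C_j)$, so it is abelian, has almost split sequences, and the usual characterization applies [ARS]: a non-split short exact sequence is almost split as soon as its epimorphism is right almost split and right minimal.

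First I would choose $\mathcal{C}'$. Since $\mathcal{C}$ is $\mathrm{Hom}$-finite and Krull--Schmidt, each $\mathrm{End}_{\mathcal{C}}(C)^{op}$ is a finite dimensional $K$-algebra, hence semiperfect, so by Corollary~\ref{cap1.14} each of $B_i$, $\mathrm{DTr}(B_i)$ and the finitely many indecomposable summands of $E_i$ has a minimal projective presentation. Let $\mathcal{C}'$ be the additive closure of all the (finitely many) objects of $\mathcal{C}$ occurring in these presentations, over all $i\in I$. Then $B_i$, $\mathrm{DTr}(B_i)$ and $E_i$ are projectively presented over $\mathcal{C}'$, hence by Proposition~\ref{cap1.9} lie in the essential image of $\mathcal{C}\otimes_{\mathcal{C}'}-$, and the adjunction $\mathrm{res}\dashv\mathcal{C}\otimes_{\mathcal{C}'}-$ gives isomorphisms $\mathrm{Hom}_{\mathcal{C}}(M,N)\cong\mathrm{Hom}_{\mathcal{C}'}(M|\mathcal{C}',N|\mathcal{C}')$ whenever $N$ is projectively presented over $\mathcal{C}'$ and $M$ is arbitrary; in particular for any two of $B_i$, $\mathrm{DTr}(B_i)$, $E_i$. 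Moreover, since $\mathcal{C}(\;\;,C)|\mathcal{C}'=\mathcal{C}'(\;\;,C)$ for $C\in\mathcal{C}'$ and $\mathrm{rad}_{\mathcal{C}}(C_1,C_0)=\mathrm{rad}_{\mathcal{C}'}(C_1,C_0)$ for $C_0,C_1\in\mathcal{C}'$ (both consist of the non-isomorphisms, by Proposition~\ref{cap1.21}), the restriction of a minimal projective presentation to $\mathcal{C}'$ is again minimal.

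It then remains to check that $\eta_i|\mathcal{C}'\colon 0\to\mathrm{DTr}(B_i)|\mathcal{C}'\xrightarrow{f_i|\mathcal{C}'}E_i|\mathcal{C}'\xrightarrow{g_i|\mathcal{C}'}B_i|\mathcal{C}'\to 0$ is almost split in $\mathrm{mod}(\mathcal{C}')$. Exactness is immediate since restriction to a full subcategory is exact, and all three terms lie in $\mathrm{mod}(\mathcal{C}')$. Non-splitting, indecomposability of the end terms and locality of their endomorphism rings all transfer through the $\mathrm{Hom}$-isomorphisms above, since a section of $g_i|\mathcal{C}'$ or a nontrivial idempotent on an end term would lift to the corresponding datum over $\mathcal{C}$. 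Right minimality of $g_i|\mathcal{C}'$ follows likewise: a summand of $E_i|\mathcal{C}'$ killed by $g_i|\mathcal{C}'$ gives, via the isomorphism on $\mathrm{Hom}(E_i,B_i)$, a summand of $E_i$ killed by $g_i$, contradicting right minimality of $g_i$. For the right almost split property: given $N\in\mathrm{mod}(\mathcal{C}')$ and a non-split epimorphism $u\colon N\to B_i|\mathcal{C}'$, the morphism $\tilde u\colon\mathcal{C}\otimes_{\mathcal{C}'}N\to B_i$ corresponding to $u$ is not a split epimorphism (a section would restrict to one for $u$), so $\tilde u$ factors through $g_i$ because $\eta_i$ is almost split in $\mathrm{mod}(\mathcal{C})$; restricting such a factorization to $\mathcal{C}'$ factors $u$ through $g_i|\mathcal{C}'$. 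Finally $B_i|\mathcal{C}'$ is non-projective, either because the restriction of its minimal projective presentation stays minimal with non-zero first term, or simply because a right almost split epimorphism must have non-projective target. Thus $\eta_i|\mathcal{C}'$ is the almost split sequence ending at $B_i|\mathcal{C}'$, whose left-hand term is necessarily $\mathrm{DTr}(B_i|\mathcal{C}')\cong\mathrm{DTr}(B_i)|\mathcal{C}'$.

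The genuinely load-bearing choice is that $\mathcal{C}'$ be large enough that $B_i$, $\mathrm{DTr}(B_i)$ and $E_i$ are \emph{projectively presented} over it, not merely ``supported'' on it: this is exactly what makes $\mathrm{res}$ act fully faithfully on the relevant modules and hence what lets non-splitting and the right almost split property descend. The routine but delicate part I would be most careful about is the naturality underlying the $\mathrm{Hom}$-isomorphisms — that the passage $u\mapsto\tilde u$ is natural in the test object $N$ and compatible with composition and with $\mathrm{res}$ — so that a factorization of $\tilde u$ through $g_i$ really does restrict to a factorization of $u$ through $g_i|\mathcal{C}'$; everything else (that $\mathcal{C}\otimes_{\mathcal{C}'}-$ preserves finite presentation, the radical bookkeeping, the identification of $\mathrm{mod}(\mathcal{C}')$ with a finite dimensional algebra's module category) is mechanical.
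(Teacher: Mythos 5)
The paper leaves this lemma as an exercise ("We leave the proof to the reader"), so there is no authorial proof to compare against; your proof is correct and is the natural argument the authors have in mind. Taking $\mathcal{C}'$ to contain the projectives appearing in minimal projective presentations of $B_i$, $\mathrm{DTr}(B_i)$ and the summands of $E_i$ — so that all three terms are projectively presented over $\mathcal{C}'$ — is exactly the load‑bearing step, and you correctly use the equivalence of Proposition \ref{cap1.9} between $\mathrm{mod}(\mathcal{C}')$ and the subcategory of functors projectively presented over $\mathcal{C}'$ to transport non‑splitting, right minimality and indecomposability, and the functor $\mathcal{C}\otimes_{\mathcal{C}'}-$ to lift arbitrary test maps $N\to B_i|\mathcal{C}'$ when verifying the right almost split property.

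One small caveat: the paper's claim in Proposition \ref{cap1.9} that $\mathcal{C}\otimes_{\mathcal{C}'}-$ is a \emph{right} adjoint of $\mathrm{res}$ is inconsistent with property (a) there (right exactness is a property of \emph{left} adjoints), and in [A] the adjunction is indeed $\mathrm{Hom}_{\mathcal{C}}(\mathcal{C}\otimes_{\mathcal{C}'}N,M)\cong\mathrm{Hom}_{\mathcal{C}'}(N,M|\mathcal{C}')$. Consequently your blanket assertion that $\mathrm{Hom}_{\mathcal{C}}(M,N)\cong\mathrm{Hom}_{\mathcal{C}'}(M|\mathcal{C}',N|\mathcal{C}')$ "whenever $N$ is projectively presented and $M$ is arbitrary" has the condition on the wrong variable: the projectively presented condition must sit on the \emph{source} $M$. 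This does not damage your argument, since in every instance you lift maps out of $B_i$, $E_i$, or $\mathcal{C}\otimes_{\mathcal{C}'}N$, all of which are projectively presented over $\mathcal{C}'$, and your treatment of the right almost split property already applies the adjunction in the correct orientation; but the sentence as written should be corrected.
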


\begin{proof}
We leave the proof to the reader.
\end{proof}

\begin{theorem}
Let $Q=(Q_{0},Q_{1})$ be a connected \textbf{locally finite }infinite quiver%
\textbf{\ } and $M$ an indecomposable module in the regular component of the
Auslander-Reiten quiver $\Gamma (KQ)$. Then $\alpha (M)\leq 2$.
\end{theorem}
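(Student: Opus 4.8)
The plan is to argue by contradiction: assuming $\alpha(M)=3$, I will transport the situation to a finite subquiver, where the ABPRS theorem for finite-dimensional hereditary algebras rules it out.

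First I would record the structure already obtained from the first part of the ABPRS argument ([ARS Prop. 4.11], recalled above): if $\alpha(M)=3$ there is an almost split sequence
\[
0\rightarrow \mathrm{DTr}M \xrightarrow{f} B_{1}\coprod B_{2}\coprod B_{3} \xrightarrow{g} M\rightarrow 0
\]
together with chains of irreducible monomorphisms $B_{i,t_{i}}\rightarrow \cdots \rightarrow B_{i,1}=B_{i}\rightarrow M$ with $\alpha(B_{i,t_{i}})=1$ and $\alpha(B_{i,j})=2$ for $j<t_{i}$. I would then assemble a finite family $\mathcal{B}$ of modules of $\mathrm{mod}(KQ)$ containing $M$, the modules $\tau^{\pm 1}M$ and $\tau^{\pm 2}M$, all the $B_{i,j}$, and the end, middle and kernel terms of the almost split sequences ending at each of these; since $Q$ is locally finite every such module is finitely presented and there are only finitely many of them.

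Next I would apply Lemma \ref{herescsd4} to $\mathcal{B}$ to obtain a finite subcategory $\mathcal{C}'\subseteq KQ$, which by enlarging $\mathcal{B}$ (and using that $Q$ is connected and locally finite) I may take to be $KQ'$ for a finite connected full subquiver $Q'\subseteq Q$, such that restriction to $\mathrm{mod}(KQ')$ carries all the almost split sequences in question to almost split sequences. Then $M|\mathcal{C}'$ is indecomposable, the restriction of the displayed sequence is almost split in $\mathrm{mod}(KQ')$, so $\alpha(M|\mathcal{C}')\geq 3$; the three chains restrict to chains of irreducible monomorphisms $B_{i,t_{i}}|\mathcal{C}'\rightarrow \cdots \rightarrow M|\mathcal{C}'$; and $\tau_{KQ'}^{\pm k}(M|\mathcal{C}')=(\tau^{\pm k}M)|\mathcal{C}'$ for $k=1,2$. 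Using that $M$ is regular in $\Gamma(KQ)$ — so its whole $\tau$-orbit consists of non-projective, non-injective modules — together with this inherited double-chain configuration and the non-projectivity and non-injectivity of $\tau_{KQ'}^{\pm k}(M|\mathcal{C}')$, I would check that the component of $M|\mathcal{C}'$ in $\Gamma(KQ')$ is stable, i.e. it is neither the preprojective nor the preinjective component of the connected finite-dimensional hereditary algebra $KQ'$; hence it is a regular component.

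Finally, since $KQ'$ is a finite-dimensional hereditary $K$-algebra, the ABPRS theorem [ARS] applies and gives $\alpha\leq 2$ for every module in a regular component of $\Gamma(KQ')$, contradicting $\alpha(M|\mathcal{C}')\geq 3$. Therefore $\alpha(M)\leq 2$. The hard part will be the middle step: ensuring $\mathcal{C}'$ can be chosen large enough that not only the almost split sequences but also the regularity of the component survive the restriction. This is exactly where the locally finite hypothesis and Lemma \ref{herescsd4} are used, and the length arguments available because $KQ$ is hereditary (so that $\mathrm{rad}(\mathrm{mod}(KQ))$ behaves as in the classical finite-dimensional setting) are what make the bookkeeping go through.
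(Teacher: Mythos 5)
Your plan breaks at exactly the step you flag as "the hard part": showing that the component of $M|\mathcal{C}'$ in $\Gamma(KQ')$ is regular. This cannot be established, and in fact must fail, for the following reason. Once Lemma \ref{herescsd4} gives you a finite, connected, hereditary $KQ'$ in which the restricted sequence
\[
0\rightarrow \mathrm{DTr}M|\mathcal{C}'\rightarrow \textstyle\coprod_{i=1}^{3}B_{i}|\mathcal{C}'\rightarrow M|\mathcal{C}'\rightarrow 0
\]
is almost split, the classical ABPRS theorem for the finite-dimensional hereditary algebra $KQ'$ already tells you that $\alpha\leq 2$ for every module in a regular component of $\Gamma(KQ')$. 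Since $\alpha(M|\mathcal{C}')=3$, $M|\mathcal{C}'$ is therefore \emph{forced} to lie in the preprojective or preinjective component of $\Gamma(KQ')$ — the opposite of what you are trying to prove. Checking that $\tau^{\pm k}(M|\mathcal{C}')$ is non-projective and non-injective for $k=1,2$ (or for any fixed finite range controlled by a finite family $\mathcal{B}$) only tells you that $M|\mathcal{C}'$ is not too close to a projective or injective; it does not prevent it from sitting deeper in the preprojective component, and over a finite quiver all modules at some bounded distance from the section do land there. Regularity of $M$ over $KQ$ is a statement about the full infinite $\tau$-orbit and is not inherited by the restriction.

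The paper turns this around: it \emph{accepts} that, because $\alpha(M|\mathcal{C}')=3$, the restricted module lies in a preprojective (or, dually, preinjective) component of $\Gamma(KQ')$, and then derives the contradiction from the infinity of $Q$ rather than from regularity of the restriction. Concretely, it arranges $Q'$ to be non-Dynkin (using Lemma \ref{Dinher}), so the preprojective component of $KQ'$ has the form $(-\mathbb{N}\Delta',\tau)$ with $\Delta'=Q'^{op}$; the three chains $B_{i,t_i}\rightarrow\cdots\rightarrow M$ then form a section of this component isomorphic to $Q'$ up to orientation. The key point is that the same finite family of almost split sequences remains almost split over any larger finite subquiver $Q''\supseteq Q'$, so $KQ''$ would have to have the \emph{same} section in its preprojective component; but the section of the preprojective component of $KQ''$ must be $Q''$ up to orientation, forcing $Q''=Q'$ for all $Q''$, hence $Q=Q'$ finite — contradiction. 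If you want to salvage your write-up, replace the "show $M|\mathcal{C}'$ is regular" step with this section-growth argument; the first half (building $\mathcal{B}$ and restricting via Lemma \ref{herescsd4}) is essentially what the paper does.
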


\begin{proof}
Since we already know the shape of the Auslander-Reiten components of the
infinite Dynkin quivers, we may assume $Q$ is non Dynkin.

Let's suppose $\alpha (M)=3$, and let $0\rightarrow \mathrm{DTr}%
(M)\rightarrow \coprod_{i=1}^{3}B_{i}\rightarrow M\rightarrow 0$ be an
almost split sequence and chains of irreducible monomorphisms $%
B_{i,t_{i}}\rightarrow \cdots \rightarrow B_{i,1}=B_{i}\rightarrow
M,\;i=1,2,3$, as above.

(a) We proceed as in the lemma \ref{herescsd4}, defining $\mathcal{B}%
=\{M\}\cup \{B_{i,j}\}\cup \{\mathrm{TrD}(B_{i,j})\}$, $i=1,2,3$, $1\leq
j\leq t_{i}$, to find a subcategory $\mathcal{C}^{\prime }\subset \mathcal{C}
$, such that the almost split sequences of the objects in $\mathcal{B}$,
restrict to almost split sequences in $\mathrm{mod}(\mathcal{C}^{\prime })$
\begin{eqnarray*}
0 &\rightarrow &\mathrm{DTr}(B_{i,j})|\mathcal{C}^{\prime }\rightarrow E_{i}|%
\mathcal{C}^{\prime }\rightarrow B_{i,j}|\mathcal{C}^{\prime }\rightarrow
0,1\leq j\leq t_{i} \\
0 &\rightarrow &B_{i,j}|\mathcal{C}^{\prime }\rightarrow E_{i}^{\prime }|%
\mathcal{C}^{\prime }\rightarrow \mathrm{TrD}B_{i,j}|\mathcal{C}^{\prime
}\rightarrow 0,1\leq j\leq t_{i} \\
0 &\rightarrow &\mathrm{DTr}M|\mathcal{C}^{\prime }\rightarrow
\coprod_{i=1}^{3}B_{i}|\mathcal{C}^{\prime }\rightarrow M|\mathcal{C}%
^{\prime }\rightarrow 0,
\end{eqnarray*}

Adding a finite number of objects, if needed, we can identify $\mathcal{C}%
^{\prime }$ with $KQ^{\prime }$, where $Q^{\prime }$ is a finite connected
non Dynkin full subquiver of $Q$. Observe that these almost split sequences
will be almost split sequences for any quiver algebra of a finite subquiver $%
Q^{\prime \prime }$ of $Q^{\prime }$containing $Q^{\prime \prime },$ since
it will contain the support of the objects in the sequences.

Since $\alpha (M)=3$, the module $M|\mathcal{C}^{\prime }$ is in a
preprojective or in a preinjective component. We assume it is in the
preprojective component, the other case will follow by duality.
\begin{equation*}
\begin{diagram}\dgARROWLENGTH=1.5em \node{B_{1,3}}\arrow{se}
\node{B_{2,3}}\arrow{se,..} \node{\tau^{-1}B_{1,3}}\arrow{se}
\node{\tau^{-1}B_{2,3}}\arrow{se,..} \node{.}\arrow{se}
\node{.}\arrow{se,..} \node{.}\\ \node{.}\arrow{ne,..}\arrow{se,..}
\node{B_{1,2}}\arrow{ne}\arrow{se} \node{B_{2,2}}\arrow{ne,..}\arrow{se,..}
\node{\tau^{-}B_{1,2}}\arrow{ne}\arrow{se}
\node{\tau^{-1}B_{2,2}.}\arrow{ne,..}\arrow{se,..}
\node{.}\arrow{ne}\arrow{se} \node{.}\\ \node{.}\arrow{ne}\arrow{see}
\node{.}\arrow{ne,..}\arrow{se,..} \node{B_1}\arrow{ne}\arrow{see}
\node{B_2}\arrow{ne,..}\arrow{se,..}
\node{\tau^{-1}B_1}\arrow{ne}\arrow{see}
\node{\tau^{-1}B_2}\arrow{ne,..}\arrow{se,..} \node{.}\\ \node{.}\arrow{n}
\arrow{ne,..}\arrow{se} \node{} \node{.}\arrow{n}\arrow{ne,..}\arrow{se}
\node{} \node{M}\arrow{ne,..}\arrow{n}\arrow{se} \node{} \node{.}\arrow{n}\\
\node{} \node{.}\arrow{ne}\arrow{se} \node{} \node{B_3}\arrow{ne}\arrow{se}
\node{} \node{\tau^{-}B_3}\arrow{ne}\arrow{se} \node{}\\ \node{.}\arrow{ne}
\node{} \node{B_{3,2}}\arrow{ne} \node{} \node{\tau^{-1}B_{3,2}}\arrow{ne}
\node{} \node{.} \end{diagram}
\end{equation*}
Since we are assuming $Q^{\prime}$ is non Dynkin, the preprojective
component is of the from $(-\mathbb{N}\Delta ^{\prime },\tau )$, with $%
Q^{\prime }=\Delta ^{\prime }$, hence, the section consisting of the three
paths $B_{i,t_{i}}\rightarrow \cdots \rightarrow B_{i,1}=B_{i}\rightarrow
M,\;i=1,2,3$ is isomorphic to $Q^{\prime }$ after a change of orientation.
But for any larger subquiver $Q^{\prime \prime }$ the algebra $KQ^{\prime
\prime }$ will have the same section in its preprojective component as $%
KQ^{\prime }$, which implies $Q$ is a finite quiver, contradicting our
hypothesis.
\end{proof}

These results are used in the next section.

The shape of the Auslander-Reiten components of infinite quivers was found
in an independent way by [BSP].

\section{The Auslander Reiten Components of a regular \newline
Auslander Reiten Component}

In the last section we use the results of the previous section to describe
the Auslander Reiten components of a regular Auslander-Reiten component of a
finite dimensional algebra, to do this, we need the concepts and results of
[MVS1], [MVS2], [MVS3], which we briefly recall here.

Let $\Lambda $ be a finite dimensional algebra over an algebraically closed
field $K$. We denote by $\mathrm{mod}_{\Lambda }$ the category of finitely
generated left $\Lambda $-modules and by $\mathcal{A}_{gr}(\mathrm{mod}%
_{\Lambda })$ the category with the same objects as $\mathrm{mod}_{\Lambda }$
and whose morphisms between $A$ and $B$ in $\mathcal{A}_{gr}(\mathrm{mod}%
_{\Lambda })$ are given by:

\begin{center}
$\mathrm{Hom}_{\mathcal{A}_{gr}(\mathrm{mod}_{\Lambda })}(A,B)=
\coprod_{i\geq 0}\mathrm{rad}_{\Lambda }^{i}(A,B)/\mathrm{rad}_{\Lambda
}^{i+1}(A,B)$.
\end{center}

In [MVS1] the definition of Koszul and weakly Koszul categories was given
and it was proved that $\mathrm{mod}_{\Lambda }$ is weakly Koszul and $%
\mathcal{A}_{gr}(\mathrm{mod}_{\Lambda })$ is Koszul.

Koszul categories are a natural generalization of Koszul algebras and the
main results of Koszul theory extend to Koszul categories in particular we
have Koszul duality.

Next, recall the construction of the Ext-category of a full subcategory of
an abelian category. For an abelian category $\mathcal{D}$ we consider the
Ext-category $E(\mathcal{D}^{\prime })$ of the full subcategory $\mathcal{D}%
^{\prime }$of $\mathcal{D}$. The graded category $E(\mathcal{D}^{\prime })$
has the same objects as $\mathcal{D}^{\prime }$and maps given by:

\begin{equation*}
\mathrm{Hom}_{E(\mathcal{D}^{\prime })}(A,B)=\coprod_{i\geq 0}\mathrm{Ext}_{%
\mathcal{D}}^{i}(A,B)\text{,}
\end{equation*}

for all objects $A$ and $B$ in $E(\mathcal{D}^{\prime })$ .

We recall the application of this construction to $\mathrm{mod}(\mathrm{mod}%
_{\Lambda })$. Let's denote by $\mathcal{C}$ the category $\mathrm{mod}(%
\mathrm{mod}_{\Lambda })$ and by $\mathcal{S(C)}$ the full additive
subcategory of $\mathcal{C}$ generated by the simple functors

\begin{center}
S$_{C}$=Hom$_{\Lambda }$(-, $C$)/rad(-,C):(mod$_{\Lambda }$)$^{op}$,
\end{center}

for all indecomposable objects $C$ in $\mathrm{mod}_{\Lambda }$. The category%
$\mathcal{A}_{gr}(\mathrm{mod}_{\Lambda })$ is graded and according to [IT]
the category of graded functors $Gr(\mathcal{A}_{gr}(\mathrm{mod}_{\Lambda
}))$ has global dimension two. Then the Ext-category $E(\mathcal{S(C)})$ has
the same object as $\mathcal{S(C)}$ and maps:

\begin{equation*}
\mathrm{Hom}_{E(\mathcal{A}_{gr}(\mathrm{mod}_{\Lambda }))}(S_{A},S_{B})=
\coprod_{i\ge 0}\mathrm{Ext}_\mathcal{C}^i(S_{A},S_{B})
\end{equation*}

The category $E(\mathcal{S(C)})$ is locally finite of Loewy length 3.
Moreover, the following theorem was proved in [MVS3]:

\begin{theorem}
Let $C$ be an indecomposable $\Lambda $-module, and consider the
indecomposable projective functor P$_{C}=\mathrm{Hom}_{E(\mathcal{A}_{gr}(%
\mathrm{mod}_{\Lambda }))}(-,S_{C})$ in $Gr(E(\mathcal{A}_{gr}(\mathrm{mod}%
_{\Lambda })))$. Then one of the following statements is true:

\begin{itemize}
\item[(i)] P$_{C}$ is a simple projective, when C is a simple injective $%
\Lambda $-module.

\item[(ii)] P$_{C}$ is a projective of Loewy length 2, when C is a non
simple injective module.

\item[(iii)] P$_{C}$ is a projective injective functor of Loewy length 3,
when C is non injective.
\end{itemize}
\end{theorem}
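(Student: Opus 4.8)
The plan is to recover $P_{C}$ from the minimal projective resolutions of the simple functors $S_{A}$ in $\mathcal{C}=\mathrm{mod}(\mathrm{mod}_{\Lambda})$, which are governed by the almost split sequences of $\mathrm{mod}_{\Lambda}$, and then to read off the three alternatives according to the position of $C$ in the Auslander--Reiten quiver of $\Lambda$. Formally, writing $E$ for $E(\mathcal{S(C)})$, the functor $P_{C}=\mathrm{Hom}_{E}(-,S_{C})$ is representable, hence projective in $\mathrm{Gr}(E)$, and its grading is $P_{C}(S_{A})=\bigoplus_{i\geq 0}\mathrm{Ext}^{i}_{\mathcal{C}}(S_{A},S_{C})$, the degree $i$ summand being $\mathrm{Ext}^{i}_{\mathcal{C}}(S_{A},S_{C})$. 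Since $\mathcal{A}_{gr}(\mathrm{mod}_{\Lambda})$ is Koszul, $E$ is generated in degrees $0$ and $1$, and with $E_{0}$ semisimple this gives $\mathrm{rad}^{j}P_{C}=(P_{C})_{\geq j}$; thus the Loewy length of $P_{C}$ equals $1+\max\{i:\mathrm{Ext}^{i}_{\mathcal{C}}(-,S_{C})\neq 0\}$, and it is $\leq 3$ because $\mathrm{gldim}\,\mathrm{Gr}(\mathcal{A}_{gr}(\mathrm{mod}_{\Lambda}))=2$ forces $\mathrm{Ext}^{i}_{\mathcal{C}}=0$ for $i\geq 3$. Everything therefore reduces to computing $\mathrm{Ext}^{i}_{\mathcal{C}}(S_{A},S_{C})$ for $i=0,1,2$ and $A$ indecomposable.

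For this computation I would use the fundamental exact sequences: for $A$ indecomposable non-projective with almost split sequence $0\to\mathrm{DTr}\,A\to E_{A}\to A\to 0$, the minimal projective resolution of $S_{A}$ in $\mathcal{C}$ is $0\to(-,\mathrm{DTr}\,A)\to(-,E_{A})\to(-,A)\to S_{A}\to 0$; for $A$ projective it is $0\to(-,\mathrm{rad}\,A)\to(-,A)\to S_{A}\to 0$, and $S_{A}=(-,A)$ when $A$ is simple projective. Applying $\mathrm{Hom}_{\mathcal{C}}(-,S_{C})$ and Yoneda's Lemma --- which identifies $\mathrm{Hom}_{\mathcal{C}}((-,M),S_{C})$ with $S_{C}(M)=\mathrm{Hom}_{\Lambda}(M,C)/\mathrm{rad}_{\Lambda}(M,C)$, a direct sum of $(\text{multiplicity of }C\text{ as a summand of }M)$ copies of the division ring $\mathrm{End}_{\Lambda}(C)/\mathrm{rad}$ --- and using that the connecting maps $S_{C}(E_{A})\to S_{C}(\mathrm{DTr}\,A)$ vanish because $\mathrm{DTr}\,A\to E_{A}$ is a radical morphism, one obtains: $\mathrm{Ext}^{0}_{\mathcal{C}}(S_{A},S_{C})\neq 0$ iff $A\cong C$; $\mathrm{Ext}^{1}_{\mathcal{C}}(S_{A},S_{C})$ is the space of irreducible maps $C\to A$ (i.e. the multiplicity of $C$ in $E_{A}$, resp. in $\mathrm{rad}\,A$); and $\mathrm{Ext}^{2}_{\mathcal{C}}(S_{A},S_{C})\neq 0$ if and only if $C$ is non-injective and $A\cong\mathrm{TrD}\,C$, in which case it is isomorphic to $\mathrm{End}_{\Lambda}(C)/\mathrm{rad}$. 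Here one uses that $\mathrm{DTr}$ is a bijection from non-projective to non-injective indecomposables, so $\mathrm{DTr}\,A\cong C$ is impossible when $C$ is injective.

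The three cases then follow. If $C$ is simple and injective, a nonzero irreducible map $C\to A$ would be a radical monomorphism out of a simple injective module and hence split, a contradiction; so $\mathrm{Ext}^{1}_{\mathcal{C}}(-,S_{C})=0$, and $\mathrm{Ext}^{2}_{\mathcal{C}}(-,S_{C})=0$ since $C$ is injective, whence $P_{C}(S_{A})=\mathrm{Hom}_{\mathcal{C}}(S_{A},S_{C})$ and $P_{C}$ is the simple functor supported at the object $S_{C}$, which is projective --- this is (i). If $C$ is injective but not simple, again $\mathrm{Ext}^{2}_{\mathcal{C}}(-,S_{C})=0$, while the minimal left almost split map $C\to C/\mathrm{soc}\,C$ is nonzero and so yields an irreducible map out of $C$, hence $\mathrm{Ext}^{1}_{\mathcal{C}}(S_{A},S_{C})\neq 0$ for some $A$; thus $P_{C}$ is projective of Loewy length $2$, which is (ii). If $C$ is non-injective then $\mathrm{Ext}^{2}_{\mathcal{C}}(S_{\mathrm{TrD}\,C},S_{C})\neq 0$, so $P_{C}$ has Loewy length $3$ and its degree $2$ part is the simple functor supported at $S_{\mathrm{TrD}\,C}$. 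To see that $P_{C}$ is also injective I would compare it with the indecomposable injective $I_{\mathrm{TrD}\,C}=D\,\mathrm{Hom}_{E}(S_{\mathrm{TrD}\,C},-)$ of $\mathrm{Gr}(E)$: the analogous computation using the minimal projective resolution of $S_{\mathrm{TrD}\,C}$ shows that $P_{C}$ and $I_{\mathrm{TrD}\,C}$ have the same dimension at every $S_{A}$ --- both are supported on $\{C\}\cup\{A:\text{there is an arrow }C\to A\}\cup\{\mathrm{TrD}\,C\}$, using the bijection between arrows $C\to A$ and arrows $A\to\mathrm{TrD}\,C$ given by the mesh at $\mathrm{TrD}\,C$ --- while $\mathrm{Ext}^{2}_{\mathcal{C}}(S_{\mathrm{TrD}\,C},S_{C})\neq 0$ together with Yoneda's Lemma produces a nonzero morphism $P_{C}\to I_{\mathrm{TrD}\,C}$ (up to a shift of the grading), which is then forced to be an isomorphism; so $P_{C}$ is projective injective of Loewy length $3$, which is (iii).

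The main obstacle is exactly the last point: proving that the comparison morphism $P_{C}\to I_{\mathrm{TrD}\,C}$ is an isomorphism, equivalently that $\mathrm{soc}(P_{C})$ is the simple functor at $S_{\mathrm{TrD}\,C}$ sitting in degree $2$. This is where the Koszulity of $\mathcal{A}_{gr}(\mathrm{mod}_{\Lambda})$ --- concretely, the mesh relations of the Auslander--Reiten quiver together with the one-dimensionality of $\mathrm{Ext}^{2}_{\mathcal{C}}(S_{\mathrm{TrD}\,C},S_{C})$ --- is really used: it guarantees that every degree one generator of $\mathrm{rad}(P_{C})$ composes nontrivially into degree $2$, so that the kernel of the comparison map, being graded with zero components in degrees $0$ and $2$, must vanish, after which the dimension count finishes the argument. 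Some extra care is needed for degenerate configurations (such as $\mathrm{TrD}\,C\cong C$, or loops and multiple arrows at the relevant vertices), which are handled by the direct cohomology computation rather than by the generic count of arrows.
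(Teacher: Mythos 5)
Before anything else: the present paper does not prove this theorem. It is stated with the attribution ``the following theorem was proved in [MVS3],'' so there is no argument in the text to compare against; I can only assess your reconstruction on its own merits.

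Your computation is correct and is the natural functor-theoretic route. Writing $P_C(S_A)=\coprod_{i\ge 0}\mathrm{Ext}^i_{\mathcal{C}}(S_A,S_C)$ and reading off the $\mathrm{Ext}$ groups from the minimal projective resolution of $S_A$ in $\mathcal{C}=\mathrm{mod}(\mathrm{mod}_\Lambda)$ --- the almost split sequence $0\to\mathrm{DTr}A\to E_A\to A\to 0$ when $A$ is non-projective, $\mathrm{rad}A\hookrightarrow A$ when $A$ is projective --- and then observing that all maps in $\mathrm{Hom}_{\mathcal C}(-,S_C)$ applied to this resolution vanish, because the resolution morphisms are radical and $S_C$ kills radical morphisms, is exactly right. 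This gives $\mathrm{Ext}^0\neq 0$ iff $A\cong C$, $\dim\mathrm{Ext}^1$ equals the multiplicity of $C$ in $E_A$ (resp.\ $\mathrm{rad}A$), and $\mathrm{Ext}^2\neq 0$ iff $A\cong\mathrm{TrD}C$ with $C$ non-injective; together with the identification of the radical filtration with the degree filtration (from $E_0$ semisimple and $E$ generated in degree one), this yields cases (i), (ii) and the Loewy-length-three claim in (iii) immediately.

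The only genuine gap --- which you yourself flag --- is the injectivity of $P_C$ in case (iii). You correctly reduce it to showing that $\mathrm{soc}(P_C)$ is the single simple $S_{\mathrm{TrD}C}$ in degree two, i.e.\ to the non-degeneracy of the Yoneda pairing $\mathrm{Ext}^1_{\mathcal{C}}(S_{\mathrm{TrD}C},S_A)\otimes\mathrm{Ext}^1_{\mathcal{C}}(S_A,S_C)\to\mathrm{Ext}^2_{\mathcal{C}}(S_{\mathrm{TrD}C},S_C)$, and you note the dimension count matching $P_C$ with $I_{\mathrm{TrD}C}$ via the translation-quiver bijection between arrows $C\to A$ and arrows $A\to\mathrm{TrD}C$. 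But non-degeneracy is asserted to follow ``from Koszulity and the mesh relations'' rather than shown; a complete proof must exhibit a nonzero product explicitly, e.g.\ by lifting the $\mathrm{Ext}^1$ classes along the minimal resolutions and tracing through the almost split sequence at $\mathrm{TrD}C$. That single verification is the content [MVS3] supplies and the step that your sketch leaves open. Everything else is correct.
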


Let $\mathcal{K}$ be a fixed component of the Auslander-Reiten quiver of $%
\Lambda $, $Agr(\mathcal{K}\emph{)}$ is the full subcategory of $\mathcal{A}%
_{gr}(\mathrm{mod}_{\Lambda })$ generated by objects in $\mathcal{K}$\emph{\
}the corresponding Ext-category $E(Agr(\mathcal{K}\emph{))}$ is equivalent
to $E(\mathcal{K}\emph{).}$ The full subcategory gr$_{\mathcal{PI}}(E(%
\mathcal{K}\emph{)}^{op})$ of gr($E(\mathcal{K}\emph{)}^{op})$ consisting of
objects in gr($E(\mathcal{K}\emph{)}^{op})$ without projective-injective
summands it is equivalent to a radical square zero category. Radical square
zero categories are stably equivalent to hereditary\textsl{\ }$K$%
-categories. The stable equivalence is obtained in a way similar to the
radical square zero artin algebras.

\begin{proposition}
\lbrack MVS3]Let $\mathcal{K}$ be the additive closure of a connected
component of the Auslander-Reiten quiver of a finite dimensional algebra $%
\Lambda $ over an algebraically closed field \textrm{K}\textsl{\ }.

\begin{itemize}
\item[(a)] The category gr($E(\mathcal{K}\emph{)}^{op}/$rad$^{2})\cong $gr$%
(Agr(\mathcal{K}\emph{)}$/rad$^{2})$ is stably equivalent to gr($\mathcal{H}%
) $, where $\mathcal{H}$ is a disjoint union of hereditary full
subcategories corresponding to the sections of the Auslander-Reiten
component of $\mathcal{K}$ given by $\mathcal{H}_{C}$ for an indecomposable
module $C$ in $\mathcal{K}$.

\item[(b)] Each subcategory gr($\mathcal{H}_{C})$ is equivalent to the
category of representations of the separated quiver of the component $%
\mathcal{K}$ of the Ausalnder-Reiten quiver at C.
\end{itemize}
\end{proposition}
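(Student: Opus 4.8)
The plan is to reduce the proposition to the classical fact that a radical-square-zero algebra is stably equivalent, via its separated quiver, to a hereditary algebra, and then transport that argument into the setting of ``rings with several objects''. First I would dispose of the identification $\mathrm{gr}(E(\mathcal{K})^{op}/\mathrm{rad}^2)\cong\mathrm{gr}(Agr(\mathcal{K})/\mathrm{rad}^2)$. Both $E(\mathcal{K})^{op}/\mathrm{rad}^2$ and $Agr(\mathcal{K})/\mathrm{rad}^2$ are graded $K$-categories concentrated in degrees $0$ and $1$, with degree-zero part the semisimple category $\mathcal{K}/\mathrm{rad}_{\mathcal{K}}$. The degree-one part of $Agr(\mathcal{K})/\mathrm{rad}^2$ is $\mathrm{rad}_{\mathcal{K}}/\mathrm{rad}^2_{\mathcal{K}}$, whose nonzero components $\mathrm{rad}(C,D)/\mathrm{rad}^2(C,D)$ record the irreducible maps $C\to D$, i.e.\ the arrows of the Auslander--Reiten quiver of $\mathcal{K}$. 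On the other side, the computation from the projective cover $0\to\mathrm{rad}(\;\;,C)\to(\;\;,C)\to S_C\to 0$ shows $\mathrm{Ext}^1_{\mathcal{C}}(S_C,S_D)\neq 0$ precisely when there is an arrow $D\to C$ of the same quiver, so after applying $(-)^{op}$ the two radical-square-zero categories have the same Gabriel quiver; since such a category is the path category of its Gabriel quiver with zero composition (here $K$ is algebraically closed and all multiplicities are one), the isomorphism follows. This is the $\le 1$-degree part of the Koszul/quadratic-duality statement of [MVS1]. It therefore suffices to prove the proposition for $\Gamma:=Agr(\mathcal{K})/\mathrm{rad}^2$, which is a $\mathrm{Hom}$-finite, Krull--Schmidt $K$-category with $\mathrm{rad}^2_{\Gamma}=0$ admitting projective covers by Lemma \ref{cap1.22}.

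Next I would build the separated category. Let $\mathcal{Q}$ be the Gabriel quiver of $\Gamma$ (the Auslander--Reiten quiver of $\mathcal{K}$), and let $\mathcal{Q}^s$ be its separated quiver: two vertices $C',C''$ for every indecomposable $C$ of $\mathcal{K}$ and one arrow $C'\to D''$ for every arrow $C\to D$ of $\mathcal{Q}$. Put $\mathcal{H}=K\mathcal{Q}^s$; equivalently, $\mathcal{H}$ is the ``triangular'' category with $\mathcal{H}(C',D')=\mathcal{H}(C'',D'')=(\Gamma/\mathrm{rad}_{\Gamma})(C,D)$, $\mathcal{H}(C',D'')=\mathrm{rad}_{\Gamma}(C,D)$ and $\mathcal{H}(C'',D')=0$, graded with the diagonal in degree $0$ and the off-diagonal in degree $1$. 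Since $\mathcal{Q}^s$ is bipartite, it has no oriented cycles, so $\mathrm{gr}(\mathcal{H})$ has global dimension one. Because $\mathcal{Q}$ is a translation quiver (the mesh relations prevent detours), $\mathcal{Q}^s$ splits into connected components: the component of $C'$ spans a full subcategory $\mathcal{H}_C$, which is by construction a path category of a finite quiver and which one checks realizes the separated quiver of $\mathcal{K}$ at $C$. Hence $\mathcal{H}=\coprod_C\mathcal{H}_C$ with each $\mathrm{gr}(\mathcal{H}_C)$ equivalent to the category of representations of that quiver; this proves the second half of part~(a) and all of part~(b).

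Finally I would establish the stable equivalence $\underline{\mathrm{gr}}(\Gamma)\simeq\underline{\mathrm{gr}}(\mathcal{H})$ by adapting the radical-square-zero argument of [ARS] to the many-object case. Define $\Phi\colon\mathrm{gr}(\Gamma)\to\mathrm{gr}(\mathcal{H})$ by sending a finitely presented $\Gamma$-module $M$ to the representation with $\Phi(M)(C')=(M/\mathrm{rad}\,M)(C)$, $\Phi(M)(C'')=(\mathrm{rad}\,M)(C)$, and off-diagonal structure maps induced by the $\mathrm{rad}_{\Gamma}$-action on $M$; this is well defined because $\mathrm{rad}_{\Gamma}(\mathrm{rad}\,M)\subseteq\mathrm{rad}^2 M=0$, using $\mathrm{rad}^2_{\Gamma}=0$. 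One checks that $\Phi$ carries each representable $(\;\;,C)_{\Gamma}$ to the projective--injective $\mathcal{H}$-module supported on $\mathcal{H}_C$, so $\Phi$ descends to a functor $\underline{\Phi}$ of stable categories (modulo projectives), and these projective--injectives are exactly the summands discarded in passing to $\mathrm{gr}_{\mathcal{PI}}$. A quasi-inverse $\Psi$ is assembled in the standard way: from $(V',V'',\varphi)$ one takes minimal projective covers over $\mathcal{K}/\mathrm{rad}_{\mathcal{K}}$, lifts $\varphi$, and forms the $\Gamma$-module with top $V'$ and radical the image of $\varphi$; the proof that $\underline{\Psi}\,\underline{\Phi}\cong\mathrm{id}$ and $\underline{\Phi}\,\underline{\Psi}\cong\mathrm{id}$ is the same diagram chase as in the finite-dimensional case, legitimate here because $\Gamma$ and $\mathcal{H}$ admit minimal projective presentations (Lemma \ref{cap1.22}). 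I expect this last paragraph to carry the real weight: transcribing the radical-square-zero stable-equivalence bijection — in particular the well-definedness of $\Phi$ and $\Psi$ on the stable categories, the bookkeeping of projective--injective summands, and keeping everything $\mathbb{Z}$-graded — to functor categories. The earlier steps (the quadratic-duality identification and the heredity and decomposition of $\mathcal{H}$) are essentially formal by comparison.
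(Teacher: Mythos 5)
The paper does not prove this proposition; it records it as a result imported from \textrm{[MVS3]} (the bracketed tag at the head of the statement is a citation, not a proof). There is therefore no internal proof to compare against, and the question is only whether your sketch is sound as a reconstruction of the argument in \textrm{[MVS3]}.

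Your overall strategy — identify the two radical-square-zero quotients via the degree-$\leq 1$ part of Koszul duality, form the separated quiver, and run the Auslander--Reiten radical-square-zero stable equivalence in the many-object setting — is exactly the right one, and it is the approach \textrm{[MVS3]} takes. A few points need tightening. First, the claim that each $\mathcal{H}_C$ is ``a path category of a finite quiver'' is false in general: when $\mathcal{K}$ is a regular component (which is the case of principal interest later in the paper), the sections are infinite quivers of type $A_\infty$, $A_\infty^\infty$, or $D_\infty$; the paper's follow-up theorem even says explicitly ``if $\mathcal{H}_i$ is finite, then it is non-Dynkin.'' Second, your justification for the decomposition of $\mathcal{Q}^s$ into the $\mathcal{H}_C$ — ``the mesh relations prevent detours'' — is not quite the point: once you have passed to $Agr(\mathcal{K})/\mathrm{rad}^2$ the mesh relations are gone, and the separated quiver only sees the arrows, not the relations. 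What is actually being used is the combinatorics of a stable translation quiver: alternating the primed/double-primed labelling along arrows of $\mathcal{Q}$ partitions the separated quiver into connected pieces, each of which is a section of $\mathcal{K}$ with alternating orientation (hence the parenthetical claim in the paper's next theorem that $\mathcal{H}_i$ and $\mathcal{H}_{i+1}$ are opposite). You should say this directly rather than invoking meshes. Third, the stable equivalence $\Phi$, $\Psi$ you describe is the correct ARS construction, but to get an honest equivalence of stable categories one has to treat the nodes (simple modules that are both projective and injective over $\mathcal{H}$): the paper explicitly records that in this situation ``all simple are nodes'' and that each node corresponds to a pair of a simple projective and a simple injective. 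Without noticing this, $\underline{\Phi}$ is not quite an equivalence — it is only an equivalence after the appropriate bookkeeping at the nodes, which is also where ``almost split sequences with $\mathrm{DTr}M$ non simple are preserved'' but the ones with simple ends are not. These are exactly the places where the classical finite-dimensional proof needs real modification in the functor-category setting, and your sketch glosses over them.
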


For a regular component we have he following:

\begin{theorem}
\lbrack MVS3] Let $\mathcal{K}$ be a connected regular component of the
Auslander-Reiten quiver of a finite dimensional algebra $\Lambda $ over an
algebraically closed field \textrm{K}. Then

\begin{itemize}
\item[(a)] The category gr($E(\mathcal{K}\emph{)}^{op}/$rad$^{2})\cong $gr$%
(Agr(\mathcal{K}\emph{)}$/rad$^{2})$ is stably equivalent to gr($\mathcal{H}%
) $, where $\mathcal{H}$ is a disjoint union $\underset{i\in Z}{\cup }%
\mathcal{H}_{i}$ of hereditary full subcategories corresponding to the
sections of the Auslander-Reiten component of $\mathcal{K}$

\item[(b)] The categories $\mathcal{H}_{i}$, $\mathcal{H}_{i+2}$ are
equivalent and the categories $\mathcal{H}_{i}$, $\mathcal{H}_{i+1}$ are
opposite categories for all i.

\item[(c)] If $\mathcal{H}_{i}$ is finite, then it is non-Dynkin.
\end{itemize}
\end{theorem}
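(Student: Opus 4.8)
The plan is to derive all three assertions from the general Proposition of [MVS3] recalled just above, which for an arbitrary component already produces a stable equivalence between $\mathrm{gr}(E(\mathcal{K})^{op}/\mathrm{rad}^{2})$ and $\mathrm{gr}(\mathcal{H})$, with $\mathcal{H}$ a disjoint union of hereditary full subcategories, one per section, each $\mathrm{gr}(\mathcal{H}_{C})$ being the category of representations of the separated quiver of $\mathcal{K}$ at $C$; the extra inputs needed are the trichotomy theorem for the projective functors $P_{C}$ and the analysis of regular components from Section~3. For (a), the only point beyond the general Proposition is the identification of the index set with $\mathbb{Z}$. Since $\mathcal{K}$ is regular it contains neither a projective nor an injective $\Lambda$-module, so by the trichotomy theorem every $P_{C}$ with $C\in\mathcal{K}$ is projective-injective of Loewy length $3$; consequently $E(\mathcal{K})^{op}/\mathrm{rad}^{2}$ is a $\mathbb{Z}$-graded category with semisimple degree-zero part and with degree-one part lying strictly between consecutive graded levels, and it has no exceptional vertices. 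Its category of graded modules is therefore a disjoint union, over the graded levels $i\in\mathbb{Z}$, of the module categories of hereditary categories $\mathcal{H}_{i}$, where $\mathcal{H}_{i}$ is the hereditary category attached by the general Proposition to the section of $\mathcal{K}$ sitting at level $i$; the disjointness and hereditariness of the $\mathcal{H}_{i}$ are exactly what the Proposition gives, and this proves (a).

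For (b) I would exploit the self-equivalence induced by the Auslander-Reiten translation. Since $\mathcal{K}$ is regular, $\tau$ is an automorphism of the translation quiver $\mathcal{K}$ which preserves irreducible maps and the mesh relations, so it induces a graded self-equivalence of $E(\mathcal{K})^{op}/\mathrm{rad}^{2}$ compatible with the radical filtration; on the separated quiver it carries the section at level $i$ to the section at level $i+1$. The crucial observation is that, by the shape of a mesh, the passage from the section at level $i$ to the one at level $i+1$ interchanges the arrows \emph{into} a vertex of $\mathcal{K}$ with the arrows \emph{out of} its $\tau$-translate, so that every arrow of the separated quiver gets reversed; hence $\mathcal{H}_{i+1}\cong\mathcal{H}_{i}^{op}$, and applying $\tau$ twice gives $\mathcal{H}_{i+2}\cong(\mathcal{H}_{i}^{op})^{op}=\mathcal{H}_{i}$, which is (b).

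For (c), suppose some $\mathcal{H}_{i}$ is finite; by (b) every $\mathcal{H}_{j}$ is then finite and, being a finite locally finite hereditary $K$-category, is of the form $kQ'$ for a finite quiver $Q'$. If $Q'$ were Dynkin then $\mathrm{mod}(kQ')$ would be representation-finite, so $\mathrm{gr}(\mathcal{H}_{i})$, and hence $\mathrm{gr}(E(\mathcal{K})^{op}/\mathrm{rad}^{2})$ through the stable equivalence of (a), would possess an Auslander-Reiten component of the finite-type shape $\mathbb{Z}Q'$; transporting back, the component $\mathcal{K}$ would then be of this shape. But a regular Auslander-Reiten component of a finite-dimensional algebra is never of the form $\mathbb{Z}\Delta$ with $\Delta$ a finite Dynkin diagram, by the structure of regular components recalled in Section~3 (regular components of locally finite quivers being of type $A_{\infty}$), a contradiction. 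Hence no $\mathcal{H}_{i}$ is finite Dynkin, proving (c). The main obstacle is the comparison carried out in (b): checking that the graded separated-quiver description of $\mathcal{H}_{i+1}$ really is the opposite of that of $\mathcal{H}_{i}$ forces one to unwind the Koszul-dual and Ext-category constructions of [MVS1]--[MVS3] that are only quoted here, keeping careful track of how the Ext-grading meets the mesh relations; once this is established, (a) is bookkeeping on the general Proposition and (c) is formal.
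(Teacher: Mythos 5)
This theorem is quoted verbatim from [MVS3]; the present paper gives no proof, only the citation, so there is no internal argument to compare yours against. Assessing the reconstruction on its own: part (a) is reasonable once one notes that regularity of $\mathcal{K}$ puts every $P_{C}$ in case (iii) of the trichotomy theorem, so the graded levels of $E(\mathcal{K})^{op}/\mathrm{rad}^{2}$ line up uniformly and the index set becomes $\mathbb{Z}$. For (b) you correctly identify that the mesh relations interchange arrows into a vertex with arrows out of its translate, which is the source of the ``opposite'' phenomenon, and you candidly flag that making this precise requires unwinding the Ext-category and Koszul-dual constructions that the paper only quotes; I agree that this is where the real work lies, and your sketch, while plausible, would not survive as a proof without it.

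Part (c) contains a genuine gap. You argue that if $Q'$ were Dynkin then $\mathrm{gr}(\mathcal{H}_{i})$ would ``possess an Auslander--Reiten component of the finite-type shape $\mathbb{Z}Q'$'' and that ``transporting back, the component $\mathcal{K}$ would then be of this shape.'' Both steps misfire. First, $\mathrm{mod}(kQ')$ representation-finite means its AR quiver is \emph{finite}, not of shape $\mathbb{Z}Q'$. Second, and more seriously, the stable equivalence of (a) relates $\mathrm{gr}\bigl(E(\mathcal{K})^{op}/\mathrm{rad}^{2}\bigr)$ to $\mathrm{gr}(\mathcal{H})$; it does not give a way to ``transport back'' to the translation quiver $\mathcal{K}$ sitting inside $\mathrm{mod}_{\Lambda}$, so no conclusion about the shape of $\mathcal{K}$ follows from it. Your appeal to the Section~3 result that regular components of locally finite infinite quivers are of type $A_{\infty}$ is also misplaced: that statement concerns $KQ$ for $Q$ infinite, whereas the $Q'$ in your argument is a finite Dynkin quiver, for which $kQ'$ is representation-finite and has no regular components at all, so the cited result does not speak to the situation. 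The correct route to (c) is more direct: if the section of $\mathcal{K}$ (equivalently, its tree class, which is what $\mathcal{H}_{i}$ encodes) were finite Dynkin, then a Riedtmann / Happel--Preiser--Ringel type theorem on additive functions on stable translation quivers forces $\Lambda$ to be representation-finite, in which case the AR quiver of $\Lambda$ is a single finite component containing the projectives, contradicting the assumption that $\mathcal{K}$ is a regular component.
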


It was also proved in [MVS3] (See also [MVS2] ) that a regular component $%
\mathcal{K}$ the category $Agr(\mathcal{K}\emph{)}$ is Artin-Schelter
regular, its Ext-category $E(\mathcal{K}\emph{)}$ is locally finite,
Frobenius of radical cube zero. We will have a situation very similar to the
preprojective algebra and we can use the same line of ideas as in [MV] to
prove the following:

\begin{theorem}
The following statements hold:

\begin{itemize}
\item[(1)] $E(\mathcal{K}\emph{)}$ is selfinjective of radical cube zero,
for any indecomposable object \ \ $M$ in gr$_{\mathcal{PI}}(E(\mathcal{K}%
\emph{)}^{op})$ generated in degree zero $\Omega (M)$ is either simple
generated in degree 2 or it is generated in degree one.

\item[(2)] The indecomposable non Koszul objects in gr$(E(\mathcal{K}\emph{)}%
^{op})$ are the objects $M$ such that for some integer $n$, the n-th syzygy $%
\Omega ^{n}(M)$ is simple.

\item[(3)] For any indecomposable non projective object $M$ the
Ausalnder-Reiten translation is $\Omega ^{2}(\mathcal{N(}M))$, with $%
\mathcal{N}$ the Nakayama equivalence. Hence for $n=2k$ the n-th syzygy $%
\Omega ^{n}(M)$ is simple, if and only if, Dtr$^{k}$M is simple and for $%
n=2k+1$, the syzygy $\Omega ^{n}(M)$ is simple, if and only if , Dtr$^{k-1}$%
M is P/socP, with $P$ an indecomposable projective.

\item[(4)] gr$_{\mathcal{PI}}(E(\mathcal{K}\emph{)}^{op})$ is radical square
zero. It is stably equivalent to the disjoint union$\underset{i\in Z}{\cup }%
\mathcal{H}_{i}$ of hereditary full subcategories corresponding to the
sections of the Auslander-Reiten component of $\mathcal{K}$ and the
categories $\mathcal{H}_{i}$, $\mathcal{H}_{i+2}$ are equivalent and the
categories $\mathcal{H}_{i}$, $\mathcal{H}_{i+1}$ are opposite categories
for all i, the functor producing the stable equivalence sends all the almost
split sequences $0\rightarrow DtrM\rightarrow E\rightarrow M\rightarrow 0$
with $DtrM$ non simple, to almost split sequences and nodes (in this case
all simple are nodes) correspond to two simple objects, one projective and
one injective.

\item[(5)] The Auslander-Reiten components of $E(\mathcal{K}\emph{)}$
containing the projective objects are built by gluing for each i, a
preprojective component of $\mathcal{H}_{i}$ with a preinjective component
of $\mathcal{H}_{i}$ by identifying the simple projective with the
corresponding simple injective objects and adding projective injective
objects to recover the almost split sequences with a projective injective in
the middle term.

\item[(6)] All the regular components are of type A$_{\infty }$ .

\item[(7)] The objects in the preinjective and regular components are Koszul.

\item[(8)] The objects in the preprojective and regular components are co
Koszul.

\item[(9)] The category of indecomposable non projective Koszul object in $E(%
\mathcal{K}\emph{)}$ has almost split sequences to the left and they are
almost split in the whole category gr$(E(\mathcal{K}\emph{)}^{op})$.
\end{itemize}
\end{theorem}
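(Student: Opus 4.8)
The plan is to follow the strategy of [MV] for the preprojective algebra, taking as input the structural results on $E(\mathcal{K})$ recalled above: $Agr(\mathcal{K})$ is Artin--Schelter regular and its Koszul dual $E(\mathcal{K})$ is locally finite, selfinjective (Frobenius) with $\mathrm{rad}^{3}E(\mathcal{K})=0$. This already gives the first assertion of (1). For the syzygy analysis I would examine the minimal projective resolution of an indecomposable $M$ in $\mathrm{gr}_{\mathcal{PI}}(E(\mathcal{K})^{op})$ generated in degree zero. Since $\mathrm{rad}^{3}=0$, the projective cover $P(M)$ has radical layers concentrated in degrees $0,1,2$, so $\Omega(M)$ is generated in degrees $1$ and $2$; a local computation with the mesh/separated-quiver data of $\mathcal{K}$ shows that the degree-$2$ part, when non-zero, is a sum of simples placed in degree $2$. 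Splitting off that simple summand (if present) yields the dichotomy in (1). Iterating this, I would prove (2): linearity of the resolution, i.e. Koszulity of $M$, fails precisely at the step where a degree-$2$ simple first appears, and once it appears the subsequent syzygies are forced to be simple, while an object whose resolution never produces such a simple is Koszul.

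For (3) I would invoke the standard identity $\mathrm{DTr}=\Omega^{2}\circ\mathcal{N}$ for selfinjective (here locally finite $K$-) categories, transported to $\mathrm{gr}(E(\mathcal{K})^{op})$; combined with the fact, recalled above, that under Koszul duality the grading shift corresponds to $\mathrm{DTr}$ on the module side, this converts ``$\Omega^{2k}(M)$ simple'' into ``$\mathrm{DTr}^{k}$ of the corresponding $\Lambda$-module is simple'' and the odd case into ``$\mathrm{DTr}^{k-1}$ is of the form $P/\mathrm{soc}\,P$''. Assertion (4) is then the Koszul-dual packaging of the theorem on $P_{C}$ (Loewy length $\le 3$, projective--injective exactly when $C$ is non-injective) together with the [MVS3] proposition identifying $\mathrm{gr}(E(\mathcal{K})^{op}/\mathrm{rad}^{2})$ with $\mathrm{gr}(\mathcal{H})$, $\mathcal{H}=\bigcup_{i\in\mathbb{Z}}\mathcal{H}_{i}$; the periodicities $\mathcal{H}_{i}\cong\mathcal{H}_{i+2}$ and $\mathcal{H}_{i+1}\cong\mathcal{H}_{i}^{op}$ are quoted from the same source, and all simples being nodes (one projective, one injective each) is read off from the description of $P_{C}$.

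Next I would derive the Auslander--Reiten statements. For (6): each $\mathcal{H}_{i}$ is a hereditary, $\mathrm{Hom}$-finite, dualizing, Krull--Schmidt $K$-category whose underlying quiver is locally finite and, by part (c) of the recalled theorem, either finite non-Dynkin or infinite; hence by the results of Section 3 the regular components of each $\mathrm{gr}(\mathcal{H}_{i})$ are of type $A_{\infty}$, and the stable equivalence $\mathrm{gr}_{\mathcal{PI}}(E(\mathcal{K})^{op})\sim\mathrm{gr}(\mathcal{H})$ carries almost split sequences to almost split sequences away from the nodes, so the regular components of $E(\mathcal{K})$ are $A_{\infty}$ as well. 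Statement (5) is the explicit reconstruction of the non-regular components from the preprojective and preinjective components of the $\mathcal{H}_{i}$, glued along the node simples and augmented by the projective--injective functors needed to restore the meshes having a projective--injective middle term, exactly the radical-square-zero gluing recipe used for selfinjective radical-square-zero algebras. For (7) and (8): an object in a preinjective or regular component of $\mathcal{H}_{i}$ never has a simple syzygy in its minimal resolution (its $\mathrm{DTr}$-orbit never reaches a simple), so by (2) it is Koszul; dualizing via $\mathcal{H}_{i}^{op}\cong\mathcal{H}_{i+1}$, objects in preprojective or regular components are co-Koszul. Finally (9) follows because the non-projective Koszul objects are precisely those lying off the ``simple-syzygy locus'', a subcategory closed under the relevant deflations, so the almost split sequences ending in such an object in $\mathrm{gr}(E(\mathcal{K})^{op})$ have all terms Koszul and are therefore already almost split inside that subcategory, as in [MV].

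The main obstacle I anticipate is the bookkeeping in steps (1)--(2): controlling the degree of generation of iterated syzygies in the radical-cube-zero selfinjective category and pinning down exactly when the degree-$2$ simple summand is forced to appear, since this is what ties Koszulity to the combinatorics of the $\mathrm{DTr}$-orbits in $\mathcal{K}$. A secondary difficulty is making the transport of almost split sequences through the stable equivalence rigorous in the presence of infinitely many, possibly infinite, hereditary pieces $\mathcal{H}_{i}$ and of the projective--injective functors that the stable category discards.
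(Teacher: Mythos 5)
The paper itself provides no proof of this theorem: it merely remarks that the situation is ``very similar to the preprojective algebra'' and that one ``can use the same line of ideas as in [MV],'' then states the result, relying on the structural facts recalled from [MVS1], [MVS2], [MVS3] (that $Agr(\mathcal{K})$ is Artin--Schelter regular, that $E(\mathcal{K})$ is locally finite, selfinjective of radical cube zero, and that $\mathrm{gr}_{\mathcal{PI}}(E(\mathcal{K})^{op})$ is stably equivalent to $\mathrm{gr}(\bigcup_{i}\mathcal{H}_{i})$ with the stated periodicities). Your proposal identifies exactly these ingredients and lays out the [MV]-style argument---minimal resolution/syzygy bookkeeping in the radical-cube-zero selfinjective category for (1)--(2) and (7)--(8), the identity $\mathrm{DTr}=\Omega^{2}\circ\mathcal{N}$ for (3), the [MVS3] stable-equivalence proposition and the Loewy-length-$\le 3$ description of the $P_{C}$ for (4)--(5), the Section 3 results on locally finite quivers for (6), and closure of the Koszul locus under the relevant operations for (9)---so you are following the same route the paper intends, fleshed out rather than compressed into a citation.

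One caution you should make explicit if you write this up in full: in your argument for (6) you allow the sections $\mathcal{H}_{i}$ to be finite non-Dynkin, and the Section~3 theorem ($\alpha(M)\le 2$, hence regular components of type $A_{\infty}$) is proved there only for connected \emph{locally finite infinite} quivers. For the finite non-Dynkin case you would need to invoke instead the classical results for finite hereditary algebras (regular components of tame or wild hereditary algebras are stable tubes $\mathbb{Z}A_{\infty}/\langle\tau^{n}\rangle$ or $\mathbb{Z}A_{\infty}$, which are still of tree type $A_{\infty}$); alternatively, restrict attention to the cases $\mathcal{K}$ of type $A_{\infty}$, $A_{\infty}^{\infty}$, $D_{\infty}$, which is in fact what the corollary that follows the theorem does, and in those cases the $\mathcal{H}_{i}$ are infinite Dynkin so Section~3 applies verbatim. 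This is a presentational gap rather than a substantive one, but since the paper leaves the entire proof implicit, you are the one who has to close it.
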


\begin{corollary} The following statements hold:
\begin{itemize}

\item[(a)] If $\mathcal{K}$ is of type A$_{\infty }$ it is a tube, then $E(%
\mathcal{K}\emph{)}$ has a finite number of preprojective components and it
has no regular component.

\item[(b)] If $\mathcal{K}$ is of type A$_{\infty }$ and it is not a tube,
then $E(\mathcal{K}\emph{)}$ has a countable number of preprojective
components and it has no regular component.

\item[(c)] If $\mathcal{K}$ is of type D$_{\infty }$ , then $E(\mathcal{K}%
\emph{)}$ has a countable number of preprojective components and it has no
regular component.

\item[(b)] If $\mathcal{K}$ is of type A$_{\infty }^{\infty }$, then it has
a countable number of preprojective components and a countable number of
regular components of type A$_{\infty }$
\end{itemize}
\end{corollary}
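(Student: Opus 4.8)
The plan is to derive this corollary entirely from parts (4), (5), (6) of the preceding theorem, together with the description of the Auslander--Reiten quivers of hereditary categories of locally finite infinite quivers obtained in Section 3 (the theorem computing $\Gamma(A_\infty)$, $\Gamma(D_\infty)$, $\Gamma(A_\infty^\infty)$).

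First I would assemble the structural input. By (4) the category $\mathrm{gr}_{\mathcal{PI}}(E(\mathcal{K})^{op})$ is radical square zero and stably equivalent to $\bigcup_{i\in\mathbb{Z}}\mathcal{H}_i$, where $\mathcal{H}_i\cong\mathcal{H}_{i+2}$, $\mathcal{H}_{i+1}\cong\mathcal{H}_i^{op}$, and $\mathcal{H}_i$ is the hereditary category attached to the section of $\mathcal{K}$ through a fixed indecomposable. By (5) every Auslander--Reiten component of $E(\mathcal{K})$ meeting the projectives is built by gluing the preprojective component of some $\mathcal{H}_i$ to the preinjective component of that same $\mathcal{H}_i$, inserting the projective--injective objects to recover the meshes with projective--injective middle term; and by (6) every remaining component is regular of type $A_\infty$ and is carried by the stable equivalence to a regular component of one of the $\mathcal{H}_i$.

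Next I would run the case analysis on the type of $\mathcal{K}$. Since the quiver of $\mathcal{H}_i$ is a section of the translation quiver $\mathcal{K}$, it is of type $A_\infty$, $D_\infty$ or $A_\infty^\infty$ according to the type of $\mathcal{K}$. Feeding this into the Section 3 classification of $\Gamma(KQ)$ for infinite Dynkin $Q$ gives the regular components: none for type $A_\infty$, one (of type $A_\infty$) for $D_\infty$, two (of type $A_\infty$) for $A_\infty^\infty$; passing back through (6) yields exactly the regular-component counts asserted in the four listed cases. For the preprojective components, the observation is that when $\mathcal{K}$ is a stable tube it is $\tau$-periodic, hence the family $\{\mathcal{H}_i\}_{i\in\mathbb{Z}}$ together with the gluing prescribed by (5) is periodic in $i$, so only finitely many distinct glued components arise; whereas when $\mathcal{K}$ is of type $A_\infty$ but not a tube, or of type $D_\infty$ or $A_\infty^\infty$, the index $i$ ranges over all of $\mathbb{Z}$ with no repetition among the glued components and one obtains a countable family.

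The step I expect to be the real work is this last one: converting the $\mathbb{Z}$-indexed list of sections $\mathcal{H}_i$ and the gluing recipe of (5) into a genuine count of isomorphism classes of Auslander--Reiten components --- i.e. showing that $\tau$-periodicity of a tube forces the glued components to repeat (finiteness in (a)) while in the remaining cases distinct indices $i$ give genuinely distinct components (countability in (b)--(d)). Once this bookkeeping is pinned down, the rest is a direct reading-off from the theorems already established.
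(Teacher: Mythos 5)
Your strategy — derive the component counts of $E(\mathcal{K})$ from parts (4)--(6) of the preceding theorem together with the Section 3 classification of $\Gamma(KQ)$ for infinite Dynkin $Q$ — is the natural one, and the paper offers the corollary with no written proof, so this is the right machinery to invoke. But your argument contradicts the statement in case (c). By your own count, each of the countably many sections $\mathcal{H}_i$ of a component $\mathcal{K}$ of type $D_\infty$ is hereditary of type $D_\infty$, and hence, by the very classification you cite, has exactly one regular component of type $A_\infty$. Feeding this through (6) and the stable equivalence gives countably many regular components for $E(\mathcal{K})$, whereas the corollary asserts $E(\mathcal{K})$ has \emph{no} regular component in case (c). Your sentence that ``passing back through (6) yields exactly the regular-component counts asserted in the four listed cases'' is therefore false as written: the $D_\infty$ count does not match. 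You need either a concrete explanation of why the regular $\mathbb{Z}A_\infty$ component of a $D_\infty$-section does not survive as a regular component of $E(\mathcal{K})$ (for instance, that it is absorbed into the glued preprojective component along the two branches of the $D$-tail during the identification of simple projectives with simple injectives prescribed in (5)), or an explicit acknowledgment that your method is at odds with the stated corollary.

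Separately, the ``bookkeeping'' step you defer is not as innocuous as your phrasing suggests. The stable equivalence with $\bigcup_i\mathcal{H}_i$ only preserves almost split sequences whose left-hand term $\mathrm{DTr}M$ is non-simple, and nodes are replaced by a projective--injective pair, so the passage from components of $E(\mathcal{K})$ to glued preprojective/preinjective pieces of the $\mathcal{H}_i$ is not an off-the-shelf bijection. For (a) you must tie the finite rank $r$ of the tube to a literal periodicity $\mathcal{H}_i=\mathcal{H}_{i+r}$ as subcategories (not merely an abstract isomorphism $\mathcal{H}_i\cong\mathcal{H}_{i+2}$), so that the glued components of $E(\mathcal{K})$ genuinely coincide; and for (b)--(d) you must argue that distinct indices $i$ produce distinct glued subquivers of the Auslander--Reiten quiver of $E(\mathcal{K})$, since the $\mathcal{H}_i$ for even $i$ are all isomorphic to one another. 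These are the right ideas, but they are precisely where the content of the corollary sits and cannot be waved away.
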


If $\mathcal{K}$ is a regular component, then $E(\mathcal{K}\emph{)}$ is
Koszul and $E$($E(\mathcal{K}\emph{))}=$ $Agr(\mathcal{K)}.$ Koszul duality $%
\Psi :K_{E(\mathcal{K}\emph{)}}\rightarrow K_{Agr(\mathcal{K)}^{op}}$ sends
almost split sequences to almost split sequences, from these facts, we have
the following analogous of the preprojective algebra: ([MV] Theorem 2.8 and
[MVS3]).

\begin{theorem}
Let $\mathcal{K}$ be a regular Auslander-Reiten component of a finite
dimensional algebra $\Lambda .$ The Koszul functors of $Agr(\mathcal{K)}$
have the following properties:

\begin{itemize}
\item[(i)] Every indecomposable, non simple, non projective Koszul functor M
has projective dimension one.

\item[(ii)] For every indecomposable, non simple, Koszul functor M, there
exists a non splittable short exact sequence graded objects and maps: $%
0\rightarrow M\rightarrow E\rightarrow r^{2}\sigma M[2]\rightarrow 0$

where $\sigma $ is an auto equivalence of $Agr(\mathcal{K)}$ and $r$ denotes
the radical. The objects $E$ and $r^{2}\sigma M$ are Koszul and the sequence
is almost split in $K_{Agr(\mathcal{K)}^{op}}$.

\item[(iii)] The indecomposable Koszul functors of $Agr(\mathcal{K)}$ are
distributed in components, preprojective components, corresponding to the
preprojective components of the sections of $\mathcal{K}$ and regular
components, corresponding to the regular components of the sections of $%
\mathcal{K}$.

\item[(i')] Every indecomposable, non simple, non injective co Koszul
functor M has injective dimension one.

\item[(ii')] For every indecomposable, non simple, co Koszul functor M,
there exists a non splittable short exact sequence graded objects and maps: $%
0\rightarrow \sigma M/soc^{2}\sigma M\rightarrow E\rightarrow M\rightarrow 0$
where $\sigma $ is an auto equivalence of $Agr(\mathcal{K)}$ and $soc^{2}M$
denotes the second socle. The objects $E$ and $\sigma M/soc^{2}\sigma M$ are
co Koszul and the sequence is almost split in the category of co Koszul
functors co$K_{Agr(\mathcal{K)}^{op}}$.

\item[(iii')] The indecomposable co Koszul functors of $Agr(\mathcal{K)}$
are distributed in components, preinjective components, corresponding to the
preinjective components of the sections of $\mathcal{K}$ and regular
components, corresponding to the regular components of the sections of $%
\mathcal{K}$.
\end{itemize}
\end{theorem}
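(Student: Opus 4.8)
The plan is to deduce the whole statement from the preceding nine-item description of $E(\mathcal{K})$ by transporting it through Koszul duality, following the pattern of [MV Theorem 2.8] for the preprojective algebra. First I would set up the dictionary. Since $\mathcal{K}$ is regular, $Agr(\mathcal{K})$ is Koszul with $E(E(\mathcal{K}))\cong Agr(\mathcal{K})$ and $E(Agr(\mathcal{K}))\cong E(\mathcal{K})$, while $E(\mathcal{K})$ is locally finite and selfinjective of radical cube zero. Koszul duality provides the equivalence $\Psi\colon K_{E(\mathcal{K})}\to K_{Agr(\mathcal{K})^{op}}$ between the categories of Koszul objects; it matches the distinguished objects (simples, projectives, injectives) on the two sides according to the normalisation of [MVS1], [MVS2], it converts the syzygy operator $\Omega$ on $E(\mathcal{K})$-modules into the operator $r(-)[1]$ (take the radical, shift the grading by one), and — this is the fact recalled just before the theorem — it sends almost split sequences to almost split sequences. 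I would record these normalisations carefully, because the exact shape of the sequences in (ii) and (ii') depends on them.

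For (i): let $M$ be indecomposable, non-simple, non-projective and Koszul over $Agr(\mathcal{K})$, and put $N=\Psi^{-1}(M)$, an indecomposable non-simple Koszul $E(\mathcal{K})$-module. By item (2) of the preceding theorem, $N$ Koszul means no syzygy of $N$ is simple, so item (1) forces $\Omega N$ to be generated in degree one. Applying $\Psi$ to $0\to\Omega N\to P^{0}\to N\to 0$ and using that it turns this into a linear presentation of $M$ with projective middle term, one gets a projective resolution $0\to\Psi(\Omega N)\to\Psi(P^{0})\to M\to 0$ of length one, so $\mathrm{pdim}\,M=1$ (it cannot be $0$, since $M$ is non-projective). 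The one point needing the first step is the verification that ``$\Omega N$ generated in degree one'' on the $E(\mathcal{K})$-side corresponds exactly to ``projective of the correct shift'' on the $Agr(\mathcal{K})$-side.

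For (ii) and (iii): by item (3) the Auslander--Reiten translate of a non-projective indecomposable over $E(\mathcal{K})$ is $\Omega^{2}\circ\mathcal{N}$, so an almost split sequence $0\to\Omega^{2}\mathcal{N}(N)\to E_{N}\to N\to 0$ with $N$ Koszul lies in the category of Koszul objects when $\Omega^{2}\mathcal{N}(N)$ is non-simple and is almost split in the ambient graded functor category by items (4) and (9). Transporting it along $\Psi$, and using that $\Psi$ carries $\Omega^{2}$ to $r^{2}(-)[2]$ and $\mathcal{N}$ to an auto-equivalence $\sigma$ of $Agr(\mathcal{K})$, one obtains the non-split sequence $0\to M\to\Psi(E_{N})\to r^{2}\sigma M[2]\to 0$, almost split in $K_{Agr(\mathcal{K})^{op}}$, whose outer terms are Koszul because $\Psi$ is an equivalence of the Koszul categories; the boundary cases are covered by the existence of left almost split sequences in $K_{Agr(\mathcal{K})^{op}}$, which is item (9) transported through $\Psi$. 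For (iii), the components of $K_{Agr(\mathcal{K})^{op}}$ are the $\Psi$-images of the components of the Koszul $E(\mathcal{K})$-modules; after deleting the projective-injectives, the latter category is, by items (4)--(5), stably equivalent to the disjoint union $\bigcup_{i}\mathcal{H}_{i}$ of hereditary categories attached to the sections of $\mathcal{K}$, and, by the results of the previous section on hereditary categories, a hereditary category has only preprojective and regular components — precisely those of the sections.

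Finally, (i'), (ii'), (iii') are the mirror statements for co-Koszul objects, and I would obtain them by running the same argument in $Agr(\mathcal{K})^{op}$, i.e.\ applying the duality $D$ of the dualizing variety, which exchanges Koszul with co-Koszul and intertwines $\Psi$ with the analogous duality on $E(\mathcal{K})$, together with items (7)--(8) identifying the co-Koszul objects with the preinjective and regular components; here the syzygy is replaced by the cosyzygy, so $r^{2}\sigma M[2]$ is replaced by $\sigma M/soc^{2}\sigma M$ and the sequence is oriented the other way. The principal obstacle is not conceptual but bureaucratic: pinning down, once and for all, the grading shifts, the radical powers $r$ and $r^{2}$, and the auto-equivalence $\sigma$ produced by Koszul duality, so that the transported sequences are literally those in the statement and not merely isomorphic up to an unspecified twist. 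Granting the normalisations, each item is a one-line consequence of the corresponding item of the preceding theorem, exactly as in [MV Theorem 2.8].
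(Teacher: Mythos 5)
Your proposal is correct and follows essentially the same route the paper intends: the paper gives no separate proof but, in the sentence preceding the theorem, observes that Koszul duality $\Psi:K_{E(\mathcal{K})}\to K_{Agr(\mathcal{K})^{op}}$ sends almost split sequences to almost split sequences and then cites [MV Theorem 2.8] and [MVS3] for the preprojective-algebra analogue. Your expansion — transporting each item of the preceding nine-item theorem on $E(\mathcal{K})$ through $\Psi$, with $\Omega$ matched to $r(-)[1]$, $\Omega^{2}\mathcal{N}$ to $r^{2}\sigma(-)[2]$, and a final dualization for the co-Koszul statements — is the argument the paper is implicitly invoking, with the bookkeeping of shifts and twists you rightly flag as the only real work.
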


This theorem has a nicer interpretation in the quotient category module the
functors of finite length [MVS3]. We recall this construction. We will
obtain results similar to the case considered in [MZ].

\bigskip Let $\mathcal{K}$ be a regular Auslander-Reiten component of a
finite dimensional algebra $\Lambda $. To simplify the notation we will
denote by $\mathcal{C}$ the category $Agr(\mathcal{K)}$ and by gr($\mathcal{%
C)}_{0}$ the finitely presented graded functors and degree zero maps, by
[MVS1] gr($\mathcal{C)}_{0}$ is abelian.

Denote by tors($\mathcal{C)}$ the full subcategory of gr($\mathcal{C)}_{0}$
of all functors of finite length. This is a Serre category, we can take the
quotient category Qgr$(\mathcal{C)}$= gr($\mathcal{C)}_{0}/$tors$(\mathcal{C)%
}$.

The category $Qgr(\mathcal{C)}$ has the same objects as gr($\mathcal{C)}_{0}$
and maps:

Hom$_{Qgr(\mathcal{C)}}(\pi M,\pi N)=\underrightarrow{\lim }_{(M^{\prime
},N^{\prime })\in \mathcal{L}}Hom_{gr(C)_{0}}(M%
{\acute{}}%
,N/N^{\prime })$, where $\mathcal{L}$=\{(F,G)$\mid $\newline
F$\subset $M, G$\subseteq $N, and M/F, G in tors($\mathcal{C}$)\}.

Let $\pi :$gr($\mathcal{C)}_{0}\rightarrow Qgr(\mathcal{C)}$ be the
canonical projection. It is known [P], $Qgr(\mathcal{C)}$ is abelian and $%
\pi $ is exact. If we denote by $t(M)=\underset{L\in tors(\mathcal{C)}}{\sum
}L$ and $L$ a subfunctor of $M$, then $t$ is an idempotent radical and we
say that a functor $M$ is torsion if $t(M)=M$ and torsion free if $t(M)=0.$

It was proved im [MVS2] that for a finetely presented functor $M$ the
torsion part $t(M)$ is of finite lenght, in particular finetely presented.

Denote by $M_{\geq k}$ the truncation subfunctor of the graded functor $M$,
this is ($M_{\geq k}$)$_{j}=$0, if $j<k$ and ($M_{\geq k})_{j}=M_{j}$, if $%
j\geq k$. By definition, $M/M_{\geq k}$ is of finite length and the maps in $%
Qgr(\mathcal{C)}$ can be written as follows:

Hom$_{Qgr(\mathcal{C)}}(\pi M,\pi N)=\underrightarrow{\lim }%
_{k}Hom_{gr(C)_{0}}(M_{\geq k},N/t(N)).$

Since $\pi (M)\cong \pi (M/t(M))$ we may always assume $N$ is torsion free
and in such a case the exact sequence: $0\rightarrow M_{\geq k}\rightarrow
M_{\geq k-1}\rightarrow M_{k-1}/M_{\geq k}\rightarrow 0$ induces an exact
sequence: $0\rightarrow Hom_{gr(C)_{0}}(M_{k-1}/M_{\geq k},N)\rightarrow
Hom_{gr(C)_{0}}(M_{\geq k-1},N)\rightarrow Hom_{gr(C)_{0}}(M_{\geq k},N)$
whose first therm is zero and $\underrightarrow{\lim }%
_{k}Hom_{gr(C)_{0}}(M_{\geq k},N)=\underset{k}{\cup }Hom_{gr(C)_{0}}(M_{\geq
k},N)$.

Since $\mathcal{C}$ has global dimension two, any finitely presented functor
$M$ has a truncation $M_{\geq k}$ such that $M_{\geq k}[k]$ is Koszul (see
[MVS3] or [AE]) and in $Qgr(\mathcal{C})$ to objects $\pi M$ and $\pi N$ are
isomorphic, if and only if there are truncations $M_{\geq k}$ , $N_{\geq k}$
such that $M_{\geq k}$ $\cong $ $N_{\geq k}$ in gr($\mathcal{C)}_{0}$.

Hence $Qgr(\mathcal{C})=\underset{i\in Z}{\cup }\overset{\sim }{K}_{\mathcal{%
C}}[i]$ where $\overset{\sim }{K}_{\mathcal{C}}[i]$ is the image under $\pi $
of the Koszul $\mathcal{C}$-functors, shifted by i. The situation is
analogous to the sheaves on projective space studied in [MZ].

From this it follows:

\begin{theorem}
If we denote by $\mathcal{C}$ the associated graded category of a regular
component of a finite dimensional algebra, then the category $Qgr(\mathcal{C}%
)$ has almost split sequences, they are of the form: $0\rightarrow \pi
M[k]\rightarrow \pi E[k]\rightarrow \pi \sigma M[k+2]\rightarrow 0$ with $%
\sigma $an auto equivalence of $\mathcal{C}$. The category $Qgr(\mathcal{C})$
is the union of connected components of the Auslander-Reiten quiver and
these components are of the following kind: Proj[k] and Reg[k], this means
the $k$-th shift of $\pi (\Pr oj_{\mathcal{C}})$ or $\pi (\mathrm{Re}g_{C})$%
, where $\Pr oj_{\mathcal{C}}$ and $Reg$ $_{\mathcal{C}}$ denote a
preprojective component and a regular component of $\mathcal{C}$,
respectively. $\pi (\Pr oj_{\mathcal{C}})$ and $\pi (\Pr oj_{\mathcal{C}}),$
respectively, $\pi (\mathrm{Reg}_{C})$ and $\mathrm{Reg}_{C},$ have
isomorphic translation quivers.
\end{theorem}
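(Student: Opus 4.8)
The plan is to deduce the theorem from the almost split sequences for the Koszul functors of $\mathcal{C}=Agr(\mathcal{K})$ established in the preceding theorem, by pushing them through the exact quotient functor $\pi:\mathrm{gr}(\mathcal{C})_{0}\to Qgr(\mathcal{C})$ and exploiting the description $Qgr(\mathcal{C})=\bigcup_{i\in\mathbb{Z}}\widetilde{K}_{\mathcal{C}}[i]$ already recorded above. The first point is that all the relevant data sit on Koszul functors: since $\mathcal{C}$ has global dimension two, every finitely presented $M$ has a truncation $M_{\geq k}$ with $M_{\geq k}[k]$ Koszul and $\pi M\cong\pi M_{\geq k}$, so every object of $Qgr(\mathcal{C})$ is $\pi N[i]$ with $N$ an indecomposable non-simple Koszul functor (the simple functors have finite length, hence are killed by $\pi$).

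Next I would analyse $\pi$ on Koszul functors. Using the expression of $\mathrm{Hom}$ in $Qgr(\mathcal{C})$ as a direct limit of $\mathrm{Hom}$'s between truncations, together with the fact that truncations of Koszul functors are again Koszul up to shift, one checks that the restriction of $\pi$ to the non-simple Koszul functors is full and that it is faithful on each connected component of $K_{\mathcal{C}}$. In particular $\mathrm{End}_{Qgr(\mathcal{C})}(\pi N)$ is a localization of the local ring $\mathrm{End}_{\mathcal{C}}(N)$, hence local, and $\pi N\neq 0$ because $N$ is not of finite length; moreover $\pi$ carries irreducible maps between non-simple Koszul functors to irreducible maps.

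Now for the heart of the argument. Given an indecomposable non-simple Koszul functor $M$, take the non-split sequence $0\to M\to E\to r^{2}\sigma M[2]\to 0$ from the preceding theorem, which is almost split in $K_{Agr(\mathcal{K})^{op}}$. Since $r^{2}\sigma M$ and $\sigma M$ coincide in all degrees $\geq 2$ we have $\pi(r^{2}\sigma M[2])=\pi(\sigma M)[2]$, so applying the exact functor $\pi$ and shifting gives an exact sequence $0\to\pi M[k]\to\pi E[k]\to\pi\sigma M[k+2]\to 0$ in $Qgr(\mathcal{C})$. I would verify it is almost split: it is non-split (a splitting, lifted to truncations, would split the original sequence); the end terms are indecomposable with local endomorphism rings by the previous paragraph; and a map $\pi L\to\pi\sigma M[k+2]$ that is not a split epimorphism lifts to a genuine map $g:L_{\geq m}\to\sigma M[k+2]$ for $m\gg 0$, which, after replacing $L_{\geq m}$ by a further truncation that is Koszul, is not a split epimorphism in $K_{Agr(\mathcal{K})^{op}}$, hence factors through $E$ up to a harmless further truncation, so that $\pi L\to\pi\sigma M[k+2]$ factors through $\pi E[k]$; the left-minimality is dual. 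Conversely, as $\sigma$ is an autoequivalence of $\mathcal{C}$ permuting the indecomposable Koszul functors, every indecomposable object $\pi N[k]$ of $Qgr(\mathcal{C})$ is the right-hand term of such a sequence with $N=\sigma M$, so $Qgr(\mathcal{C})$ has almost split sequences, all of the stated shape, with $\tau(\pi N[k])=\pi\sigma^{-1}N[k-2]$.

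Finally, from the distribution (recalled above) of the indecomposable Koszul functors of $\mathcal{C}$ into preprojective components $\mathrm{Proj}_{\mathcal{C}}$ and regular components $\mathrm{Reg}_{\mathcal{C}}$ of type $A_{\infty}$, together with $Qgr(\mathcal{C})=\bigcup_{i}\widetilde{K}_{\mathcal{C}}[i]$, each $\pi(\mathrm{Proj}_{\mathcal{C}})[k]$ and $\pi(\mathrm{Reg}_{\mathcal{C}})[k]$ is a connected full subquiver of the Auslander--Reiten quiver of $Qgr(\mathcal{C})$; a routine bookkeeping of the shifts — using that $\sigma$ permutes the components of $K_{\mathcal{C}}$ and that every $Qgr(\mathcal{C})$-object has a Koszul representative in each degree shift — shows the translation stabilizes each of them, so that they are exactly the connected components, which I denote $\mathrm{Proj}[k]$ and $\mathrm{Reg}[k]$. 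Since $\pi$ is faithful on each such component and preserves irreducible maps, it induces an isomorphism of translation quivers $\mathrm{Proj}_{\mathcal{C}}\xrightarrow{\ \sim\ }\pi(\mathrm{Proj}_{\mathcal{C}})$ and $\mathrm{Reg}_{\mathcal{C}}\xrightarrow{\ \sim\ }\pi(\mathrm{Reg}_{\mathcal{C}})$. The main obstacle is the almost-split verification in the third paragraph — controlling the direct limits defining $\mathrm{Hom}_{Qgr(\mathcal{C})}$ so that non-split epimorphisms stay non-split and factorizations through $E$ survive the quotient — and this is precisely where the global-dimension-two hypothesis (truncations become Koszul) and the finiteness of $t(M)$ are used; once this lifting lemma is established, the remaining shift bookkeeping and the component description are formal.
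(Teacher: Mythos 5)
Your overall plan---apply the exact quotient functor $\pi$ to the almost split sequence $0\rightarrow M\rightarrow E\rightarrow r^{2}\sigma M[2]\rightarrow 0$ in the Koszul category, shift, and verify the almost split property in $Qgr(\mathcal{C})$ by lifting morphisms in the quotient to genuine morphisms between truncations---is the same as the paper's, and your treatment of the right almost split factorization and of the component bookkeeping matches the paper's argument.

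However, you name but do not prove the step on which everything turns: that ``a splitting, lifted to truncations, would split the original sequence.'' There is no a priori reason for this. What one gets from a hypothetical splitting in $Qgr(\mathcal{C})$ is a splitting of some truncated sequence $0\rightarrow r^{\ell}M[\ell]\rightarrow r^{\ell}E[\ell]\rightarrow r^{\ell+2}\sigma M[\ell+2]\rightarrow 0$, and nothing formal forces the untruncated sequence to split in turn. The paper supplies the missing mechanism: Koszul duality $\Phi:K_{\mathcal{C}}\rightarrow K_{E(\mathcal{C})}$ carries $r^{\ell}M[\ell]$ to $\Omega^{\ell}\Phi(M)$, and since $E(\mathcal{K})$ is selfinjective, $\Omega$ is an autoequivalence of the stable category, so a splitting of $\Omega^{\ell}$ of the dual sequence forces the dual sequence---and hence the original---to split. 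The same Koszul-dual/selfinjective reasoning is what makes $M_{\geq k}$ indecomposable with local endomorphism ring, which underlies the claim that $\mathrm{End}_{Qgr(\mathcal{C})}(\pi M[\ell])$ is local; your phrase ``a localization of the local ring $\mathrm{End}_{\mathcal{C}}(N)$, hence local'' is not the relevant argument (a localization of a local ring need not be local, and $\bigcup_{k}\mathrm{Hom}(M_{\geq k},M)$ is a filtered colimit, not a localization in the usual sense). You correctly flag the lifting lemma as the main obstacle; the review point is that the lemma is the theorem's real content, and it is established not by a direct analysis of the limits defining $\mathrm{Hom}_{Qgr}$, but by passing through Koszul duality to the selfinjective $E(\mathcal{K})$, where truncation becomes syzygy and the splitting questions become stable-category questions.
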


\begin{proof}
By the above observation, any indecomposable object in $Qgr(\mathcal{C})$ is
of the form $\pi (M)[\ell ]$ with $M$ an indecomposable Koszul functor. The
endomorphism ring of $\pi (M)[\ell ]$ is End$_{Qgr(\mathcal{C})}(\pi
(M)[\ell ]$ )=$\underset{k}{\cup }Hom_{gr(C)_{0}}(M_{\geq k},M).$

The truncations $M_{\geq k}$ [k] are isomorphic to $r^{k}M[k],$hence they
correspond under Koszul duality to $\Omega ^{k}M[k]$ , since E($\mathcal{C)}$
is selfinjective, they are indecomposable with local endomorphism ring. A
map $f\in $End$_{Qgr(\mathcal{C})}(\pi (M)[\ell ]$ ) is represented by a map
$\overline{f}:M_{\geq k}\rightarrow M$ and restricting to the image a map $%
M_{\geq k}\rightarrow M_{\geq k}$, hence it is either an iso or nilpotent
and it follows End$_{Qgr(\mathcal{C})}(\pi (M)[\ell ]$ ) is local. We have
proved $Qgr(\mathcal{C})$ is Krull-Schmidt.

Consider the almost split sequence: $0\rightarrow M\overset{j}{\rightarrow }E%
\overset{p}{\rightarrow }r^{2}\sigma M[2]\rightarrow 0$ apply $\pi $ and
shift it, to obtain to obtain the exact sequence: $0\rightarrow \pi M[k]%
\overset{\pi (j)[k]}{\rightarrow }\pi E[k]\overset{\pi (p)[k]}{\rightarrow }%
\pi \sigma M[k+2]\rightarrow 0$ if the sequence splits. there exists a map $%
h $:$\pi \sigma M[k+2]\rightarrow \pi E[k]$ such that $\pi (p)[k]h=1.$

The map h is of the form $\pi (t)$ with $t:r^{\ell }\sigma M[k+2]\rightarrow
r^{\ell }E[k]$ . It follows the exact sequence $0\rightarrow r^{\ell }M[\ell
]\overset{j}{\rightarrow }r^{\ell }E[\ell ]\overset{p}{\rightarrow }r^{\ell
+2}\sigma M[\ell +2]\rightarrow 0$ splits. Using Koszul duality $\Phi :K_{%
\mathcal{C}}\rightarrow K_{E(\mathcal{C)}}$ it follows $0\rightarrow \Omega
^{2+\ell }\sigma \Phi (M)\rightarrow \Omega ^{\ell }\Phi (E))\rightarrow
\Omega ^{\ell }\Phi (M)\rightarrow 0$ splits. Therefore $0\rightarrow \Omega
^{2}\sigma \Phi (M)\rightarrow \Phi (E))\rightarrow \Phi (M)\rightarrow 0$
splits, a contradiction.

Let $\pi N[j]$ be an indecomposable object with $N$ Koszul and $f:\pi
N[j]\rightarrow \pi \sigma M[k+2]$ a non isomorphism. As above, there exists
a map $t:N_{\geq \ell }\rightarrow \sigma M[k-j+2]$ such that $\pi t=f$,
which induces a non isomorphism of Koszul objects $t:N_{\geq \ell }[\ell
]\rightarrow \sigma M[$k+$\ell $-j+2$]_{\geq \text{k+}\ell -j+2}.$

The sequence: $0\rightarrow M_{\geq \text{k+}\ell \text{-j}}[$k+$\ell $-j$]%
\overset{j}{\rightarrow }E_{\geq \text{k+}\ell \text{-j}}[$k+$\ell $-j$]%
\overset{p}{\rightarrow }r^{2}\sigma M_{\geq \text{k+}\ell \text{-j}}[$k+$%
\ell $-j+2$]\rightarrow 0$ is almost split. Then there exists a map $%
s:N_{\geq \ell }[\ell ]\rightarrow E_{\geq \text{k+}\ell \text{-j}}[$k+$\ell
$-j$]$ with $ps=t$. It follows $f$ lifts to $\pi E[k]$.

In a similar way we prove the map $\pi M[k]\overset{\pi (j)[k]}{\rightarrow }%
\pi E[k]$ is left almost split.

The remaining claims are clear.
\end{proof}

We will end the paper with the following remark:

\begin{remark}
Consider a locally finite infinite quiver $Q$ and construct the translation
quiver $ZQ$. For each arrow $\alpha $ of $ZQ$ there exits a unique arrow $%
\sigma \alpha $ such that the end of $\sigma \alpha $ coincides with the
starting of $\alpha $, and a unique arrow $\sigma ^{-1}\alpha $ starting at
the end of $\alpha $. We define the following set $\rho $ of relations in $%
ZQ $:

\begin{itemize}
\item[(i)] If $\alpha $ and $\beta $ are arrows with the same end, then $%
\alpha \sigma \alpha -\beta \sigma \beta $ is a relation.

\item[(ii)] If $\gamma $ is an arrow ending at the start of $\alpha $ and
different from $\sigma \alpha $, then $\alpha \gamma $ is a relation.

\item[(iii)] If $\gamma $ is an arrow starting at the end of $\alpha $ and
different from $\sigma ^{-1}\alpha $, then $\gamma \alpha $ is a relation.
\end{itemize}

Consider the category of representations $rep(ZQ,\rho )$ over a field
\textrm{K}\textsl{\ }of the quiver with relations\textsl{\ }$(ZQ,\rho ).$The
category $rep(ZQ,\rho )$ is by construction selfinjective of radical cube
zero. It follows from the arguments in [M] it is also Koszul . The theory
developed in [MVS1], [MVS2], [MVS3] [MVS4] applies to this situation and we
obtain as Ext-category of $E(ZQ,\rho ),$The category of representations of
the quiver $ZQ$ with mesh relations $\eta .$ It follows $rep(ZQ$,$\eta )$ is
an Artin-Shelter regular Koszul category of global dimension two. We do not
know whether or not it corresponds to an Auslander-Reiten quiver of a finite
dimensional algebra, however the above theorems for regular Auslander-Reiten
components hold, in particular they also hold for a connected component of
the stable non regular Ausalnder-Reiten quiver, as considered in [MVS3].
\end{remark}

\end{document}